\title[Representation of partially ordered sets over regular algebras]
{Representation of partially ordered sets over Von Neumann regular algebras.\\ More prime, non-primitive regular rings}
\date{August 19, 2025}
\newtheorem{theo}{Theorem}[section]
\newtheorem{pro}[theo]{Proposition}
\newtheorem{lem}[theo]{Lemma}
\newtheorem{cor}[theo]{Corollary}
\newtheorem{ex}[theo]{Example}
\theoremstyle{remark}
\newtheorem{re}[theo]{Remark}
\newtheorem{res}[theo]{Remarks}
\newtheorem{notations}[theo]{Notations}
\newtheorem{notationsettings}[theo]{Notations and Settings}
\newtheorem{definition}[theo]{Definition}
\numberwithin{equation}{section}
\newcommand{\putl}[5]{\put(#1,#2){\line(#3,#4){#5}}}
\newcommand{\putbb}[3]{\put(#1,#2){\makebox(0,0)[b]{#3}}}
\newcommand{\CA}{\mathcal{A}}
\newcommand{\CE}{\mathcal{E}}
\newcommand{\CF}{\mathcal{F}}
\newcommand{\CG}{\mathcal{G}}
\newcommand{\CH}{\mathcal{H}}
\newcommand{\CM}{\mathcal{M}}
\newcommand{\CN}{\mathcal{N}}
\newcommand{\CP}{\mathcal{P}}
\newcommand{\CR}{\mathcal{R}}
\newcommand{\CS}{\mathcal{S}}
\newcommand{\CV}{\mathcal{V}}
\newcommand{\BC}{\mathbb C}
\newcommand{\BF}{\mathbb F}
\newcommand{\BK}{\mathbb K}
\newcommand{\BL}{\mathbb L}
\newcommand{\BM}{\mathbb M}
\newcommand{\BN}{\mathbb N}
\newcommand{\BR}{\mathbb R}
\newcommand{\BZ}{\mathbb Z}
\newcommand{\ZA}{\mathbf{A}}
\newcommand{\ZC}{\mathbf{C}}
\newcommand{\ZD}{\mathbf{D}}
\newcommand{\ZE}{\mathbf{E}}
\newcommand{\ZF}{\mathbf{F}}
\newcommand{\ZG}{\mathbf{G}}
\newcommand{\ZK}{\mathbf{K}}
\newcommand{\ZM}{\mathbf{M}}
\newcommand{\ZP}{\mathbf{P}}
\newcommand{\ZR}{\mathbf{R}}
\newcommand{\ZS}{\mathbf{S}}
\newcommand{\ZU}{\mathbf{U}}
\newcommand{\za}{\mathbf{a}}
\newcommand{\zb}{\mathbf{b}}
\newcommand{\zd}{\mathbf{d}}
\newcommand{\ze}{\mathbf{e}}
\newcommand{\zg}{\mathbf{g}}
\newcommand{\zi}{\mathbf{i}}
\newcommand{\zl}{\mathbf{l}}
\newcommand{\zm}{\mathbf{m}}
\newcommand{\zn}{\mathbf{n}}
\newcommand{\zo}{\mathbf{o}}
\newcommand{\zp}{\mathbf{p}}
\newcommand{\zr}{\mathbf{r}}
\newcommand{\zs}{\mathbf{s}}
\newcommand{\zu}{\mathbf{u}}
\newcommand{\zv}{\mathbf{v}}
\renewcommand{\a}{\alpha}
\renewcommand{\b}{\beta}
\newcommand{\g}{\gamma}
\renewcommand{\d}{\delta}
\newcommand{\et}{\eta}
\renewcommand{\l}{\lambda}
\newcommand{\f}{\varphi}
\newcommand{\G}{\varGamma}
\renewcommand{\Xi}{\varXi}
\renewcommand{\Pi}{\varPi}
\renewcommand{\Sigma}{\varSigma}
\newcommand{\al}{\boldsymbol{\aleph}}
\newcommand{\nin}{\not\in} 
\newcommand{\sbs}{\subset} 
\newcommand{\psbs}{\subsetneqq} 
\newcommand{\nsbs}{\not\subset} 
\newcommand{\sps}{\supset} 
\newcommand{\vu}{\emptyset} 
\newcommand{\is}{\simeq} 
\renewcommand{\le}{\leqslant} 
\renewcommand{\ge}{\geqslant} 
\newcommand{\nle}{\nleqslant} 
\newcommand{\defug}{\,\,\colon\!\!=}
\DeclareMathOperator{\Ker}{Ker} 
\DeclareMathOperator{\Imm}{Im}
\DeclareMathOperator{\Supp}{Supp} 
 \DeclareMathOperator{\Soc}{Soc}
\newcommand{\FP}{\ZF\ZP}
\newcommand{\ESFP}{\ZE\ZS\ZF\ZP}
\newcommand{\CFM}{\BC\BF\BM}
\newcommand{\FR}{\BF\BR}
\newcommand{\Alg}{{\ZA\zl\zg}}
\newcommand{\Simp}{{\ZS\zi\zm\zp}}
\newcommand{\Prim}{{\ZP\zr\zi\zm}}
\newcommand{\Pos}{{\ZP\zo\zs}}
\newcommand{\KO}{\ZK_0}
\newcommand{\Poab}{{\ZP\zo\ZA\zb}}
\newcommand{\map}[3]{#1\colon #2\to#3}
\newcommand{\lmap}[3]{#1\colon#2\longrightarrow#3}
\newcommand{\maximal}[1]{#1^{\bigstar}}
\begin{document}

\author{Giuseppe Baccella}

\address{Universit\`a di L'Aquila, Dipartimento di Ingegneria e scienze dell'informazione e matematica,
              Via Vetoio, 67100 L'Aquila, Italy}

\email{giuseppe.baccella@guest.univaq.it}

\subjclass{Primary 16E50; Secondary 16D60, 16S50, 19A49, 06A06, 06F20}

\keywords{Von Neumann regular ring; partially ordered set; prime ring; primitive ring; dimension group, Grothendieck group; (restricted) Hahn power.}

\begin{abstract}
    Given an \emph{infinite} cardinal $\al$, let $\Pos_{\al}$ be the category whose objects are all partially ordered sets $I$, having a finite cofinal subset and such that both $I$ and the set of all maximal chains of $I$ have at most cardinality $\al$, while the morphisms are all maps which are order imbeddings of the domain as an upper subset of the codomain. Given a field $\BK$, for every object $I$ of $\Pos_{\al}$ we find a unital, locally matricial and hence unit-regular $\BK$-algebra $\mathfrak{B}_{\al,\BK}(I)$ such that the lattice of all its ideals is order isomorphic to the lattice of all lower subsets of $I$. We show that the Grothendieck group $\KO(\mathfrak{B}_{\al,\BK}(I))$, with its natural partial order, is order isomorphic to the restricted Hahn power of $\BZ$ by $I$; this gives a contribution to solve the \emph{Realization Problem for Dimension Groups with order-unit}. We also show that the algebra $\mathfrak{B}_{\al,\BK}(I)$ has the following features: (a) $\mathfrak{B}_{\al,\BK}(I)$ is prime if and only if $I$ is lower directed; (b) $\mathfrak{B}_{\al,\BK}(I)$ is primitive if and only if $I$ has a coinitial chain; (c) $\mathfrak{B}_{\al,\BK}(I)$ is semiartinian if and only if $I$ is artinian, in which case $I$ is order isomorphic to the primitive spectrum of $\mathfrak{B}_{\al,\BK}(I)$. Finally we show that the assignment $I\mapsto\mathfrak{B}_{\al,\BK}(I)$ extends to a pair of functors from $\Pos_{\al}$ to the category of $\BK$-algebras, one covariant and the other contravariant.
\end{abstract}

\maketitle

\setcounter{section}{-1}
\section{Introduction}

We recall that if $L$ is a complete lattice, an element $a\in L$ is said to be \emph{compact} if whenever $a\le \bigvee M$ for some subset $M\sbs L$, then $a\le \bigvee N$ for some finite subset $N\sbs M$. Given a ring $R$ (associative with multiplicative identity), the compact elements of the complete lattice $\BL_{2}(R)$ of ideals of $R$ are the finitely generated ideals, consequently $\BL_{2}(R)$ is an \emph{algebraic} lattice, meaning that all its elements are joins of (possibly infinitely many) compact elements. It is well known that if, in addition, $R$ is a von Neumann regular ring, then $\BL_{2}(R)$ is also distributive.

The problem of determining which algebraic and distributive lattices are isomorphic to $\BL_{2}(R)$ for some von Neumann regular ring $R$ is one among the so called \emph{Realization (or Representation) Problems for Von Neumann regular rings}. A first answer to this problem was given by Bergman in his famous 1986 unpublished note (see \cite{Bergman:001}):

\textbf{Bergman’s Theorem.} \emph{An algebraic distributive lattice $D$ in which the greatest element is compact is isomorphic to $\BL_{2}(R)$, for some unital von Neumann regular ring $R$, if either $D$ has countably many compact elements, or every element of $D$ is a join of \emph{join-irreducible} compact elements}.

In \cite{GoodWeh:1} Goodearl and Wehrung wrote an extended account about the representation problems of distributive algebraic lattices and the connection with the analogous problems for distributive semilattices. The first condition stated in Bergman's Theorem raised the obvious question: is it possible to remove the countability condition on the set of compact elements of $D$? A negative answer was given by Wehrung in \cite{Weh:4}, where he proved that Bergman’s Theorem cannot be extended to those algebraic distributive lattices having $\al_{2}$ or more compact elements. However Wehrung proved in \cite{Weh:2} that Bergman’s Theorem still holds for algebraic distributive lattices having  at most $\al_{1}$ compact elements.

Concerning the second condition, we note that the algebraic distributive lattices satisfying it are precisely those isomorphic to the lattice (partially ordered by inclusion) $\Downarrow\!\!I$ of all lower subsets of some partially ordered set $I$; so, actually, given a partially ordered set $I$ having a finite cofinal subset, Bergman had built a unit-regular algebra $R$ whose lattice of ideals is isomorphic to $\Downarrow\!\!I$.

Other important realization problems arise when one considers the additive abelian monoid $\CV(R)$ of isomorphism classes of finitely generated projective right modules over a ring $R$, with the addition defined as $[A]+[B]\defug [A\oplus B]$, and the corresponding Grothendieck group $\KO(R)$. If $R$ is a regular ring, then $\CV(R)$ enjoys some fundamental and well known properties and the problem of deciding wether, given an abelian monoid $M$ having the same properties, there exists a regular ring $R$ such that $\CV(R)$ is isomorphic to $M$, was the first which was named as \emph{Realization Problem for Von Neumann Regular Rings\/} (see \cite{Ara:011}).

The group $\KO(R)$ carries a natural structure of a pre-ordered and directed abelian group with order-unit, which is  partially ordered in case $R$ is unit-regular (see \cite[Ch 15]{Good:3}). This gives rise to the corresponding \emph{Realization Problem}: which partially ordered, directed abelian groups with order unit are isomorphic to $\KO(R)$ for some regular ring $R$\,? (see \cite[Problem 29]{Good:3}).

Partially ordered abelian groups of special interest are the \emph{dimension groups}, namely the directed, unperforated interpolation groups. Originally, the name ``dimension group" was introduced by Elliott in \cite{Elliot:1} to designate a group which is a direct limit of a countable sequence of simplicial groups, i. e. of groups each order-isomorphic to $\BZ^{n}$ for some $n\in\BN$, with the partial order induced by the submonoid $\BN^{n}$ as positive cone. Later on it was proved by Effros, Handelman and Shen (see \cite[Theorem 2.2]{EffrosHandShen:1}) that a partially ordered abelian group is a (arbitrary) direct limit of simplicial groups if and only if it is a directed, unperforated interpolation group. It was well known that a countable dimension group is isomorphic to $\KO(R)$ for an ultramatricial algebra $R$ over a given field. Goodearl and Handelman proved in \cite{GoodHand:1} that every dimension group with order unit having cardinality $\al_{1}$ can be realized as $\KO(R)$ for a locally matricial (and hence unit-regular) algebra $R$ over a given field and, in addition, every infinite tensor product of such dimension groups can be realized similarly. By way of contrast, dimension groups which cannot be realized as $\KO(R)$ for a regular ring $R$ were found by Wehrung in \cite{Weh:5} and \cite{Weh:1}. Actually the two realization problems for Von Neumann regular rings, the first concerning algebraic distributive lattices and the second concerning dimension groups are intimately connected, as illustrated by P. P.~R{\r u}\v{z}i\v{c}ka in \cite{Ruz:1} and \cite{Ruz:2}.

Recently L.Vas accomplished in \cite{Vas:1} the task of extending the concepts of a simplicial group and of a dimension group to partially ordered abelian groups having an additional action of a group $\G$; by considering the $\G$-graded Grothendieck group $\KO^{\G}(R)$ of a $\G$-graded ring $R$ (as introduced in \cite{Hazrat:1}) she studied the corresponding realization problems of the above graded dimension groups as graded Grothendieck groups.

The deep role of countable dimension groups in the theory of topological dyna\-mical systems is widely displayed in the monograph \cite{DurandPerrin:1}.

If $I$ is a partially ordered set and $G(I)$ denotes the free abelian group generated by $I$, the \emph{(restricted) Hahn power} of $\BZ$ by $I$ is the group $G(I)$, equipped with the partial order defined by declaring positive those elements $x = \sum\{x_{i}i\mid x_{i}\in\BZ, i\in I\}$ of $G(I)$ such that $x_{i}\ge 0$ whenever $i$ is a maximal element of the support of $x$ (see Section \ref{sect algebraBKO} for the general definition). In \cite{BacSpin:17} we proved that $G(I)$ is a dimension group and, if $I$ has a finite cofinal subset, the sum of all maximal elements of $I$ is a order unit; thus we proved that if $I$ is an artinian poset with a finite cofinal subset, then $G(I)$ is realizable as $\mathbf{K}_0(R)$, where $R$ is a unit-regular and semiartinian ring whose primitive spectrum is isomorphic to $I$.

Our main goal in the present work is to show that if $I$ is any partially ordered set, with the only condition of having a finite cofinal subset, then there exists a locally matricial unital algebra $R$ such that $\KO(R)$ is isomorphic to $G(I)$. In order to give some more detail, for a given \emph{infinite} cardinal $\al$, let us call \emph{$\al$-poset} every partially ordered set $I$  such that both the cardinalities of $I$ and of the set $\CM(I)$ of all maximal chains of $I$ do not exceed $\al$. Given a field $\BK$, for every $\al$-poset $I$ \emph{having a finite cofinal subset} we build a unit-regular $\BK$-algebra $\mathfrak{B}_{\al,\BK}(I)$ having the following main properties: the lattice of all ideals of $\mathfrak{B}_{\al,\BK}(I)$ is isomorphic to the lattice of all lower subsets of $I$ and the Grothendieck group $\mathbf{K}_0(\mathfrak{B}_{\al,\BK}(I))$ is order isomorphic to $G(I)$. Thus, while the first property simply confirms one part of Bergman's Theorem, with the second property we enlarge the class of dimension groups with order unit which are realizable as $\mathbf{K}_0$ of some unit-regular algebra, by adding the restricted Hahn powers of $\BZ$ by any partially ordered set $I$ having a finite cofinal subset.

The first two sections have a merely technical nature. We start with an $\al$-poset $I$ and we consider the set $X(I)\defug \al^{(I)}\times\CM(I)$, where $\al^{(I)}$ denotes the set of all maps $p\colon I\to\al$ such that $p(i) = \emptyset$ for all but a finite number of $i\in I$, and we establish the algebra $Q(I)\defug \CFM_{\BK}(X(I))$ of all $X(I)\times X(I)$ column-finite matrices over $\BK$ as the scenario for the construction of the algebra $\mathfrak{B}_{\al,\BK}(I)$. Note that $Q(I)\is \CFM_{\BK}(\al)$, as $|X(I)|=\al$. By using suitable partitions of $X(I)$, for every $i\in I$ we make two subsequent selections of specially shaped matrices in $Q(I)$. With the first one we obtain a $\BK$-subalgebra $Q(I,i)$ of $Q(I)$, isomorphic to $\CFM_{\BK}(\al)$, in such a way that if $i,j\in I$, then $Q(I,i)\sbs Q(I,j)$ if and only if $j\le i$. With the second selection we find for each $i\in I$ a $\BK$-subalgebra $S(I,i)$ of $Q(I,i)$, still isomorphic to $\CFM_{\BK}(\al)$, in such a way that if $i,j\in I$, then $S(I,i)S(I,j) = 0$ if and only if $i,j$ are not comparable, while $S(I,i)S(I,j)\cup S(I,j)S(I,i)\sbs S(I,i)$ if $i\le j$.

In the third section we define the family $(H(I,i))_{i\in I}$ of ``building blocks" which will enable us to define the algebra $\mathfrak{B}_{\al,\BK}(I)$. For every $i\in I$ we take $H(I,i) = \Soc(S(I,i))$ if $i$ is not a maximal element of $I$, so that $H(I,i)$ is isomorphic to the algebra of all finite rank linear transformations of an $\al$-dimensional vector space over $\BK$, hence it is a simple, semisimple non-artinian ring; if $i$ is a maximal element of $I$ we select $H(I,i)$ as a suitable subalgebra isomorphic to $\BK$. We prove that $(H(I,i))_{i\in I}$ is an independent family of $\BK$-subspaces of $Q(I)$ with the following properties: given $i,j\in I$, if $i,j$ are not comparable then $H(I,i)H(I,j) = \{\mathbf{0}\}$ otherwise, if $i\le j$ and $\mathbf{0}\ne\za\in H(I,j)$, then both $\za H(I,i)$ and $H(I,i)\za$ are not zero and are contained in $H(I,i)$. As a consequence, for every subset $J\sbs I$ the subspace $H(I,J) = \bigoplus_{j\in J}H(I,j)$ is a $\BK$-subalgebra of $Q(I)$. We explore the conditions under which $H(I,J)$ has a multiplicative identity; in particular this turns out to be the case if $J$ is an upper subset of $I$ which is bounded from above by a finite number of maximal elements of $I$.

Starting with the fourth section we assume that the $\al$-poset $I$ \emph{has a finite cofinal subset} and, finally, we define $\mathfrak{B}_{\al,\BK}(I) \defug H(I,I)$, which is a unital $\BK$-algebra by the above. We provide an explicit description of the ideals of $\mathfrak{B}_{\al,\BK}(I)$; more generally we prove that, given any subset $J$ of $I$, the assignment $L\mapsto H(I,L)$ defines an isomorphism from the lattice of all lower subsets of $J$ to the lattice of ideals of $H(I,J)$. We also prove that $H(I,J)$ is locally matricial; as a consequence, if $J$ is an upper subset of $I$ then $H(I,J)$ is unit-regular and so, in particular, $\mathfrak{B}_{\al,\BK}(I)$ is unit-regular. As previously announced, this represent a confirmation of the part of Bergman's Theorem which states that every algebraic distributive lattice $D$, in which the greatest element is compact and every element of $D$ is a join of \emph{join-irreducible} compact elements, is isomorphic to the lattice of ideals of some unital von Neumann regular ring $R$.

The fifth section is entirely devoted to the computation of the Grothendieck group of $\mathfrak{B}_{\al,\BK}(I)$ and, as we already mentioned previously, the main result is that $\mathbf{K}_0(\mathfrak{B}_{\al,\BK}(I))$ is isomorphic to $G(I)$, the restricted Hahn power of $\BZ$ by $I$.

In the sixth section we investigate when the algebra  $\mathfrak{B}_{\al,\BK}(I)$ is prime and when it is primitive.
In his great two volumes book ``Ring Theory" Rowen wrote: ``The abstract theory of noncommutative regular rings has been a disappointment" (see \cite[p.277]{Row:1}) because some known basic facts about commutative regular rings, which were hoped to be true in general, turned out to fail. For instance, the question whether every prime regular ring is primitive has an obvious positive answer in the commutative case. By 1970 Kaplansky conjectured an affirmative answer in general (see \cite[p.2]{Kapl:1}) and, again, Rowen commented in \cite[p.308]{Row:2} that ``...until recently the major question in the theory of (von Neumann) regular rings was whether a prime regular ring need be primitive", a comment already mentioned in \cite{AbramsBellRangaswamy:011}. A first counterexample to Kaplansky's conjecture was found in 1977 by Domanov \cite{Domanov:1}, but only thirty seven years later a whole class of counterexamples was found by Abrams, Bell and Rangaswamy; in \cite{AbramsBellRangaswamy:011} they found necessary and sufficient conditions for an oriented graph $E$ in order that the Leavitt algebra $L(E)$ (over any field) is prime and those for which $L(E)$ is primitive. These conditions involve the preorder induced in the set $E_{0}$ of vertices of $E$ by the paths; namely, if $v, w\in E_{0}$, then declare that $v\le w$ if and only if there is a path from $v$ to $w$ (including vertices as zero length paths); thus they proved that $L(E)$ is prime if and only if $E_{0}$ is downward directed, while $L(E)$ is primitive if and only if: (i) $E_{0}$ is downward directed, (ii) every cycle in $E$ has an exit and (iii) $E_{0}$ has a countable coinitial subset $S$. Now $L(E)$ is regular if and only if $E$ is acyclic (see \cite[Theorem 1]{AbramsRangaswamy:012}), in which case the above preorder $\le$ is a partial order; as a consequence, the Abrams, Bell and Rangaswamy result gives rise to whole classes of prime and non-primitive regular rings.

With our result we get new classes of regular algebras which are prime but not primitive. In fact we prove that $\mathfrak{B}_{\al,\BK}(I)$ is prime if and only if the poset $I$ is downward directed, while it is right primitive if and only if $I$ has a coinitial chain, in which case it is also left primitive; if $\al>\al_{0}$ there are plenty of downward directed $\al$-posets without a coinitial chain. Curiously enough, the set of conditions on an acyclic graph $E$ for primitivity of the Leavitt algebra $L(E)$ is very close to the conditions on a poset $I$ in order that $\mathfrak{B}_{\al,\BK}(I)$ is primitive; in fact, it is not difficult to show that if a poset is downward directed and has a countable coinitial subset, then it has a countable coinitial chain, therefore the Abrams, Bell and Rangaswamy result may be restated as follows: if $E$ is an acyclic graph, then the regular algebra $L(E)$ is primitive if and only if the set of vertices $E_{0}$, partially ordered by paths, has a countable coinitial chain.

By combining our result with the characterization of the ideals of $\mathfrak{B}_{\al,\BK}(I)$, we obtain that an ideal $\mathfrak{P}$ of $\mathfrak{B}_{\al,\BK}(I)$ is primitive if and only if there is a (not necessarily unique) chain $A$ of $I$ such that $\mathfrak{P} = H(I,\{A\nle\})$, where $\{A\nle\}$ is the complement set in $I$ of the upper subset $\{A\le\}$ generated by $A$; of course there is a $\BK$-algebra isomorphism $\mathfrak{B}_{\al,\BK}(I)/\mathfrak{P} \is H(I,\{A\le\})$. In particular, for every $i\in I$ we have the primitive ideal $H(I,\{\{i\}\nle\})$ of $\mathfrak{B}_{\al,\BK}(I)$; by denoting with $\Prim_{\mathfrak{B}_{\al,\BK}(I)}$ the primitive spectrum of $\mathfrak{B}_{\al,\BK}(I)$ (partially ordered by inclusion), the assignment $i\mapsto H(I,\{\{i\}\nle\})$ defines an order embedding $\Psi\colon I\to\Prim_{\mathfrak{B}_{\al,\BK}(I)}$. We prove that $\Psi$ is an order isomorphism if and only if $I$ is artinian, if and only if the algebra $\mathfrak{B}_{\al,\BK}(I)$ is semiartinian. This means that every artinian poset, with a finite cofinal subset, can be realized as the primitive spectrum of a regular ring.

In the seventh and last section we explore the possible functorial extensions of the assignments $I\mapsto G(I)$ and $I\mapsto\mathfrak{B}_{\al,\BK}(I)$. We find that if we consider the category $\Pos$ whose objects are all posets, while the morphisms are all maps which are order imbeddings of the domain as an upper subset of the codomain, then the assignment $I\mapsto G(I)$ extends in a ``natural" way to a pair of functors $G(-)$, $G^{*}(-)$, the first covariant and the second contravariant, from $\Pos$ to the category whose objects are all partially ordered abelian groups and isotone group homomorphisms. On the other hand, given an infinite cardinal $\al$ and a field $\BK$, the assignment $I\mapsto \mathfrak{B}_{\al,\BK}(I)$ extends to a pair of functors $\mathfrak{B}_{\al,\BK}(-)$, $\mathfrak{B}_{\al,\BK}^{*}(-)$, the first covariant and the second contravariant, from the subcategory $\Pos_{\al}$ of $\Pos$ of all $\al$-posets \emph{having a finite cofinal subset} to the category $\Alg_{\BK}$ of \emph{unital} $\BK$-algebras and \emph{not necessarily unital} $\BK$-algebra homomorphisms. It turns out that these four functors match with the functor $\KO(-)$, in the sense that there are natural equivalences $\rho(-)\colon G(-)\approx \KO(-)\circ \mathfrak{B}(-)$ and $\rho^{*}(-)\colon G^{*}(-)\approx \KO(-)\circ \mathfrak{B}^{*}(-)$.

Concerning notations and terminology, we follow the (by now) universally established practice in ring and module theory. However, we use the symbol $A\sbs B$ with the meaning ``$A$ is a subset of $B$'' and write $A\psbs B$ to indicate that ``$A$ is a proper subset of $B$''. Finally, we always consider the number $0$ as a member of the set $\BN$ of natural numbers.

\section{Representing a partially ordered set $I$: the main frame.}
\label{sect represposetQi}

Let us start by recalling some basic concepts and introduce notations concerning partially ordered sets that we shall use extensively. If $I$ is a partially ordered set, we denote by $\CM(I)$ the set of all maximal chains of $I$; the \emph{Hausdorff maximal principle} (which is equivalent to the Choice Axiom) states that every chain of $I$ is contained in some maximal one. If not otherwise stated, we shall consider every subset $J$ of $I$ as a partially ordered set endowed with the partial order induced by that of $I$. We denote by $\min(I)$ and $\max(I)$ respectively the smallest and the greatest element of $I$ (when they exist); also, we denote by $\ZM(I)$ the set of all maximal elements of $I$.

For every subset $J\sbs I$ we define
\begin{gather*}
\{\le J\} \defug \{k\in I\mid k\le j\text{ for some }j\in J\}, \\
 \{J\le\} \defug \{k\in I\mid j\le k\text{ for some }j\in J\}, \\
\{\nle J\} \defug I\setminus\{\le J\} \\
 \{J\nle\} \defug I\setminus\{J\le\};
\end{gather*}
if $J = \{i\}$ for a single element $i\in I$, we shall simplify notations by writing $\{i\le\}$ or $\{i\nle\}$, for instance, in place of $\{\{i\}\le\}$ or $\{\{i\}\nle\}$ respectively. A {\em lower subset\/} (resp.
{\em upper subset\/}) of $I$ is a subset $J\sbs I$ such that
if $j\in J$, then $\{\le j\}\sbs J$ (resp. $\{j\le\}\sbs J$).
Clearly $\{\le J\}$ and $\{J\le\}$ are respectively the smallest lower subset and the smallest upper subset of $I$ which contain $J$. While the above notations are largely standard, in order to simplify notations when nested parentheses should occur and provided that it does not create any conflict in the specific context, we will consistently omit almost everywhere curled braces; thus we will write
\[
\le J\,,\quad J\le\,,\quad \le i\,,\quad i\le\,,\quad \nle J\,,\quad J\nle\,,\quad
 \nle i\,,\quad i\nle\,
\]
instead of
\[
\{\le J\}\,,\quad \{J\le\}\,,\quad \{\le i\}\,,\quad \{i \le\}\,,\quad \{\nle J\}\,,\quad \{J\nle\}\,,\quad
 \{\nle i\}\,,\quad \{i\nle\}
\]
respectively. We denote by $\Uparrow\!\!I$ (resp. $\Downarrow\!\!I$) the set of all upper
subsets (resp. lower subsets) of $I$; both $\Uparrow\!\!I$ and
$\Downarrow\!\!I$ are complete and distributive lattices and the map
$J\mapsto I\setminus J$ is an anti-isomorphism from
$\Uparrow\!\!I$ to $\Downarrow\!\!I$.

Throughout the present paper $\al$ stands for a given \underline{infinite} cardinal. For every set $A$, we denote by $\al^{(A)}$ the set of all maps $p$ from $A$ to $\al$ such that $p(a) = \emptyset$ for all but a finite number of $a\in A$ \footnote{As is the standard practice within ZFC set theory, we consider $\al$ as a special ordinal; so $\al$ is the set of all ordinals strictly less than $\al$ and thus, in particular, $\emptyset\in\al$.}.

\begin{pro}\label{pro cardX}
If $A$ is a set such that $0<|A|\le \al$, then
\[
    \left|\al^{(A)}\right| = \al.
\]
\end{pro}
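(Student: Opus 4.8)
The plan is to establish the two inequalities $\al\le\left|\al^{(A)}\right|$ and $\left|\al^{(A)}\right|\le\al$ separately, and then invoke the Cantor--Schr\"oder--Bernstein theorem. The lower bound is the easy one and uses only the hypothesis $0<|A|$: fix $a_{0}\in A$ and, for each ordinal $\b<\al$, let $p_{\b}\in\al^{(A)}$ be the map defined by $p_{\b}(a_{0})=\b$ and $p_{\b}(a)=\vu$ for every $a\in A\setminus\{a_{0}\}$. The assignment $\b\mapsto p_{\b}$ is injective, whence $\al\le\left|\al^{(A)}\right|$.

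For the upper bound, to each $p\in\al^{(A)}$ I would associate its support $\Supp(p)\defug\{a\in A\mid p(a)\ne\vu\}$, which by definition of $\al^{(A)}$ is a finite subset of $A$, together with the restriction $p\!\restriction\!\Supp(p)$; since $p$ is recovered from this pair (as $p$ is $\vu$ off its support), the map $p\mapsto\bigl(\Supp(p),\,p\!\restriction\!\Supp(p)\bigr)$ is injective. Writing $[A]^{<\omega}$ for the set of all finite subsets of $A$, this gives
\[
\left|\al^{(A)}\right|\ \le\ \sum_{F\in[A]^{<\omega}}\left|\,{}^{F}\al\,\right|\ =\ \sum_{F\in[A]^{<\omega}}\al^{|F|}.
\]
Now I invoke the standard facts of infinite cardinal arithmetic: $\al\cdot\al=\al$, hence $\al^{n}=\al$ for every integer $n\ge 1$, while the unique summand with $F=\vu$ contributes $1$; and $\left|[A]^{<\omega}\right|\le\al$, since if $A$ is infinite this is the classical equality $\left|[A]^{<\omega}\right|=|A|\le\al$, and if $A$ is finite then $\left|[A]^{<\omega}\right|$ is finite, a fortiori $\le\al$. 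Combining,
\[
\left|\al^{(A)}\right|\ \le\ 1+\left|[A]^{<\omega}\right|\cdot\al\ \le\ 1+\al\cdot\al\ =\ \al,
\]
and Cantor--Schr\"oder--Bernstein yields the claimed equality.

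I do not expect a genuine obstacle here: the statement is an exercise in cardinal arithmetic. The only points that demand a little care are the degenerate cases --- the single function with empty support, and the split between $A$ infinite and $A$ finite --- and the explicit appeal to $\al\cdot\al=\al$ and to $\left|[A]^{<\omega}\right|=|A|$ for infinite $A$; everything else is bookkeeping.
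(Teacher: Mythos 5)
Your proof is correct and follows essentially the same route as the paper's: both decompose $\al^{(A)}$ over the finite subsets of $A$ (the paper writes $\al^{(A)}=\bigcup_{K\in\CF}X_{K}$ with $|X_{K}|=\al$ and $|\CF|=|A|$), so the only difference is that you spell out the lower bound and the degenerate cases explicitly where the paper leaves them implicit.
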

\begin{proof}
Let $\CF$ be the set of all finite subsets of $A$ and, for every $K\in\CF$, let $X_{K}$ be the set of those maps $\map{p}{A}{\al}$ such that $p(A\setminus K) = \{\emptyset\}$. Then
\[
|X_{K}| = \left|\al^{K}\right| = |\al|
\]
for every $K\in\CF$. Since
\[
\al^{(A)} = \bigcup\{X_{K}\mid K\in\CF\}
\]
and $|\CF| = |A|$ in case $A$ is infinite, we infer that
\[
    \left|\al^{(A)}\right| = |\CF|\cdot\al = \al,
\]
as claimed.
\end{proof}

If $A$ is a set with $|A| = \al$, we say that a partition $\CE$ of $A$ is an $\al$-{\em partition\/} if $|\CE| = \al$ and $|B| = \al$ for all $B\in\CE$. Given two partitions $\CE$, $\CF$ of $A$, we say that $\CF$ is $\al$-{\em coarser\/} than $\CE$ if each element of $\CF$ is the union of $\al$ elements of $\CE$.

We say that a partially ordered set $I$ is an \emph{$\al$-poset} in case both $I$ and $\CM(I)$ have cardinality at most $\al$. As we are going to see, if $I$ is a non-empty $\al$-poset, then $\al^{(I)}$ admits a family $(\CE_{i})_{i\in I}$ of partitions with the following features: (1) $\CE_{i}$ is an $\al$-partition for every $i \ne \min(I)$, (2) if $i<j$, then $\CE_{j}$ is $\al$-coarser than $\CE_{i}$, (3) if $i\nle j$, then no member of $\CE_{i}$ is contained in any member of $\CE_{j}$. First observe that, given $i\in I$, every $p\in \al^{(I)}$ decomposes uniquely as a disjoint union
\[
p = r\cup u,
\]
where $r\in\al^{(i\nle)}$ and $u\in\al^{(i\le)}$: simply take $r = p\left|_{\{i\nle\}}\right.$ and $u = p\left|_{\{i\le\}}\right.$. For every $u\in\al^{(i\le)}$, let us consider the subset
\begin{equation}\label{eq 0canonpartition}
E_{u}\defug \left\{r\cup u\left|\,\, r\in\al^{(i\nle)}\right.\right\} = \left\{\left.p\in \al^{(I)}\right|\,\, p\left|_{\{i\le\}}\right. = u\right\} \ne\vu
\end{equation}
of $\al^{(I)}$; clearly, if $u,u'\in\al^{(i\le)}$ and $u\ne u'$, then $E_{u}\cap E_{u'} = \vu$ and therefore
\begin{equation}\label{eq canonpartition}
\CE_{i}\defug \left\{E_{u}\left|\,\, u\in\al^{(i\le)}\right.\right\}
\end{equation}
is a partition of $\al^{(I)}$. If $i \ne \min(I)$, then $\CE_{i}$ is an $\al$-partition by Proposition \ref{pro cardX}; if $i = \min(I)$, namely $\{i\nle\} = \vu$, then $\al^{(i\nle)} = \{\vu\}$ and therefore $E_{u} = \{\vu\cup u\} = \{u\}$ for every $u\in\al^{(i\le)} = \al^{(I)}$, so that $\CE_{i} = \left\{\{u\}\left|\,\,u\in\al^{(I)}\right.\right\}$.

\begin{lem}\label{lem partpart}
With the above settings and notations, the following pro\-perties hold:
\begin{enumerate}
  \item If $i,j\in I$ and $i<j$, then $\CE_{j}$ is $\al$-coarser than $\CE_{i}$; specifically, for every $v\in\al^{(j\le)}$ we have that
\begin{equation}\label{eq partpart1}
  E_{v} =
\bigcup\left\{E_{u}\left|\,\, u\in\al^{(i\le)} \text{ and } u\left|_{\{j\le\}}\right. = v\right.\right\}.
\end{equation}
  \item Given $i\in I$ and a finite subset $J\sbs I$, assume that there are
\[
B\in\CE_{i} \quad\text{and}\quad C_{1},\ldots,C_{n}\in \bigcup_{j\in J}\CE_{j}
\]
such that $B\sbs C_{1}\cup\cdots\cup C_{n}$. Then $i\in \{\le J\}$. Consequently, if $j\in I$ and $i\nle j$, then no member of $\CE_{i}$ is contained in some member of $\CE_{j}$.
\end{enumerate}
\end{lem}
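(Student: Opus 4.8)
The plan is to read everything off the explicit description of the partitions given in \eqref{eq 0canonpartition}--\eqref{eq canonpartition}: a typical member of $\CE_i$ is $E_w=\{p\in\al^{(I)}\mid p|_{\{i\le\}}=w\}$ for a unique $w\in\al^{(i\le)}$, and likewise $\CE_j=\{E_v\mid v\in\al^{(j\le)}\}$ with $E_v=\{p\in\al^{(I)}\mid p|_{\{j\le\}}=v\}$. For part (1), I would first record that $i<j$ gives $\{j\le\}\sbs\{i\le\}$ (from $j\le k$ follows $i\le k$), the inclusion being proper since $i\in\{i\le\}\setminus\{j\le\}$. Fixing $v\in\al^{(j\le)}$, identity \eqref{eq partpart1} is then a routine double inclusion: if $p\in E_u$ with $u|_{\{j\le\}}=v$, restricting $p|_{\{i\le\}}=u$ to $\{j\le\}$ gives $p\in E_v$; conversely, if $p\in E_v$ then $u\defug p|_{\{i\le\}}$ lies in $\al^{(i\le)}$, satisfies $u|_{\{j\le\}}=v$, and has $p\in E_u$. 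The sets $E_u$ occurring in \eqref{eq partpart1} are distinct members of the partition $\CE_i$, hence pairwise disjoint, and they are indexed bijectively by $\al^{(\{i\le\}\setminus\{j\le\})}$ (an admissible $u$ being precisely an extension of $v$ by a finitely supported map on $\{i\le\}\setminus\{j\le\}$), a set of cardinality $\al$ by Proposition \ref{pro cardX} since $0<|\{i\le\}\setminus\{j\le\}|\le|I|\le\al$. Thus every member of $\CE_j$ is a disjoint union of exactly $\al$ members of $\CE_i$, which is the assertion that $\CE_j$ is $\al$-coarser than $\CE_i$.

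For part (2) I would argue by contraposition. Write $B=E_w$ with $w\in\al^{(i\le)}$ and, for $k=1,\dots,n$, write $C_k=E_{v_k}$ with $v_k\in\al^{(j_k\le)}$ for suitable $j_k\in J$; assuming $i\notin\{\le J\}$, so that $i\nle j_k$ for every $k$, I will exhibit a point of $B$ lying in no $C_k$. The crucial remark is that $i\nle j_k$ forces $\{j_k\le\}\nsbs\{i\le\}$ --- otherwise $j_k\in\{i\le\}$, i.e.\ $i\le j_k$ --- so one may choose a point $t_k\in\{j_k\le\}\setminus\{i\le\}$ for each $k$. Define $p\in\al^{(I)}$ by setting $p|_{\{i\le\}}=w$, letting $p(t)$ for each $t\in\{t_1,\dots,t_n\}$ be any element of $\al$ outside the finite set $\{v_k(t)\mid t_k=t\}$ (possible as $\al$ is infinite), and putting $p(t)=\emptyset$ at every remaining point $t\in I$; this $p$ has finite support. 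Then $p\in E_w=B$, while for each $k$ we have $p(t_k)\ne v_k(t_k)$ with $t_k\in\{j_k\le\}$, so $p|_{\{j_k\le\}}\ne v_k$ and hence $p\notin C_k$. This contradicts $B\sbs C_1\cup\cdots\cup C_n$, proving $i\in\{\le J\}$; the final consequence is then the special case $J=\{j\}$, $n=1$.

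The routine parts are the double inclusion in part (1) and the verification in part (2) that the map $p$ is finitely supported with the claimed restrictions. The one genuinely delicate step is the construction of this separating element $p$: membership in $B$ rigidly prescribes $p$ on all of $\{i\le\}$, so the only freedom available sits over $I\setminus\{i\le\}$, and it is exactly the hypothesis $i\nle j_k$ that pushes each $\{j_k\le\}$ into that complement and so supplies the points $t_k$ where $p$ can be driven off every $C_k$ at once.
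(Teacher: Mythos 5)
Your proof is correct and follows essentially the same route as the paper's: part (1) rests on the same decomposition of $\al^{(I)}$ along the partition $\{\,\{i\nle\},\{i\le\}\setminus\{j\le\},\{j\le\}\,\}$ (you phrase it as a double inclusion and, helpfully, make explicit the cardinality check that there are exactly $\al$ blocks $E_{u}$ inside $E_{v}$), and part (2) is the same separating-element construction, where the paper simply takes $t_{k}=j_{k}$ (legitimate since $i\nle j_{k}$ gives $j_{k}\in\{j_{k}\le\}\setminus\{i\le\}$) and a single value $b$ avoiding the finite set $\bigcup_{k}u_{k}(\{j_{k}\le\})$, while you allow arbitrary $t_{k}$ and pointwise choices. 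No gaps.
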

\begin{proof}
(1) Assume that $i<j$ and observe that
\begin{equation}\label{eqpartpat2}
\{\,\{i\nle\},\,\, \{i\le\}\setminus\{j\le\},\,\{j\le\}\,\}
\end{equation}
is a partition of $I$. Accordingly, every element $p\in \al^{(I)}$ decomposes uniquely as a disjoint union
\[
p = r\cup s\cup v,
\]
where $r\in\al^{(i\nle)}$, $s\in\al^{(\{i\le\}\setminus\{j\le\})}$ and $v\in\al^{(j\le)}$. Consequently, for every $v\in\al^{(j\le)}$ we have that
\[
E_{v} = \left\{r\cup s\cup v\left|\,\, r\in\al^{(i\nle)} \text{ and } s\in\al^{(\{i\le\}\setminus\{j\le\})}\right.\right\}
\]
and hence \eqref{eq partpart1} follows.

(2) Under the assumption of (2) suppose that, on the contrary, $i\nin \{\le J\}$. There are $u\in\al^{(i\le)}$, $j_{1},\ldots,j_{n}\in J$ and  $u_{\a}\in\al^{(j_{\a}\le)}$, for $\a\in\{1,\ldots,n\}$, such that
\[
B = E_{u}\quad\text{and}\quad  C_{\a} = E_{u_{\a}}.
\]
According to the assumption, if $p\in\al^{(I)}$ and $\displaystyle p\left|_{\{i\le\}}\right. = u$, then there is some $\a\in\{1,\ldots,n\}$ such that $\displaystyle p\left|_{\{j_{\a}\le\}}\right. = u_{\a}$. Set
\[
D = u_{1}(\{j_{1}\le\})\cup\cdots\cup u_{n}(\{j_{n}\le\}).
\]
By the finiteness of $D$, we can choose some $b\in\al\setminus D$ and, since $j_{\a}\nin\{i\le\}$ for all $\a\in\{1,\ldots,n\}$, we can define a map $p\in\al^{(I)}$ by setting
\[
p\left|_{\{i\le\}}\right. = u,\quad p(j_{1}) =\cdots = p(j_{n}) = b \quad \text{and}\quad p(I\setminus(\{i\le\}\cup\{j_{1},\ldots,j_{n}\})) = \{\emptyset\}.
\]
Then we have that $p\in E_{u}$. However, by the choice of $b$ we have that  $p(j_{\a}) = b \ne u_{\a}(j_{\a})$ for every $\a\in\{1,\ldots,n\}$. This means that $p\nin C_{1}\cup\cdots\cup C_{n}$ and we have a contradiction.
\end{proof}

Starting from this point we let $\BK$ be a given field. Given an $\al$-poset $I$, we set
\[
X(I) \defug \al^{(I)}\times \CM(I);
\]
the main frame within which we will find the regular $\BK$-algebra $\mathfrak{B}_{\al,\BK}(I)$ associated to $I$ is the $\BK$-algebra
\[
Q(I) \defug \CFM_{X(I)}(\BK)
\]
of all finite-column $X(I)\times X(I)$-matrices with entries in $\BK$;
note that if $I \ne \emptyset$, then $|X(I)| = \al$ by Proposition \ref{pro cardX}, therefore $Q(I) \is \CFM_{\al}(\BK)$. If $I = \vu$, then $X(I) = \{(\vu,\vu)\}$ and $Q(I)$ is identified with $\BK$.

\begin{notations}
\phantom{With the   setting, we adopt the following notations:}
\begin{itemize}
    \item We denote respectively by $\mathbf{0}$ and $\mathbf{1}$ the zero and the identity matrices of $Q(I)$.
     \item   If $\za\in Q(I)$ and $x,y\in X(I)$, we use the
symbol $\za(x,y)$ to denote the entry at the intersection of the
$x$-th row with the $y$-th column of $\za$ (i.e. the $(x,y)$-entry of $\za$), instead of the more
traditional symbol $\za_{xy}$; since we often use more complex
arrays, other than single letters, in order to designate the position of the entries of the matrices we deal with, our choice should guarantee a better readability. If $Y,Z\sbs X(I)$, then $\za(Y,Z)$ denotes the $(Y,Z)$-block of $\za$, that is the
    submatrix $(\za(y,z))_{y\in Y, z\in Z}$ of $\za$.
      \item For every $Y\sbs X(I)$, we denote with $\ze_Y$ the
idempotent diagonal matrix such that $\ze_Y(x,x)$ is $1$ if $x\in Y$
and is $0$ otherwise. If $x,y\in X(I)$, we write $\ze_x$ instead of $\ze_{\{x\}}$, while $\ze_{x,y}$ stands for the usual $(x,y)$-matrix unit, namely the matrix whose $(x,y)$-entry is $1$ and all others are zero; so, in particular $\ze_{x} = \ze_{x,x}$.
    \item $\BF\BR_{X(I)}(\BK)$ denotes the subset of $Q(I)$ of all matrices having only finitely many nonzero rows.
\end{itemize}

Of course we shall consider $\BK$ as a
subring of $Q(I)$ by identifying each element of $\BK$ with the
corresponding scalar matrix in $Q(I)$. Without any assumption on the ring $\BK$ (apart from being unital), we have that $\BF\BR_{X(I)}(\BK)$ is an ideal of $Q(I)$ and the following equality holds:
\[
\BF\BR_{X(I)}(\BK) = \bigoplus\{\ze_{x}Q(I) \mid x\in X(I)\};
\]
moreover, for every $x\in X(I)$ the following hold:
\begin{gather*}
\ze_{x}Q(I) = \ze_{x}\BF\BR_{X(I)}(\BK), \\
\BF\BR_{X(I)}(\BK)  = \BF\BR_{X(I)}(\BK)\ze_{x}\BF\BR_{X(I)}(\BK) \label{eq FR2} = Q(I)\ze_{x}Q(I).
\end{gather*}

With our assumption that $\BK$ is a field, $Q(I)$ is regular and right selfinjective; moreover $\BF\BR_{X(I)}(\BK) = \Soc(Q(I))$ is a minimal ideal and $\{\ze_{x} \mid x\in X(I)\}$ is a set of pairwise orthogonal primitive idempotents; it is \emph{complete}, in the sense that it generates $\Soc(Q(I))$ as a right ideal.

If $R$ is any ring and $\map{\f}{Q(I)}R$ is a ring isomorphism, then $\CE = \{\f(\ze_{x})\mid x\in X(I)\}$ generates $\Soc(R)$ as a right ideal; we emphasize that this latter does not depend on the specific isomorphism $\f$. We say that a set $\CS$ of idempotents of $R$ is a \emph{complete set of pairwise orthogonal and primitive idempotents} if $\CS = \CE$ for some isomorphism $\f$ as above.
\end{notations}

The family $(\CE_{i})_{i\in I}$ of partitions of $\al^{(I)}$ previously introduced induces the family $(\CP_{i})_{i\in I}$ of partitions of $X(I)$ defined as follows. Given $i\in I$, for every $(u,A)\in\al^{(i\le)}\times \CM(I)$ we consider the subset
\begin{gather*}X(I)_{(u,A)} \defug E_{u}\times\{A\}
  = \left\{(r\cup u,A)\left|\,\, r\in\al^{(i\nle)}\right.\right\} \\ = \left\{(p,A) \left|\,\, p\in\al^{(I)}\text{ and } p\left|_{\{i\le\}}\right.\right. = u\right\}
\end{gather*}
(see \eqref{eq 0canonpartition}) of $X(I)$ and set
\begin{equation}\label{eq part}
\CP_{i} \defug \left\{X(I)_{(u,A)}\left|\,\, (u,A)\in\al^{(i\le)}\times \CM(I)\right.\right\}.
\end{equation}
Note that if $i= \min(I)$, then $X(I)_{(u,A)} = \{(u,A)\}$ and hence
\[
\CP_{\min(I)} = \left\{\{(u,A)\}\left|\,\,u\in\al^{(I)}, A\in \CM(I)\right.\right\} = \{\{x\} \mid x\in X(I)\}.
\]
As a consequence of Lemma \ref{lem partpart} we have the following:

\begin{cor}\label{cor PartP}
Given $i\in I$, if $i\ne \min(I)$ then $\CP_{i}$ is an $\al$-partition of $X(I)$. If $i,j\in I$, then $\CP_{j}$ is $\al$-coarser than $\CP_{i}$ in case $i<j$, while no member of $\CP_{i}$ is contained in any member of $\CP_{j}$ if $i\nleqslant j$.
\end{cor}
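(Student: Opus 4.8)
The whole statement should fall out of Lemma~\ref{lem partpart} together with Proposition~\ref{pro cardX}, once one records that $\CP_i$ is nothing but the ``product'' of the partition $\CE_i$ of $\al^{(I)}$ with the discrete partition $\bigl\{\{A\}\bigm|A\in\CM(I)\bigr\}$ of $\CM(I)$: indeed $X(I)_{(u,A)} = E_u\times\{A\}$ by definition. So first I would note that, the blocks $E_u$ ($u\in\al^{(i\le)}$) being non-empty and pairwise disjoint by~\eqref{eq 0canonpartition}, the assignment $(u,A)\mapsto X(I)_{(u,A)}$ is a bijection of $\al^{(i\le)}\times\CM(I)$ onto $\CP_i$ and $\CP_i$ is a genuine partition of $X(I)$; after this, each of the three clauses becomes a short translation of the corresponding fact about the $\CE_i$.

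For the first clause, assume $i\ne\min(I)$, so that $\{i\nle\}\ne\vu$. Then $0<|\{i\nle\}|\le|I|\le\al$, and Proposition~\ref{pro cardX} gives $|\al^{(i\nle)}| = \al$; since $r\mapsto r\cup u$ is a bijection of $\al^{(i\nle)}$ onto $E_u$, every block $X(I)_{(u,A)} = E_u\times\{A\}$ has cardinality $\al$. Likewise $\{i\le\}\ni i$ is non-empty of cardinality at most $\al$, so $|\al^{(i\le)}| = \al$, and since $1\le|\CM(I)|\le\al$ with $\al$ infinite, the bijection above yields $|\CP_i| = |\al^{(i\le)}|\cdot|\CM(I)| = \al$. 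Hence $\CP_i$ is an $\al$-partition.

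For the second clause, let $i<j$ and fix $X(I)_{(v,A)}\in\CP_j$ with $v\in\al^{(j\le)}$. Multiplying the identity~\eqref{eq partpart1} of Lemma~\ref{lem partpart}(1) by $\{A\}$ gives
\[
X(I)_{(v,A)} \;=\; \bigcup\bigl\{X(I)_{(u,A)}\bigm|u\in\al^{(i\le)},\ u|_{\{j\le\}} = v\bigr\},
\]
which exhibits this member of $\CP_j$ as a union of members of $\CP_i$, pairwise distinct because distinct $u$'s give disjoint non-empty $E_u$'s. The $u$'s occurring here are precisely the extensions of $v$ from $\{j\le\}$ to $\{i\le\}$, hence correspond bijectively to $\al^{(\{i\le\}\setminus\{j\le\})}$; as $i\in\{i\le\}\setminus\{j\le\}\ne\vu$ (because $i<j$), Proposition~\ref{pro cardX} shows there are exactly $\al$ of them, so $\CP_j$ is $\al$-coarser than $\CP_i$. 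For the third clause, if $i\nleqslant j$ and $X(I)_{(u,A)}\sbs X(I)_{(v,B)}$ with $u\in\al^{(i\le)}$, $v\in\al^{(j\le)}$ and $A,B\in\CM(I)$, then picking $p\in E_u$ (non-empty by~\eqref{eq 0canonpartition}) forces $A = B$ and then $E_u\sbs E_v$; but $E_u\in\CE_i$ and $E_v\in\CE_j$, contradicting the last sentence of Lemma~\ref{lem partpart}(2). I do not expect any real obstacle: all the substance is already contained in Lemma~\ref{lem partpart} and Proposition~\ref{pro cardX}, and the only point needing a little care is keeping the $\CM(I)$-coordinate inert, which is automatic since it is carried by the discrete partition.
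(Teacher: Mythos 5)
Your argument is correct and is exactly what the paper intends: the Corollary is stated there without proof as an immediate consequence of Lemma~\ref{lem partpart} (and Proposition~\ref{pro cardX}), via the observation that $X(I)_{(u,A)} = E_{u}\times\{A\}$ transports the properties of the partitions $\CE_{i}$ of $\al^{(I)}$ to the partitions $\CP_{i}$ of $X(I)$. You have merely written out the routine translation that the paper leaves to the reader, and each step checks out.
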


As a first step toward our goals, we determine in $Q(I)$ a family $(Q(I,i))_{i\in I}$ of unital $\BK$-subalgebras, each isomorphic to $Q(I)$, such that $Q(I,i)\sbs Q(I,j)$ exactly when $i\le j$.

For every $i\in I$, let $Q(I,i)$ be the subset of $Q(I)$ consisting of those matrices $\za$ satisfying the following condition:
\begin{equation}\label{magicmatrix}
\begin{split}
&\text{for every $r,s,t\in\al^{(i\nle)}$, $u,v\in\al^{(i\le)}$ and
 $A,B\in \CM(I)$}, \\
&\za((r\cup u,A),(s\cup v,B)) = \d(r,s)\,\za((t\cup u,A),
(t\cup v,B));
\end{split}
\end{equation}
equivalently, for every $(u,A), (v,B)\in\al^{(i\le)}\times \CM(I)$ there is a scalar $d\in \BK$ such that
\[
\za((r\cup u,A),(s\cup v,B)) = \d(r,s)d
\]
for all $r,s\in\al^{(i\nle)}$.

\begin{res}\label{res boh}
Some comments concerning the shape of the matrices in $Q(I,i)$ are in order. Let $\za\in Q(I,i)$ and let $x,y\in X(I)$ be such that $\za(x,y)\ne 0$. Then there are $r,s\in\al^{(i\nle)}$, $u,v\in\al^{(i\le)}$ and $A,B\in \CM(I)$ such that
\[
x = (r\cup u, A), \qquad y = (s\cup v, B)
\]
and condition \eqref{magicmatrix} imposes that $r = s$. We have that $x\in X(I)_{(u,A)}\in \CP_{i}$ and it follows again from \eqref{magicmatrix} that, for every $(t\cup u, A)\in X(I)_{(u,A)}$,
\[
\za((t\cup u,A),(t\cup v,B)) = \za((r\cup u,A),(r\cup v,B)) \ne 0.
\]
This means that the set of those elements $x\in X(I)$ such that the $x$-th row of $\za$ is not zero is always a union of members of the partition $\CP_{i}$ of $X(I)$.

We observe that if $(u,A)\in\al^{(i\le)}\times \CM(I)$, then the assignment $r\mapsto (r\cup u,A)$ defines a bijection from $\al^{(i\nle)}$ to $X(I)_{(u,A)}$; consequently, for every $(u,A),(v,B)\in\al^{(i\le)}\times \CM(I)$, we are allowed to consider the block $\za(X(I)_{(u,A)},X(I)_{(v,B)})$ of $\za$ as a $\al^{(i\nle)}\times\al^{(i\nle)}$-matrix and, as such, the above condition \eqref{magicmatrix} just imposes that this is a scalar matrix.

If $i=\min(I)$, it is clear that $Q(I,i)=Q(I)$.
\end{res}

\begin{theo}\label{theo-posetflr}
With the above settings and notations,
the following properties hold:

\begin{enumerate}
    \item For every $i\in I$, the map
     \[
    \lmap{\f_{i}}{\CFM_{\al^{(i\le)}\times \CM(I)}(\BK)}{Q(I)}
    \]
    defined by
 \begin{equation}\label{a}
\f_{i}(\za)((r\cup u,A),(s\cup v,B)) = \d(r,s)\,\za((u,A),(v,B))
\end{equation}
for
all $\za\in\CFM_{\al^{(i\le)}\times \CM(I)}(\BK)$, $r,s,\in\al^{(i\nle)}$, $u,v\in\al^{(i\le)}$ and $A,B\in \CM(I)$, is a unital and injective $\BK$-algebra homomorphism such that $\Imm(\f_{i}) = Q(I,i)$; consequently
$Q(I,i)$ is a unital $\BK$-subalgebra of $Q(I)$ isomorphic to $\CFM_{\al}(\BK)$.
    \item For every $i\in I$, $u\in\al^{(i\le)}$ and $A\in \CM(I)$
         \begin{equation}\label{abc}
\f_{i}(\ze_{(u,A)}) = \ze_{X(I)_{(u,A)}};
\end{equation}
consequently $\{\ze_{X}\mid X\in\CP_{i}\}$ is a complete set of pairwise orthogonal and primitive idempotents of $Q(I,i)$; in addition, if $Y$ is any union of members of $\CP_{i}$, then $\ze_{Y}\in Q(I,i)$.
    \item For every $i\in I$, a matrix\, $\zb\in Q(I,i)$
belongs to
    \[
    \Soc(Q(I,i)) = \f_{i}\left(\FR_{\al^{(i\le)}\times \CM(I)}(\BK)\right)
     \]
if and only if it the set of those $x\in X(I)$ such that the $x$-th row of $\zb$ is not zero is a union of finitely many members of $\CP_{i}$.
    \item If $i,j\in I$, then $Q(I,j)\sbs Q(I,i)$ if and only if $i\le j$.
    \item The set $\{\Soc(Q(I,i))\mid i\in I\}$ of $\BK$-subspaces of $Q(I)$ is independent.
\end{enumerate}
\end{theo}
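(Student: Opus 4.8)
The plan is to prove (1) directly, deduce (2) and (3) by transporting the matrix‑unit and socle structure along the isomorphism $\f_i$, settle the comparison (4) by manipulating the defining condition \eqref{magicmatrix} through the three‑block partition \eqref{eqpartpat2}, and prove the independence (5) by exhibiting, for a \emph{maximal} element of the index set of a vanishing sum, a matrix position that "sees" only one of the summands. For (1) I would first check that $\f_i$ lands in $Q(I)$: in a fixed column $(s\cup v,B)$ of $\f_i(\za)$ the only rows carrying a nonzero entry are the $(s\cup u,A)$ with $\za((u,A),(v,B))\ne0$, finitely many since $\za$ is column‑finite. Additivity and $\BK$‑linearity are read off \eqref{a}; for multiplicativity I would expand $(\f_i(\za)\f_i(\zb))((r\cup u,A),(s\cup v,B))$ as a sum over $X(I)$ and observe that the factor $\d(r,t)\,\d(t,s)$ arising from the two uses of \eqref{a} collapses the $\al^{(i\nle)}$‑component of the summation index, leaving $\d(r,s)(\za\zb)((u,A),(v,B))=\f_i(\za\zb)((r\cup u,A),(s\cup v,B))$. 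Evaluating \eqref{a} at the identity gives $\d(r,s)\,\d((u,A),(v,B))=\d((r\cup u,A),(s\cup v,B))$, so $\f_i$ is unital, and $\za((u,A),(v,B))=\f_i(\za)((r\cup u,A),(r\cup v,B))$ for any $r$ shows it is injective. The equality $\Imm(\f_i)=Q(I,i)$ is then essentially a restatement of the definition: \eqref{a} exhibits each $\f_i(\za)$ as satisfying \eqref{magicmatrix}, and conversely, if $\za'$ satisfies \eqref{magicmatrix}, then $\za((u,A),(v,B)):=\za'((r\cup u,A),(r\cup v,B))$ is independent of $r$, defines a column‑finite matrix, and has $\f_i(\za)=\za'$. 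Finally $|\al^{(i\le)}\times\CM(I)|=\al$ by Proposition \ref{pro cardX} (using $0<|\{i\le\}|\le\al$ and $1\le|\CM(I)|\le\al$), so the domain of $\f_i$ is $\is\CFM_\al(\BK)$.

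For (2), plugging $\za=\ze_{(u,A)}$ into \eqref{a} shows $\f_i(\ze_{(u,A)})$ is the diagonal matrix supported exactly on $\{(r\cup u,A)\mid r\in\al^{(i\nle)}\}=X(I)_{(u,A)}$, which is \eqref{abc}. Since $\{\ze_{(u,A)}\}$ is a complete set of pairwise orthogonal primitive idempotents of $\CFM_{\al^{(i\le)}\times\CM(I)}(\BK)$ and this algebra is isomorphic to $Q(I)$ (equipotent index sets), precomposing $\f_i$ with such an isomorphism realizes $\{\ze_X\mid X\in\CP_i\}$ as the image of $\{\ze_x\mid x\in X(I)\}$ under an isomorphism $Q(I)\to Q(I,i)$ — which is precisely the definition of a complete set in $Q(I,i)$; and for $Y$ a union of members of $\CP_i$, a one‑line check against \eqref{magicmatrix} shows $\ze_Y\in Q(I,i)$ (its diagonal entry on $X(I)_{(u,A)}$ is $1$ or $0$ according as $X(I)_{(u,A)}\sbs Y$ or not, independently of the $\al^{(i\nle)}$‑coordinate). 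For (3), $\Soc(Q(I,i))=\f_i(\FR_{\al^{(i\le)}\times\CM(I)}(\BK))$ because $\f_i$ is an isomorphism onto $Q(I,i)$ and $\FR_\L(\BK)=\Soc(\CFM_\L(\BK))$; and by \eqref{a} the $(r\cup u,A)$‑th row of $\f_i(\za)$ is nonzero iff the $(u,A)$‑th row of $\za$ is, so the set of nonzero rows of $\f_i(\za)$ is the union of the blocks $X(I)_{(u,A)}$ over the nonzero rows $(u,A)$ of $\za$, a finite union precisely when $\za\in\FR_{\al^{(i\le)}\times\CM(I)}(\BK)$; this gives the stated criterion (Remarks \ref{res boh} already guaranteeing the set of nonzero rows is always some union of members of $\CP_i$).

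For (4), the case $i=j$ being trivial, assume $i<j$ for the nontrivial implication $Q(I,j)\sbs Q(I,i)$. Via the partition \eqref{eqpartpat2} write each $u\in\al^{(i\le)}$ as $u=s_u\cup w_u$ with $s_u\in\al^{(\{i\le\}\setminus\{j\le\})}$ and $w_u\in\al^{(j\le)}$, and identify $\al^{(j\nle)}$ with $\al^{(i\nle)}\times\al^{(\{i\le\}\setminus\{j\le\})}$. Given $\za\in Q(I,j)$ and $r',s'\in\al^{(i\nle)}$, $u',v'\in\al^{(i\le)}$, rewriting the positions $(r'\cup u',A)$ and $(s'\cup v',B)$ in $j$‑adapted form and applying \eqref{magicmatrix} for $j$ yields, for any $t\in\al^{(j\nle)}$,
\[
\za((r'\cup u',A),(s'\cup v',B))=\d(r',s')\,\d(s_{u'},s_{v'})\,\za((t\cup w_{u'},A),(t\cup w_{v'},B)),
\]
while with $r',s'$ replaced by a common $t'\in\al^{(i\nle)}$ one gets $\za((t'\cup u',A),(t'\cup v',B))=\d(s_{u'},s_{v'})\,\za((t\cup w_{u'},A),(t\cup w_{v'},B))$; comparing the two shows $\za$ satisfies \eqref{magicmatrix} for $i$, i.e. $\za\in Q(I,i)$. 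Conversely, if $i\nle j$ then by Corollary \ref{cor PartP} no member of $\CP_i$ is contained in any member $X$ of $\CP_j$, so $X$ is not a union of members of $\CP_i$; hence, by the last clause of (2) together with Remarks \ref{res boh}, $\ze_X\in Q(I,j)\setminus Q(I,i)$, whence $Q(I,j)\nsbs Q(I,i)$.

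For (5), suppose $\sum_{i\in F}\zb_i=\mathbf{0}$ with $F\sbs I$ finite and $\mathbf{0}\ne\zb_i\in\Soc(Q(I,i))$ for each $i\in F$ (discarding any zero terms), aiming at a contradiction unless $F=\vu$. Choose $i_0$ maximal in $F$. By (3), the nonzero rows of each $\zb_i$ form a finite union of blocks $X(I)_{(u^i_1,A^i_1)},\dots,X(I)_{(u^i_{m_i},A^i_{m_i})}\in\CP_i$; fix one such block $X(I)_{(u_0,A)}$ of $\zb_{i_0}$. Since $i_0$ is maximal in $F$, for every $i\in F\setminus\{i_0\}$ we have $i_0\nle i$, so $\{i\le\}\cap\{i_0\nle\}\ne\vu$; pick $k_i$ in this set. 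Define $p\in\al^{(I)}$ by $p|_{\{i_0\le\}}=u_0$, by choosing $p(k_i)\in\al$ to avoid the finitely many values $u^i_\nu(k_i)$ ($i\in F\setminus\{i_0\}$, $\nu\le m_i$), and $p(k)=\emptyset$ otherwise; then $p$ is finitely supported and $(p,A)\in X(I)_{(u_0,A)}$, so the $(p,A)$‑th row of $\zb_{i_0}$ is nonzero, whereas for $i\in F\setminus\{i_0\}$ we have $p|_{\{i\le\}}\ne u^i_\nu$ for all $\nu$ (they differ at $k_i\in\{i\le\}$), so the $(p,A)$‑th row of $\zb_i$ vanishes. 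Reading off the $(p,A)$‑th row of $\sum_{i\in F}\zb_i=\mathbf{0}$ forces the $(p,A)$‑th row of $\zb_{i_0}$ to vanish — a contradiction. Parts (1)–(3) are a routine transport of structure along $\f_i$; the delicate points are the $j$‑adapted bookkeeping in the forward direction of (4) and, above all, the construction in (5), whose crux is the choice of $i_0$ \emph{maximal} in $F$: this is exactly what guarantees that $\{i\le\}$ meets the "free" coordinate region $\{i_0\nle\}$ for every other $i\in F$, so that $p$ can be tuned to kill every competing summand while keeping $\zb_{i_0}$ visible on the chosen row.
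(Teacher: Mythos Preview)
Your proof is correct and follows essentially the same route as the paper for parts (1)--(3) and the forward direction of (4). Two minor differences are worth noting. For the converse of (4), the paper assumes $Q(I,j)\sbs Q(I,i)$, picks $X\in\CP_j$, and shows directly that some $Y\in\CP_i$ satisfies $Y\sbs X$ (by intersecting $\ze_X$ with a primitive idempotent $\ze_Y$ of $Q(I,i)$), then invokes Lemma~\ref{lem partpart}(2); your contrapositive via Remarks~\ref{res boh} is a clean shortcut. For (5), the paper proceeds by induction on $|F|$ and appeals to Lemma~\ref{lem partpart}(2) as a black box (a single $\CP_{i_0}$-block cannot be covered by finitely many $\CP_j$-blocks for $j$ with $i_0\nle j$), whereas you unfold that lemma into an explicit witness point $(p,A)$; the two arguments are equivalent in content, yours being more self-contained and the paper's more modular. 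One small wrinkle in your construction: the $k_i$'s may collide, so you should phrase the choice of $p$ as picking, for each $k$ in the finite set $\{k_i\}$, a value of $p(k)$ avoiding the (still finite) union of bad values over all $i$ with $k_i=k$ --- but this is a cosmetic fix, not a gap.
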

\begin{proof}
(1) Let $i\in I$. Given $r,s,t\in\al^{(i\nle)}$, $u,v\in\al^{(i\le)}$ and $A,B\in \CM(I)$, we have that
\begin{align*}
\f_{i}(\za)((r\cup u,A),(s\cup v,B)) &= \d(r,s)\,\za((u,A),(v,B)) \\
                    &= \d(r,s)\,\d(t,t)\,\za((u,A),(v,B)) \\
                  &= \d(r,s)\,\f_{i}(\za)((t\cup u,A),(t\cup v,B)),
\end{align*}
therefore $\Imm(\f_{i})\sbs Q(I,i)$ in view of \eqref{magicmatrix}.
Conversely, given $\zb\in
Q(I,i)$, let $\za\in \CFM_{\al^{(i\le)}\times \CM(I)}(\BK)$ be the
matrix defined as follows: take an arbitrary $r\in\al^{(i\nle)}$ and, for every $u,v\in\al^{(i\le)}$ and $A,B\in \CM(I)$, set
\[
\za((u,A),(v,B)) =
\zb((r\cup u,A),(r\cup v,B));
\]
this definition does not depend on the choice of $r$, thanks to the condition \eqref{magicmatrix}. Then, for every $r,s,\in\al^{(\nle i)}$, $u,v\in\al^{(\le i)}$ and $A,B\in \CM(I)$, we
have
\begin{align*}
\f_{i}(\za)((r\cup u,A),(s\cup v,B)) &= \d(r,s)\,\za((u,A),(v,B)) \\
                  &= \d(r,s)\,\zb((r\cup u,A),(r\cup v,B))\\
                  &= \zb((r\cup u,A),(s\cup v,B)) \qquad\text{again by \eqref{magicmatrix}};
\end{align*}
consequently $\zb = \f_{i}(\za)$ and hence $\Imm(\f_{i}) = Q(I,i)$. It is immediate to check that $\f_{i}(\mathbf{1}) = \mathbf{1}$ and that $\f_{i}$ is $\BK$-linear.
Given $\za, \zb\in\CFM_{\al^{(i\le)}\times \CM(I)}(\BK)$, for all $r,s,\in\al^{(i\nle)}$, $u,v\in\al^{(i\le)}$ and $A,B\in \CM(I)$ we have the following:
\begin{gather*}
\quad\f_{i}(\za\zb)((r\cup u,A),(s\cup v,B)) =
\d(r,s)\,(\za\zb)((u,A),(v,B))\qquad\qquad\qquad \\
\begin{align*}
 &= \sum_{(w,C)\in\al^{(i\le)}\times \CM(I)}
\d(r,s)\,\za((u,A),(w,C))\,\zb((w,C),(v,B)) \\
&= \sum_{\substack{(w,C)\in\al^{(i\le)}\times \CM(I) \\ t\in\al^{(i\nle)}}}
\d(r,t)\,\d(t,s)\,
\za((u,A),(w,C))\,\zb((w,C),(v,B)) \\
&= \sum_{\substack{(w,C)\in\al^{(i\le)}\times \CM(I) \\ t\in\al^{(i\nle)}}}
\d(r,t)\,\za((u,A),(w,C))\,\d(t,s)\,\zb((w,C),(v,B)) \\
&= \sum_{\substack{(w,C)\in\al^{(i\le)}\times \CM(I) \\ t\in\al^{(i\nle)}}}
\f_{i}(\za)((r\cup u,A),(t\cup w,C))\cdot\f_{i}(\zb)((t\cup w,C),(s\cup v,B)) \\
&= \left(\f_{i}(\za)\,\f_{i}(\zb)\right)((r\cup u,A),(s\cup v,B)).
\end{align*}
\end{gather*}
Thus $\f_{i}$ is a $\BK$-algebra homomorphism. Finally, let $\za\in
\CFM_{\al^{(i\le)}\times \CM(I)}(\BK)$ and assume that $\za((u,A),(v,B))\ne 0$ for some $u,v\in\al^{(i\le)}$ and $A,B\in \CM(I)$. Then
\begin{gather*}
(\f_{i}(\za))((r\cup u,A),(r\cup v,B)) \\ = \d(r,r)\,\za((u,A),(v,B)) = \za((u,A),(v,B)) \ne 0
\end{gather*}
for all $r\in\al^{(i\nle)}$; this shows that $\f_{i}$ is injective. As $\left|\al^{(i\le)}\times \CM(I)\right| = \al$, we infer that $Q(I,i)\is
\CFM_{\al}(\BK)$.

(2) Equality \eqref{abc} follows by a direct computation.

(3) It follows from (2) that $\Soc(Q(I,i))$, as a right ideal of $Q(I,i)$, is generated by the set $\{\ze_{X}\mid X\in\CP_{i}\}$ of pairwise orthogonal idempotents. Thus, given $\zb\in Q(I,i)$, we have that $\zb\in \Soc(Q(I,i))$ if and only if there are $X_{1},\ldots,X_{k}\in\CP_{i}$ such that
\[
\zb = (\ze_{X_{1}}+\cdots+\ze_{X_{k}})\zb.
\]
Now the thesis follows from the fact that the set of those $x\in X(I)$ such that the $x$-th row of $\zb$ is not zero is a union of members of the partition $\CP_{i}$ (Remarks \ref{res boh}).

(4) Let $i,j\in I$, suppose that $i<j$ and let $\za\in Q(I,j)$. For every $x\in X(I)$, let us denote by $r_{x}$, $s_{x}$, $u_{x}$ and $A_{x}$ the unique elements of $\al^{(i\nle)}$, $\al^{(\{i\le\}\setminus\{j\le\})}$, $\al^{(j\le)}$ and $\CM(I)$, respectively, such that
\[
x = (r_{x}\cup s_{x}\cup u_{x},A_{x}).
\]
Then, given $x,y\in X(I)$ and any $t\in\al^{(i\nle)}$, by using \eqref{magicmatrix} with $j$ in place of $i$ we have the
following equalities:
\begin{align*}
\za(x,y)&= \d(r_{x}\cup s_{x},r_{y}\cup s_{y})\,\za((t\cup s_{x}\cup u_{x},A_{x}),(t\cup s_{x}\cup u_{y},A_{y})) \\
          &= \d(r_{x},r_{y})\,\d(s_{x},s_{y})\,
          \za((t\cup s_{x}\cup u_{x},A_{x}),(t\cup s_{x}\cup u_{y},A_{y})) \\
          &= \d(r_{x},r_{y})\,\d(t\cup s_{x},t\cup s_{y})\,
          \za((t\cup s_{x}\cup u_{x},A_{x}),(t\cup s_{x}\cup u_{y},A_{y})) \\
          &= \d(r_{x},r_{y})\,
          \za((t\cup s_{x}\cup u_{x},A_{x}),(t\cup s_{y}\cup u_{y},A_{y})).
\end{align*}
Noting that both $s_{x}\cup u_{x}$ and $s_{y}\cup u_{y}$ are in $\al^{(i\le)}$, again from \eqref{magicmatrix} we infer that $\za\in Q(I,i)$.

Conversely, assume that $Q(I,j)\sbs Q(I,i)$ and take any $X\in\CP_{j}$. Then $\mathbf{0}\ne\ze_{X}\in Q(I,j)\sbs Q(I,i)$ and so, by property (2), there is some $Y\in\CP_{i}$ such that $\ze_{Y}\ze_{X}\ne \mathbf{0}$, therefore $X\cap Y\ne\vu$. Let $x\in X\cap Y$. As $x = (p,A)$ for some $p\in\al^{(I)}$ and $A\in\CM(I)$, we infer that $X = X(I)_{(u,A)}$ and $Y = X(I)_{(v,A)}$, where $u = p|_{\{j\le\}}$ and $v = p|_{\{i\le\}}$. As $\ze_{X}(x,x) = 1$ and $x\in Y$, since $\ze_{X}\in Q(I,i)$ we infer that $\ze_{X}(y,y) = 1$ for every $y\in Y$, proving that $Y\sbs X$. Finally it follows from Lemma \ref{lem partpart},(2) that $i\le j$, by taking into account that $X = E_{u}\times\{A\}$ and $Y = E_{v}\times\{A\}$.

(5) For every subset $J$ of $I$ let us consider the family
\[
\CF_{J} \defug \{\Soc(Q(I,j))\mid j\in J\}
\]
and let $J$ be any finite subset of $I$.
Since $\CF_{J}$ is independent if $J = \vu$, given a positive integer $n$, assume that $\CF_{J}$ is independent whenever $|J|<n$ and suppose that $|J| = n$. We can choose a maximal element $m$ of $J$, so that if $J' = J\setminus\{m\}$, then $\CF_{J'}$ is independent by the inductive assumption. We claim that
\begin{equation}\label{eq aaa}
\Soc(Q(I,m))\cap \bigoplus \CF_{J'} =\{\mathbf{0}\},
\end{equation}
which proves the independence of $\CF_{J}$. Assume that $\mathbf{0}\ne \za\in \Soc(Q(I,m))$. Then there are $Y,Z\in\CP_{m}$ such that the the block $\za(Y,Z)$ is a nonzero scalar $\al^{(i\nle)}\times\al^{(i\nle)}$-matrix; in particular the $y$-th row of $\za$ is not zero for all $y\in Y$. On the other hand, if $\mathbf{0}\ne \zb\in \bigoplus \CF_{J'}$, it follows from the already proved property (3) that there are $Y_{1},\ldots,Y_{n}\in \bigcup\{\CP_{j}\mid j\in J'\}$ such that the $y$-th row of $\zb$ is not zero only if $y\in Y_{1}\cup\cdots\cup Y_{n}$. Since $m\nleqslant j$ for every $j\in J'$, it follows from (2) of Lemma \ref{lem partpart} that $Y\nsbs Y_{1}\cup\cdots\cup Y_{n}$. We conclude that $\za\nin \bigoplus \CF_{J'}$ and hence \eqref{eq aaa} follows.
\end{proof}

\section{Representing a partially ordered set $I$: the family $(S(I,i))_{i\in I}$ of subalgebras of $\CFM_{X(I)}(\BK)$.}\label{sect represposetSi}

As a further step toward the construction of the $\BK$-algebra $\mathfrak{B}_{\al,\BK}(I)$, we start from the family $(Q(I,i))_{i\in I}$ of $\BK$-subalgebras of $Q(I) = \CFM_{X(I)}(\BK)$ defined in the previous section and select in each $Q(I,i)$ a $\BK$-subalgebra $S(I,i)$, still isomorphic to $Q(I)$, in such a way that $S(I,i)S(I,j) = \{\mathbf{0}\}$ exactly when $i$ and $j$ are not comparable, while $S(I,i)S(I,j)\cup S(I,j)S(I,i)\sbs S(I,i)$ if $i\le j$. In order to reach this goal we need an additional setup.

\begin{notationsettings}\label{not main}
By keeping the same setting and notations as in the previous section, we add the following data and further notations:
\begin{itemize}
    \item We let $I$ be a given $\al$-poset. For every $i\in I$ we denote by $\CM(\le i\le)$ the set of all maximal chains which pass through $i$:
        \[
        \CM(\le i\le)\defug \{A\in\CM(I)\mid i\in A\}.
        \]
\item Given $i\in I$ and $C\in\CM(\le i)$, we define
\begin{align*}
\CP_{i,C} &\defug \left\{X(I)_{(u,C\cup E)}\left|\,\, u\in\al^{(i\le)},  E\in\CM(i\le)\right.\right\} \sbs \CP_{i},\\
\CN_{C} &\defug \al^{(I)}\times\{C\cup E\mid E\in\CM(i\le)\} = \bigcup\CP_{i,C}\sbs X(I),\\
X(I)_i &\defug \al^{(I)}\times\CM(\le i\le) = \bigcup\{\CN_{C}\mid C\in\CM(\le i)\}\sbs X(I).
\end{align*}
Note that $\CP_{i,C}$ is an $\al$-partition of $\CN_{C}$ (thus $\CN_{C}$ is the union of $\al$ members of $\CP_{i}$), while $\{\CN_{C}\mid C\in\CM(\le i)\}$ is a partition of $X(I)_i$.
\item Given $C,C'\in\CM(\le i)$, we consider the map
\[
\lmap{f_{C'C}}{\CN_{C}}{\CN_{C'}}
\]
defined by
\[
f_{C'C}(p,C\cup E) = (p,C'\cup E)
\]
for every $p\in\al^{(I)}$ and $E\in\CM(i\le)$. It is clear that $f_{C'C}$ is a bijection; moreover, for every $C,C',C''\in\CM(\le i)$ we have
\begin{equation}\label{eq tCA}
f_{C''C'}\,f_{C'C} =
f_{C''C}\,\quad\text{and}\quad f_{CC} = 1_{\CN_{C}.}
\end{equation}
\end{itemize}
\end{notationsettings}

\begin{pro}\label{pro max}
Two elements $i,j\in I$ are comparable if and only if
\[
X(I)_{i}\cap X(I)_{j}\ne\emptyset.
\]
 Consequently, if every maximal chain of $I$ has a supremum, then the set
 \[
 \{X(I)_{m}\mid m\in\ZM(I)\}
 \]
 is a partition of $X(I)$. In particular $X(I)_{\max(X(I))} = X(I)$.
\end{pro}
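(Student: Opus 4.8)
The plan is to reduce both parts to a standard fact about maximal chains. First I would unwind the definitions from \ref{not main}: since $X(I)_i = \al^{(I)}\times\CM(\le i\le)$, we have
\[
X(I)_{i}\cap X(I)_{j} \;=\; \al^{(I)}\times\bigl(\CM(\le i\le)\cap\CM(\le j\le)\bigr),
\]
and as $\al^{(I)}\ne\vu$ (it contains the map constantly equal to $\vu$), this set is nonempty precisely when some maximal chain $A$ of $I$ satisfies $i,j\in A$. If such an $A$ exists then $i,j$ lie in the totally ordered set $A$, so they are comparable; conversely, if $i$ and $j$ are comparable then $\{i,j\}$ is a chain, which by the Hausdorff maximal principle is contained in a maximal chain $A$, and then $A\in\CM(\le i\le)\cap\CM(\le j\le)$. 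This settles the equivalence.

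For the second statement I would assume that every maximal chain of $I$ has a supremum and check the three requirements for $\{X(I)_{m}\mid m\in\ZM(I)\}$ to be a partition. \emph{Pairwise disjointness}: two distinct maximal elements of $I$ are incomparable, so the equivalence just proved gives $X(I)_{m}\cap X(I)_{m'}=\vu$ whenever $m\ne m'$ in $\ZM(I)$. \emph{Each block is nonempty}: the chain $\{m\}$ is contained in some maximal chain $A$, so $A\in\CM(\le m\le)$ and $X(I)_{m}\sps\al^{(I)}\times\{A\}\ne\vu$. \emph{The blocks cover $X(I)$}: given $(p,A)\in X(I)$, set $s=\sup A$; then $A\cup\{s\}$ is a chain containing $A$, so $s\in A$ by maximality of $A$, and if $s\le t$ for some $t\in I$ then $A\cup\{t\}$ is again a chain containing $A$, whence $t\in A$, so $t\le s$ and thus $t=s$ --- that is, $s\in\ZM(I)$. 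Since $s\in A$ we get $A\in\CM(\le s\le)$, hence $(p,A)\in X(I)_{s}$.

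Finally, the ``in particular'' clause follows because, when $I$ has a greatest element $m$, every maximal chain $A$ contains $m$ (adjoin $m$ to $A$ and invoke maximality of $A$), so $\CM(\le m\le)=\CM(I)$ and therefore $X(I)_{m}=\al^{(I)}\times\CM(I)=X(I)$; alternatively, $\ZM(I)=\{m\}$ and the partition statement already forces $X(I)_{m}=X(I)$.

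The whole argument is essentially bookkeeping; the only step that needs a second's thought is the covering step, where one must verify that the supremum of a maximal chain both lies in that chain and is a maximal element of $I$ --- both facts coming from the maximality of the chain together with the least‑upper‑bound property.
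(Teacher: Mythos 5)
Your proof is correct and follows essentially the same route as the paper: the key equivalence is obtained exactly as in the paper's proof, by observing that $X(I)_{i}\cap X(I)_{j}\ne\emptyset$ iff some maximal chain contains both $i$ and $j$, which by the Hausdorff maximal principle is equivalent to comparability. The paper leaves the ``consequently'' clauses unargued; your verification of the covering step (that the supremum of a maximal chain lies in the chain and is maximal in $I$) is a correct filling-in of that omitted detail.
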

\begin{proof}
First observe that $X(I)_{i}\cap X(I)_{j}\ne\emptyset$ if and only if ${\CM(\le i\le)}\cap\CM({\le j\le})\ne\emptyset$, if and only if there is some $A\in\CM(I)$ such that $i,j\in A$. By the Hausdorff Maximal Principle this latter condition holds if and only if $i$ and $j$ are comparable.
\end{proof}

\begin{lem}\label{lem-partition2}
With the above notations, if $i,j\in I$, then the following properties hold:
\begin{enumerate}
\item Given $C\in\CM(\le i)$ and $D\in\CM(\le j)$, if $\CN_{C}\cap\CN_{D} \ne \emptyset$, then $i$ and $j$ are comparable and $\CN_{D}\sbs \CN_{C}$ (resp. $\CN_{C}\sbs \CN_{D}$) in case $i\le j$ (resp $j\le i$).
\item Given $ C\in\CM(\le i)$ and $D\in\CM(\le j)$, if $C\sbs D$ then $\CN_{D}\sbs \CN_{C}$.
\item If\, $i$ and $j$ are not comparable, then $\CN_{C}\cap\CN_{D} = \emptyset$ for every $C\in\CM(\le i)$ and $D\in\CM(\le j)$.
\item If $i\le j$, for every $C\in\CM(\le i)$ there exists some $D\in\CM(\le j)$ such that $\CN_{D}\sbs \CN_{C}$.
\item  Assume that $i\le j$, let $C,C'\in\CM(\le i)$ and let $D\in\CM(\le j)$ be such that $\CN_{D}\sbs \CN_{C}$. Then $f_{C'C}(\CN_{D}) = \CN_{D'}$ for some $D'\in\CM(\le j)$ and we have
\begin{equation}\label{eq fAB}
f_{C'C}(x) = f_{D'D}(x)\qquad \text{for every $x\in \CN_{D}$.}
\end{equation}
\end{enumerate}
\end{lem}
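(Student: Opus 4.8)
The plan is to prove the five properties of Lemma \ref{lem-partition2} in a cyclic fashion, deriving the later parts from the earlier ones and from the structural results already at hand, chiefly Proposition \ref{pro max}, Lemma \ref{lem partpart}, and the definitions in \ref{not main}. For part (1), I would start from a point $x = (p, A) \in \CN_C \cap \CN_D$. By definition of $\CN_C$ there is $E \in \CM(i\le)$ with $A = C \cup E$, and similarly $A = D \cup F$ with $F \in \CM(j\le)$; in particular $i, j \in A$, so $i$ and $j$ are comparable by the Hausdorff Maximal Principle (exactly as in Proposition \ref{pro max}). Assume $i \le j$. The key observation is that $A$, being a maximal chain containing both $i$ and $j$, decomposes as $A = C' \cup E'$ where $C' = \{a \in A \mid a \le j\}$ is a maximal chain in $\{\le j\}$ through $j$ and $E' \in \CM(j\le)$; since $i \le j$ we get $C' \supseteq D$ forced by $D \in \CM(\le j)$ and... actually here one must be careful: $D$ is \emph{a} maximal chain of $\{\le j\}$, and $A \cap \{\le j\}$ is \emph{the} maximal chain of $\{\le j\}$ determined by $A$. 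So rather the right statement is: $\CN_D$ is by definition $\al^{(I)} \times \{D \cup F \mid F \in \CM(j\le)\}$, and I want $\CN_D \subseteq \CN_C$, i.e.\ every maximal chain of the form $D \cup F$ has the form $C \cup E$. Since $i \le j$ and $i \in C \subseteq D$ (as $C = (D \cup F) \cap \{\le i\}$ for the particular $F$ appearing at $x$, hence $C \subseteq D$), we get $C \cup E = D \cup F$ forces $C = (D\cup F)\cap\{\le i\} = D \cap \{\le i\}$ — independent of $F$ — so that for \emph{every} $F \in \CM(j\le)$, the chain $D \cup F$ satisfies $(D \cup F) \cap \{\le i\} = D \cap \{\le i\} = C$ and $(D \cup F)\setminus\{\le i\} \in \CM(i\le)$, whence $D \cup F = C \cup E$ for suitable $E$. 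This gives $\CN_D \subseteq \CN_C$; the case $j \le i$ is symmetric.

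For part (2), suppose $C \subseteq D$ with $C \in \CM(\le i)$, $D \in \CM(\le j)$; then $i, j \in D$ and $i \le j$ (since $i \in C \subseteq D$ and $j = \max(D)$ in the relevant sense — more precisely $i, j \in D$ comparable, and $i \in C = D \cap \{\le i\}$... one checks $i \le j$ directly from $C \subseteq D$ since $j \in D$, $i \in D$, $D$ a chain, and if $j < i$ then $j \in \{\le i\}$ would force $j \in C$, contradicting... no — need a cleaner argument). I would instead argue: $C \subseteq D$ and both are chains containing $i$ respectively $j$; $C$ is maximal in $\{\le i\}$, so $C = D \cap \{\le i\}$ if $i \in D$ — and indeed $i \in C \subseteq D$. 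Then any $D \cup F$ ($F \in \CM(j\le)$) satisfies $(D \cup F) \cap \{\le i\} = D \cap \{\le i\} = C$ since $F \subseteq \{j\le\} \subseteq \{i\le\}$ is disjoint from $\{\nle i\} \cup \{i\}$ except possibly at elements $\ge i$ which are not $\le i$ unless equal to $i$, and $i \notin F$ because $F \in \CM(j\le)$ means $F$'s least element is $j > i$; hence $(D\cup F)\setminus\{\le i\}$ is a maximal chain in $\{i\le\}$, giving $D \cup F \in \{C \cup E \mid E \in \CM(i\le)\}$, i.e.\ $\CN_D \subseteq \CN_C$. Part (3) is then immediate: if $\CN_C \cap \CN_D \ne \emptyset$ for some $C \in \CM(\le i)$, $D \in \CM(\le j)$, part (1) forces $i, j$ comparable, so by contraposition incomparability gives $\CN_C \cap \CN_D = \emptyset$ for all such $C, D$.

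For part (4), given $i \le j$ and $C \in \CM(\le i)$: extend $C$ to a maximal chain $A$ of $I$ using the Hausdorff Maximal Principle — but I need $A$ to contain $j$. Here the subtlety is that $C$ need not be extendable to a maximal chain through $j$ if $C$ already contains some element incomparable to $j$; however $C \subseteq \{\le i\} \subseteq \{\le j\}$ and every element of $C$ is $\le i \le j$, hence comparable to $j$ (in fact $\le j$), so $C \cup \{j\}$ is a chain, and I extend $C \cup \{j\}$ downward-compatibly: let $D$ be a maximal chain of $\{\le j\}$ containing $C \cup \{j\}$; then $D \in \CM(\le j)$ and $D \cap \{\le i\} \supseteq C$, but $C$ is maximal in $\{\le i\}$ so $D \cap \{\le i\} = C$, and then by the argument of part (2), $\CN_D \subseteq \CN_C$. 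For part (5), assume $i \le j$, $C, C' \in \CM(\le i)$, $D \in \CM(\le j)$ with $\CN_D \subseteq \CN_C$; by the previous analysis $D \cap \{\le i\} = C$. Set $D' \defug C' \cup (D \setminus C) = C' \cup (D \cap \{i\le\}\setminus\{i\}) \cup \cdots$ — more cleanly, $D' \defug C' \cup \{d \in D \mid d \ge i\}$; since $\{d \in D \mid d \ge i\}$ is a chain in $\{i \le\}$ containing $j$ and $C'$ is a maximal chain of $\{\le i\}$ with $\max(C') = i$, their union $D'$ is a chain containing $j$, maximal in $\{\le j\}$ by the same extension argument, so $D' \in \CM(\le j)$ with $D' \cap \{\le i\} = C'$, whence $\CN_{D'} \subseteq \CN_{C'}$ by part (2). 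For the map equality, take $x = (p, D \cup F) \in \CN_D$ with $F \in \CM(j\le)$; then $D \cup F = C \cup E$ where $E = (D \cup F)\setminus C \in \CM(i\le)$, so $f_{C'C}(x) = (p, C' \cup E)$; on the other hand $D \cup F = D \cup F$ with $F \in \CM(j\le)$, so $f_{D'D}(x) = (p, D' \cup F)$, and I must check $C' \cup E = D' \cup F$. Both equal $C' \cup (D\setminus C) \cup F$ since $E = (D\setminus C) \cup F$ (as $D \cup F = C \cup E$ and $C, D\setminus C, F$ are disjoint) and $D' = C' \cup (D \setminus C)$; this gives \eqref{eq fAB}.

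The main obstacle I anticipate is \emph{bookkeeping about maximal chains}: the whole argument hinges on the fact that for $i \le j$ and a maximal chain $A$ of $I$ through both $i$ and $j$, the ``cut'' $A \cap \{\le i\}$ is a maximal chain of $\{\le i\}$ with largest element $i$, and is determined by $A$ alone, independently of how we further decompose the upper part — and dually that gluing a maximal chain of $\{\le i\}$ ending at $i$ to a chain in $\{i \le\}$ or $\{j\le\}$ produces a maximal chain of the appropriate down-set. Making these ``cut and glue'' operations precise (especially verifying maximality is preserved and that $i \notin F$ for $F \in \CM(i\le)$ or $\CM(j\le)$, which depends on reading $\CM(i\le)$ as maximal chains of the \emph{strict} upper set $\{k : i < k\}$ versus $\{i \le\}$) is the one place where a careless reading of the notation in \ref{not main} could derail the proof; everything else is then a routine translation between the combinatorics of these chains and the inclusions/bijections among the sets $\CN_C$.
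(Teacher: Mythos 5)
Your proposal is correct and follows essentially the same route as the paper's proof: cutting a maximal chain through $i$ and $j$ at $i$ to obtain $C = D\cap\{\le i\}$ and $D = C\cup(D\cap[i,j])$, gluing chains to produce the required $D\in\CM(\le j)$ in (4), and taking $D' = C'\cup(D\setminus C)$ with a direct computation of $f_{C'C}$ versus $f_{D'D}$ in (5). The paper is merely terser (it declares (2) and (3) clear and writes the constructions in one line each), while you spell out the cut-and-glue bookkeeping you correctly identify as the only delicate point.
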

\begin{proof}
(1). Assume that $\CN_{C}\cap\CN_{D} \ne \emptyset$. Then there is $p\in\al^{(I)}$, $E\in\{i\le\}$ and $F\in\{j\le\}$ such that $(p,C\cup E) = (p,D\cup F)$ and hence $C\cup E = D\cup F$. This implies that $i,j$ are comparable. If $i\le j$, then
$D = C\cup(D\cap[i,j])$ and this implies that $\CN_{D}\sbs \CN_{C}$. A similar argument applies if $j\le i$.

(2) is clear from the definition and (3) is an obvious consequence of (1).

(4) Suppose that $i\le j$ and let $C\in\CM\{\le i\}$. If $C'$ is any maximal chain of $[i,j]$, by taking $D = C\cup C'$ it is clear from (2) that $\CN_{D}\sbs \CN_{C}$.

(5) Suppose that $i<j$ and let $C, C', D$ be as in the assumptions. Then $D = C\cup E$ for some $E\in\CM([i,j])$. By taking $D' = C'\cup E$ we see that, for every $p\in\al^{(I)}$ and $F\in\CM\{j\le\}$, the following equalities hold:
\[
f_{C'C}(p,D\cup F) = f_{C'C}(p,C\cup E\cup F) = (p,C'\cup E\cup F) = (p,D'\cup F) = f_{D'D}(p,D\cup F).
\]
This proves that $f_{C'C}(\CN_{D}) = \CN_{D'}$ and \eqref{eq fAB}.
\end{proof}

\begin{re}\label{remarklem-partition2}
Let $i<j$ in $I$, let $C\in\CM(\le i)$, $D\in\CM(\le j)$ and assume that $C\sbs D$; then $\CN_{D}\sbs \CN_{C}$ according to Lemma \ref{lem-partition2}, (2). If $Y\in\CP_{j,D}$,
then $Y$ is the union of $\al$ members of $\CP_{i,C}$. In fact, since $\CP_{j}$ is $\al$-coarser than $\CP_{i}$ and $\CP_{j,D}\sbs\CP_{j}$, then $Y$ is the union of $\al$ members of $\CP_{i}$. But if $Z\in \CP_{i}$ and $Z\sbs Y$, then $Z\sbs Y\sbs \CN_{D}\sbs \CN_{C}$ and so $Z\in\CP_{i,C}$.
\end{re}

The necessary setup is now ready and we can proceed to find, for each $i\in I$, a $\BK$-subalgebra $S(I,i)$ of $Q(I,i)$ having the properties announced at the beginning of this section.

Given $i\in I$, by their definitions all $\CN_{C}$ (for $C\in\CM(\le i)$) and $X(I)_i$ are unions of members of $\CP_i$, therefore all the idempotents $\ze_{\CN_{C}}$ and $\ze_{X(I)_i}$ belong to $Q(I,i)$. Let us denote by $S(I,i)$ the
subset of $Q(I,i)$ of those matrices $\za$ satisfying the following two conditions:
\begin{enumerate}
\item[($*_{i}$)] for every $p,q\in\al^{(I)}$ and $A,B\in \CM(I)$,
\[
  \za((p,A),(q,B)) \ne 0 \text{ only if } i\in A\cap B \text{ and } A\cap\{\le i\} = B\cap\{\le i\},
\]
  \item[($**_{i}$)] for every $p,q\in\al^{(I)}$, $C,D\in \CM(\le i)$ and $E,F\in \CM(i\le)$,
\[
\za((p,C\cup E),(q,C\cup F)) = \za((p,D\cup E),(q,D\cup F)).
\]
\end{enumerate}
Note that these conditions may be restated as follows:
\begin{enumerate}
\item[($*_{i}$)] $\za = \ze_{X(I)_i}\za = \za\ze_{X(I)_i}$ and
\begin{equation}\label{mamagicmatrix}
  \ze_{\CN_{C}}\za = \ze_{\CN_{C}}\za\ze_{\CN_{C}} = \za\ze_{\CN_{C}} \quad \text{for every $C\in\CM(\le i)$},
\end{equation}
  \item[($**_{i}$)] for every $C,C'\in\CM(\le i)$ and $x,y\in \CN_{C}$,
\begin{equation}\label{mamamagicmatrix}
\za(x,y) = \za(f_{C'C}(x),f_{C'C}(y));
\end{equation}
\end{enumerate}
note that $\ze_{X(I)_{i}}$ and $\ze_{\CN_{C}}$ are in $S(I,i)$ for all $C\in\CM(\le i)$.

Roughly speaking, $S(I,i)$ consists of those
matrices of $Q(I,i)$ which have zero entries outside the
$(\CN_{C},\CN_{C})$-blocks for $C\in\CM(\le i)$ (which are mutually
disjoint) and, if $C,C'\in\CM(\le i)$, the
$(\CN_{C'},\CN_{C'})$-block coincides with the
$(\CN_{C},\CN_{C})$-block ``\,up to the bijection $f_{C'C}$\,''.

For every $i\in I$ let us consider the set
\[
X(i\le) \defug \al^{(i\le)}\times \CM(i\le).
\]
As we shall see with the next result, $S(I,i)$ turns out to be a unital $\BK$-subalgebra of $\ze_{X(I)_i}Q(I,i)\ze_{X(I)_i}$ isomorphic to $\CFM_{X(i\le)}(\BK)$ and hence to $\CFM_{\al}(\BK)$.

\begin{pro}\label{subrings}
With the above notations and settings, for every ${i\in I}$ let us consider the map
\[
\lmap{\psi_{I,i}}{\CFM_{X(i\le)}(\BK)}{Q(I)}
\]
defined as follows: for all $\za\in\CFM_{X(i\le)}(\BK)$, $r,s\in\al^{(i\nle)}$, $u,v\in\al^{(i\le)}$ and $A,B\in \CM(I)$,
\[
\psi_{I,i}(\za)((r\cup u,A),(s\cup v,B)) =
\begin{cases}\d(r,s)\,\za((u,A\cap\{i\le\}),(v,B\cap\{i\le\})),\\
\text{ if }i\in A\cap B\text{ and }
A\cap\{\le i\} = B\cap\{\le i\}, \\
0,\text{ otherwise. }
\end{cases}
\]
Then $\psi_{I,i}$ is an injective (not necessarily unital) $\BK$-algebra homorphism and
\begin{equation}\label{eq paraponzi}
\psi_{I,i}(\CFM_{X(i\le)}(\BK)) = S(I,i).
\end{equation}
Moreover the following pro\-perties hold:
\begin{enumerate}
\item $S(I,i)S(I,j) = 0$ if and only if $i,j$ are not comparable.
\item
If $i\le j$, then $S(I,i)S(I,j)\cup S(I,j)S(I,i)\sbs S(I,i)$.
\end{enumerate}
\end{pro}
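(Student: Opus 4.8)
The plan is to prove the claims in order: first that $\psi_{I,i}$ is an injective (non-unital) $\BK$-algebra homomorphism with image $S(I,i)$, and then (1) and (2). $\BK$-linearity of $\psi_{I,i}$ is clear from the defining formula, and $\psi_{I,i}(\za)$ is column-finite because a nonzero entry $\psi_{I,i}(\za)((r\cup u,A),(s\cup v,B))$ forces $r=s$ and, once $i\in A\cap B$ and $A\cap\{\le i\}=B\cap\{\le i\}$, forces $A=(B\cap\{\le i\})\cup E$ with $(u,E)$ in the finite support of the column $(v,B\cap\{i\le\})$ of $\za$. For multiplicativity I would expand $\bigl(\psi_{I,i}(\za)\,\psi_{I,i}(\zb)\bigr)((r\cup u,A),(s\cup v,B))$ over the intermediate indices $(w,C)=(t\cup w',C)$: the case split in the definition forces every nonzero summand to have $r=t=s$, $i\in C$ and $C\cap\{\le i\}=A\cap\{\le i\}=B\cap\{\le i\}$ (so both sides vanish unless $i\in A\cap B$ and $A\cap\{\le i\}=B\cap\{\le i\}$). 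Because $i\in C$ forces $C=(C\cap\{\le i\})\cup(C\cap\{i\le\})$ while, conversely, $(A\cap\{\le i\})\cup E\in\CM(I)$ for every $E\in\CM(i\le)$, the map $C\mapsto C\cap\{i\le\}$ is a bijection from $\{C\in\CM(I)\mid i\in C,\ C\cap\{\le i\}=A\cap\{\le i\}\}$ onto $\CM(i\le)$; trading $C$ for $C\cap\{i\le\}$, the surviving sum becomes $(\za\zb)\bigl((u,A\cap\{i\le\}),(v,B\cap\{i\le\})\bigr)$, which is exactly $\psi_{I,i}(\za\zb)((r\cup u,A),(s\cup v,B))$. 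This is the bookkeeping from the proof of Theorem \ref{theo-posetflr}(1), enriched by the maximal-chain coordinate.

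For $\Imm(\psi_{I,i})=S(I,i)$: the inclusion $\Imm(\psi_{I,i})\sbs S(I,i)$ is a direct check — the case split is exactly condition $(*_i)$, while the factor $\d(r,s)$ and the fact that the value of $\psi_{I,i}(\za)$ depends only on $A\cap\{i\le\}$ and $B\cap\{i\le\}$ give both \eqref{magicmatrix} (so $\psi_{I,i}(\za)\in Q(I,i)$) and $(**_i)$ in the form \eqref{mamamagicmatrix}, since replacing a chain $C\in\CM(\le i)$ by another $C'\in\CM(\le i)$ leaves both $i\in C\cup E$ and $(C\cup E)\cap\{\le i\}=C$ unchanged. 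For the reverse inclusion, given $\zb\in S(I,i)$ I would fix a maximal chain $G_{0}$ of $\{\le i\}$ (necessarily $i\in G_{0}$) and define $\za\in\CFM_{X(i\le)}(\BK)$ by letting $\za((u,E),(v,F))$ be the common value of $\zb((r\cup u,G_{0}\cup E),(r\cup v,G_{0}\cup F))$ as $r$ runs over $\al^{(i\nle)}$ (well defined by \eqref{magicmatrix}); column-finiteness of $\za$ follows from that of $\zb$. One then verifies $\psi_{I,i}(\za)=\zb$: where $i\notin A\cap B$ or $A\cap\{\le i\}\ne B\cap\{\le i\}$ both sides vanish by $(*_i)$, and on the complement one writes $A=G\cup E$, $B=G\cup F$ with $G=A\cap\{\le i\}=B\cap\{\le i\}$ and moves from $G_{0}$ to $G$ using \eqref{mamamagicmatrix} for $f_{GG_{0}}$. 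Injectivity is then automatic; alternatively, if $\za((u,E),(v,F))\ne 0$ and $G$ is a maximal chain of $\{\le i\}$, then $A=G\cup E$ and $B=G\cup F$ are maximal chains of $I$ with $\psi_{I,i}(\za)((u,A),(v,B))=\za((u,E),(v,F))\ne 0$.

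For (1): if $i,j$ are not comparable and $\za\in S(I,i)$, $\zb\in S(I,j)$, then every nonzero summand of $(\za\zb)((p,A),(q,B))$ runs through an intermediate index $(w,C)$ for which $(*_i)$ and $(*_j)$ force $i\in C$ and $j\in C$, impossible in a chain; hence $S(I,i)S(I,j)=\{\mathbf{0}\}$. Conversely, if $i,j$ are comparable then $X(I)_{i}\cap X(I)_{j}\ne\vu$ by Proposition \ref{pro max}, and $\ze_{X(I)_{i}}\in S(I,i)$, $\ze_{X(I)_{j}}\in S(I,j)$ with $\ze_{X(I)_{i}}\ze_{X(I)_{j}}=\ze_{X(I)_{i}\cap X(I)_{j}}\ne\mathbf{0}$, so $S(I,i)S(I,j)\ne\{\mathbf{0}\}$. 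For (2), the case $i=j$ is immediate once $S(I,i)=\Imm(\psi_{I,i})$ is known to be a subalgebra, so assume $i<j$. Then $Q(I,j)\sbs Q(I,i)$ by Theorem \ref{theo-posetflr}(4), so $\za\zb,\zb\za\in Q(I,i)$ for $\za\in S(I,i)$, $\zb\in S(I,j)$; and $(*_i)$ for these products is a short chain-intersection computation using $\{\le i\}\sbs\{\le j\}$ (for $\za\zb$: an intermediate $(w,C)$ forces $i\in A$, then $i\in C$ hence $i\in C\cap\{\le j\}=B\cap\{\le j\}\sbs B$, and $A\cap\{\le i\}=C\cap\{\le i\}=B\cap\{\le i\}$; the $\zb\za$ case is symmetric).

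The main obstacle is verifying $(**_i)$ for the products $\za\zb$ and $\zb\za$ (still with $i<j$). Fix $x,y\in\CN_{C}$, $C\in\CM(\le i)$, and let $C'\in\CM(\le i)$; I treat $\za\zb$, the case $\zb\za$ being symmetric with $x$ rather than $y$ playing the decisive role. If $y\notin X(I)_{j}$, then $(\za\zb)(x,y)=0$ since $\zb=\ze_{X(I)_{j}}\zb$, and likewise $(\za\zb)(f_{C'C}(x),f_{C'C}(y))=0$ since $f_{C'C}(y)\notin X(I)_{j}$. Otherwise let $D$ be the unique maximal chain of $\{\le j\}$ with $y\in\CN_{D}$; then $C\sbs D$, so $\CN_{D}\sbs\CN_{C}$ by Lemma \ref{lem-partition2}(2). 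In the sum for $(\za\zb)(x,y)$ the factor $\zb(z,y)$ forces $z\in\CN_{D}$, so $(\za\zb)(x,y)=\sum_{z\in\CN_{D}}\za(x,z)\,\zb(z,y)$. By Lemma \ref{lem-partition2}(5) there is $D'\in\CM(\le j)$ with $f_{C'C}(\CN_{D})=\CN_{D'}$ and $f_{C'C}|_{\CN_{D}}=f_{D'D}$, so reindexing by $z'=f_{D'D}(z)$ yields $(\za\zb)(f_{C'C}(x),f_{C'C}(y))=\sum_{z\in\CN_{D}}\za(f_{C'C}(x),f_{D'D}(z))\,\zb(f_{D'D}(z),f_{C'C}(y))$. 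Since $f_{D'D}(z)=f_{C'C}(z)$ on $\CN_{D}\sbs\CN_{C}$, condition $(**_i)$ for $\za$ gives $\za(f_{C'C}(x),f_{D'D}(z))=\za(x,z)$, and since $f_{C'C}(y)=f_{D'D}(y)$, condition $(**_j)$ for $\zb$ gives $\zb(f_{D'D}(z),f_{C'C}(y))=\zb(z,y)$; hence the two sums agree. Together with $(*_i)$ and membership in $Q(I,i)$ this gives $\za\zb,\zb\za\in S(I,i)$, completing (2).
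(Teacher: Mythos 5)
Your proposal is correct and follows essentially the same route as the paper's proof: the same explicit computation for multiplicativity (summing over intermediate indices $(t\cup w, D\cup F)$), the same construction of a preimage for a given $\zb\in S(I,i)$ to get surjectivity onto $S(I,i)$, the same disjointness/idempotent argument for (1), and the same block-compatibility computation via Lemma \ref{lem-partition2}(5) for (2). The only (harmless) divergence is organizational: you explicitly secure $\za\zb,\zb\za\in Q(I,i)$ by citing Theorem \ref{theo-posetflr}(4), and you phrase the non-comparability half of (1) through the impossibility of a chain containing both $i$ and $j$ rather than through $\ze_{X(I)_i}\ze_{X(I)_j}=\mathbf{0}$, but these are equivalent to what the paper does.
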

\begin{proof}
Let $\za\in\CFM_{X(i\le)}(\BK)$. Given $r,s,t\in\al^{(i\nle)}$, $u,v\in\al^{(i\le)}$ and $A,B\in \CM(I)$ we have that
\[
\psi_{I,i}(\za)((r\cup u,A),(s\cup v,B)) = \d(r,s)\psi_{I,i}(\za)((t\cup u,A),(t\cup v,B)),
\]
consequently $\psi_{I,i}(\za)\in Q(I,i)$. It is clear that $\psi_{I,i}(\za)$ satisfies ($*_{i}$). Assume that $C,D\in \CM(\le i)$ and $E,F\in \CM(i\le)$. Then
\begin{gather*}
\psi_{I,i}(\za)((r\cup u,C\cup E),(s\cup v,C\cup F)) = \d(r,s)\za((u,E),(v,F)) \\
= \psi_{I,i}(\za)((r\cup u,D\cup E),(s\cup v,D\cup F))
\end{gather*}
therefore $\psi_{I,i}(\za)$ satisfies ($**_{i}$) and then $\psi_{I,i}(\za)\in S(I,i)$. Conversely, suppose that $\zb\in S(I,i)$ and let us define the matrix $\za\in\CFM_{X(i\le)}(\BK)$ as follows: given $u,v\in\al^{(i\le)}$ and $E,F\in \CM(i\le)$, choose any $D\in\CM(\le i)$, $r\in\al^{(i\nle)}$ and set
\begin{equation}\label{eq ponzipa}
    \za((u,E), (v,F) = \zb((r\cup u,D\cup E),
(r\cup v,D\cup F)).
\end{equation}
We note that, since $\zb\in Q(I,i)$, the second member of \eqref{eq ponzipa} does not depend on the choice of $r$ in $\al^{(i\nle)}$; it does not depend on the choice of $D$ in $\CM(\le i)$ either, because $\zb$ satisfies condition ($**_{i}$). A straightforward computation shows that $\psi_{I,i}(\za) = \zb$ and we conclude that \eqref{eq paraponzi} holds.

The map $\psi_{I,i}$ is $\BK$-linear; let $\za\in\CFM_{X(i\le)}(\BK)$ and suppose that $\psi_{I,i}(\za) = \textbf{0}$. Given $u,v\in\al^{(i\le)}$ and $E,F\in\CM({i\le})$, by choosing any $D\in \CM({\le i})$ we have that
\[
\d(r,s)\za((u,E),(v,F)) = \psi_{I,i}(\za)((r\cup u, D\cup E), (s\cup v, D\cup F)) = 0
\]
for every $r,s\in\al^{(i\nle)}$,  hence $\za((u,E),(v,F)) = 0$. We conclude that $\za = \textbf{0}$ and this shows that $\psi_{I,i}$ is injective.

Next, given $\za,\zb\in \CFM_{X(i\le)}(\BK)$, we must prove that
\[
 \psi_{I,i}(\za\zb) = \psi_{I,i}(\za)\psi_{I,i}(\zb),
 \]
that is, given $r,s,\in\al^{(i\nle)}$, $u,v\in\al^{(i\le)}$ and $A,B\in \CM(I)$, by writing $x = (r\cup u,A)$ and $y = (s\cup v,B)$  we must show that the equality
\begin{equation}\label{eq S(I,i)2}
    \psi_{I,i}(\za\zb)(x, y)
    = \sum_{z\in X(I)}\psi_{I,i}(\za)(x, z)\psi_{I,i}(\zb)(z, y)
\end{equation}
holds. If either $i\nin A$, or $i\nin B$, or $i\in A\cap B$ but $A\cap\{\le i\} \ne B\cap\{\le i\}$, then it follows from the definition of $\psi_{I,i}$ that the first member and each summand in the second member of \eqref{eq S(I,i)2} are zero. Otherwise, suppose that $i\in A\cap B$, set $D = A\cap\{\le i\} = B\cap\{\le i\}$ and note that $x,y\in\CN_{D}$. Then we have:
\begin{multline*}
\psi_{I,i}(\za\zb)(x, y) = \d(r,s)(\za\zb)((u,A\cap\{i\le\}), (v,B\cap\{i\le\})\\
\begin{aligned}
&= \d(r,s)\sum_{(w,E)\in X(i\le)}\za((u,A\cap\{i\le\}), (w,E))\zb((w,E), (v,B\cap\{i\le\})) \\
&= \sum_{\substack{t\in\al^{(i\nle)},w\in\al^{(i\le)}\\
C\in\{D\cup F\mid F\in\CM(i\le)\}}}
\bigg(\d(r,t)\d(t,s)\za((u,A\cap\{i\le\}), (w,C\cap\{i\le\})) \\
&\phantom{= \sum_{\substack{t\in\al^{(i\nle)},w\in\al^{(i\le)}\\
C\in\{D\cup F\mid F\in\CM(i\le)\}}}
\bigg(\d(r,t)\d(t,s)}\cdot\zb((w,C\cap\{i\le\}), (v,B\cap\{i\le\}))\bigg)  \\
&= \sum_{\substack{t\in\al^{(i\nle)},w\in\al^{(i\le)}\\
C\in\{D\cup F\mid F\in\CM(i\le)\}}}\psi_{I,i}(\za)(x, (t\cup w,C))\psi_{I,i}(\zb)((t\cup w,C), y)  \\
&= \sum_{z\in \CN_{D}}\psi_{I,i}(\za)(x, z)\psi_{I,i}(\zb)(z, y)  \\
&= \sum_{z\in X(I)}\psi_{I,i}(\za)(x, z)\psi_{I,i}(\zb)(z, y),
\end{aligned}
\end{multline*}
as wanted, and we conclude that $\psi_{I,i}$ is a $\BK$-algebra homomorphism.

(1) Let $i,j\in I$ and assume that $i,j$ are not comparable. Then,
given ${C\in \CM(\le i)}$ and $D\in \CM(\le j)$, we have $\CN_{C}\cap\CN_{D} =
\emptyset$ according to Lemma \ref{lem-partition2}, (3) and therefore $X(I)_i\cap
X(I)_j = \emptyset$. Consequently, if $\za\in S(I,i)$ and $\zb\in
S(I,j)$, then $\za\zb =
\ze_{X(I)_{i}}\za\ze_{X(I)_{i}}\ze_{X(I)_{j}}\zb\ze_{X(I)_{j}} = 0$. If, on the
contrary, $i\le j$ and $A$ is any maximal chain of $I$ such that
$i,j\in A$, then $A\in\CM(\le i\le)\cap \CM(\le j\le)$ and hence
$X(I)_{i}\cap X(I)_{j}\ne\emptyset$. Consequently $\mathbf{0} \ne
\ze_{X(I)_{i}}\,\ze_{X(I)_{j}} = \ze_{X(I)_{j}}\,\ze_{X(I)_{i}}\in S(I,i)S(I,j)\cap
S(I,j)S(I,i)$.

(2) Suppose that $i<j$, let $\za\in S(I,i)$, $\zb\in S(I,j)$ and assume
that $0\ne (\za\zb)(x,y) = \sum_{z\in X(I)}\za(x,z)\zb(z,y)$ for some
$x,y\in X(I)$. Then $\za(x,z) \ne 0 \ne \zb(z,y)$ for some $z\in X(I)$ and
therefore, by conditions ($*_{i}$) and ($*_{j}$), there are some ${C\in\CM(\le i)}$ and $D\in\CM(\le j)$ such that $x,z\in \CN_{C}$ and $z,y\in\CN_{D}$, so that $\CN_{C}\cap\CN_{D}\ne\vu$ and hence $\CN_{D}\sbs\CN_{C}$ in view of
Lemma \ref{lem-partition2}, (1). We infer that the matrix $\za\zb$
has zero entries outside the $(\CN_{C},\CN_{C})$-blocks for
$C\in\CM(\le i)$. Suppose that $C,C'\in\CM(\le i)$ and let us prove that
\begin{equation}\label{eqblockss}
(\za\zb)(x,y) = (\za\zb)(f_{C'C}(x),f_{C'C}(y))
\end{equation}
for all $x,y\in \CN_{C}$. It follows from Lemma \ref{lem-partition2}, (5) that there is no $D\in\CM(\le j)$ such
that $y\in\CN_{D}$ if and only if there is no $D'\in\CM(\le j)$ such
that $f_{C'C}(y)\in\CN_{D'}$; if that is the case, since $\zb\in
S(I,j)$, both members of \eqref{eqblockss} are zero. Otherwise there is
$D\in\CM(\le j)$ such that $y\in\CN_{D}$; again by
Lemma \ref{lem-partition2}, (1) we have that $\CN_{D}\sbs\CN_{C}$ and, by setting $D' = f_{C'C}(B)$, we may compute as follows:
\begin{align*}
(\za\zb)(x,y) &= \sum_{z\in \CN_{C}}\za(x,z)\,\zb(z,y) =
\sum_{z\in \CN_{D}}\za(x,z)\,\zb(z,y) \\
&= \sum_{z\in \CN_{D}} [\za(f_{C'C}(x),f_{C'C}(z))]\,
[\zb(f_{D'D}(z),f_{D'D}(y))] \\
&= \sum_{u\in \CN_{D'}}
[\za(f_{C'C}(x),u)]\,[\zb(u,f_{D'D}(y))] \\
&= \sum_{u\in \CN_{C'}}
[\za(f_{C'C}(x),u)]\,[\zb(u,f_{C'C}(y))] \\
&= (\za\zb)(f_{C'C}(x),f_{C'C}(y)).
\end{align*}
Thus \eqref{eqblockss} holds for all $x,y\in \CN_{C}$, showing that
$\za\zb\in S(I,i)$. The proof that $\zb\za\in S(I,i)$ is similar.
\end{proof}

\begin{re}\label{res Si}
Clearly we have that $\psi_{I,i}(\mathbf{1}) = \ze_{X(I)_i}$ and $S(I,i)$ is a unital subalgebra of $Q(I,i)$ if and only if $X(I)_{i} = X(I)$, namely $I = {\{\le i\}\cup\{i\le\}}$; if it is the case, then $S(I,i) = Q(I,i)$ if and only if $\{\le i\}$ is a chain. As a result, we have that $S(I,i) = Q(I,i)$ for all $i\in I$ if and only if $I$ is a chain.
\end{re}

As $\psi_{I,i}$ restricts to an isomorphism from $\CFM_{X(i\le)}(\BK)$ onto $S(I,i)$, we have in $S(I,i)$ the complete system
\[
\CS_{i} \defug \left\{\psi_{I,i}(\ze_{(u,E)})\left|(u,E)\in X(i\le)\right.\right\}
\]
 of pairwise orthogonal and primitive idempotents, each of which is actually a diagonal matrix of the form $\ze_{Y}$ for a suitable subset $Y\sbs X(I)_i$; to see this and for subsequent purposes we introduce the following additional

\begin{notations}\label{not Y} \phantom{xxxx}
\begin{enumerate}
\item[$\bullet$] For every $u\in\al^{(i\le)}$ and $E\in\CM(i\le)$ we define
\begin{equation}\label{eq YuB}
\begin{aligned}
Y_{(u,E)}\colon &= \bigcup\left\{X(I)_{(u,C\cup E)}\left|\,\,C\in\CM(\le i)\right.\right\} \\
&= \left\{(p,A)\in X(I)\left|\,\,p|_{\{i\le\}} = u, A\cap\{i\le\} = E\right.\right\} \sbs X(I)_i
\end{aligned}
\end{equation}
and set
\begin{equation}\label{eq YuBB}
\CR_{i}\defug \left\{Y_{(u,E)}\left|\,\, u\in\al^{(i\le)}, E\in\CM(i\le)\right.\right\}.
\end{equation}
\item[$\bullet$] For every $C\in\CM(\le i)$ and $V\in\CP_{i,C}$ define
\begin{equation}\label{eq overline}
\overline{V} \defug
\bigcup\{f_{C'C}(V)\mid C'\in\CM(\le i)\}.
\end{equation}
\end{enumerate}
\end{notations}

Now direct computations show that, given $i\in I$, for every $u\in\al^{(i\le)}$, $C\in\CM(\le i)$ and $C\in\CM(i\le)$ we have
\begin{equation}\label{eq eYub}
 \psi_{I,i}(\ze_{(u,E)}) = \ze_{Y_{(u,E)}}
\end{equation}
and
\begin{equation}\label{eq Y}
Y_{(u,E)} =  \overline{X(I)_{(u,C\cup E)}};
\end{equation}
consequently
\[
\CR_{i} =
\{\overline{V}\mid V\in\CP_{i,C}\}
\]
and therefore
\[
\CS_{i} = \left\{\ze_{Y}\left|Y\in \CR_{i}\right.\right\}.
\]

\begin{lem}\label{lem idempprim Hi}
With the above notations, let $i,j\in I$. Then
the following hold:
\begin{enumerate}
  \item If $Y\in\CR_{i}$, $Z\in\CR_{j}$ and $i,j$ are not comparable, then $Y\cap Z = \vu$, while if $i\le j$ and $Y\cap Z \ne \vu$, then $Y\sbs Z$.
  \item $\CR_{i}$ is a partition of $X(I)_{i}$ of cardinality $\al$; it is an $\al$-partition if $i\ne \min(I)$.
  \item If $Z\in\CR_{j}$ and $j$ is not a minimal element of $I$, then $Z$ is the union of members of $\CR_{i}$, for $i$ ranging in $\{<j\}$. In addition, if $i<j$, then $Z$ contains $\al$ elements of $\CR_{i}$; precisely, if $Z = Y_{(u,E)}\in\CR_{j}$, where ${u\in\al^{(j\le)}}$ and $E\in\CM(j\le)$, given $v\in\al^{(i\le)}$ and $F\in\CM(i\le)$, we have that $Y_{(v,F)}\sbs Z$ if and only if $v\left|_{\{j\le\}}= u\right.$ and $F\cap\{j\le\} = E$.
  \item If $i$ is not a maximal element of $I$ and $Y\in\CR_{i}$, then there is some $j>i$ and $Z\in\CR_{j}$ such that $Y\sbs Z$.
  \item If $\za\in S(I,i)$, then the set $X$ of those $x\in X(I)$ such that the $x$-th row of $\za$ is not zero is a union of members of $\CR_{i}$.
\end{enumerate}
 \end{lem}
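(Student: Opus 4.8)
The plan is to read everything off the explicit description \eqref{eq YuB},
\[
Y_{(u,E)} = \left\{(p,A)\in X(I)\mid p|_{\{i\le\}} = u,\ A\cap\{i\le\} = E\right\},
\]
together with the elementary observation that $i\le j$ forces $\{j\le\}\sbs\{i\le\}$, whence $p|_{\{j\le\}} = (p|_{\{i\le\}})|_{\{j\le\}}$ and $A\cap\{j\le\} = (A\cap\{i\le\})\cap\{j\le\}$ for all $(p,A)$. From \eqref{eq YuB} one recovers $u$ and $E$ from any single point $(p,A)\in Y_{(u,E)}$, so $(u,E)\mapsto Y_{(u,E)}$ is a bijection of $X(i\le)$ onto $\CR_i$ and distinct members of $\CR_i$ are disjoint; since every $(p,A)\in X(I)_i$ (i.e.\ with $i\in A$) lies in $Y_{(p|_{\{i\le\}},\,A\cap\{i\le\})}$, this already shows $\CR_i$ partitions $X(I)_i$, and $|\CR_i| = |X(i\le)| = |\al^{(i\le)}\times\CM(i\le)| = \al$ by Proposition \ref{pro cardX} (recalling $|\CM(i\le)|\le|\CM(I)|\le\al$). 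When $i\ne\min(I)$, each $Y_{(u,E)}$ contains a member $X(I)_{(u,C\cup E)}$ of the $\al$-partition $\CP_i$ (Corollary \ref{cor PartP}), so $|Y_{(u,E)}| = \al$ and $\CR_i$ is an $\al$-partition; this settles (2).

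For (1): incomparability of $i,j$ gives $Y\cap Z\sbs X(I)_i\cap X(I)_j = \vu$ by Proposition \ref{pro max}, while the case $i = j$ is the disjointness just noted. So assume $i<j$ and $Y\cap Z\ne\vu$; write $Y = Y_{(u,E)}$ with $(u,E)\in X(i\le)$ and $Z = Y_{(v,F)}$ with $(v,F)\in X(j\le)$, and pick $(p,A)\in Y\cap Z$. The restriction identities above give $v = u|_{\{j\le\}}$ and $F = E\cap\{j\le\}$; substituting these into the description of $Y_{(v,F)}$ shows that every $(q,B)\in Y_{(u,E)}$ (so $q|_{\{i\le\}} = u$ and $B\cap\{i\le\} = E$) satisfies $q|_{\{j\le\}} = v$ and $B\cap\{j\le\} = F$, i.e.\ $(q,B)\in Y_{(v,F)}$. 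Hence $Y\sbs Z$.

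Parts (3) and (4) then follow from (1) plus the fact that a maximal chain $A$ through a non-minimal element $j$ meets $\{<j\}$ — its trace $A\cap\{\le j\}$ is a maximal chain of $\{\le j\}$ with more than one element — and, dually, that if $i$ is non-maximal then $A\cap\{i\le\}$ has more than one element, hence contains some $j>i$. For (3), given $(p,A)\in Z\in\CR_j$ pick $i\in A$ with $i<j$; then $(p,A)\in Y_{(p|_{\{i\le\}},\,A\cap\{i\le\})}\in\CR_i$, which is $\sbs Z$ by (1), so $Z$ is the union of the members of $\bigcup_{i<j}\CR_i$ that it contains. The characterization that, for $Z = Y_{(u,E)}$, one has $Y_{(v,F)}\sbs Z$ if and only if $v|_{\{j\le\}} = u$ and $F\cap\{j\le\} = E$ is proved exactly as the inclusion in (1), ``$\Rightarrow$'' starting from any point of $Y_{(v,F)}\cap Z$ and ``$\Leftarrow$'' by substitution; one checks such $F$ exist, and the admissible $v$ (those with $v|_{\{j\le\}}=u$) are in bijection with $\al^{(\{i\le\}\setminus\{j\le\})}$, which has cardinality $\al$ by Proposition \ref{pro cardX} since $i\in\{i\le\}\setminus\{j\le\}$, so $Z$ contains $\al$ members of $\CR_i$. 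For (4), given $Y\in\CR_i$ and $(p,A)\in Y$, pick $j\in A$ with $j>i$; then $(p,A)\in Y_{(p|_{\{j\le\}},\,A\cap\{j\le\})}\in\CR_j$, which contains $Y$ by (1).

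Part (5) is the most delicate. As $\za\in S(I,i)\sbs Q(I,i)$, Remarks \ref{res boh} gives that $X$ is a union of members of $\CP_i$, and $X\sbs X(I)_i$ because $\za = \ze_{X(I)_i}\za$ by ($*_{i}$). By \eqref{eq Y} each member of $\CR_i$ is a union $\bigcup_{C'\in\CM(\le i)}X(I)_{(u,C'\cup E)}$ of members of $\CP_i$, so it is enough to show that $X(I)_{(u,C\cup E)}\sbs X$ implies $X(I)_{(u,C'\cup E)}\sbs X$ for every $C'\in\CM(\le i)$. Since $X(I)_{(u,C\cup E)}\sbs X$, pick $x = (p,C\cup E)$ in it with $\za(x,y)\ne 0$ for some $y$; since $x\in X(I)_{(u,C\cup E)}\sbs\CN_C$, \eqref{mamagicmatrix} forces $y\in\CN_C$, and then \eqref{mamamagicmatrix} (condition ($**_{i}$)) yields $\za(f_{C'C}(x),f_{C'C}(y)) = \za(x,y)\ne 0$, so $f_{C'C}(x) = (p,C'\cup E)$ has a nonzero row and lies in $X\cap X(I)_{(u,C'\cup E)}$; as $X$ is a union of $\CP_i$-members and $X(I)_{(u,C'\cup E)}\in\CP_i$, the whole block lies in $X$. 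The step I expect to cost the most is precisely this last interplay: marrying the ``constant on each member of $\CP_i$'' behaviour that $\za$ inherits from $Q(I,i)$ to the $f_{C'C}$-invariance imposed by ($**_{i}$), while taking care that a nonzero \emph{entry} at $(x,y)$ is transported to a nonzero entry, hence to a nonzero \emph{row}, at $(f_{C'C}(x),f_{C'C}(y))$. Beyond that, the only recurring chore is keeping the restrictions $p\mapsto p|_{\{i\le\}}$ and $A\mapsto A\cap\{i\le\}$ straight under the comparisons $i<j$; once \eqref{eq YuB} is available, no individual verification is deep.
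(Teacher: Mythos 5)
Your proposal is correct and follows essentially the same route as the paper: parts (1)--(4) are read off the explicit description \eqref{eq YuB} exactly as in the paper's own proof, and part (5) rests on the same two ingredients, namely the $\CP_{i}$-block structure that $\za$ inherits from $Q(I,i)$ and the $f_{C'C}$-invariance coming from condition ($**_{i}$). The only cosmetic difference is that in (5) you first invoke Remarks \ref{res boh} to reduce to whole $\CP_{i}$-blocks and then transport those blocks by $f_{C'C}$, whereas the paper transports individual nonzero rows directly within a member of $\CR_{i}$; the two come to the same thing.
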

 \begin{proof}
 (1) Let $Y\in\CR_{i}$, $Z\in\CR_{j}$. If $i,j$ are not comparable then $X(I)_{i}\cap X(I)_{j} = \vu$ by Proposition \ref{pro max}. Thus $Y\cap Z = \vu$, because $Y\sbs X(I)_{i}$ and $Z\sbs X(I)_{j}$. Assume that $i\le j$ and write $Y = Y_{(u,E)}$, $Z = Y_{(v,F)}$ for some $u\in\al^{(i\le)}$, $v\in\al^{(j\le)}$, $E\in\CM(i\le)$ and $F\in\CM(j\le)$. If there is some $(p,A)\in X(I)$ which belongs to $Y\cap Z$, then necessarily $p\left|_{\{i\le\}}= u\right.$, $p\left|_{\{j\le\}}= v\right.$, $A\cap\{i\le\} = E$ and $A\cap\{j\le\} = F$. Inasmuch as $\{j\le\}\sbs \{i\le\}$, then $u\left|_{\{j\le\}}= v\right.$ and $F\sbs E$. From the definition \eqref{eq YuB} we easily infer that $Y\sbs Z$.

  (2) We already know that $\bigcup\CR_{i}\sbs X(I)_{i}$. If $Y,Z\in\CR_{i}$ and $Y\ne Z$, then it follows from (1) that $Y\cap Z = \vu$. Given $x\in X(I)_{i}$, that is $x = (p, C\cup E)$ for some $p\in \al^{(I)}$, $C\in\CM(\le i)$ and $E\in\CM(i\le)$, by setting $u = p|_{\{i\le\}}$ we have that $x\in Y_{(u,E)}$. Thus $\bigcup\CR_{i}= X(I)_{i}$. The remaining part of the statement comes clear from the definitions.

 (3) Assume that $j$ is not minimal and let $Z = Y_{(u,E)}\in\CR_{j}$, where $u\in\al^{(j\le)}$ and $E\in\CM(j\le)$. Given $C\in\CM(\le j)$, we can choose some $i\in C$ with $i<j$ and we obtain from Lemma \ref{lem partpart} that
  \[
  X(I)_{(u,C\cup E)} = \bigcup\left\{X(I)_{(v,C\cup E)}\left|\,\,v\in\al^{(i\le)} \text{ and }v|_{\{j\le\}} = u\right.\right\}.
  \]
  On the other hand, given $v\in\al^{(i\le)}$ with $v|_{\{j\le\}} = u$, we have that $X(I)_{(v,C\cup E)}\sbs Y_{(v,F)}$, where $F = (C\cap\{i\le\})\cup E\in\CM(i\le)$. Thus, by recalling \eqref{eq YuB} we infer that $Y_{(v,F)}\cap Z\ne\vu$ and therefore $Y_{(v,F)}\sbs Z$ by (1). This proves the first statement of (3) and the second follows since the set $\left.\left\{v\in\al^{(i\le)}\right| v|_{\{j\le\}} = u\right\}$ has cardinality $\al$.

  (4) Assume that $i$ is not a maximal element and let $Y_{(u,E)}\in\CR_{i}$, where $u\in\al^{(i\le)}$ and $E\in\CM(i\le)$. By the assumption there is some $j\in E$ such that $i<j$ and, by taking $v = u|_{\{j\le\}}$ and $F = E\cap\{j\le\}$ we see that $Y_{(u,E)}\sbs Y_{(v,F)}$.

  (5) Let $x\in X$. Then, by condition ($*_{i}$), there is some $C\in\CM(\le i)$ such that $x\in\CN_{C}$; in particular $x = (r\cup u, C\cup E)$ for some $r\in\al^{(i\nle)}$, $u\in\al^{(i\le)}$ and $E\in\CM(i\le)$, so that
  \[
  x\in X(I)_{(u,C\cup E)} \sbs Y_{(u,E)} \in \CR_{i}.
  \]
  Next, let us consider any $y\in Y_{(u,E)}$ and let $z\in X(I)$ be such that $\za(x,z) \ne 0$. Then $z\in\CN_{C}$ by condition ($*_{i}$) and, by taking condition \eqref{magicmatrix} into account, we have that $z = (r\cup v, C\cup F)$ for some $v\in\al^{(i\le)}$ and $F\in\CM(i\le)$. Since $y = (s\cup u, D\cup E)$ for some $s\in\al^{(i\nle)}$ and $D\in\CM(\le i)$, by taking conditions \eqref{magicmatrix} and ($**_{i}$) into account we may write:
  \begin{align*}
0 \ne \za(x,z) &= \za((r\cup u, C\cup E),(r\cup v, C\cup F)) \\
&= \za((s\cup u, C\cup E),(s\cup v, C\cup F)) \\
&= \za(f_{CD}(s\cup u, D\cup E),f_{CD}(s\cup v, D\cup F)) \\
&= \za(f_{CD}(y),f_{CD}(s\cup v, D\cup F)) \\
&= \za(y,(s\cup v, C\cup F)).
\end{align*}
Thus $y\in X$ and we conclude that $X$ is the union of those $Y\in \CR_{i}$ for which $X\cap Y \ne \vu$.
\end{proof}

\section{Representing a partially ordered set $I$: the $\BK$-algebra $H(I,J)$ associated to a subset $J$ of $I$.}\label{sect ringDI}

By keeping the same data, settings and notations as in the previous sections, our next objective is to associate to each element $i$ of the $\al$-poset $I$ a (not necessarily unital) $\BK$-subalgebra $H(I,i)$ of $S(I,i)$, in such a way that $\CH = \{H(I,i)\mid i\in I\}$ is an independent set of $\BK$-subspaces of $Q(I)$ with the following features: if $i$ is a maximal element of $I$, then $H(I,i)$ is isomorphic to $\BK$; if $i$ is not maximal, then $H(I,i)$ is isomorphic to $\FR_{X(I)}(\BK)$, therefore it is a simple, semisimple and non-artinian algebra; moreover $H(I,i) H(I,j) = 0$ if $i,j$ are not comparable, while both $H(I,i)H(I,j)$ and $H(I,j) H(I,i)$ are nonzero and are contained in $H(I,i)$ if $i\le j$. Thus, for every subset $J\sbs I$, the subspace
\[
H(I,J)\defug\bigoplus_{j\in J}H(I,j)
\]
turns out to be a $\BK$-subalgebra of $Q(I)$, possibly without an identity.

For every $i\in I$ let us define
the $\BK$-subalgebra $H(I,i)$ of $Q(I)$ as follows:
\[
H(I,i) \defug \begin{cases}
          \psi_{I,i}\left(\FR_{X(i\le)}(\BK)\right) = \Soc(S(I,i)),  \hbox{ if $i$
\underbar{is not} a maximal element of $I$;} \\
          \psi_{I,i}\left(\mathbf{1}_{\CFM_{X(i\le)}(\BK)}\BK\right)= \ze_{X(I)_i}\BK,  \hbox{ if $i$ \underbar{is} a maximal
element of $I$.}
        \end{cases}
\]
Of course $H(I,i)\ne H(I,j)$ if $i\ne j$; also note that $H(I,i)$ is a unital subring of $Q(I)$ if and only if $i$ is the greatest element of $I$, in which case $H(I,i) = \BK$.

 \begin{lem}\label{HsbsFr}
 Assume that $i$ \underbar{is not} a maximal element of $I$. Then
\begin{equation}\label{H_ii}
     H(I,i) = \bigoplus\left\{\ze_{Y}H(I,i) = \ze_{Y}S(I,i)\,\,\left|\,\,
Y\in\CR_{i}\right.\right\}
             \end{equation}
and
     \begin{equation}\label{H_i}
     H(I,i) = H(I,i)\,\ze_{Y}H(I,i)
     \end{equation}
for every $Y\in\CR_{i}$. Moreover, given $\za\in S(I,i)$,
 the following conditions are equivalent:
 \begin{enumerate}
     \item $\za\in H(I,i)$.
     \item $\ze_{\CN_{C}}\za\,\ze_{\CN_{C}}\in \Soc(Q(I,i))$ for all
     $C\in\CM(\le i)$.
     \item There are $Y_{1},\ldots,Y_{n}\in\CR_{i}$ such that the
     $x$-th row of $\za$ is not zero only if $x\in
     Y_{1}\cup\cdots\cup Y_{n}$; equivalently
     \[
     \za = \left(\ze_{Y_{1}}+\cdots+\ze_{Y_{n}}\right)\za.
     \]
     \end{enumerate}
 \end{lem}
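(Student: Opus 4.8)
The plan is to deduce everything by transporting structure along the two isomorphisms already available, namely $\psi_{I,i}\colon\CFM_{X(i\le)}(\BK)\to S(I,i)$ from Proposition~\ref{subrings} and $\f_i\colon\CFM_{\al^{(i\le)}\times\CM(I)}(\BK)\to Q(I,i)$ from Theorem~\ref{theo-posetflr}, and then reading off the two descriptions of ``being in the socle'' in terms of nonzero rows: the one for $\Soc(S(I,i))=H(I,i)$ phrased via $\CR_i$, and the one for $\Soc(Q(I,i))$ from Theorem~\ref{theo-posetflr}(3) phrased via $\CP_i$. The bridge between the two is that inside a single $\CN_{C}$ every member of $\CR_i$ meets exactly one member of $\CP_i$.

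First I would dispose of \eqref{H_ii} and \eqref{H_i}. Since $i$ is not maximal, $H(I,i)=\psi_{I,i}(\FR_{X(i\le)}(\BK))=\Soc(S(I,i))$, and by Proposition~\ref{subrings} together with \eqref{eq eYub} the map $\psi_{I,i}$ is an additive, multiplicative bijection onto $S(I,i)$ carrying the matrix unit $\ze_{(u,E)}$ to $\ze_{Y_{(u,E)}}$, where $(u,E)\mapsto Y_{(u,E)}$ is a bijection of $X(i\le)$ onto $\CR_i$. For an arbitrary index set $Z$ the elementary identities recorded in the Notations of Section~\ref{sect represposetQi} give $\FR_{Z}(\BK)=\bigoplus\{\ze_{x}\CFM_{Z}(\BK)\mid x\in Z\}$, $\ze_{x}\CFM_{Z}(\BK)=\ze_{x}\FR_{Z}(\BK)$ and $\FR_{Z}(\BK)\ze_{x}\FR_{Z}(\BK)=\FR_{Z}(\BK)$; applying $\psi_{I,i}$ to these with $Z=X(i\le)$ yields at once \eqref{H_ii} (directness being preserved by injectivity of $\psi_{I,i}$) and \eqref{H_i}.

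For the equivalence of (1), (2), (3) I would run the cycle $(1)\Rightarrow(3)\Rightarrow(2)\Rightarrow(1)$. $(1)\Rightarrow(3)$: writing $\za=\psi_{I,i}(\zb)$ with $\zb\in\FR_{X(i\le)}(\BK)$, the matrix $\zb$ has finitely many nonzero rows, so $\zb=(\ze_{(u_{1},E_{1})}+\cdots+\ze_{(u_{n},E_{n})})\zb$; applying $\psi_{I,i}$ and \eqref{eq eYub} gives $\za=(\ze_{Y_{1}}+\cdots+\ze_{Y_{n}})\za$ with $Y_{k}=Y_{(u_{k},E_{k})}\in\CR_i$, and since distinct members of $\CR_i$ are disjoint (Lemma~\ref{lem idempprim Hi}) this $\ze$-sum equals $\ze_{Y_{1}\cup\cdots\cup Y_{n}}$, so the nonzero rows of $\za$ lie in $Y_{1}\cup\cdots\cup Y_{n}$; the two formulations in (3) are manifestly the same. $(3)\Rightarrow(2)$: fix $C\in\CM(\le i)$ (nonempty, $\{\le i\}$ having a maximal chain); using $(*_i)$ in the form \eqref{mamagicmatrix}, $\ze_{\CN_{C}}\za\ze_{\CN_{C}}=\ze_{\CN_{C}}\za=(\ze_{\CN_{C}\cap Y_{1}}+\cdots+\ze_{\CN_{C}\cap Y_{n}})\za$, and a short check from \eqref{eq YuB} and the definition of $\CN_{C}$ gives $\CN_{C}\cap Y_{(u_{k},E_{k})}=X(I)_{(u_{k},C\cup E_{k})}$, a single member of $\CP_i$; since $\ze_{X(I)_{(u_{k},C\cup E_{k})}}=\f_i(\ze_{(u_{k},C\cup E_{k})})$ and $\za=\f_i(\zc)$ for some $\zc$, each $\ze_{X(I)_{(u_{k},C\cup E_{k})}}\za=\f_i(\ze_{(u_{k},C\cup E_{k})}\zc)$ lies in $\f_i(\FR_{\al^{(i\le)}\times\CM(I)}(\BK))=\Soc(Q(I,i))$, whence $\ze_{\CN_{C}}\za\ze_{\CN_{C}}\in\Soc(Q(I,i))$ for every $C$. $(2)\Rightarrow(1)$: by Lemma~\ref{lem idempprim Hi}(5) the nonzero rows of $\za$ form $\bigcup_{k\in K}Y_{k}$ for distinct $Y_{k}=Y_{(u_{k},E_{k})}\in\CR_i$; fixing $C\in\CM(\le i)$ and using \eqref{mamagicmatrix}, the nonzero rows of $\ze_{\CN_{C}}\za\ze_{\CN_{C}}=\ze_{\CN_{C}}\za$ are exactly $\bigcup_{k\in K}X(I)_{(u_{k},C\cup E_{k})}$, with the $X(I)_{(u_{k},C\cup E_{k})}$ pairwise distinct members of $\CP_i$; as this matrix lies in $\Soc(Q(I,i))$, Theorem~\ref{theo-posetflr}(3) forces that set to be a union of \emph{finitely many} members of $\CP_i$, hence $K$ is finite, and then $\za=\ze_{\bigcup_{k\in K}Y_{k}}\za=\sum_{k\in K}\ze_{Y_{k}}\za\in\sum_{k\in K}\ze_{Y_{k}}S(I,i)\sbs H(I,i)$ by \eqref{H_ii}.

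The only point that has to be genuinely checked rather than transported is the bookkeeping identity $\CN_{C}\cap Y_{(u,E)}=X(I)_{(u,C\cup E)}\in\CP_i$, together with the observation used at the last step — a nonempty block of the partition $\CP_i$ that is contained in a finite union of blocks of $\CP_i$ must coincide with one of them, so that ``a union of finitely many members of $\CP_i$'' forces $K$ to be finite. Everything else is a routine passage through the isomorphisms $\psi_{I,i}$ and $\f_i$.
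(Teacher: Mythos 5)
Your proof is correct and follows essentially the same route as the paper's: both parts rest on transporting the socle structure along $\psi_{I,i}$ and on the comparison of the $\CR_{i}$- and $\CP_{i}$-descriptions of socle membership via the identity $\CN_{C}\cap Y_{(u,E)}=X(I)_{(u,C\cup E)}$. The only organizational difference is that you close the cycle with $(2)\Rightarrow(1)$ by citing Lemma \ref{lem idempprim Hi}(5) for the fact that the nonzero-row set of $\za\in S(I,i)$ is a union of $\CR_{i}$-blocks and then counting, whereas the paper proves $(2)\Rightarrow(3)$ by re-running the $(**_{i})$/$f_{C'C}$ propagation directly — the same computation, merely relocated into an already-proved lemma.
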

 \begin{proof}
 The first statement follows from the fact that $\{\ze_{Y}\mid Y\in\CR_{i}\}$ is a complete set of pairwise orthogonal and primitive idempotents of $S(I,i)$ (see just after Notations \ref{not Y}), while the equivalence (1)$\Leftrightarrow$(3) is clear from \eqref{H_ii}.

 (3)$\Rightarrow$(2). Assume (3) and let $C\in\CM(\le i)$. There are $V_{1},\ldots,V_{n}\in\CP_{i,C}$ such that $Y_{r} = \overline{V_{r}}$ for all $r\in\{1,\ldots,n\}$ and we infer from the definition \eqref{eq overline} that
\[
(Y_{1}\cup\cdots\cup Y_{n})\cap \CN_{C} = V_{1}\cup\cdots\cup V_{n}.
\]
As a result we have that
\[
\ze_{\CN_{C}}(\ze_{Y_{1}}+\cdots+\ze_{Y_{n}}) = \ze_{V_{1}}+\cdots+\ze_{V_{n}}
\]
and (3) of Theorem \ref{theo-posetflr} implies that $\ze_{\CN_{C}}\za\,\ze_{\CN_{C}}\in
\Soc(Q(I,i))$, because $\CP_{i,C}\sbs\CP_{i}$, $\za\in Q(I,i)$ and $\Soc(Q(I,i))$ is an ideal of $Q(I,i)$.

(2)$\Rightarrow$(3) Suppose (2) and take any $C\in\CM(\le i)$. Then it follows from (3) of Theorem \ref{theo-posetflr} that there are $Z_1,\ldots,Z_n\in\CP_{i}$ such that, for every $x\in X(I)$, the $x$-th row of $\ze_{\CN_{C}}\za\,\ze_{\CN_{C}}$ is not zero only if $x\in Z_1\cup\cdots\cup Z_n$. As $\ze_{\CN_{C}}\za\,\ze_{\CN_{C}}$ has zero entries outside the $\ze_{\CN_{C}}\times\ze_{\CN_{C}}$-block, necessarily $Z_1,\ldots,Z_n$ are contained in $\CN_{C}$ and so they belong to $\CP_{i,C}$. Given $x,y\in X(I)$, assume that
$\za(x,y)\ne 0$. Then $x,y\in\CN_{C'}$ for some $C'\in\CM(\le i)$ and $\za(f_{CC'}(x),f_{CC'}(y)) = \za(x,y)\ne 0$, therefore $f_{CC'}(x)\in Z_1\cup\cdots\cup Z_n$ and hence $x\in f_{C'C}(Z_1)\cup\cdots\cup f_{C'C}(Z_n)\sbs \overline{Z_1}\cup\cdots\cup \overline{Z_n}$.
\end{proof}

\begin{re}
    In general we have that $\ze_{\CN_{C}}H(I,i)\,\ze_{\CN_{C}}\not\sbs H(I,i)$,
unless $\{\le i\}$ is a chain. If $i$ \underbar{is not} a maximal element
of $I$, then $H(I,i)\sbs \Soc(Q(I,i))$ \emph{if and only if $\{\le i\}$ has finitely many maximal
chains}.
\end{re}

\begin{re}\label{re ponzipera}
    We feel it worthwhile to emphasize that the main features of the algebras $H(I,i)$ we are going to investigate bear on the particular shape of a matrix $\za\ne \mathbf{0}$ which belongs to $H(I,i)$. Firstly remember that, since $\za\in S(I,i)$, by Lemma \ref{lem idempprim Hi}, (5) the set $X$ of those $x\in X(I)$ such that the $x$-th row of $\za$ is not zero is a union of members of the partition $\CR_{i}$ of $X(I)_{i}$; in turn, these members are a union of members of $\CP_{i,C}\sbs\CP_{i}$ (see \ref{eq YuB}). If $i$ is not a maximal element of $I$ and $x\in X$, by condition ($*_{i}$) we have that $x\in\CN_{C}$ for a necessarily unique $C\in\CM(\le i)$; in addition, given $C\in\CM(\le i)$, by (3) of Theorem \ref{theo-posetflr} and Lemma \ref{HsbsFr} we have that $X\cap\CN_{C}$ is the union of finitely many members of $\CP_{i,C}$ and condition ($**_{i}$) entails that
    \[
    X\cap\CN_{C'} = f_{C'C}(X\cap\CN_{C})\quad \text{for every $C, C'\in\CM(\le i)$}.
    \]
\end{re}

 \begin{lem}\label{nonzerochain}
 Let $j_1<\cdots<j_n$ be a finite chain of $I$ with $n>1$, assume that $\mathbf{0}\ne\za_1\in
\nolinebreak H(I,j_1)$,\ldots,$\mathbf{0}\ne\za_n\in H(I,j_n)$ and let
$C_n\in\CM(\le j_n)$ \footnote{\, \emph{\textbf{WARNING}}! In a context like the present one, in which we have a poset $I$, a subset $J\sbs I$ and an element $j\in J$, by the symbol $\{\le j\}$ we mean the set $\{k\in I\mid k\le j\}$ and \underline{not} $\{k\in J\mid k\le j\}$!} be such that $j_1,\ldots,j_{n-1}\in C_n$. Then the
$(\CN_{C_n}\times \CN_{C_n})$-block
 of $\za = \za_1+\cdots+\za_n$ is not zero; specifically,
 there is some $x\in \CN_{C_n}$ such that the $x$-th row of $\za$
 is not zero and coincides with the $x$-th row of $\za_n$.
 \end{lem}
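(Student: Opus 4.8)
The plan is to exhibit a single row-index $x\in\CN_{C_n}$ at which the row of $\za_n$ is nonzero while the rows of $\za_1,\dots,\za_{n-1}$ all vanish. Since $\za_n\in S(I,j_n)$ and $j_n\in C_n$, condition ($*_{j_n}$) forces the $x$-th row of $\za_n$ to be supported among the columns belonging to $\CN_{C_n}$; hence the $x$-th row of $\za=\za_1+\dots+\za_n$ equals that of $\za_n$, is nonzero, and lies inside the $(\CN_{C_n}\times\CN_{C_n})$-block, which is exactly the assertion.

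First I would set $C_k\defug C_n\cap\{\le j_k\}$ for $k\in\{1,\dots,n-1\}$; a routine chain argument (using $j_k\in C_n$) shows $C_k\in\CM(\le j_k)$, and obviously $C_1\sbs\dots\sbs C_{n-1}\sbs C_n$ with $j_k\in C_k$, so Lemma \ref{lem-partition2}, (2) gives $\CN_{C_n}\sbs\CN_{C_{n-1}}\sbs\dots\sbs\CN_{C_1}$ and, iterating Remark \ref{remarklem-partition2}, the set $\CN_{C_n}$ is a union of members of each $\CP_{j_k,C_k}$. Next, since $\za_n\ne\mathbf{0}$, conditions ($*_{j_n}$) and ($**_{j_n}$) make the $(\CN_{C_n}\times\CN_{C_n})$-block of $\za_n$ nonzero, and by Lemma \ref{lem idempprim Hi}, (5) (after intersecting with $\CN_{C_n}$) the set of $x\in\CN_{C_n}$ whose $\za_n$-row is nonzero is a nonempty union of members of $\CP_{j_n,C_n}$; when $j_n$ is a maximal element one argues directly instead, since then $\mathbf{0}\ne\za_n\in\ze_{X(I)_{j_n}}\BK$, so that \emph{every} row of $\za_n$ indexed by $\CN_{C_n}$ is nonzero. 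Fix one such member $V^*=X(I)_{(v^*,C_n\cup E^*)}$, with $v^*\in\al^{(j_n\le)}$ and $E^*\in\CM(j_n\le)$.

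Now for $k<n$ the element $j_k$ is not a maximal element (it is $<j_n$), hence $\za_k\in H(I,j_k)=\Soc(S(I,j_k))$; by Lemma \ref{HsbsFr} and (3) of Theorem \ref{theo-posetflr} the set of $x\in\CN_{C_k}$ with nonzero $\za_k$-row is a union of finitely many members of $\CP_{j_k,C_k}$, and intersecting with $\CN_{C_n}\sbs\CN_{C_k}$ the set of $x\in\CN_{C_n}$ with nonzero $\za_k$-row becomes a union of finitely many members $X(I)_{(u^{(k)}_1,C_k\cup F^{(k)}_1)},\dots,X(I)_{(u^{(k)}_{l_k},C_k\cup F^{(k)}_{l_k})}$ of $\CP_{j_k,C_k}$, each $u^{(k)}_r$ lying in $\al^{(j_k\le)}$. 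The key observation is that $j_{n-1}\in\{j_k\le\}$ for every $k\le n-1$ (because $j_k\le j_{n-1}$), whereas $j_{n-1}\nin\{j_n\le\}$ (because $j_{n-1}<j_n$). So I may pick $b\in\al$ different from each of the finitely many ordinals $u^{(k)}_r(j_{n-1})$ ($1\le k\le n-1$, $1\le r\le l_k$), define $p\in\al^{(I)}$ by $p|_{\{j_n\le\}}=v^*$, $p(j_{n-1})=b$ and $p(i)=\emptyset$ for every other $i$, and put $x\defug(p,C_n\cup E^*)$. Then $x\in V^*\sbs\CN_{C_n}$, so the $x$-th row of $\za_n$ is nonzero; and if $x$ belonged to some $X(I)_{(u^{(k)}_r,C_k\cup F^{(k)}_r)}$ then $p|_{\{j_k\le\}}=u^{(k)}_r$ would force $b=p(j_{n-1})=u^{(k)}_r(j_{n-1})$, contradicting the choice of $b$; hence the $x$-th row of every $\za_k$ with $k<n$ is zero. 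Consequently the $x$-th row of $\za$ is exactly the nonzero $x$-th row of $\za_n$, which is supported in the columns of $\CN_{C_n}$, and the statement follows.

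I expect the only genuinely delicate point to be the step invoked above as routine: that the nonzero rows of $\za_k$, when cut down to the slice $\CN_{C_n}$, form a union of finitely many members of $\CP_{j_k,C_k}$ parametrized by data $(u^{(k)}_r,C_k\cup F^{(k)}_r)$ with $u^{(k)}_r\in\al^{(j_k\le)}$. Making this precise requires combining Lemma \ref{HsbsFr}, (3) of Theorem \ref{theo-posetflr} and Remark \ref{remarklem-partition2} with the coherence conditions ($*_{i}$), ($**_{i}$), tracking how a member of $\CR_{j_k}$ (equivalently, of $\CP_{j_k}$) meets the slice $\CN_{C_n}$ and how the parameter $u$ restricts from $\{j_k\le\}$ under that intersection.
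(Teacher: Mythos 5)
Your proof is correct. It shares the paper's skeleton: for $k<n$ the nonzero rows of $\za_k$, cut down to $\CN_{C_k}$, are covered by finitely many members of $\CP_{j_k,C_k}$ (Lemma \ref{HsbsFr} together with Theorem \ref{theo-posetflr},(3)), while the nonzero rows of $\za_n$ fill at least one whole member of $\CP_{j_n,C_n}$, and one must produce an index in the latter that avoids the former. Where you differ is in how that index is produced. The paper argues by pure cardinality: since $\CP_{j_n}$ is $\al$-coarser than $\CP_{j_{n-1}}$, the set $Y_n$ is a union of $\al$ members of $\CP_{j_{n-1}}$, whereas $Y_1\cup\cdots\cup Y_{n-1}$ sits inside finitely many of them, so $Y_n\setminus(Y_1\cup\cdots\cup Y_{n-1})\ne\vu$. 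You instead construct the witness explicitly, choosing a fresh ordinal $b$ at the coordinate $j_{n-1}$ — which lies in $\{j_k\le\}$ for every $k<n$ but not in $\{j_n\le\}$ — thereby re-running the separation argument of Lemma \ref{lem partpart},(2) inline rather than invoking the coarseness relations already packaged in Corollary \ref{cor PartP}. Both routes are sound; the paper's is shorter, while yours exhibits the row index concretely and isolates exactly which coordinate does the separating. The step you flag as delicate — that the nonzero rows of $\za_k$ restricted to the slice $\CN_{C_n}$ are covered by finitely many members of $\CP_{j_k,C_k}$ — is indeed the load-bearing point, and it is precisely what Remark \ref{re ponzipera} records, so it is available as stated.
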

\begin{proof}
For every $r\in\{1,\ldots,n\}$ set $C_r = C_n\cap\{\le j_r\}$, let $Y_r$ be the subset of those $x\in\CN_{C_r}$ such that the $x$-th row of $\za_r$ is not zero and note that $Y_r\ne\vu$. If $r<n$, the element $j_r$ is not maximal in $I$, therefore $Y_r$ is a (disjoint) union of finitely many
elements of $\CP_{j_{r},C_r}\sbs\CP_{j_r}$ (see Remark \ref{re ponzipera}). Since $\CP_{j_s}$ is
$\al$-coarser than $\CP_{j_r}$ when $r<s\le n$, in particular
$Y_n$ is a (disjoint) union of $\al$ elements of
$\CP_{j_{n-1}}$; on the other hand, by the above
$Y_1\cup\cdots\cup Y_{n-1}$ is contained in the (disjoint) union of
finitely many elements of $\CP_{j_{n-1}}$. Consequently
\[
Y_n\setminus(Y_1\cup\cdots\cup Y_{n-1})\ne\emptyset
\]
and therefore, if $x\in Y_n\setminus(Y_1\cup\cdots\cup Y_{n-1})$,
the $x$-th row of $\za$ coincides with the $x$-th row of $\za_n$,
which is not zero.
\end{proof}

 \begin{lem}\label{nonzerochain2}
 Let $J$ be a finite subset of $I$, let $\za = \sum_{j\in
J}\za_j$, where each $\za_j$ is a non-zero element of $H(I,j)$ for $j\in J$, let $j_1<\cdots<j_n$ be
a maximal chain of $J$ and let
$C_n\in\CM(\le j_n)$ be such that $j_1,\ldots,j_{n-1}\in C_n$. Then the $(\CN_{C_n}\times \CN_{C_n})$-blocks of
$\za$ and $\za_{j_1}+\cdots+\za_{j_n}$ coincide.
 \end{lem}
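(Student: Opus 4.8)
The plan is to compare the two matrices entry by entry on the block indexed by $\CN_{C_n}$. Writing $J':=J\setminus\{j_1,\ldots,j_n\}$, we have $\za-(\za_{j_1}+\cdots+\za_{j_n})=\sum_{m\in J'}\za_m$, so it suffices to show that the $(\CN_{C_n}\times\CN_{C_n})$-block of $\za_m$ is $\mathbf 0$ for every $m\in J'$; in fact I would show the stronger fact that every row of $\za_m$ indexed by an element of $\CN_{C_n}$ vanishes.

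Fix $m\in J'$. The combinatorial heart of the argument is the observation that, since $j_1<\cdots<j_n$ is a \emph{maximal} chain of $J$ while $m\in J\setminus\{j_1,\ldots,j_n\}$, there must be an index $r\in\{1,\ldots,n\}$ with $m$ and $j_r$ not comparable; otherwise $\{j_1,\ldots,j_n\}\cup\{m\}$ would be a chain of $J$ strictly containing $\{j_1,\ldots,j_n\}$, contradicting maximality. Moreover $j_r\in C_n$: for $r<n$ this is exactly the hypothesis $j_1,\ldots,j_{n-1}\in C_n$, and for $r=n$ it holds because $j_n$ is the greatest element of $\{\le j_n\}$ and so lies in every $C_n\in\CM(\le j_n)$.

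Since $j_r\in C_n$, every element of $\CN_{C_n}$, being of the form $(p,C_n\cup E)$ with $E\in\CM(j_n\le)$, has its second coordinate a maximal chain of $I$ passing through $j_r$; hence $\CN_{C_n}\sbs X(I)_{j_r}$. Because $m$ and $j_r$ are not comparable, Proposition~\ref{pro max} gives $X(I)_{j_r}\cap X(I)_m=\vu$, and therefore $\CN_{C_n}\cap X(I)_m=\vu$. On the other hand $\za_m\in H(I,m)\sbs S(I,m)$, so by condition $(*_{m})$ we have $\za_m=\ze_{X(I)_m}\za_m$; thus the $x$-th row of $\za_m$ is zero whenever $x\notin X(I)_m$, in particular whenever $x\in\CN_{C_n}$. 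Consequently $\za_m(\CN_{C_n},\CN_{C_n})=\mathbf 0$, and summing over $m\in J'$ we conclude that the $(\CN_{C_n}\times\CN_{C_n})$-blocks of $\za$ and $\za_{j_1}+\cdots+\za_{j_n}$ coincide.

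The only step that is not a direct unwinding of the definitions is the maximality argument producing an index $r$ with $j_r$ incomparable to $m$ and $j_r\in C_n$; this is what rules out any contribution of the off-chain summands to the $\CN_{C_n}$-block, and I would state it carefully. Everything else reduces to Proposition~\ref{pro max}, condition $(*_{m})$, and the definition of $\CN_{C_n}$. (Alternatively, one can bypass Proposition~\ref{pro max} by checking directly that $m\notin C_n$ and that no maximal chain $C_n\cup E$ with $E\in\CM(j_n\le)$ passes through $m$, since the elements of such an $E$ are $\ge j_n$ and would then be comparable to $j_r$.)
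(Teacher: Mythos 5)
Your proof is correct and follows essentially the same strategy as the paper: both arguments use the maximality of the chain $j_1<\cdots<j_n$ in $J$ to force any off-chain $m\in J$ to be incomparable with some $j_r\in C_n$, and then use disjointness of index sets to kill the contribution of $\za_m$ to the $(\CN_{C_n}\times\CN_{C_n})$-block. The only (cosmetic) difference is that you route the disjointness through $\CN_{C_n}\sbs X(I)_{j_r}$ and Proposition~\ref{pro max}, whereas the paper works with the nested sets $\CN_{C_n}\sbs\cdots\sbs\CN_{C_1}$ and Lemma~\ref{lem-partition2}.
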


\begin{proof}
As in the proof of Lemma \ref{nonzerochain}, for every $r\in\{1,\ldots,n\}$ set $C_r = C_n\cap\{\le j_r\}$ and note that since $C_1\sbs\cdots\sbs C_n$, then $\CN_{C_n}\sbs\cdots\sbs\CN_{C_1}$ by (2) of Lemma \ref{lem-partition2}. Consequently, given $j\in J$, if there is some $D\in\CM(\le j)$ such that $\CN_{C_n}\cap \CN_{D}\ne\emptyset$, then $\CN_{C_r}\cap \CN_{D}\ne\emptyset$ for all
$r\in\{1,\ldots,n\}$ and therefore it follows from Lemma
\ref{lem-partition2}(3) that $j$ is comparable with every $j_r$. As a
result $j\in\{j_1,\ldots,j_n\}$, because this latter is a maximal
chain of $J$. Since every matrix in $H(I,j)$ has zero entries outside the $(\CN_{D}\times\CN_{D})$-blocks, for $D\in\CM(\le j)$, we infer from the above that if $j\in
J\setminus\{j_1,\ldots,j_n\}$, then the $(\CN_{C_n}\times \CN_{C_n})$-block of every matrix in $H(I,j)$ is zero and,
consequently, the $(\CN_{C_n}\times \CN_{C_n})$-blocks of $\za$ and
$\za_{j_1}+\cdots+\za_{j_n}$ coincide.
\end{proof}

\begin{theo}\label{lemposetring}
With the above notations and settings, the set
\[
\CH = \{H(I,i)\mid i\in I\}
\]
of
$\BK$-subspaces of $Q(I)$ is independent and the following conditions hold:
\begin{enumerate}
\item Given $i\in I$, the subring $H(I,i)$ of $Q(I)$ has an
identity, given by $\ze_{X(I)_i}$, if and only if $i$ is a maximal
element of $I$; in particular $H(I,i) = \BK$ when $i = \max(I)$.
\item $H(I,i)H(I,j) = \{\mathbf{0}\}$ if $i,j$ are not comparable;
\item Given $i\in I$, if $J\sbs\{i\le\}$ and
$\mathbf{0}\ne\za\in\bigoplus_{j\in J}H(I,j)$, then
\[
\{\mathbf{0}\}\ne H(I,i)\za\sbs H(I,i) \quad\text{and}\quad \{\mathbf{0}\}\ne \za H(I,i)\sbs H(I,i).
\]
in particular there are $Y,Z\in\CR_{i}$ such that
\[
\mathbf{0}\ne\ze_{Y}\za\in H(I,i) \quad\text{and}\quad \mathbf{0}\ne
\za\ze_{Z}\in H(I,i).
\]
Consequently, if $i\le j$, then $H(I,i)H(I,j)\cup H(I,j)H(I,i)\sbs H(I,i)$.
\end{enumerate}
\end{theo}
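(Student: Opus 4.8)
The plan is to dispatch the independence of $\CH$ together with items~(1) and~(2) fairly quickly, and to concentrate the effort on~(3), whose substance lies in the case where $i$ is not a maximal element. For independence it is enough to treat a finite subset $J\sbs I$: if $\za=\sum_{j\in J}\za_j$ vanishes with $\za_j\in H(I,j)$ but $J_0\defug\{j\in J\mid\za_j\ne\mathbf 0\}\ne\vu$, pick a maximal chain $j_1<\dots<j_n$ of $J_0$ and, by the Hausdorff principle, a maximal chain $C_n$ of $\{\le j_n\}$ containing $j_1,\dots,j_{n-1}$. By Lemma~\ref{nonzerochain2} the $(\CN_{C_n}\times\CN_{C_n})$-block of $\za$ coincides with that of $\za_{j_1}+\dots+\za_{j_n}$, which by Lemma~\ref{nonzerochain} is nonzero when $n>1$ (it carries a row equal to a nonzero row of $\za_{j_n}$); for $n=1$ it equals the block of $\za_{j_1}\ne\mathbf 0$, which is nonzero once $C_1$ is chosen through a nonzero row of $\za_{j_1}$ (such a row sits in some $\CN_{C_0}$ by~($*_{j_1}$), and~($**_{j_1}$) carries it into $\CN_{C_1}$). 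In either case this contradicts $\za=\mathbf 0$. For~(1), if $i\in\ZM(I)$ then $H(I,i)=\ze_{X(I)_i}\BK$ has identity $\ze_{X(I)_i}=\psi_{I,i}(\mathbf 1)$ (Remark~\ref{res Si}), and when $i=\max(I)$ one has $\CM(\le i\le)=\CM(I)$, so $X(I)_i=X(I)$, $\ze_{X(I)_i}=\mathbf 1$ and $H(I,i)=\BK$; if $i\nin\ZM(I)$ then $H(I,i)=\Soc(S(I,i))\is\FR_{X(i\le)}(\BK)$ through $\psi_{I,i}$, and since $\{i\le\}\ne\vu$ forces $|X(i\le)|=\al$ to be infinite, $\FR_{X(i\le)}(\BK)$ — the finite-rank transformations of an $\al$-dimensional space — has no identity, hence neither does $H(I,i)$ and $\ze_{X(I)_i}=\psi_{I,i}(\mathbf 1)\nin H(I,i)$. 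Item~(2) is immediate: if $i,j$ are incomparable then $H(I,i)H(I,j)\sbs S(I,i)S(I,j)=\{\mathbf 0\}$ by Proposition~\ref{subrings}(1).

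Turning to~(3): if $i\in\ZM(I)$ then $J\sbs\{i\le\}=\{i\}$, so $\mathbf 0\ne\za\in H(I,i)$, and since $H(I,i)\is\BK$ is a field $H(I,i)\za=\za H(I,i)=H(I,i)$, which settles this case. Assume from now on $i\nin\ZM(I)$ and write $\za=\sum_{j\in J_0}\za_j$ with $J_0$ finite, $\mathbf 0\ne\za_j\in H(I,j)$ and $j\ge i$ for all $j\in J_0$. Fix any $W\in\CR_i$; by the identity~\eqref{H_i} each $\zb\in H(I,i)$ is a finite sum $\zb=\sum_k\zg_k\ze_W\zd_k$ with $\zg_k,\zd_k\in H(I,i)$. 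Since $i\le j$, Proposition~\ref{subrings}(2) keeps the products $\zd_k\za_j$ and $\za_j\zg_k$ inside $S(I,i)$, hence so are $\ze_W\zd_k\za_j$ and $\za_j\zg_k\ze_W$; left-multiplying the former by $\zg_k\in H(I,i)=\Soc(S(I,i))$ and right-multiplying the latter by $\zd_k\in\Soc(S(I,i))$, and using that $\Soc(S(I,i))$ is an ideal of $S(I,i)$, we obtain $\zb\za_j=\sum_k\zg_k\ze_W(\zd_k\za_j)\in H(I,i)$ and $\za_j\zb=\sum_k(\za_j\zg_k)\ze_W\zd_k\in H(I,i)$. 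Summing over $j\in J_0$ gives $H(I,i)\za\sbs H(I,i)$ and $\za H(I,i)\sbs H(I,i)$.

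For the nonvanishing (still $i\nin\ZM(I)$), repeat the chain construction above, now choosing $C_n$ to contain $i$ along with $j_1,\dots,j_{n-1}$. Lemmas~\ref{nonzerochain2} and~\ref{nonzerochain}, with the case $n=1$ handled as before (take $C_1\ni i$ through a nonzero row of $\za_{j_1}$), produce $x\in\CN_{C_n}$ whose $x$-th row of $\za$ agrees, on the block $\CN_{C_n}$, with the nonzero $x$-th row of $\za_{j_n}\in S(I,j_n)$; as $C_n\in\CM(\le j_n)$ and $x\in\CN_{C_n}$, the nonzero entries of that row lie in columns $y\in\CN_{C_n}$, so $\za(x,y)\ne 0$ for some $(x,y)\in\CN_{C_n}\times\CN_{C_n}$. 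Since $i\in C_n$ we have $\CN_{C_n}\sbs X(I)_i$, whence $x\in Y$ and $y\in Z$ for suitable $Y,Z\in\CR_i$; then $(\ze_Y\za)(x,y)=(\za\ze_Z)(x,y)=\za(x,y)\ne 0$, while $\ze_Y,\ze_Z\in\Soc(S(I,i))=H(I,i)$ place $\ze_Y\za\in H(I,i)\za\sbs H(I,i)$ and $\za\ze_Z\in\za H(I,i)\sbs H(I,i)$. This proves $H(I,i)\za\ne\{\mathbf 0\}\ne\za H(I,i)$ and exhibits the $Y,Z\in\CR_i$ of the statement; finally, taking $J=\{j\}$ with arbitrary $\za\in H(I,j)$ for $i\le j$ yields $H(I,i)H(I,j)\cup H(I,j)H(I,i)\sbs H(I,i)$.

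The delicate point is the two inclusions in~(3) for non-maximal $i$: one cannot simply invoke ``the socle is an ideal'', because $\za$ need not belong to $S(I,i)$ at all. What resolves this is the sandwich identity~\eqref{H_i}, which permits the foreign factor $\za$ to be wedged between two genuine elements of $\Soc(S(I,i))=H(I,i)$; Proposition~\ref{subrings}(2) then keeps everything inside $S(I,i)$ and the ideal property of the socle absorbs the rest. The only further care needed is combinatorial: routing the maximal chain $C_n$ through $i$, so that $\CN_{C_n}\sbs X(I)_i$, and carrying the block Lemmas~\ref{nonzerochain} and~\ref{nonzerochain2} through the degenerate case $n=1$.
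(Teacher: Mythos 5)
Your proposal is correct and, for the independence of $\CH$, for items (1) and (2), and for the nonvanishing half of (3), it follows essentially the same route as the paper: the same reduction to a maximal chain $j_1<\cdots<j_n$ of the support, the same use of Lemmas \ref{nonzerochain} and \ref{nonzerochain2} on the $(\CN_{C_n}\times\CN_{C_n})$-block, and the same device of routing $C_n$ through $i$ so that a nonzero entry lands in $\CN_{C_n}\sbs X(I)_i$ and is caught by idempotents $\ze_Y,\ze_Z$ with $Y,Z\in\CR_i$ (you are slightly more careful than the paper in spelling out the degenerate case $n=1$, which the paper leaves implicit). The genuine divergence is in the inclusion $H(I,i)\za\cup\za H(I,i)\sbs H(I,i)$: the paper verifies criterion (2) of Lemma \ref{HsbsFr}, computing $\ze_{\CN_{C}}(\za\zb)\ze_{\CN_{C}}=\ze_{\CN_{C}}\za\,(\ze_{\CN_{C}}\zb\ze_{\CN_{C}})$ and absorbing into $\Soc(Q(I,i))$ by using that this socle is an ideal of the larger algebra $Q(I,i)\sps Q(I,j)$; you instead sandwich $\zb\in H(I,i)$ as $\sum_k\zg_k\ze_W\zd_k$ via \eqref{H_i} and absorb into $\Soc(S(I,i))$ using only Proposition \ref{subrings}(2) and the fact that the socle is an ideal of $S(I,i)$ itself. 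Both are valid; your version stays entirely inside $S(I,i)$, avoiding the equivalence (1)$\Leftrightarrow$(2) of Lemma \ref{HsbsFr} and the containment $Q(I,j)\sbs Q(I,i)$, at the modest cost of invoking the sandwich identity \eqref{H_i}. One caveat, which you share with the paper rather than introduce: when $i$ is maximal your case analysis correctly yields the two displayed inclusions and nonvanishings but not the ``in particular'' clause about $Y,Z\in\CR_i$, since for maximal $i$ the idempotents $\ze_Y$ with $Y\in\CR_i$ do not lie in $H(I,i)=\ze_{X(I)_i}\BK$; the paper's own justification (``because $\ze_Y,\ze_Z\in H(I,i)$'') suffers from the same restriction, so this is an issue with the statement in the maximal case, not a defect of your argument relative to the paper's.
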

\begin{proof}
Assume that $J$ is a finite subset of $I$, suppose that $\za = \sum_{j\in
J}\za_j$, where $\mathbf{0}\ne \za_j\in H(I,j)$ for $j\in J$, let us
choose a maximal chain $j_1<\cdots<j_n$ of $J$ and let
$C_n\in\CM(\le j_n)$ be such that $j_1,\ldots,j_{n-1}\in C_n$. Then by Lemma
\ref{nonzerochain2} the $(\CN_{C_n}\times \CN_{C_n})$-blocks of
$\za$ and $\za' = \za_{j_1}+\cdots+\za_{j_n}$ coincide and, on the
other hand, the $(\CN_{C_n}\times \CN_{C_n})$-block of $\za'$ is not
zero by Lemma \ref{nonzerochain}. As a consequence $\za
\ne\mathbf{0}$ and this proves the independence of $\CH$.

(1) If $i\in I$ and $i$ is not a maximal element, then $H(I,i)\is
\FR_{X(i\le)}(\BK)$ as rings, therefore $H(I,i)$ is a ring without an
identity. If $i$ is a maximal element of $I$, then $H(I,i) = \psi_{I,i}\left(\mathbf{1}_{\CFM_{X(i\le)}(\BK)}\BK\right)\is \BK$ and $\ze_{X(I)_i} = \psi_{I,i}\left(\mathbf{1}_{\CFM_{X(i\le)}(\BK)}\right)$ is the multiplicative identity
of $H(I,i)$. If $i = \max(I)$, then $X(I)_i= X(I)$ and so $H(I,i) = \mathbf{1}\BK$ is a unital subring of $Q(I)$.

(2) follows from the property (1) of Proposition \ref{subrings},
since $H(I,i)\sbs S(I,i)$ for all $i\in I$.

(3) Let us prove first that if we take $i,j\in I$ with $i<j$ and $\za\in H(I,j)$, $\zb\in H(I,i)$, then $\za\zb$ and $\zb\za$ are both in $H(I,i)$. According to Proposition \ref{subrings} we have that $\za\zb\in S(I,i)$ and $\zb\za\in S(I,i)$. Given $C\in\CM(\le i)$, we have from
\eqref{mamagicmatrix} that
\begin{equation}\label{equa}
\ze_{\CN_{C}}(\za\zb)\ze_{\CN_{C}} = \ze_{\CN_{C}}\za(\zb\ze_{\CN_{C}}) =
\ze_{\CN_{C}}\za\ze_{\CN_{C}}\zb\ze_{\CN_{C}}.
\end{equation}
By Lemma \ref{HsbsFr} we have that $\ze_{\CN_{C}}\zb\ze_{\CN_{C}}\in
\Soc(Q(I,i))$. On the other hand $\za\in H(I,j)\sbs
S(I,j)\sbs
Q(I,j)\sbs Q(I,i)$. Since $\ze_{\CN_{C}}\in Q(I,i)$ and $\Soc(Q(I,i))$ is an ideal of
$Q(I,i)$, we infer that the third member of \eqref{equa}, and hence the first one, belongs
to $\Soc(Q(I,i))$. As a result $\za\zb\in H(I,i)$, again by Lemma
\ref{HsbsFr}. The proof that $\zb\za\in H(I,i)$ is similar.

Now, let $J$ be a finite subset of $\{i\le\}$, let $\za = \sum_{j\in J}\za_j$, where $\mathbf{0}\ne \za_j\in H(I,j)$ for $j\in J$, let $j_1<\cdots<j_n$ be a maximal chain of $J$, choose ${C_n\in\CM(\le j_n)}$ in such a way that $i,j_1,\ldots,j_n\in C_n$ and set $D = C_n\cap\{\le i\}\in\CM(\le i)$. As seen in the first
part of the present proof, the $(\CN_{C_n}\times \CN_{C_n})$-block
of $\za$ is not zero. Let $x,y\in \CN_{C_n}$ be such that
$\za(x,y)\ne 0$. Since $\CN_{C_n}\sbs \CN_{D}$, there are
(necessarily unique) $V,W\in\CP_{i,D}$ such that $x\in V$ and $y\in
W$. Now $Y =\overline{V},Z =\overline{W}$ are both members of $\CR_{i}$; we have that $x\in Y$ and $y\in Z$, thus both
$\ze_{Y}\za$ and $\za\ze_{Z}$ are nonzero and belong to $H(I,i)$ by the above, because $\ze_{Y},\ze_{Z}\in H(I,i)$.
\end{proof}

A first consequence of Theorem \ref{lemposetring}, as already announced at the beginning of this section, is that for every subset $J$ of $I$ the $\BK$-subspace $H(I,J) = \bigoplus_{j\in J}H(I,j)$
is a $\BK$-subalgebra of $Q(I)$. Of course we have that $H(I,\emptyset) = \{\mathbf{0}\}$ and $H(I,\{i\}) = H(I,i)$ for every $i\in I$.
Note that $H(I,J)$ may fail to be a unital subring of $Q(I)$ and it may
even lack multiplicative identity; we want to investigate under which conditions on $J$ the ring $H(I,J)$ has a multiplicative identity. Firstly, let us consider the subset $X(I,J)$ of $X$ defined by
\begin{gather*}
X(I,J) \defug \bigcup\{X(I)_{i}\mid i\in J\}
= \bigcup\left\{\CN_{C}\mid C\in\CM(\le i), i\in J\right\} \\
= \bigcup\left.\left\{\al^{(I)}\times\CM(\le i\le)\right|
i\in J\right\}
\end{gather*}
(remember that, for every $i\in J$, we have $X(I)_{i} = \bigcup\{\CN_{C}\mid C\in\CM(\le i)\}$ and $\CN_{C} = \bigcup\CP_{i,C}$ for every $C\in\CM(\le i)$). We observe that $X(I,J)$ is the smallest subset of $X(I)$ such that every matrix in $H(I,J)$ has zero entries outside the $(X(I,J)\times X(I,J))$-block. On the other hand, if a matrix $\zu \in Q(I)$ acts as a multiplicative identity on $H(I,J)$, then the following equalities hold:
\begin{equation}\label{eq multunitHJ}
    \zu\ze_{X(I,J)} = \ze_{X(I,J)} = \ze_{X(I,J)}\zu.
\end{equation}
In fact, given $x\in X(I,J)$, there are $i\in J$, $C\in\CM(\le i)$ and
$V\in\CP_{i,C}$ such that $x\in V$ (remember that, for every $i\in J$, we have $X(I)_{i} = \bigcup\{\CN_{C}\mid C\in\CM(\le i)\}$ and $\CN_{C} = \bigcup\CP_{i,C}$ for every $C\in\CM(\le i)$). Inasmuch as
$\ze_{\overline{V}}\in H(I,i)\sbs H(I,J)$, we have that
$\ze_{\overline{V}}\zu = \ze_{\overline{V}} =
\zu\,\ze_{\overline{V}}$ and, since $x\in {\overline{V}}$, we infer that
$\zu(x,y) = \delta(x,y) = \zu(y,x)$ for every $y\in X(I)$, which proves \eqref{eq multunitHJ}.
As a consequence, $H(I,J)+\ze_{X(I,J)}\,\BK$ is the smallest $\BK$-subalgebra of $Q(I)$ which has a multiplicative identity (given by $\ze_{X(I,J)}$) and contains $H(I,J)$ as an ideal; this means that the ring $H(I,J)$ has a multiplicative identity exactly when $\ze_{X(I,J)}\in H(I,J)$.

Let us denote by $\maximal{J}$ the set of those maximal elements of $I$ which are comparable with some element of $J$:
\[
\maximal{J} \defug \ZM(I)\cap\{J\le\}
\]
(recall that $\ZM(I)$ denotes the set of all maximal elements of $I$). Of course it may happen that
$J\not\sbs\{\le\maximal{J}\}$, in particular that $\maximal{J} =
\emptyset$, while $J\ne\emptyset$. If every element of $I$ is bounded by a maximal element or, equivalently, all maximal chains of $I$ have a greatest element, then it is clear that  $X(I,J)\sbs X(I,\maximal{J})$; this inclusion is an equality if and only if, given $m\in\maximal{J}$, every maximal chain of $I$ which is bounded by $m$ contains an element of $J$. Obviously this is the case if $\maximal{J}\sbs J$, in particular when $J$ is an upper subset of $I$; in this latter case it is clear that $\maximal{J} = \ZM(J)$.

\begin{definition}\label{def finshelt}
We say that a subset $J$ of $I$ is \emph{finitely sheltered in} $I$ if the following three conditions hold:
\[
\maximal{J} \text{ is finite,} \quad J\sbs\{\le\maximal{J}\}\quad \text{ and }\quad \maximal{J}\sbs J.
\]
\end{definition}
Note that in this case we have that
\[
X(I,J) = X(I,\maximal{J}).
\]

If $J$ is an upper subset of $I$, then $J$ is finitely sheltered in $I$ if and only if $J$ has a finite cofinal subset; in particular $I$ is finitely sheltered in $I$ exactly when $I$ has a finite cofinal subset and, if it is the case, then every upper subset of $I$ is finitely sheltered in $I$.

\begin{pro}\label{multunit}
Given a nonempty subset $J$ of $I$, the following conditions are equivalent:
\begin{enumerate}
\item $H(I,J)$ has a multiplicative identity (which is necessarily $\ze_{X(I,J)}$).
\item $\ze_{X(I,J)}\in H(I,J)$.
\item $J$ is finitely sheltered in $I$.
\end{enumerate}
If any (and hence all) of these conditions holds, then
\begin{equation}\label{multuniteq}
  \ze_{X(I,J)} =  \ze_{X(I,\maximal{J})} = \sum_{m\in\maximal{J}}\ze_{X(I)_{m}}.
\end{equation}
\end{pro}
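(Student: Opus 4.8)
The plan is as follows. The equivalence $(1)\Leftrightarrow(2)$, together with the assertion that an identity of $H(I,J)$ must be $\ze_{X(I,J)}$, has already been obtained in the discussion preceding the statement: $H(I,J)+\ze_{X(I,J)}\BK$ is the smallest $\BK$-subalgebra of $Q(I)$ with a multiplicative identity that contains $H(I,J)$ as an ideal, so $H(I,J)$ is unital exactly when $\ze_{X(I,J)}\in H(I,J)$, and in that case any identity $\zu$, lying in $H(I,J)$ and hence with all entries inside the $(X(I,J)\times X(I,J))$-block, satisfies $\zu=\ze_{X(I,J)}\zu\ze_{X(I,J)}=\ze_{X(I,J)}$ by \eqref{eq multunitHJ}. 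So the task is to prove $(2)\Leftrightarrow(3)$, and the formula \eqref{multuniteq} will drop out of the easy implication.

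For $(3)\Rightarrow(2)$, suppose $J$ is finitely sheltered; then $\maximal{J}=\{m_{1},\dots,m_{k}\}$ is a finite set of maximal elements of $I$, all belonging to $J$, and $X(I,J)=X(I,\maximal{J})=\bigcup_{l=1}^{k}X(I)_{m_{l}}$ as recorded just after Definition \ref{def finshelt}. Distinct maximal elements of $I$ are incomparable, so the $X(I)_{m_{l}}$ are pairwise disjoint by Proposition \ref{pro max}; hence $\ze_{X(I,J)}=\sum_{l=1}^{k}\ze_{X(I)_{m_{l}}}$, which is precisely \eqref{multuniteq}. Each $m_{l}$ is maximal, so by Theorem \ref{lemposetring}(1) we have $\ze_{X(I)_{m_{l}}}\in H(I,m_{l})\sbs H(I,J)$ (the inclusion because $m_{l}\in J$), and therefore $\ze_{X(I,J)}\in H(I,J)$, i.e. $(2)$.

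The substantive implication is $(2)\Rightarrow(3)$. Assume $\ze_{X(I,J)}\in H(I,J)=\bigoplus_{j\in J}H(I,j)$ and fix a finite $J_{0}\sbs J$ and matrices $\za_{j}\in H(I,j)$ $(j\in J_{0})$ with $\ze_{X(I,J)}=\sum_{j\in J_{0}}\za_{j}$. The set of nonzero rows of $\ze_{X(I,J)}$ is exactly $X(I,J)=\bigcup_{i\in J}X(I)_{i}$, and since the nonzero rows of $\za_{j}$ lie in $X(I)_{j}$ (as $H(I,j)\sbs S(I,j)$) we get $\bigcup_{i\in J}X(I)_{i}\sbs\bigcup_{j\in J_{0}}(\text{set of nonzero rows of }\za_{j})$. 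I will show: \emph{for every $j_{0}\in J$ and every maximal chain $A$ of $I$ with $j_{0}\in A$, the chain $A$ has a greatest element and $\max A$ is a maximal element of $I$ lying in $J_{0}$.} Granting this, the three requirements of Definition \ref{def finshelt} follow at once. Indeed, taking any maximal chain through a given $j_{0}\in J$ gives $j_{0}\le\max A\in\ZM(I)\cap\{J\le\}=\maximal{J}$, so $J\sbs\{\le\maximal{J}\}$; and for $m\in\maximal{J}$, choosing $j_{0}\in J$ with $j_{0}\le m$ and extending the chain $\{j_{0},m\}$ to a maximal chain $A$ forces $m=\max A\in J_{0}$ (because $m$ is maximal and $m\in A$), so $\maximal{J}\sbs J_{0}$, which is finite and contained in $J$.

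It remains to prove the italicized claim, and this is the point I expect to require the real work. Fix $j_{0}$ and $A$, put $F_{A}=\al^{(I)}\times\{A\}\sbs X(I)_{j_{0}}$ and $J_{0}^{A}=\{j\in J_{0}\mid j\in A\}$. A nonzero row of $\za_{j}$ can meet $F_{A}$ only if $j\in A$, so $F_{A}\sbs\bigcup_{j\in J_{0}^{A}}(\text{nonzero rows of }\za_{j})$; since $F_{A}\ne\emptyset$, also $J_{0}^{A}\ne\emptyset$. Suppose toward a contradiction that $J_{0}^{A}$ contains no maximal element of $I$. Then each $j\in J_{0}^{A}$ is non-maximal, so $\za_{j}\in\Soc(S(I,j))$ and, by Lemma \ref{HsbsFr}(3) together with the description \eqref{eq YuB} of the members of $\CR_{j}$, there is a finite set $U_{j}\sbs\al^{(j\le)}$ such that, for $(p,A)\in F_{A}$, if $(p,A)$ is a nonzero row of $\za_{j}$ then $p|_{\{j\le\}}\in U_{j}$. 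Consequently every $p\in\al^{(I)}$ satisfies $p|_{\{j\le\}}\in U_{j}$ for some $j\in J_{0}^{A}$. Now $J_{0}^{A}$ is a finite chain; let $j^{*}=\max J_{0}^{A}$, which is non-maximal, so there is $k^{*}\in I$ with $k^{*}>j^{*}$, whence $k^{*}\in\{j\le\}$ for every $j\in J_{0}^{A}$. Pick $b\in\al$ outside the finite set $\{\,u(k^{*})\mid j\in J_{0}^{A},\ u\in U_{j}\,\}$ and let $p\in\al^{(I)}$ be the map sending $k^{*}$ to $b$ and every other element of $I$ to $\emptyset$; then for all $j\in J_{0}^{A}$ and $u\in U_{j}$ the restrictions $p|_{\{j\le\}}$ and $u$ differ at $k^{*}$, so $p|_{\{j\le\}}\notin U_{j}$ for every $j\in J_{0}^{A}$, contradicting the previous sentence. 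Hence $J_{0}^{A}$ contains a maximal element $m$ of $I$; since $m\in A$ and $m$ is maximal, $m=\max A\in J_{0}$, which is the claim. The one genuinely delicate step is this construction of $p$: it succeeds precisely because each socle element $\za_{j}$ is supported on only finitely many blocks of $\CR_{j}$, whereas the fibre $F_{A}$ is as large as $\al^{(I)}$, so the ``block coordinates'' of the $\za_{j}$ with $j\in J_{0}^{A}$ cannot jointly exhaust $\al^{(I)}$; everything else is bookkeeping with the partitions $\CR_{i}$, $\CP_{i}$ and with Proposition \ref{pro max}.
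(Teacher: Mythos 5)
Your proof is correct. The equivalence $(1)\Leftrightarrow(2)$ and the implication $(3)\Rightarrow(2)$, together with formula \eqref{multuniteq}, are handled exactly as in the paper, by appeal to the discussion preceding the statement and to the pairwise disjointness of the $X(I)_{m}$ for distinct maximal $m$ (Proposition \ref{pro max}). Where you genuinely diverge is in $(2)\Rightarrow(3)$. The paper's proof of $(1)\Rightarrow(3)$ proceeds in three separate steps: it first shows $J\sbs\{\le F\}$ by an algebraic argument ($H(I,j)=\sum_{i\in F}\zd_{i}H(I,j)$ combined with the independence of the family $\CH$), then shows that every maximal element of the support $F$ is maximal in $I$ by isolating the $(\CN_{C}\times\CN_{C})$-block of the sum along a maximal chain of $F$ via Lemma \ref{nonzerochain2}, and finally shows $\maximal{J}\sbs F$ by a further contradiction with the fact that $\CN_{C}$ is a union of $\al$ members of $\CP_{m}$. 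You instead prove a single statement --- every maximal chain through an element of $J$ has a greatest element, which is maximal in $I$ and lies in the finite support $J_{0}$ --- from which all three clauses of Definition \ref{def finshelt} drop out at once, and you prove it by restricting to the fibre $\al^{(I)}\times\{A\}$ over one maximal chain and exhibiting an explicit $p\in\al^{(I)}$, supported at a single point $k^{*}>\max J_{0}^{A}$, that escapes the finitely many block-coordinates permitted by Lemma \ref{HsbsFr}(3) and \eqref{eq YuB}; this is the same device used in the paper's proof of Lemma \ref{lem partpart}(2). Both arguments rest on the same principle --- a socle element of $S(I,j)$ with $j$ non-maximal has its nonzero rows inside finitely many members of $\CR_{j}$, which cannot cover a fibre of cardinality $\al$ --- but your version avoids Lemma \ref{nonzerochain2} and the independence argument entirely, at the cost of redoing the combinatorial construction by hand, while the paper's version is longer but reuses block lemmas it needs elsewhere anyway. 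One remark: like the paper, your $(3)\Rightarrow(2)$ relies on the identity $X(I,J)=X(I,\maximal{J})$ recorded after Definition \ref{def finshelt}, which the paper obtains under the hypothesis, stated in the paragraph preceding that definition, that every element of $I$ is bounded by a maximal element; since you cite that remark rather than reprove it, your argument carries exactly the same implicit hypothesis as the paper's.
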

\begin{proof}
(1)$\Leftrightarrow$(2): see the previous discussion.

(1)$\Rightarrow$(3). Suppose that (1) holds. This means that there is a finite subset $F\sbs J$ and nonzero matrices $\zd_{i}\in H(I,i)$, for $i\in F$, such that
  \begin{equation}\label{bobobo}
     \ze_{X(I,J)} = \sum_{i\in F}\zd_{i}.
\end{equation}
Let us prove first that $J\sbs\{\le F\}$. Given $j\in J$, it follows from \ref{bobobo} that
\[
H(I,j) = \sum_{i\in F}\zd_{i}H(I,j)
\]
and so, by letting $F'$ be the subset of $F$ of those elements $i$ for which $\zd_{i}H(I,j) \ne 0$, it follows from Theorem \ref{lemposetring}, (2) that $j$ is comparable with all $i\in F'$. Assume that $i<j$ for all $i\in F'$, so that $\zd_{i}H(I,j) \sbs H(I,i)$ for all $i\in F'$ again by Theorem \ref{lemposetring}, (2). Then we have that
\[
H(I,j) \sbs \sum_{i\in F}H(I,i),
\]
which is impossible because $H(I,j)$ and $\sum_{i\in F}H(I,i)$ are independent $\BK$-subspaces of $Q(I)$ by Theorem \ref{lemposetring}. We conclude that $j\le i$ for some $i\in F$, as wanted.

Next, we claim that every maximal element of $F$ is maximal in $I$. Let $i_1<\ldots<i_n$ be a maximal chain of $F$. If we consider any $C\in\CM(\le i_n)$ such that $i_1,\ldots,i_{n-1}\in C$, then it follows from Lemma \ref{nonzerochain2} that the $\CN_{C}\times\CN_{C}$-blocks of $\ze_{X(I,J)}$ and $\zd_{i_1}+\cdots+\zd_{i_n}$ coincide. If $i_n$ were not maximal in $I$, it would follow from Lemma \ref{HsbsFr}, (2) and Theorem \ref{theo-posetflr}, (3) that there are $X_1,\ldots,X_r\in \CP_{i_n}$ such that, given $x\in \CN_{C}$, the $x$-th row of $\ze_{X(I,J)}$ is not zero only if $x\in X_1\cup\cdots\cup X_r$. On the other hand, the $\CN_{C}\times\CN_{C}$-block of $\ze_{X(I,J)}$ is a nonzero scalar matrix and therefore the $x$-th row of $\ze_{X(I,J)}$ is not zero for every $x\in \CN_{C}$. Since $\CN_{C}$ is the union of $\al$ elements of the partition $\CP_{i_{n}}$ (see Notations and Settings \ref{not main}), we have a contradiction and our claim is proved.

Finally, let us show that $\maximal{J}\sbs F$, from which it will follow that $\maximal{J}$ is finite and is contained in $J$, completing the proof that $J$ is finitely sheltered.
Assume, on the contrary, that there is some maximal element $m$ of $I$ and some $j\in J$ such that $m\not\in F$ but $j<m$, let $C\in\CM(I)$ be such that $j,m\in C$ and put $D = C\cap\{\le j\}\in\CM(\le j)$. We have from Lemma \ref{lem-partition2}, (2) that
\[
  \vu\ne \CN_{C}\sbs\CN_{D} \sbs X(I)_{j} \sbs X(I,J),
\]
meaning that the $\CN_{C}\times\CN_{C}$-block of
$\ze_{X(I,J)}$ is not zero. If $i_{1},\ldots,i_{r}$ are the
elements of $F$ such that the $\CN_{C}\times\CN_{C}$-blocks of
$\zd_{i_{1}},\ldots,\zd_{i_{r}}$ are not zero, then $i_{1},\ldots,i_{r}\in\{<m\}$. Indeed, given $t\in\{1,\ldots,r\}$, there is some $E\in\CM(\le i_{t})$ such that $\CN_{D}\cap \CN_{E}\ne\emptyset$, hence $i_{t}$ and $m$ are comparable by Lemma \ref{lem-partition2}, (1) and therefore $i_{t}<m$. We have now
\begin{equation}\label{eq enc}
 \mathbf{0}\ne\ze_{\CN_{C}} =
 \zd_{i_{1}}(\CN_{C},\CN_{C})+\cdots+\zd_{i_{r}}(\CN_{C},\CN_{C}).
\end{equation}
Given $t\in\{1,\ldots,r\}$, since $i_{t}$ is not maximal in $I$, according to Lemma \ref{HsbsFr}, (2) and Theorem \ref{theo-posetflr}, (3) there are finitely many members of $\CP_{i_{t}}$ whose union contains the set of those $x\in X(I)$ such that the $x$-row of $\zd_{i_{t}}$ is not zero. On the other hand, for every $t\in\{1,\ldots,r\}$ we have that $i_{t}<m$ and hence the partition $\CP_{m}$ is coarser than $\CP_{i_{t}}$. We infer that there are $Z_{1},\ldots,Z_{s}\in\CP_{m}$ such that if the $x$-th row of $\zd_{i_{t}}$ is not zero for some $t\in\{1,\ldots,r\}$, then $x\in Z_{1}\cup\cdots\cup Z_{s}$. Consequently we get from \eqref{eq enc} that $\CN_{C}\sbs Z_{1}\cup\cdots\cup Z_{s}\in\CP_{m}$, in contradiction with the fact that $\CN_{C}$ is the union of $\al$ members of $\CP_{m}$ (see Notations and Settings \ref{not main}). This shows that $\maximal{J}\sbs F$, as
 wanted.

    (3)$\Rightarrow$(2) Assume (3). Then it follows that $\{X(I)_{m}\mid m\in\maximal{J}\}$ is a finite partition of $X(I,J)$ (see the proof of Proposition \ref{pro max}). Thus \eqref{multuniteq} holds and, since $\ze_{X(I)_{m}}$ is in $H(I,m)$ for all $m\in\maximal{J}$, it follows that $\ze_{X(I,J)} \in H(I,J)$.
\end{proof}

\begin{cor}\label{cor fincof}
The ring $H(I,I)$ has a multiplicative identity if and only if $I$ has a finite cofinal subset. If it is the case and $J\sbs I$, then $H(I,J)$ is a unital subring of $H(I,I)$ if and only if all maximal elements of $I$ belong to $J$.
\end{cor}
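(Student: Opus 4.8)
The plan is to read everything off Proposition \ref{multunit}, whose statement already packages the two equivalences I need; the only work left is bookkeeping with $\maximal{(-)}$ and with the sets $X(I,-)$ introduced before Definition \ref{def finshelt}. Recall that $H(I,J)$ is always a $\BK$-subalgebra of $H(I,I)$, so the issue is purely whether it has an identity and whether that identity is $\ze_{X(I,I)}$. First I would dispose of the trivial case $I=\vu$ (then $H(I,I)=\{\mathbf{0}\}$, $\vu$ is cofinal in $\vu$, and $\ZM(I)=\vu\sbs J$), so from now on $I\ne\vu$. For the first assertion, apply Proposition \ref{multunit} with $J=I$: the ring $H(I,I)$ has a multiplicative identity if and only if $I$ is finitely sheltered in $I$, and — as noted right after Definition \ref{def finshelt} — an upper subset of $I$ is finitely sheltered in $I$ exactly when it has a finite cofinal subset, so with the upper subset $J=I$ this says precisely that $I$ has a finite cofinal subset. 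I also record for later use that in this case $\maximal{I}=\ZM(I)$, so by \eqref{multuniteq} the identity of $H(I,I)$ is $\ze_{X(I,I)}=\sum_{m\in\ZM(I)}\ze_{X(I)_{m}}$; moreover $I$ finitely sheltered forces $I\sbs\{\le\maximal{I}\}$, i.e.\ every element of $I$ is bounded above by a maximal element, equivalently every maximal chain of $I$ has a greatest element and hence a supremum, so Proposition \ref{pro max} gives that $\{X(I)_{m}\mid m\in\ZM(I)\}$ is a partition of $X(I)$ into nonempty blocks.

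Now suppose $I$ has a finite cofinal subset and $J\sbs I$. For the ``if'' direction, assume $\ZM(I)\sbs J$. Then every $m\in\ZM(I)$ lies in $J\sbs\{J\le\}$, so $\maximal{J}=\ZM(I)\cap\{J\le\}=\ZM(I)$; hence $\maximal{J}=\ZM(I)$ is finite, $\maximal{J}=\ZM(I)\sbs J$, and $J\sbs I\sbs\{\le\ZM(I)\}=\{\le\maximal{J}\}$ (the middle inclusion holding because $I$ is finitely sheltered). Thus $J$ is finitely sheltered in $I$, so by Proposition \ref{multunit} the ring $H(I,J)$ has a multiplicative identity, which by \eqref{multuniteq} equals $\sum_{m\in\maximal{J}}\ze_{X(I)_{m}}=\sum_{m\in\ZM(I)}\ze_{X(I)_{m}}=\ze_{X(I,I)}$, the identity of $H(I,I)$; hence $H(I,J)$ is a unital subring of $H(I,I)$.

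For the ``only if'' direction, assume $H(I,J)$ is a unital subring of $H(I,I)$, that is, $H(I,J)$ has a multiplicative identity and it equals the identity $\ze_{X(I,I)}$ of $H(I,I)$. Then $J\ne\vu$ (otherwise that identity would be $\mathbf{0}\ne\ze_{X(I,I)}$), so Proposition \ref{multunit} applies and gives that $J$ is finitely sheltered in $I$ with identity $\ze_{X(I,J)}=\sum_{m\in\maximal{J}}\ze_{X(I)_{m}}$, where $\maximal{J}\sbs\ZM(I)$. Equating $\sum_{m\in\maximal{J}}\ze_{X(I)_{m}}$ with $\sum_{m\in\ZM(I)}\ze_{X(I)_{m}}$ and using that $\{X(I)_{m}\mid m\in\ZM(I)\}$ is a partition of $X(I)$ into nonempty blocks (recorded above), so that the two sums are the diagonal idempotents supported on $\bigcup_{m\in\maximal{J}}X(I)_{m}$ and $\bigcup_{m\in\ZM(I)}X(I)_{m}$ respectively, forces $\maximal{J}=\ZM(I)$. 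Since $J$ is finitely sheltered we have $\maximal{J}\sbs J$, whence $\ZM(I)\sbs J$, as wanted.

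I do not expect a genuine obstacle here; the one point requiring care is the meaning of ``unital subring of $H(I,I)$'', which must be taken to mean that the identity of $H(I,J)$ coincides with that of $H(I,I)$ — under the weaker reading ``$H(I,J)$ merely has some identity'' the second assertion fails (e.g.\ for a proper upper subset $J$ with a finite cofinal subset that omits a maximal element of $I$). Once this convention is fixed, the whole argument is simply the comparison of the diagonal idempotents $\ze_{X(I,J)}$ and $\ze_{X(I,I)}$ through Propositions \ref{multunit} and \ref{pro max}.
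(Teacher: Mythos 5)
Your proof is correct and follows exactly the route the paper intends: the corollary is stated without a separate proof because it is the direct specialization of Proposition \ref{multunit} (via the remark after Definition \ref{def finshelt} that an upper subset is finitely sheltered iff it has a finite cofinal subset), and your argument is just the careful fleshing-out of that, including the comparison of $\ze_{X(I,J)}$ with $\ze_{X(I,I)}$ through the partition $\{X(I)_{m}\mid m\in\ZM(I)\}$ of Proposition \ref{pro max}. Your remark that ``unital subring'' must mean the identities coincide is the right reading and correctly identified as the one point needing care.
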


\section{Representing a partially ordered set $I$: the unit-regular $\BK$-algebra $\mathfrak{B}(I)$.}\label{sect algebraB}

Henceforth, unless otherwise stated, we assume that the $\al$-poset $I$ we consider \emph{has a finite cofinal subset}. Starting from this section we focus our interest on the main object of our work: the $\BK$-subalgebra
\[
\mathfrak{B}(I) \defug H(I,I) = \bigoplus_{i\in I}H(I,i)
\]
of $Q(I) = \CFM_{X(I)}(\BK)$, which is unital by Proposition \ref{pro max} and Corollary \ref{cor fincof}. Obviously $\mathfrak{B}(I)$ depends on the cardinal $\al$ and the field $\BK$; if we need to emphasize this we use the notation $\mathfrak{B}_{\al,\BK}(I)$.

Our first result is a complete description of ideals of $H(I,J)$, for any subset $J\sbs I$, and hence of the whole $\BK$-algebra $\mathfrak{B}(I)$.

 \begin{theo}\label{theo ideals}
 Given a subset $J$  of $I$, the assignment $L\mapsto H(I,L)$ defines a lattice isomorphism from the lattice ${\Downarrow\!\! J}$ of all lower subsets of $J$ to the lattice $\BL_{2}(H(I,J))$ of all ideals of $H(I,J)$. In particular, the lattice of all ideals of $\mathfrak{B}(I)$ is isomorphic to $\Downarrow\!\! I$.
 \end{theo}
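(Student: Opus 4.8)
The plan is to exhibit the assignment $L\mapsto H(I,L)$ as an order-preserving, order-reflecting bijection between the complete lattices $\Downarrow\!\!J$ and $\BL_{2}(H(I,J))$; since an order isomorphism of lattices automatically preserves meets and joins, this yields the asserted lattice isomorphism, and the statement about $\mathfrak B(I)$ is just the case $J=I$, which is finitely sheltered by our standing hypothesis, so that $H(I,I)=\mathfrak B(I)$.

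First I would dispatch the easy directions. That $H(I,L)=\bigoplus_{l\in L}H(I,l)$ is an ideal of $H(I,J)=\bigoplus_{j\in J}H(I,j)$ for every $L\in\Downarrow\!\!J$ follows from Theorem \ref{lemposetring}: it suffices to check $H(I,j)H(I,l)\cup H(I,l)H(I,j)\sbs H(I,L)$ for $j\in J$ and $l\in L$, and the three cases --- $j,l$ incomparable; $l\le j$; $j<l$ --- are settled by parts (2) and (3) of that theorem, the last one using that $L$ is lower, so $j\in L$. Injectivity and order-reflection are immediate from the independence of $\CH=\{H(I,i)\mid i\in I\}$: if $H(I,L)\sbs H(I,L')$ and $l\in L$, then $H(I,l)$ is contained in $\bigoplus_{l'\in L'}H(I,l')$, which forces $l\in L'$. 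So the whole point is surjectivity.

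Let $\mathfrak I\in\BL_{2}(H(I,J))$. For each $j\in J$ the set $\mathfrak I\cap H(I,j)$ is a two-sided ideal of $H(I,j)$, and $H(I,j)$ is a \emph{simple} ring: it equals $\BK$ when $j\in\ZM(I)$, and otherwise it is isomorphic, via $\psi_{I,j}$, to $\FR_{X(j\le)}(\BK)$, which has no proper nonzero two-sided ideal (from a nonzero element one obtains, by suitable left and right multiplications, every matrix unit, and then every element). Hence $\mathfrak I\cap H(I,j)\in\{\{\mathbf{0}\},H(I,j)\}$; put $L\defug\{j\in J\mid H(I,j)\sbs\mathfrak I\}$. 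Then $H(I,L)\sbs\mathfrak I$, and $L$ is a lower subset of $J$: if $j\in L$, $k\in J$ and $k<j$, picking $\mathbf{0}\ne\za\in H(I,j)$ Theorem \ref{lemposetring}(3) provides $Y\in\CR_{k}$ with $\mathbf{0}\ne\ze_{Y}\za\in H(I,k)$, and $\ze_{Y}\za\in H(I,J)\,\mathfrak I\sbs\mathfrak I$, so $k\in L$. Because $H(I,J)=H(I,L)\oplus H(I,J\setminus L)$ with $H(I,L)$ an ideal, the quotient map restricts to a $\BK$-algebra isomorphism $H(I,J\setminus L)\is H(I,J)/H(I,L)$ (injectivity by independence), under which no $H(I,j)$ with $j\in J\setminus L$ is contained in the image of $\mathfrak I$. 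Thus everything reduces to the following: \emph{if $\mathfrak I$ is an ideal of $H(I,J)$ with $H(I,j)\not\sbs\mathfrak I$ for every $j\in J$, then $\mathfrak I=\{\mathbf{0}\}$} --- equivalently, every ideal of $H(I,J)$ is homogeneous for the decomposition $\bigoplus_{j\in J}H(I,j)$.

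To prove this, suppose $\mathfrak I\ne\{\mathbf{0}\}$ and pick $\mathbf{0}\ne\za=\sum_{j\in F}\za_{j}\in\mathfrak I$, with $\za_{j}\in H(I,j)\setminus\{\mathbf{0}\}$, whose support $F$ has the smallest possible cardinality; let $m$ be a minimal element of $F$. If $m\in\ZM(I)$, then $\ze_{X(I)_{m}}\in H(I,m)\sbs H(I,J)$ and $\ze_{X(I)_{m}}\za=\za_{m}\in\mathfrak I$ (the other terms vanish, $m$ being incomparable with each $j\in F\setminus\{m\}$), contradicting the hypothesis. Hence $m\notin\ZM(I)$, so $\ze_{Y}\in H(I,m)\sbs H(I,J)$ for every $Y\in\CR_{m}$, and $\ze_{Y}\za=\sum_{j\in F}\ze_{Y}\za_{j}\in H(I,m)$ (the $j=m$ term is in $H(I,m)$; those with $j>m$ lie in $H(I,m)H(I,j)\sbs H(I,m)$; those with $j$ incomparable with $m$ vanish; and $F$ has no element below $m$). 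As $\ze_{Y}\za\in H(I,m)\cap\mathfrak I=\{\mathbf{0}\}$, we get $\ze_{X(I)_{m}}\za=\mathbf{0}$, whereas $\za_{m}\ne\mathbf{0}$ has all its nonzero rows inside $X(I)_{m}$ (Lemma \ref{lem idempprim Hi}(5)); therefore $\za_{m}=-\sum_{j\in F,\,j>m}\ze_{X(I)_{m}}\za_{j}$, and some $j^{*}\in F$ with $j^{*}>m$ satisfies $\ze_{X(I)_{m}}\za_{j^{*}}\ne\mathbf{0}$. Choosing an idempotent $\ze\in H(I,j^{*})\sbs H(I,J)$ that retains a nonzero row of $\za_{j^{*}}$ lying in $X(I)_{m}$ (namely $\ze=\ze_{X(I)_{j^{*}}}$ if $j^{*}\in\ZM(I)$, and $\ze=\ze_{Y^{*}}$ for a suitable $Y^{*}\in\CR_{j^{*}}$ otherwise), the element $\ze\za\in\mathfrak I$ has support inside $\{j^{*}\}\cup\{j\in F\mid j<j^{*}\}$ --- strictly smaller than $F$ as soon as $F$ contains an element incomparable with $j^{*}$ or above $j^{*}$. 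I expect its non-vanishing to follow, as in the proof of Proposition \ref{multunit}, from a cardinality gap: the rows touched by $\za_{j^{*}}$ form a union of $\al$ members of $\CR_{j^{*}}$, while each component $\za_{j}$ with $j>j^{*}$ touches only finitely many members of a strictly $\al$-coarser partition, so these cannot cancel the $\za_{j^{*}}$-part; this contradicts minimality, and the residual case $F\sbs\{\le j^{*}\}$ (where $j^{*}=\max F$) is handled by running the same localization at $j^{*}$ itself --- where no interference occurs --- and descending through $F$, again using the $\al$-coarsening of the partitions $\CP_{i}$ to guarantee non-vanishing at each step. This upward propagation controlled by the $\al$-coarsening is, I think, the genuine obstacle; everything else is bookkeeping with the multiplication rules of Theorem \ref{lemposetring}, the simplicity of the rings $H(I,i)$, and the independence of $\CH$.
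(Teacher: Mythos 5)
Your overall architecture is sound and in fact parallels the paper's proof: the two easy directions, the reduction via $L=\{j\in J\mid H(I,j)\sbs\mathfrak I\}$ and simplicity of each $H(I,j)$, and the choice of a nonzero $\za=\sum_{j\in F}\za_j\in\mathfrak I$ with $F$ minimal all match what the paper does (the paper inducts on $|F|$ rather than taking a minimal support, but this is cosmetic). The gap is exactly where you flag it, and your sketch of how to close it would not work as written. First, a factual slip: by Lemma \ref{HsbsFr}, the nonzero rows of $\za_{j^*}$ occupy only \emph{finitely many} members of $\CR_{j^*}$ (not $\al$ of them), so the cardinality comparison you invoke is stated backwards. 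More seriously, the direction of possible cancellation is the opposite of the one you assert: it is the \emph{coarser} (higher) components that can annihilate a \emph{finer} (lower) one on its entire row-support --- your own computation at the minimal element $m$ exhibits precisely this phenomenon, since $\za_m$ is exactly cancelled by $\sum_{j>m}\ze_{X(I)_m}\za_j$ on all of $X(I)_m$. Consequently, cutting down by a single block $\ze_{Y^*}$ with $Y^*\in\CR_{j^*}$ carries no guarantee that $\ze_{Y^*}\za\ne\mathbf 0$: on $Y^*$ the component $\za_{j^*}$ could a priori be cancelled by the components $\za_j$ with $j>j^*$ in just the same way. Also, your ``residual case'' $F\sbs\{\le j^*\}$ is really the main case, and ``descending through $F$'' is not yet an argument.

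The paper resolves this by going to the \emph{top} of $F$ rather than localizing at $j^*$ from below: choose a maximal element $k$ of $F$ that is not minimal in $F$, fix $Z\in\CR_k$ on which $\za_k$ has nonzero rows, and observe that each lower component $\za_j$ ($j<k$, $j\in F$) has its nonzero rows inside finitely many members of the finer partition $\CR_j$, whereas $Z$ contains $\al$ such members (Lemma \ref{lem idempprim Hi}, parts (2) and (3)). Hence one can pick a block $T$ of one of these finer partitions with $T\sbs Z$ and $T$ disjoint from the row-supports of all the lower components; the components indexed by elements of $F$ not below $k$ are killed by $\ze_T$ for support reasons, while every row of $\za_k$ indexed by $T$ is nonzero by Lemma \ref{lem idempprim Hi}(5). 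This produces $\mathbf 0\ne\ze_T\za=\ze_T\za_k\in\mathfrak I$ lying in a single summand $H(I,l)$ with $l\in F$, whence $H(I,l)\sbs\mathfrak I$ by simplicity and the induction (or your minimality argument) closes. To repair your proof, replace your step at $j^*$ by this construction at a maximal element of $F$, or prove and invoke the non-vanishing statements of Lemmas \ref{nonzerochain} and \ref{nonzerochain2}, which encapsulate exactly the point at issue.
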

\begin{proof}
At first, let us prove that if $L\sbs J\sbs I$, then $H(I,L)$ is an ideal of $H(I,J)$ if and only if $L$ is a lower subset of $J$. Assume that $H(I,L)$ is an ideal of $H(I,J)$ and take
$i\in L$, $j\in J$ with $j\le i$. If $j\not\in L$, then $H(I,j)\sbs
H(I,J\setminus L)$, while it follows from Theorem \ref{lemposetring}
that
\[
\{\mathbf{0}\}\ne H(I,i)H(I,j)\sbs H(I,j)\cap H(I,L) \sbs
H(I,J\setminus L)\cap H(I,L) = \{\mathbf{0}\},
\]
hence a
contradiction. Thus necessarily $j\in L$. Conversely, suppose that
$L$ is a lower subset of $J$ and let $i\in L$, $j\in J$.
Then exactly one of the following conditions occurs: a) $j\in
L$, b) $j\not\in L$ and $i,j$ are not comparable, c) $j\not\in L$ and $j>i$. In
all cases it follows from Theorem \ref{lemposetring} that
$H(I,j)H(I,i)\cup H(I,i)H(I,j)\sbs H(I,L)$. Since $H(I,L)$
is already a $\BK$-subspace of $H(I,J)$, this is sufficient to
conclude that it is an ideal of $H(I,J)$.

Suppose now that $\mathfrak{I}$ is an ideal of $H(I,J)$. As $\{\mathbf{0}\} = H(I,\emptyset)$, we may assume that $\mathfrak{I} \ne \{\mathbf{0}\}$ and observe that if $\mathfrak{I}\cap H(I,j)\ne \{\mathbf{0}\}$ for some $j\in J$, then necessarily $H(I,j) = \mathfrak{I}\cap H(I,j)\sbs \mathfrak{I}$, because $H(I,j)$ is a simple ring. Our goal is to prove that $\mathfrak{I} = H(I,L)$, where $L = \{j\in J\mid H(I,j)\sbs \mathfrak{I}\}$.
Suppose that $\mathbf{0}\ne\za\in\mathfrak{I}$  and let us consider the unique decomposition
\[
\za = \sum\{\za_{j}\mid j\in F\}
\]
for some finite subset $F$ of $J$ and nonzero elements $\za_{j}\in H(I,j)$, for $j\in F$. We will reach our goal once we show that $\za_{j}\in \mathfrak{I}$ for every $j\in F$. As the statement is obvious in case $|F| = 1$, let us consider an integer $n\ge 2$, assume inductively that the statement is true whenever $|F|<n$ and suppose that $|F| = n$. We claim that $\mathfrak{I}\cap H(I,k)\ne \{\mathbf{0}\}$ for at least one $k\in F$. If $F$ is an antichain, given any $k\in F$ and an idempotent $\ze$ of $H(I,k)$ such that $\za_{k} = \ze\za_{k}$, it follows from (2) of Theorem \ref{lemposetring} that
\[
\mathbf{0}\ne\za_{k} = \ze\za_{k} = \ze\za \in \mathfrak{I}\cap H(I,k).
\]
Assume that $F$ is not an antichain, let us choose a maximal element $k$ of $F$ which is not minimal in $F$, set $\{j_{1},\ldots,j_{r}\} = \{<k\}\cap F$ and choose any $Z\in\CR_{k}$ such that $\ze_{Z}\za_{k}\ne\mathbf{0}$; note that if $k$ is a maximal element in $I$, then $\za_{k} = \ze_{X(I)_k}a$ for some nonzero $a\in\BK$, therefore
$\mathbf{0}\ne\ze_{Z}\za_{k} = \ze_{Z}\za \in \mathfrak{I}\cap H(I,k)$ for every $Z\in\CR_{k}$. If $k$ is not maximal in $I$ it follows from Lemma \ref{HsbsFr}, (3) that, for every $\a\in\{1,\ldots,r\}$, there are $Y_{\a 1},\ldots,Y_{\a s_{\a}}\in\CR_{j_{\a}}$ such that, by setting $Y_{\a} = Y_{\a 1}\cup\cdots\cup Y_{\a s_{\a}}$, the $x$-th row of $\za_{j_{\a}}$ is not zero only if $x\in Y_{\a}$. Set $Y = Y_{1}\cup\cdots\cup Y_{r}$. We have that $\za_{j_{\a}} = \ze_{Y}\za_{j_{\a}}$ for every $\a\in\{1,\ldots,r\}$ and, on the other side, it follows from (2) and (3) of Lemma \ref{lem idempprim Hi} that there is some $T\in\CR_{j_{1}}\cup\cdots\cup\CR_{j_{r}}$ such that $T\sbs Z$ and $T\cap Y = \vu$, thus
\[
\ze_{T}\za_{j_{1}} = \cdots = \ze_{T}\za_{j_{r}}  = \mathbf{0}.
\]
In addition, if $l\in F\setminus\{\le k\}$ and $U\in\CR_{l}$, then $U\cap Z = \vu$ by Lemma \ref{lem idempprim Hi}, (1) and so $U\cap T = \vu$. Consequently, by (2) of Theorem \ref{lemposetring} we have that $\ze_{T}\za_{l} = \mathbf{0}$ for every $l\in F\setminus\{\le k\}$. It follows from (5) of Lemma \ref{lem idempprim Hi} that the $x$-th row of $\za_{k}$ is not zero for all $x\in Z$; thus, since $\vu\ne T\sbs Z$, we have that $\ze_{T}\za_{k}\ne \mathbf{0}$. By taking (3) of Theorem \ref{lemposetring} into account, we conclude that
\[
\mathbf{0}\ne \ze_{T}\za_{k} = \ze_{T}\za \in \mathfrak{I}\cap H(I,{k})
\]
and our claim is proved. As seen previously, we have that $H(I,k) \sbs \mathfrak{I}$ and therefore $\za_{k}\in \mathfrak{I}$. Consequently $\za - \za_{k}\in \mathfrak{I}$ and the inductive assumption gives us that $\za_{j}\in \mathfrak{I}$ for all $j\in F\setminus\{k\}$. We have so far established the first of the two inclusions
\[
\mathfrak{I} \sbs H(I,L) = \bigoplus_{l\in L}H(I,l)\sbs\mathfrak{I},
\]
the second being clear; thus we have the equality, as wanted. Finally $L$ is a lower subset of $J$, as we have seen at the beginning of the present proof, while the the fact that the assignment $L\mapsto H(I,L)$ defines a lattice isomorphism from $\Downarrow\!\! J$ to $\BL_{2}(H(I,J))$ is straightforward.
\end{proof}

\begin{re}\label{remark split}
If $J\sbs I$ and $L$ is a lower subset of $J$, then the projection map
\[
\lmap{\f_{L,J}}{H(I,J)}{H(I,J\setminus L)}
\]
with kernel $H(I,L)$, that is
\begin{equation}\label{mapfKJ}
\f_{L,J}(\za'+\za'') = \za' \quad\text{ for
all\,\, $\za'\in H(I,J\setminus L),\,\, \za''\in H(I,L)$},
\end{equation}
is a surjective $\BK$-algebra homomorphism which induces an isomorphism of $\BK$-algebras
\[
H(I,J)/H(I,L) \is H(I,J\setminus L).
\]
Let us say that an ideal $\mathfrak{I}$ of a $\BK$-algebra $R$ is \emph{splitting} if there is a $\BK$-subalgebra $S$ of $R$ such that $R = \mathfrak{I}\oplus S$ as $\BK$-vector spaces or, equivalently, the canonical ring epimorphism $R\to R/\mathfrak{I}$ restricts to an isomorphism from $S$ to $R/\mathfrak{I}$. Then it follows from Theorem \ref{theo ideals} that every ideal of $\mathfrak{B}(I)$ is splitting.
\end{re}

The next main feature of the algebra $\mathfrak{B}(I)$ is that it is locally matricial. We recall that a \emph{matricial $\BK$-algebra} is any algebra isomorphic to
\[
\BM_{n_{1}}(\BK)\times\cdots\times \BM_{n_{r}}(\BK)
\]
for some positive integers $n_{1},\ldots, n_{r}$ (see \cite{Good:3}, p.217, for instance). A $\BK$-algebra $R$ is \emph{locally matricial} if every \emph{finite} subset of $R$ is contained in some matricial $\BK$-subalgebra.

Given $i\in I$, since $H(I,i)$ is isomorphic to $\Soc(S(I,i))$, by Litoff's Theorem (see \cite{Jacobson:1}, p.90 and \cite{FaithUtumi:1}) $H(I,i)$ is locally matricial. Note that if $\mathbf{e}$ is any idempotent of $H(I,i)$, then $\mathbf{e}H(I,i)\mathbf{e}$ is a matricial $\BK$-algebra, as it is isomorphic to the endomorphism ring of a finite direct sum of copies of the simple left $H(I,i)$-module $\BK^{(X(I))}$, whose endomorphism ring is isomorphic to $\BK$.

\begin{pro}\label{locmatrix}
If $J$ is any subset of $I$, then the $\BK$-algebra $H(I,J)$ is locally matricial. In particular $\mathfrak{B}(I)$ is locally matricial.
\end{pro}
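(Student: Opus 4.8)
\emph{Plan.} The first move is to reduce to a \emph{finite} index set and then induct. If $S$ is a finite subset of $H(I,J)$, each of its elements has a unique expansion with components in finitely many of the summands $H(I,j)$, so there is a finite $F\subseteq J$ with $S\subseteq H(I,F)$, and $H(I,F)=\bigoplus_{j\in F}H(I,j)$ is a $\BK$-subalgebra of $H(I,J)$ by the remark following Theorem \ref{lemposetring}. Hence it suffices to prove: for every finite $F\subseteq I$ and every finite $S\subseteq H(I,F)$ there is a matricial $\BK$-subalgebra of $H(I,F)$ containing $S$; expanding the elements of $S$ into their $H(I,j)$-components, we may take $S$ to be a finite family of matrices $\gamma_q\in H(I,j_q)$ with $j_q\in F$. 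The proof will be by induction on $|F|$.

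For $|F|\le 1$ this is exactly what was recalled before the statement: if the single element is maximal in $I$ then $H(I,i)\cong\BK$ is matricial, and otherwise $H(I,i)=\Soc(S(I,i))$ is locally matricial by Litoff's Theorem, every finite subset lying in some corner $\mathbf e H(I,i)\mathbf e$, which is matricial. For the inductive step with $|F|=n\ge 2$, consider the comparability graph of $F$. If it is disconnected, write $F=F_1\sqcup F_2$ with both $F_i$ nonempty and no element of $F_1$ comparable to one of $F_2$; then $H(I,F)=H(I,F_1)\times H(I,F_2)$ is an algebra direct product by Theorem \ref{lemposetring}(2), each factor is indexed by fewer than $n$ elements, and the product of the matricial subalgebras given by the induction hypothesis (applied to the two projections of $S$) contains $S$ and is matricial.

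So assume $F$ is connected and pick a minimal element $k$ of $F$. Connectedness together with $|F|\ge 2$ forces $k$ to be non-maximal in $I$ (otherwise $\{k\}$ would be a whole connected component), so $\mathfrak a:=H(I,k)=\Soc(S(I,k))$. Since $\{k\}$ is a lower subset of $F$, Theorem \ref{theo ideals} and Remark \ref{remark split} give that $\mathfrak a$ is a two-sided ideal of $H(I,F)$ admitting the complementary $\BK$-subalgebra $T:=H(I,F\setminus\{k\})\cong H(I,F)/\mathfrak a$, so $H(I,F)=\mathfrak a\oplus T$ as vector spaces. Write each $\za^{(l)}=a^{(l)}+t^{(l)}$ with $a^{(l)}\in\mathfrak a$ and $t^{(l)}\in T$. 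Applying the induction hypothesis to $T=H(I,F\setminus\{k\})$ produces a matricial subalgebra $B\subseteq T$ (with identity $g$) containing all the $t^{(l)}$. The heart of the argument is then to produce an idempotent $\mathbf e\in\mathfrak a$ which \emph{commutes with} $B$ and is large enough that $a^{(l)}=\mathbf e a^{(l)}\mathbf e$ for all $l$. Granting this, $\mathbf e\mathfrak a\mathbf e$ is matricial (Litoff, as recalled) and is a $B$-subbimodule of $\mathfrak a$, so $A:=\mathbf e\mathfrak a\mathbf e\oplus B$ is a subalgebra of $H(I,F)$ containing $S$; it is finite-dimensional, $\mathbf e\mathfrak a\mathbf e$ is a two-sided ideal of it with $A/(\mathbf e\mathfrak a\mathbf e)\cong B$, so any nilpotent ideal of $A$ lies in, hence is a nilpotent ideal of, the matricial algebra $\mathbf e\mathfrak a\mathbf e$ and is therefore $0$; thus $A$ is semisimple, and since for a primitive idempotent $\mathbf e_r$ of $\mathbf e\mathfrak a\mathbf e$ one computes $\mathbf e_rA\mathbf e_r=\BK\mathbf e_r$ (using that $\mathbf e\mathfrak a\mathbf e$ is $B$-bistable), every simple $A$-module has endomorphism ring $\BK$, so $A$ is matricial.

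The main obstacle is precisely this last construction in the connected case. Because $H(I,F)$ is typically non-unital and its natural idempotents $\ze_Y$ ($Y\in\CR_j$) are \emph{nested}, not orthogonal, when the indices are comparable, one cannot build the desired subalgebra as a naive direct product of corners $\mathbf e_jH(I,j)\mathbf e_j$, and one must instead arrange a single idempotent inside the bottom ideal $\Soc(S(I,k))$ that simultaneously dominates the finitely many $a^{(l)}$ and commutes with the matricial algebra $B$; this rests on analysing how $B$ acts by left and right multiplication on $\Soc(S(I,k))$ (left and right multiplications commute, and the actions of the $\ze_Y$ and the $\ze_{X(I)_m}$ are explicit by Lemma \ref{lem idempprim Hi} and Theorem \ref{lemposetring}(3)), which makes the centralizer of $B$ in $\Soc(S(I,k))$ large enough. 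Care with matriciality — as opposed to mere finite-dimensional semisimplicity, since finite-dimensional subalgebras of locally matricial algebras need not be matricial — is what forces the construction to stay inside the corners $\mathbf e\mathfrak a\mathbf e$ and $B$ that are matricial by hypothesis, rather than inside an arbitrary generated subalgebra.
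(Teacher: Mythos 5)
Your overall strategy --- reduce to a finite index set, induct, split off a minimal element $k$ so that $\mathfrak a=H(I,k)$ is an ideal with complementary subalgebra $H(I,F\setminus\{k\})$, take the matricial subalgebra $B$ of that complement supplied by induction, and then seek an idempotent $\mathbf e\in\mathfrak a$ that centralizes $B$ and whose corner $\mathbf e\,\mathfrak a\,\mathbf e$ absorbs the $\mathfrak a$-components of your finite set --- is exactly the paper's strategy (there one picks a minimal $m\in J$, sets $S=H(I,m)$, and takes the inductive matricial subalgebra $T$ of $H(I,J\setminus\{m\})$). Your preliminary reductions (a disconnected comparability graph gives a ring direct product; connectedness forces the minimal element to be non-maximal in $I$) and your verification that $A=\mathbf e\,\mathfrak a\,\mathbf e\oplus B$ is matricial once $\mathbf e$ is in hand are both correct.

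The gap is that you never construct $\mathbf e$. You write ``granting this'' and then label the construction ``the main obstacle,'' offering only the remark that the centralizer of $B$ in $\Soc(S(I,k))$ should be ``large enough'' because left and right multiplications commute and the idempotents $\ze_Y$ act explicitly. That is a statement of intent, not an argument: producing, inside the non-unital simple ring $\Soc(S(I,k))$, a single idempotent that simultaneously dominates finitely many prescribed elements and commutes with a given matricial subalgebra acting on it is the technical heart of the whole proof, and it is precisely the point where the paper invokes an external result, namely Proposition 3.3 of \cite{Good:25}, applied to the unital regular ring $R=S+U$ after first enlarging $T$ to $U=T+\BK\mathbf u$ by adjoining the multiplicative identity $\mathbf u$ of $H(I,\widehat J)$ --- a unitality step your sketch also omits, and one that matters because the centralizing-idempotent lemma is stated for unital complements. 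Enlarging an arbitrary idempotent of $\Soc(S(I,k))$ that dominates the $a^{(l)}$ to one that commutes with $B$ requires a genuine construction (its corner must become a $B$-sub-bimodule on which $B$ acts unitally and compatibly on both sides); until you either carry that out or explicitly cite Goodearl's Proposition 3.3 (or his Theorem 3.4 on extensions of locally matricial algebras, which yields the whole proposition at once), the proof is incomplete.
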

\begin{proof}
The thesis might be obtained through an easy induction by using the following very recent and much more general result of Goodearl (see \cite[Theorem 3.4]{Good:25}): assume that a $\BK$-algebra $R$ is an extension of a $\BK$-algebra $S$ by a $\BK$-algebra $T$ (in the sense that $R$ contains an ideal $\mathfrak{I}$ such that, as $\BK$-algebras, $\mathfrak{I}\is S$ and $R/\mathfrak{I}\is T$); if $S$ and $T$ are locally matricial, then $R$ is locally matricial as well. However we present a more direct proof which only uses a partial result from \cite{Good:25}.

Clearly, we may assume that $J$ is finite and proceed by induction on the number $n = |J|$. If $n = 0$ then $H(I,J) = \{\mathbf{0}\}$ and there is nothing to prove. Given $n>0$, assume that $H(I,J)$ is locally matricial whenever $|J| = n-1$ and let us consider the case in which $|J| = n$. After choosing a minimal element $m$ of $J$ and by setting $S = H(I,m)$, we have that
\[
H(I,J) = S\oplus H(I,J\setminus\{m\})
\]
as $\BK$-vector spaces. Thus, in order to prove that $H(I,J)$ is locally matricial, it is sufficient to prove that if $F$ and $G$ are finite subsets of $S$ and $H(I,J\setminus\{m\})$ respectively, then there is a matricial $\BK$-subalgebra of $H(I,J)$ containing $F\cup G$. By the inductive assumption, there is a matricial $\BK$-subalgebra $T$ of $H(I,J\setminus\{m\})$ containing $G$. Set $\widehat{J} = J\cup \maximal{J}$ and note that, since we are assuming that $I$ has a finite cofinal subset, then $\widehat{J}$ is finitely sheltered; it follows from Proposition \ref{multunit} that $H(I,\widehat{J})$ has a multiplicative identity $\mathbf{u}$; set $U = T+\BK\mathbf{u}$ and note that $U$ is a matricial $\BK$-algebra, because $T+\BK\mathbf{u} = T+\BK(\mathbf{u}-\mathbf{f}) \is T\times\BK$, where $\mathbf{f}$ is the multiplicative identity of $T$ (note that $\mathbf{u}-\mathbf{f}$ is a central idempotent of $T+\BK\mathbf{u}$).

Let us consider the unital $\BK$-subalgebra $R = S+U$ of $H(I,\widehat{J})$ and note that $S\cap U = \{\mathbf{0}\}$. In addition $R$ is regular by \cite[Lemma 1.3]{Good:3}, because so are $S$ and $U$. According to Proposition 3.3 of \cite{Good:25} there is an idempotent $\mathbf{e}\in S$ such that $F\sbs \mathbf{e}S\mathbf{e}$ and $\mathbf{e}$ centralizes $U$. By setting $V\defug \mathbf{e}S\mathbf{e}+ T$, as $\mathbf{e}$ centralizes $T$ and by using the fact that $S$ is an ideal of $H(I,\widehat{J})$ we see that $V^{2} = V$. Thus $V$ is a $\BK$-subalgebra of $H(I,J)$ in which $\mathbf{e}$ is a central idempotent. We have that $\mathbf{e}S\mathbf{e}\cap T = \{\mathbf{0}\}$; moreover, by using again the fact that $S$ is an ideal of $H(I,\widehat{J})$ and $T\sbs H(I,\widehat{J})$, we have that $\mathbf{e}T \sbs S$ and so $\mathbf{e}T = T\mathbf{e}\sbs \mathbf{e}S\mathbf{e}$, from which $\mathbf{e}V = \mathbf{e}S\mathbf{e}$. We infer that $(\mathbf{1}-\mathbf{e})V \is T$ as $\BK$-algebras and therefore $V\is \mathbf{e}S\mathbf{e}\times T$. Since $\mathbf{e}S\mathbf{e}$ is a matricial $\BK$-algebra, then so is $V$ and $V$ contains $F\cup G$, as wanted.
\end{proof}

We recall that a unital ring $R$ is \emph{unit-regular} if for every $x\in R$ there exists a unit $y\in R$ such that $x = xyx$. It is well known that a unital regular ring $R$ is unit regular if and only if, given three finitely generated projective right $R$-modules $A,B,C$, the condition $A\oplus C\is B\oplus C$ implies $A\is B$. If $R$ is a locally matricial $\BK$-algebra, then $R$ is regular and it is unit-regular in case it has a multiplicative identity. Indeed, given $x\in R$, there is a matricial $\BK$-subalgebra $S$ of $R$ such that $x\in S$. Since $S$ is unit-regular, then there is a unit $y$ of $S$ such that $x = xyx$. If $e$ denotes the multiplicative identity of $S$, we see immediately that $z = y+1-e$ is a unit of $R$ and $x = xzx$. If $R$ is not necessarily unital, one says that $R$ is \emph{locally unit-regular} if, for every idempotent $e\in R$, the unital ring $eRe$ is unit regular (see \cite{Good:25}). If $R$ is a locally matricial $\BK$-algebra and $e$ is an idempotent of $R$, then $eRe$ is locally matricial by \cite[Lemma 3.1]{Good:25}, consequently $R$ is locally unit-regular and we get the following

\begin{cor}\label{corlocmatrix}
If $J$ is any subset of $I$, then the $\BK$-algebra $H(I,J)$ is locally unit-regular. Consequently $H(I,J)$ is unit-regular in case $J$ is finitely sheltered, in particular $\mathfrak{B}(I)$ is unit-regular.
\end{cor}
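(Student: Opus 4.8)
The plan is to obtain everything from Proposition~\ref{locmatrix} together with the two elementary facts recalled in the paragraph preceding the statement: (a) a locally matricial $\BK$-algebra with a multiplicative identity is unit-regular, and (b) if $R$ is locally matricial and $e\in R$ is an idempotent, then $eRe$ is again locally matricial, by \cite[Lemma~3.1]{Good:25}. First I would unwind the definition: $H(I,J)$ is locally unit-regular exactly when $eH(I,J)e$ is unit-regular for every idempotent $e\in H(I,J)$. So fix such an $e$. By Proposition~\ref{locmatrix} the algebra $H(I,J)$ is locally matricial, hence by (b) so is the corner $eH(I,J)e$; as the latter carries the multiplicative identity $e$, it is unit-regular by (a). Since $e$ was arbitrary, $H(I,J)$ is locally unit-regular.

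For the remaining assertions, suppose first that $J$ is finitely sheltered in $I$. Then Proposition~\ref{multunit} provides a multiplicative identity $\ze_{X(I,J)}$ for $H(I,J)$; taking $e=\ze_{X(I,J)}$ in the previous step yields $H(I,J)=eH(I,J)e$ unit-regular. Finally, under the standing assumption of Section~\ref{sect algebraB} that $I$ has a finite cofinal subset, the poset $I$ is finitely sheltered in itself (as observed right after Definition~\ref{def finshelt}), so $\mathfrak{B}(I)=H(I,I)$ falls under the case just treated and is unit-regular.

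There is no real obstacle here: the substantive content has been absorbed into Proposition~\ref{locmatrix} and into the cited lemmas of \cite{Good:25}, so the corollary is essentially bookkeeping. The only point needing a modicum of care is that local unit-regularity must be verified at \emph{every} idempotent of $H(I,J)$---not merely at a global identity, which need not exist when $J$ is not finitely sheltered---and this is precisely why one passes to an arbitrary corner via \cite[Lemma~3.1]{Good:25} before invoking the unital case.
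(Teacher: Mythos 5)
Your argument is correct and is essentially the one the paper gives in the discussion immediately preceding the corollary: local matriciality of $H(I,J)$ from Proposition \ref{locmatrix}, passage to corners via \cite[Lemma 3.1]{Good:25} to get local unit-regularity, and the unital case (finitely sheltered $J$, hence $\mathfrak{B}(I)$) via Proposition \ref{multunit} together with the fact that a unital locally matricial algebra is unit-regular. Nothing is missing.
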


If $J\sbs I$, let us denote by $J_1$ the set of all
minimal elements of $J$. The {\em dual classical
Krull filtration\/} of the poset $J$ is the ascending chain
$(J_{\a})_{0\le\a}$ of subsets of $J$, indexed over the ordinals, defined recursively as follows (see
\cite{Albu:10}):
\begin{gather*}
J_{0} \defug \emptyset, \\
J_{\a+1} \defug J_{\a}\cup\left(J\setminus J_{\a}\right)_1
\quad \text{ for all }\a, \\
J_{\a} \defug \bigcup_{\b<\a}J_{\b}\quad \text{ if $\a$ is a limit
ordinal.}
\end{gather*}
Note that each $J_{\a}$ is a lower subset of $J$. Clearly there exists a smallest ordinal $\xi$ such that $J_{\xi+1} =
J_{\xi}$ and hence $J_{\a} =
J_{\xi}$ for every $\a>\xi$; moreover $J$ is artinian (i.~e. it satisfies the DCC or,
equivalently, every chain of $J$ is well ordered) if and only if $J
= J_{\xi}$ and, in this case, the ordinal $\xi$ is called the {\em
dual classical Krull dimension\/} of $J$ and the family $\left(J_{\a+1}\setminus J_{\a}\right)_{\a<\xi}$ is a partition of $J$.

Given $J\sbs I$, the next result illustrates the tight connection between the dual classical Krull filtration of $J$ and the Loewy chain $(\Soc_{\a}(H(I,J)))_{0\le\a}$ of ideals of $H(I,J)$.

We recall that if $R$ is a regular ring (unital or not), then every homogeneous component of $\Soc(R_{R})$ is a minimal ideal and $\Soc(R_{R})  = \Soc({_{R}R})$, from which one can easily infer that $\Soc_{\a}(R_{R})  = \Soc_{\a}({_{R}R})$ for every ordinal $\a$.

\begin{theo}\label{artin}
     Let $J$ be a subset of $I$. For every ordinal $\a$ we have that
    \begin{equation}\label{eq loew}
        \Soc_{\a}(H(I,J)) = H(I,J_{\a}).
    \end{equation}
    Consequently $H(I,J)$ is semiartinian if and only if $J$ is artinian, in which the case the Loewy length of $H(I,J)$ equals the dual classical Krull dimension of $J$. In particular $\mathfrak{B}(I)$ is semiartinian if and only if $I$ is artinian.
    \end{theo}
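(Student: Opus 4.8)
The plan is to establish the socle identity \eqref{eq loew} by transfinite induction on $\a$, after which every remaining assertion follows formally. The base case is immediate, $\Soc_{0}(H(I,J)) = \{\mathbf{0}\} = H(I,\vu) = H(I,J_{0})$, and for a limit ordinal $\a$ both sides are the directed union over $\b<\a$ of the corresponding ascending chains, so $\Soc_{\a}(H(I,J)) = \bigcup_{\b<\a}H(I,J_{\b}) = H\bigl(I,\bigcup_{\b<\a}J_{\b}\bigr) = H(I,J_{\a})$. Hence the whole content is in the successor step, which I would reduce --- using Remark \ref{remark split} and the ideal classification of Theorem \ref{theo ideals} --- to the following \textbf{Key Claim}: for every subset $K$ of $I$ one has $\Soc(H(I,K)) = H(I,K_{1})$, where $K_{1}$ denotes the set of minimal elements of $K$ (the case $K=J$, $\a=1$ of the theorem).

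Granting the Key Claim, assume \eqref{eq loew} holds for $\b$, i.e.\ $\Soc_{\b}(H(I,J)) = H(I,J_{\b})$. By Remark \ref{remark split} the projection $\f_{J_{\b},J}$ induces a $\BK$-algebra isomorphism $H(I,J)/\Soc_{\b}(H(I,J)) = H(I,J)/H(I,J_{\b}) \is H(I,J\setminus J_{\b})$, so by the Key Claim $\Soc\bigl(H(I,J)/\Soc_{\b}(H(I,J))\bigr)$ corresponds to $\Soc(H(I,J\setminus J_{\b})) = H(I,(J\setminus J_{\b})_{1})$; pulling this back along $\f_{J_{\b},J}$ (which has kernel $H(I,J_{\b})$ and restricts to the identity on the complementary subspace $H(I,J\setminus J_{\b})$) gives $\Soc_{\b+1}(H(I,J)) = H\bigl(I,J_{\b}\cup(J\setminus J_{\b})_{1}\bigr) = H(I,J_{\b+1})$, completing the induction. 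Consequently $H(I,J)$ is semiartinian iff $\Soc_{\xi}(H(I,J)) = H(I,J)$ for some $\xi$, iff $H(I,J_{\xi}) = H(I,J)$, iff $J_{\xi} = J$ (by the injectivity in Theorem \ref{theo ideals}), iff $J$ is artinian; and the least such $\xi$, which is the Loewy length of $H(I,J)$, is then the least $\xi$ with $J_{\xi}=J$, namely the dual classical Krull dimension of $J$. Taking $J=I$ yields the statement for $\mathfrak{B}(I) = H(I,I)$.

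To prove the Key Claim, first note that by Theorem \ref{theo ideals} applied to $H(I,K)$ the minimal nonzero ideals of $H(I,K)$ are exactly the $H(I,j)$ with $j\in K_{1}$, since the minimal nonzero lower subsets of $K$ are precisely the singletons $\{j\}$ with $j$ minimal in $K$. For $H(I,K_{1})\sbs\Soc(H(I,K))$: fix $j\in K_{1}$ and a primitive idempotent $\ze$ of the simple regular ring $H(I,j)$ (take $\ze=\ze_{X(I)_{j}}$ when $j$ is maximal in $I$). As $j$ is minimal in $K$, every $k\in K$ with $k\ne j$ has $k>j$ or is incomparable with $j$, so Theorem \ref{lemposetring} gives $\ze H(I,k)\ze = \{\mathbf{0}\}$ in the incomparable case and $\ze H(I,k)\ze\sbs\ze H(I,j)\ze$ when $k>j$ (using $H(I,j)H(I,k)\cup H(I,k)H(I,j)\sbs H(I,j)$); hence $\ze H(I,K)\ze = \ze H(I,j)\ze\is\BK$, a division ring, so $\ze H(I,K)$ is a minimal right ideal of the regular ring $H(I,K)$. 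Since $H(I,j)$ ($\is\FR_{X(j\le)}(\BK)$, resp.\ $\is\BK$) equals its own right socle, $H(I,j) = \sum_{\ze}\ze H(I,j)\sbs\sum_{\ze}\ze H(I,K)\sbs\Soc(H(I,K))$; summing over $j\in K_{1}$ gives the inclusion. For the reverse inclusion, recall (as noted just before the theorem) that for a regular ring the socle of the right regular module is the direct sum of its homogeneous components, each a minimal ideal; applied to $H(I,K)$ each such component is one of the $H(I,j)$, $j\in K_{1}$, whence $\Soc(H(I,K))\sbs\bigoplus_{j\in K_{1}}H(I,j) = H(I,K_{1})$. This proves the Key Claim.

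The only step I expect to require real care is the collapse of the corner ring $\ze H(I,K)\ze$ to $\ze H(I,j)\ze\is\BK$ inside the Key Claim: this is precisely where the minimality of $j$ in $K$ and the one-sided products $H(I,i)H(I,j)\cup H(I,j)H(I,i)\sbs H(I,i)$ (for $i\le j$) from Theorem \ref{lemposetring}\,(3) are indispensable, and where one must also invoke the standard fact that in a regular ring $\ze R$ is a minimal right ideal as soon as $\ze R\ze$ is a division ring. The bookkeeping for pulling the socle back along $\f_{J_{\b},J}$ in the successor step is routine but should be written out explicitly. Neither point is deep, but both are the places where a careless argument would go wrong.
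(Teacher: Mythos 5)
Your proposal is correct and follows essentially the same route as the paper: transfinite induction with the limit case as a directed union, the successor case reduced via the splitting projection $\f_{J_{\b},J}$ of Remark \ref{remark split} to the statement $\Soc(H(I,K)) = H(I,K_{1})$ for $K = J\setminus J_{\b}$, and that statement proved by combining the classification of minimal ideals from Theorem \ref{theo ideals} (for the inclusion $\Soc(H(I,K))\sbs H(I,K_{1})$) with the semisimplicity of each $H(I,j)$, $j\in K_{1}$, as a right ideal (for the reverse inclusion). Your corner-ring verification that $\ze H(I,K)\ze\is\BK$ merely makes explicit what the paper asserts more briefly, so no substantive difference remains.
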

    \begin{proof}
Obviously \eqref{eq loew} holds if $\a = 0$. Now the ideal $H(I,J_{1})$ is semisimple as a right ideal of $H(I,J)$, because it is a direct sum of the simple and semisimple rings $H(I,i)$, for $i\in J_{1}$. On the other hand, since $H(I,J)$ is regular, each homogeneous component of $\Soc(H(I,J))$ is a minimal ideal and so, by Theorem \ref{theo ideals}, it is of the form $H(I,m)$ for some minimal element $m$ of $J$. This shows that \eqref{eq loew} holds if $\a = 1$. Given an ordinal $\a>0$, assume inductively that
\[
\Soc_{\b}(H(I,J)) = H(I,J_{\b})
\]
whenever $\b<\a$. If $\a$ is a limit ordinal, since $J_{\a} = \bigcup_{\b<\a}J_{\a}$, then we have
\[
\Soc_{\a}(H(I,J)) = \bigcup_{\b<\a}\Soc_{\b}(H(I,J))
 = \bigcup_{\b<\a}H(I,J_{\b}) = H(I,J_{\a}).
 \]
Suppose that $\a = \b+1$ for some $\b$, set $J_{\b}^{\bullet\bullet} = J\setminus J_{\b}$ and $J_{\b+1}^{\bullet} = (J\setminus J_{\b})_{1}$, so that $J_{\b+1}$ is the disjoint union of $J_{\b}$ and $J_{\b+1}^{\bullet}$. The projection map
\[
\lmap{\f_{J_{\b},J}}{H(I,J)}{H(I,J_{\b}^{\bullet\bullet})}
\]
with kernel $H(I,J_{\b})$ induces an isomorphism of $\BK$-algebras
\[
H(I,J)/H(I,J_{\b}) \is H(I,J_{\b}^{\bullet\bullet})
\]
(see Remark \ref{remark split}). Since
\begin{gather*}
\f_{J_{\b},J}(H(I,J_{\b+1}))
= \f_{J_{\b},J}[H(I,J_{\b})\oplus H(I,J_{\b+1}^{\bullet})] = H(I,J_{\b+1}^{\bullet})\\
= H(I,(J_{\b}^{\bullet\bullet})_{1})
= \Soc[H(I,(J_{\b}^{\bullet\bullet})],
\end{gather*}
we infer that
\[
H(I,J_{\b+1})/H(I,J_{\b})= \Soc[H(I,J)/H(I,J_{\b})]
\]
and hence
\[
H(I,J_{\b+1}) = \Soc_{\a}(H(I,J)).
\]
The last statement follows directly from the definitions, toghether with the fact that if $K,L\sbs I$, then $H(I,K)\sbs H(I,L)$ if and only if $K\sbs L$.
\end{proof}

\begin{re}
Given a division ring $D$ and an artinian partially ordered set $I$, in \cite{Bac:16} we presented a construction which produces a semiartinian and unit-regular ring $D_{I}$, which is isomorphic to a subring of $\CFM_{\al}(D)$ containing $D$ (this latter identified with the subring of all scalar matrices), where $\al$ is the first infinite cardinal such that $\al\ge |I|$ and $\al\ge |\CM(I)|$. The construction, heavily based on ordinal arithmetic, turns out to be rather tricky and cumbersome with no possibility to extend it to non-artinian partially ordered sets. The present construction overcomes this problem and, in addition, if $I$ is an artinian $\al$-poset having a finite cofinal subset, the algebra $\mathfrak{B}(I)$ satisfies the same properties (1), (2), (3), (4), (5) and (8) of $D_{I}$, as stated in \cite[Theorem 9.5]{Bac:16}; \emph{mutatis mutandis}, their proofs are identical.
\end{re}

\section{$\mathbf{K}_0(\mathfrak{B}(I))$.}\label{sect algebraBKO}

We keep going on with the assumption that $I$ is an $\al$-poset having a finite cofinal subset. In this section we take on the task of investigating the structure of the partially ordered abelian group $\mathbf{K}_0(\mathfrak{B}(I))$. It will turn out that $\mathbf{K}_0(\mathfrak{B}(I))$ is order isomorphic to a group which is a special instance of a more general construction, well known within the literature of lattice-ordered groups (see \cite{AndersonFeil:01}, page 6 for instance). Specifically, given a family $\CH = (H_{i})_{i\in I}$ of partially ordered abelian groups, the \emph{Hahn group} $V(I,H_{i})$ of $\CH$ is the subgroup of the direct product $\prod_{i\in I}H_{i}$ consisting of all elements having noetherian support, endowed with the partial order induced by declaring positive those elements having positive coordinates relative to the maximal elements of the support. The subgroup $\Sigma(I,H_{i})$ of $V(I,H_{i})$ of those elements having finite support, endowed with the induced partial order, is called the \emph{restricted Hahn group}; of course, as a group, $\Sigma(I,H_{i})$ is merely the direct sum $\bigoplus_{i\in I}H_{i}$. When all $H_{i}$ coincide with a single group $H$, the groups $V(I,H_{i})$ and $\Sigma(I,H_{i})$ are called respectively the \emph{Hahn power} and the \emph{restricted Hahn power of $H$ by $I$}. Our main result is to show that $\mathbf{K}_0(\mathfrak{B}(I))$ is order isomorphic to the restricted Hahn power of $\BZ$ by $I$, whose support is just the free abelian group with $I$ as a basis; we give here a description of it with some more details, suited for our needs.

Let us denote by $G(I)$ the free abelian group with $I$ as a basis; we keep the standard practice by representing each element $x\in G(I)$ as a unique linear combination of elements of $I$ with coefficients in $\BZ$ and, if $i\in I$, we denote by $x_{i}$ the coefficient of $i$. As usual, the \emph{support} of an element $x\in G(I)$ is the set $\Supp(x)\defug \{i\in I\mid x_{i}\ne 0\}$. As it was shown in \cite{BacSpin:17}, the set
\begin{equation}\label{eq M(I)}
M(I) \defug \{x\in G(I)\mid x_{i}>0 \text{ if $i$ is a maximal element of $\Supp(x)$}\}
\end{equation}
is a conical submonoid of $G(I)$, in the sense that if $x,y\in M(I)$ and $x+y = 0$, then $x = 0 = y$. We will consider $G(I)$ partially ordered with $M(I)$ as positive cone, so that $G(I)$ is the \emph{restricted Hahn power} of $\BZ$ by $I$. It was shown in \cite[Proposition 4.2]{BacSpin:17} that $G(I)$ is a dimension group in which the sum of all maximal elements of $I$ is an order-unit and, consequently, $G(I)$ is directed and $M(I)$ is a Riesz decomposition monoid, in the sense that if $x,y,z\in M(I)$ and $x\le y+z$, then there exist $a,b\in M(I)$ such that $a\le x$, $b\le y$ and $x = a+b$ (see \cite[Ch. 3]{Good:10}).

The following two results will not be used in the present work; nonetheless these propositions illustrate remarkable properties of the group $G(I)$. We recall that an element $p$ of an abelian monoid $M$ is \emph{prime} if it is not invertible and, whenever $p\le x+y$ for some $x,y\in M$, then either $p\le x$ or $p\le y$ (here ``$\le$'' is the algebraic preorder: if $a,b\in M$, then $a\le b$ means that $b = a+c$ for some $c\in M$). If $M$ is a conical, cancellative Riesz decomposition monoid (hence the algebraic preorder is a partial order), it is easy to see that a non-invertible element $p\in M$ is prime if and only if, given $a,b\in M$, the equality $p = a+b$ implies that either $p = a$ or $p = b$.

\begin{pro}\label{pro primehahn}
The prime elements of the monoid $M(I)$ are precisely the mini\-mal elements of $I$.
\end{pro}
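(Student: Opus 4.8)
The plan is to build everything on the criterion stated just before the Proposition: since $M(I)$ is conical, cancellative and a Riesz decomposition monoid (the discussion preceding the statement, together with \cite[Proposition 4.2]{BacSpin:17}), a nonzero element $p\in M(I)$ is prime if and only if every equality $p=a+b$ with $a,b\in M(I)$ forces $a=0$ or $b=0$. Both inclusions of the claimed equality will then be obtained by elementary bookkeeping with supports, the recurring device being the following: if $q\in G(I)$ satisfies $\Supp(q)\sbs\Supp(p)$ and $q_{m}>0$, where $m$ is the greatest element of $\Supp(p)$, then $m$ is the greatest --- hence the unique maximal --- element of $\Supp(q)$, so $q\in M(I)$.

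For the implication ``$\Leftarrow$'', let $i$ be a minimal element of $I$; then $i$, regarded as a basis vector of $G(I)$, lies in $M(I)$ and is nonzero, hence non-invertible. Suppose $i=a+b$ with $a,b\in M(I)$ both nonzero; we derive a contradiction. Comparing coefficients, $a_{j}=-b_{j}$ for every $j\ne i$. Were some maximal element $m$ of $\Supp(a)$ different from $i$, then $a_{m}>0$ would give $b_{m}=-a_{m}<0$, so $m\in\Supp(b)$; moreover no element of $\Supp(b)$ lies strictly above $m$ --- it cannot be $i$, since $m<i$ contradicts minimality of $i$, and any other candidate $m'$ would satisfy $a_{m'}=-b_{m'}\ne 0$, i.e.\ $m'\in\Supp(a)$ above $m$, which is absurd --- so $m$ is maximal in $\Supp(b)$ as well, whence $b_{m}>0$, a contradiction. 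Therefore every maximal element of $\Supp(a)$ equals $i$; as $\Supp(a)$ is finite and $i$ is minimal, this forces $\Supp(a)=\{i\}$, and likewise $\Supp(b)=\{i\}$. Then $a=a_{i}i$, $b=b_{i}i$ with $a_{i},b_{i}\ge 1$ (positivity at the maximal element of the support), contradicting $a_{i}+b_{i}=1$. Hence $a=0$ or $b=0$, so $i$ is prime.

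For ``$\Rightarrow$'', let $p\in M(I)$ be prime; then $p\ne 0$, so $S:=\Supp(p)$ is nonempty, and I shall show in turn that (a) $S$ has a greatest element $m$, (b) $p_{m}=1$, (c) $S=\{m\}$, (d) $m$ is minimal in $I$. (a) If $S$ had no greatest element, choose a maximal element $m$ of $S$ and some $k\in S$ with $k\nleqslant m$ (such $k$ exists as $m$ is then not the greatest element of $S$); let $a$ be the vector agreeing with $p$ on $S\cap\{\le m\}$ and vanishing elsewhere, and put $b=p-a$. Then $a\in M(I)$, because $\Supp(a)=S\cap\{\le m\}$ has greatest element $m$ with $a_{m}=p_{m}>0$; and $b\in M(I)$, because any maximal element $l$ of $\Supp(b)=S\setminus\{\le m\}$ is maximal in $S$ --- an element of $S$ above $l$ would lie in $\{\le m\}$ and force $l\in\{\le m\}$ --- so $b_{l}=p_{l}>0$. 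Since $a,b$ are nonzero and distinct from $p$, this contradicts primeness. (b) Granting (a), if $p_{m}\ge 2$ then $p=m+(p-m)$ with $m\in M(I)$ and $p-m\in M(I)$ (its support is $S$, with greatest element $m$ and coefficient $p_{m}-1>0$), both nonzero and $\ne p$; impossible, so $p_{m}=1$. (c) If $|S|\ge 2$, pick $j\in S\setminus\{m\}$; then $p=j+(p-j)$ with $j\in M(I)$ and $p-j\in M(I)$ (its support lies in $S$ and still contains $m$, with coefficient $p_{m}=1>0$), both nonzero and $\ne p$; impossible, so $S=\{m\}$ and, with (b), $p=m$. (d) If $m$ were not minimal there would be $j<m$, and $p=m=(m-j)+j$ with $m-j\in M(I)$ (its support $\{m,j\}$ has greatest element $m$, coefficient $1>0$) and $j\in M(I)$, both nonzero and $\ne m$; impossible. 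Hence $p$ is a minimal element of $I$.

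The $M(I)$-membership verifications for these decompositions are routine; the one point requiring care --- and the only real obstacle --- is that modifying the coefficient of a maximal element of a support, or removing that element, can promote a sub-maximal element carrying a negative coefficient to a maximal one and thereby push the result outside $M(I)$. This is precisely why step (a) is disposed of first: once $S$ is known to possess a genuine greatest element, the coefficient $+1$ sitting there shields every later modification; and it is also why the splitting in step (d) is $(m-j)+j$ rather than one that splits off a multiple of $j$.
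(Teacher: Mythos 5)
Your proof is correct, and the forward direction (``prime implies minimal'') follows a genuinely different route from the paper's. The paper first shows that $M(I)$ is generated by the set $\CA$ of elements $i+x$ with $\Supp(x)\sbs\{<i\}$ and all coefficients of $x$ negative, deduces from primeness (via the algebraic order in a conical cancellative refinement monoid) that a prime $p$ must equal one such generator $i+y$, and then kills the tail with the single decomposition $p=(i+2y)+(-y)$; minimality of $i$ is then checked as in your step (d). You instead dissect $\Supp(p)$ directly in four stages --- greatest element exists, its coefficient is $1$, the support is a singleton, that element is minimal --- each time exhibiting an explicit nontrivial splitting $p=a+b$ in $M(I)$. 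Your argument is more elementary and self-contained (no appeal to the generating set or to the characterization of primes via the algebraic preorder beyond the additive criterion), at the cost of more case-by-case bookkeeping; the paper's is shorter once the generation lemma is in hand, and that lemma is of independent interest. Your treatment of the step where $\Supp(p)$ is shown to have a greatest element is the one place where a naive splitting could fail (a sub-maximal index with negative coefficient could become maximal), and you handle it correctly by splitting along $\{\le m\}$ and verifying that every maximal element of the complementary support is already maximal in $\Supp(p)$. The converse direction is essentially the paper's argument, differing only in that you locate a maximal element of $\Supp(a)$ rather than of $\Supp(a)\cup\Supp(b)$.
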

\begin{proof}
Let us consider the subset
\[
\CA = \{i+x\mid x\in G(I), i\in I, \Supp(x)\sbs\{<i\} \text{ and $x_{j}<0$ for all $j\in\Supp(x)$}\}
\]
of $M(I)$, let $\langle\CA\rangle$ be the submonoid of $M(I)$ generated by $\CA$ and let us prove first that $\langle\CA\rangle = M(I)$. Given $x\in M(I)$, if $|\Supp(x)|\le 1$, then it is clear that $x\in \langle\CA\rangle$. Given a positive integer $n$, assume that $x\in \langle\CA\rangle$ whenever $x\in M(I)$ and $|\Supp(x)|<n$, let $x\in M(I)$ be such that $|\Supp(x)|=n$ and let $m$ be a maximal element of $\Supp(x)$. By setting
\[
y = x_{m}m + \sum\{x_{i}i\mid i\in\{<m\}\text{ and }x_{i}<0\},
\]
we have that $x' = x-y\in M(I)$. In fact, let $k$ be a maximal element of $\Supp(x')$. If $k$ is maximal in $\Supp(x)$ too, since $\Supp(x') = \Supp(x)\setminus\Supp(y)$ we have that $x'_{k} = x_{k}>0$. Otherwise there is some $h\in\Supp(x)$ such that $k<h$. Necessarily $h\in\Supp(y)\sbs\{\le m\}$, from which $k< m$. Since $k\ne m$ and $k\nin\Supp(y)$, we conclude that $x_{k}\ge 0$. As $|\Supp(x')|<n$, by the inductive assumption we have that $x'\in \langle\CA\rangle$; since $y\in \langle\CA\rangle$, it follows that $x = x'+y\in \langle\CA\rangle$, as wanted.

Let $p$ be a prime element of $M(I)$. We have that $p = x_{(1)}+\cdots+x_{(n)}$ for some $x_{(1)},\ldots,x_{(n)}\in\CA$. Since $M(I)$ is a conical, cancellative refinement monoid, then $p = x_{(\a)}$ for some $\a\in\{1,\ldots,n\}$, that is there are $i\in I$ and $y\in G(I)$, with $\Supp(y)\sbs\{<i\}$ and $y_{j}<0$ for $j\in\Supp(y)$, in such a way that $p = i+y$. Thus $p = (i+2y)+(-y)$ and, since both $i+2y$ and $-y$ are in $M(I)$ and $p\ne -y$, primality of $p$ implies that $p = i+2y$. Consequently $y = 0$ and so $p = i$. If $i$ were not minimal, that is $i>j$ for some $j\in I$, then $p = i = (i-j)+j$. Again, since both $i-j$ and $j$ are in $M(I)$, primality of $p$ implies that either $i = i-j$ or $i = j$, which is impossible.

Conversely, let $i$ be a minimal element of $I$, let $x,y\in M(I)\setminus \{0\}$, assume that $i = x+y$ and let $j$ be a maximal element of $\Supp(x)\cup\Supp(y)$ such that $j\ne i$. Then $x_{j} + y_{j} = 0$ and so either $x_{j}<0$, or $y_{j}<0$: a contradiction with the assumption that $x$ and $y$ are nonzero elements of $M(I)$. This means that $\Supp(x)\cup\Supp(y) = \{i\}$ and, since $M(I)$ is cancellative, we conclude that either $x = i$ or $y = i$.
\end{proof}

If $G$ is a partially ordered abelian group, we denote by $G^{+}$ the positive cone of $G$, namely $G^{+} = \{x\in G\mid 0\le x\}$. A subgroup $H$ of $G$ is an \emph{ideal} of $G$ if $H$ is a directed and convex subgroup (see \cite{Good:10}, page 8). We recall that $H$ is directed if and only if for every $x\in H$ there are $x',x''\in H^{+} = G^{+}\cap H$ such that $x = x'-x''$ (see \cite[Proposition 15.16]{Good:3}).

\begin{pro}\label{pro idealhahn}
The assignment $L\mapsto G(L)$ defines an isomorphism from the lattice of all lower subsets of $I$ to the lattice of all ideals of $G(I)$.
\end{pro}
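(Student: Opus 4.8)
The plan is to show that $L\mapsto G(L)$ is a well-defined map from $\Downarrow\!\!I$ into the set of ideals of $G(I)$, that $L\sbs L'$ holds if and only if $G(L)\sbs G(L')$ (which makes the map an order embedding, hence injective), and that it is surjective; since $\Downarrow\!\!I$ is a lattice, this will give the asserted lattice isomorphism. Two elementary remarks will be used throughout: a nonzero $x\in G(I)$ lies in $M(I)$ as soon as every maximal element of $\Supp(x)$ has positive coefficient --- in particular whenever all coefficients of $x$ are $\ge 0$ --- and for $i,j\in I$ one has $j\le i$ in $I$ if and only if $i-j\in M(I)$.

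First I would verify that $G(L)$ is an ideal when $L$ is a lower subset. Directedness is immediate: every $x\in G(L)$ splits as $x=x^{+}-x^{-}$, where $x^{+}$ (resp.\ $x^{-}$) gathers the terms of $x$ with positive (resp.\ negative) coefficient; both summands have all coefficients $\ge 0$ and support inside $\Supp(x)\sbs L$, hence lie in $M(I)\cap G(L)$. Convexity is the more delicate point. Suppose $0\le w\le y$ with $y\in G(L)$, and assume for contradiction that some $i\in\Supp(w)$ is not in $L$; since $L$ is lower and $\Supp(y)\sbs L$, this means $i\nleqslant j$ for every $j\in\Supp(y)$. Choose $j$ maximal among the (finitely many) elements of $\Supp(w)\cup\Supp(y)$ that are $\ge i$. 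One checks that $j$ is then maximal in $\Supp(w)\cup\Supp(y)$; that $j\notin\Supp(y)$ (otherwise $i\le j\in\Supp(y)$); hence $j\in\Supp(w)$ with $w_{j}>0$ because $w\in M(I)$; whereas $(y-w)_{j}=-w_{j}<0$ while $j$ is a maximal element of $\Supp(y-w)$ --- contradicting $y-w\in M(I)$. Therefore $\Supp(w)\sbs L$ and $w\in G(L)$.

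Injectivity is then formal: $L\sbs L'$ clearly implies $G(L)\sbs G(L')$, and conversely, if $G(L)\sbs G(L')$ then each $i\in L$ satisfies $i\in G(L)\sbs G(L')$, so $\{i\}=\Supp(i)\sbs L'$. Surjectivity is the core of the argument. Given an ideal $H$ of $G(I)$, set $L\defug\{i\in I\mid i\in H\}$. First, $L$ is a lower subset: if $i\in L$ and $j\le i$, then $0\le j\le i\in H$, so convexity of $H$ gives $j\in H$. The inclusion $G(L)\sbs H$ is clear, and since $H$ is directed, to prove $H\sbs G(L)$ it suffices to show $\Supp(x)\sbs L$ for every $x\in H\cap M(I)$. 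I would do this by induction on $|\Supp(x)|$, the case $x=0$ being trivial. For $x\ne 0$, choose a minimal element $i$ of $\Supp(x)$. The key point is that deleting a minimal element of $\Supp(x)$ neither creates a new maximal element of the support nor changes the coefficient of any surviving maximal element, and that an element whose coefficient is $\le 0$ cannot be maximal in $\Supp(x)$ (because $x\in M(I)$); putting these together yields $x-i\in M(I)$ and $x-x_{i}i\in M(I)$. Hence $0\le i\le x\in H$, so $i\in H$, i.e.\ $i\in L$; and $x-x_{i}i\in H\cap M(I)$ has support $\Supp(x)\setminus\{i\}$, so the inductive hypothesis gives $\Supp(x)\setminus\{i\}\sbs L$ and therefore $\Supp(x)\sbs L$. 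This completes the proof that $H=G(L)$.

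I expect the two small combinatorial verifications about supports and maximal elements to be the only real work: the contradiction argument for convexity of $G(L)$, and the fact that subtracting a minimal-support term keeps the relevant vectors inside $M(I)$. Everything else --- that the bijection and its inverse are order preserving, hence that it is a lattice isomorphism, with union and intersection on $\Downarrow\!\!I$ transported to sum and intersection of ideals --- is routine.
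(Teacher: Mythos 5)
Your proof is correct, and its overall skeleton (convexity and directedness of $G(L)$ for $L$ a lower set, the order-embedding observation, and surjectivity via induction on $|\Supp(x)|$ for $x\in H^{+}$) coincides with the paper's. The one genuine divergence is the inductive step for surjectivity. The paper splits $x\in H^{+}$ by sign of the coefficients, writing $x=y-z$ with $y,z\ge 0$ supported on disjoint sets, and uses the identity $x=z+(y-2z)$ together with $y-2z\in M(I)$ to conclude $0\le z\le x$, hence $z,y\in H^{+}$; it then inducts on the two smaller supports, with a separate case when all coefficients are nonnegative. You instead peel off a single \emph{minimal} element $i$ of $\Supp(x)$, checking that $x-i$ and $x-x_{i}i$ remain in $M(I)$ because deleting a minimal support element cannot create new maximal elements of the support; this yields $0\le i\le x$, hence $i\in H$, and a single uniform induction on $x-x_{i}i$. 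Both arguments rest on the same elementary observation that removing extremal terms preserves membership in $M(I)$; yours avoids the $y-2z$ trick and the case split, at the cost of a slightly more careful check that minimality of $i$ is what keeps the surviving maximal coefficients positive. The convexity argument is the paper's argument in all but notation.
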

\begin{proof}
Let $L$ be a lower subset of $I$ and let us prove first that $G(L)$ is convex. Given $x\in G(I)$ and $y\in G(L)$, assume that $0< x \le y$ but $x\nin G(L)$. Then there exists some $i\in I\setminus L$ such that $x_{i}\ne 0$ and, since $\Supp(x)$ is finite, there is a maximal element $m$ of $\Supp(x)$ such that $i\le m$ and $x_{m}>0$, because $x\in M(I)$; note that $m\nin L$ otherwise, since $L$ is a lower subset of $I$, from $i\le m$ we would get $i\in L$. By setting $z = y-x$, we observe that $m$ must be maximal in $\Supp(z)\sbs \Supp(x)\cup \Supp(y)$, otherwise there would be some $l\in\Supp(y)\sbs L$ such that $i\le m\le l$, from which $i\in L$. Since $z\in M(I)$, then $z_{m}>0$, in contradiction with the fact that, $z_{m} = -x_{m}<0$, because $y_{m} = 0$. Thus necessarily $x\in G(L)$ and so $G(L)$ is convex.

Next, let $0\ne x\in G(L)$. Then $x = x'-x''$, where
\[
x' = \sum\{x_{i}\cdot i\mid i\in\Supp(x), x_{i}>0\}\text{ and }
x'' = \sum\{-x_{i}\cdot i\mid i\in\Supp(x), x_{i}<0\}.
\]
Since $x', x''\in M(L) = G(L)\cap M(I)$, we infer that $G(L)$ is directed and we conclude that $G(L)$ is an ideal of $G(I)$.

Conversely, let $H$ be an ideal of $G(I)$. Obviously, if $H = \{0\}$ then $H = G(\emptyset)$. Assume that $H\ne \{0\}$, set $L = \bigcup\{\Supp(x)\mid x\in H^{+}\}$ and let us prove that $H = G(L)$. To this purpose, inasmuch as $H$ is directed it is sufficient to show that $L\sbs H$. Given $x\in H^{+}\setminus\{0\}$, if $x = x_{i}\cdot i$ for some $i\in I$ and $x_{i}\in\BN^{*}$, then obviously $i\in H$ by the convexity of $H$. Given an integer $n>1$, assume inductively that $\Supp(x)\sbs H$ whenever $x\in H^{+}$ and $|\Supp(x)|<n$. Given $x\in H^{+}$ with $|\Supp(x)|=n$, set $K = \{i\in I\mid x_{i}<0\}$ and let us consider the elements
\[
y = \sum_{i\in\Supp(x)\setminus K}x_{i}\cdot i\quad \text{and}\quad z = \sum_{i\in K}(-x_{i})\cdot i,
\]
so that $x = y-z$. Note that $y,z$ and $y-2z$ belong to $M(I)$; since $x = z+y-2z$, we infer that $0\le z<x$ and hence $z\in H^{+}$, because $H$ is convex. Consequently from $y = x+z$ it follows that $y\in H^{+}$ as well. If $K\ne\vu$, then $0<|\Supp(x)\setminus K|<n$ and $|K|<n$; since $\Supp(x)\setminus K = \Supp(y)$ and $K = \Supp(z)$, from the inductive assumption it follows that $\Supp(x)\sbs H$. If $K = \vu$, then $x = y$ and from the convexity of $H$ it is easy to infer that $\Supp(x)\sbs H$ and this completes the proof that $H = G(L)$.

Now, if $i,j\in I$ are such that $i\le j$ and $j\in L$, then $0\le i\le j$ in $H$. By the convexity of $H$ we infer that $i\in H$ and we conclude that $i\in L$, showing that $L$ is a lower subset of $I$.
\end{proof}

Given a ring $R$, we denote by $\FP_{R}$ the class of finitely generated and projective right $R$-modules. Concerning basic notions and notations about the Grothendiek group $\KO(R)$ we follow \cite[Ch 15]{Good:3}; thus, in particular, for a single $A\in\FP_{R}$ we denote by $[A]$ the stable isomorphism class of $A$ and, for a subset $\ZA$ of $\FP_{R}$, we set
\[
[\ZA]\defug \{[A]\mid A\in\ZA\}.
\]
Recall that if $R$ is unit-regular, then $[A]$ is merely the isomorphism class of $A$, for every $A\in\FP_{R}$.

Given $i\in I$, every minimal right ideal of $H(I,i)$ is generated by a primitive idempotent and any two minimal right ideals of $H(I,i)$ are isomorphic. If $\zu$ is a primitive idempotent of $H(I,i)$, then the element $[\zu\mathfrak{B}(I)]$ of $\mathbf{K}_0(\mathfrak{B}(I))$ does not depend on the choice of $\zu$. Indeed, if $\zv$ is another primitive idempotent of $H(I,i)$, since $\zu H(I,i)$ and $\zv H(I,i)$ are isomorphic minimal right ideals of $H(I,i)$, then $\zu\mathfrak{B}(I)\is \zv\mathfrak{B}(I)$, as a consequence of the following very basic lemma (see \cite[Proposition 21.20]{Lam:1}).

\begin{lem}\label{lem idempot}
 Let $R$ be a not necessarily unital ring and let $e,f$ be idempotents of $R$. Then $eR_{R}$ and $fR_{R}$ are isomorphic if and only if there are $a,b\in R$ such that
  \[
  a = fae,\qquad b = ebf,\qquad  ba = e,\qquad ab = f.
  \]
Consequently, if $S$ is a not necessarily unital ring and $\map\et RS$ is any ring homomorphism, in particular if $R$ is a subring of $S$, then every isomorphism from $eR_{R}$ to $fR_{R}$ gives rise to an isomorphism from $\et(e)S_{S}$ to $\et(f)S_{S}$.
 \end{lem}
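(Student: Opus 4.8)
The plan is to run the standard argument for the isomorphism of principal right modules generated by idempotents (compare \cite[Proposition 21.20]{Lam:1}), paying attention to the fact that $R$ need not have an identity. The single observation that makes this work is that
\[
eR = \{x\in R\mid ex = x\},
\]
since $x = er$ forces $ex = eer = er = x$, and conversely $ex = x$ exhibits $x$ as $ex\in eR$; the same holds with $f$ in place of $e$. Thus $e\in eR$ and $e$ is a \emph{left} identity for the right $R$-module $eR$ (and likewise $f$ on $fR$), so that each $x\in eR$ may be written $x = e\cdot x$, with $e$ regarded as a module element acted on by the ring element $x$.

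For the forward implication I would take an isomorphism $\map{\f}{eR}{fR}$ of right $R$-modules and set $a = \f(e)\in fR$ and $b = \f^{-1}(f)\in eR$. Since $a\in fR$ one has $fa = a$, and $\f(e) = \f(e\cdot e) = \f(e)\cdot e = ae$, so $a = fae$; symmetrically $b = ebf$. Because $\f(x) = \f(e\cdot x) = \f(e)\cdot x = ax$ for every $x\in eR$, evaluating at $x = b$ yields $ab = \f(b) = f$; dually $\f^{-1}(y) = by$ for every $y\in fR$, and evaluating at $y = a$ yields $ba = \f^{-1}(a) = e$. Conversely, given $a,b$ satisfying the four identities, the maps $x\mapsto ax$ from $eR$ to $fR$ (well defined, as $a = fae$ gives $fa=a$, hence $f(ax)=ax$) and $y\mapsto by$ from $fR$ to $eR$ (well defined, as $eb = b$) are right $R$-linear and mutually inverse, because $b(ax) = ex = x$ for $x\in eR$ and $a(by) = fy = y$ for $y\in fR$. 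This establishes the stated equivalence.

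Finally, for the ``Consequently'' clause I would start from an isomorphism $eR_{R}\is fR_{R}$, extract $a,b\in R$ with $a = fae$, $b = ebf$, $ba = e$, $ab = f$ as above, and apply the ring homomorphism $\et$: then $\et(e),\et(f)$ are idempotents of $S$ and $\et(a) = \et(f)\et(a)\et(e)$, $\et(b) = \et(e)\et(b)\et(f)$, $\et(b)\et(a) = \et(e)$, $\et(a)\et(b) = \et(f)$, so the already proved sufficiency direction applied inside $S$ gives $\et(e)S_{S}\is\et(f)S_{S}$; specializing $\et$ to an inclusion $R\sbs S$ is the last assertion. There is no real obstacle here; the only point requiring care is to avoid invoking a nonexistent global identity of $R$ and instead consistently use the one-sided identities $e$ on $eR$ and $f$ on $fR$.
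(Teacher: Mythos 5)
Your proof is correct, and it is precisely the standard argument that the paper delegates to the cited reference (Lam, Proposition 21.20), adapted as needed to the non-unital setting via the observation that $e$ acts as a left identity on $eR$. Nothing to add.
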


For every $i\in I$ let us choose a primitive idempotent $\zu_{i}$ of $H(I,i)$ and let us consider the subset
\[
\ZU\defug \{\zu_{i}\mathfrak{B}(I)\mid i\in I\}
\]
of $\FP_{\mathfrak{B}(I)}$. Note that if $i$ is maximal in $I$, the unique choice is $\zu_{i} = \ze_{X(I)_{i}}$. Then, by the above, the subset $[\ZU]$ of $\mathbf{K}_0(\mathfrak{B}(I))$ does not depend on the choice of the $\zu_{i}$'s; as we shall see, the structure of $\mathbf{K}_0(\mathfrak{B}(I))$ is completely determined by $[\ZU]$. In several instances we shall use the fact that, for every $i\in I$ which \emph{is not maximal} and $Y\in\CR_{i}$, we have that $\zu_{i}H(I,i)\is \ze_{Y}H(I,i)$ (see \eqref{eq eYub}) and hence $\zu_{i}\mathfrak{B}(I) \is \ze_{Y}\mathfrak{B}(I)$.

The following lemma is a special instance of \cite[Corollary 2.23]{Good:3}

\begin{lem}\label{lem idempreg}
Let $e, f$ be two idempotents of a regular ring $R$. If $eR\lesssim fR$, then $ReR\sbs RfR$.
 \end{lem}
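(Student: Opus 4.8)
The plan is to reduce the statement to Lemma \ref{lem idempot}. First I would fix a monomorphism of right $R$-modules $\f\colon eR\to fR$ witnessing $eR\lesssim fR$, and set $x = \f(e)\in fR$, so that $\f(er) = xr$ for every $r\in R$, $\f(eR) = xR$, and $x = fx$. Since $R$ is regular, I would choose $y\in R$ with $x = xyx$ and put $g = xy$; then $g$ is an idempotent, $gR = xR$ (because $g\in xR$, while $x = gx\in gR$), and $g = (fx)y = fg$, so that $g\in fR\sbs RfR$. Consequently $\f$ defines an isomorphism $eR_{R}\is gR_{R}$.

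Next I would apply Lemma \ref{lem idempot} to this isomorphism, obtaining $a,b\in R$ with $a = gae$, $b = ebg$, $ba = e$ and $ab = g$. From $a = gae$ it follows that $a = g(ae)\in gR$, and from $b = ebg$ that $b = (eb)g\in Rg$; hence $e = ba\in RgR$. Since $g\in RfR$ and $RfR$ is a two-sided ideal, $RgR\sbs RfR$, so $e\in RfR$, and therefore $ReR\sbs RfR$, as wanted.

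I do not expect any genuine obstacle here: the argument uses only the elementary fact that a principal right ideal of a regular ring is generated by an idempotent, together with the bookkeeping supplied by Lemma \ref{lem idempot}. One could instead invoke \cite[Corollary 2.23]{Good:3} directly, which produces an idempotent $g$ with $g = gf = fg$ and $eR\is gR$, after which only the last two sentences remain. Finally, the same reasoning applies verbatim when $R$ is not unital, since Lemma \ref{lem idempot} is stated in that generality, the identity $fr = ffr$ gives $fR\sbs RfR$ without a unit, and $RfR$ always denotes the two-sided ideal generated by $f$.
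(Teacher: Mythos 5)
Your argument is correct. There is no internal proof to compare against: the paper records this lemma without proof, as a special instance of \cite[Corollary 2.23]{Good:3}, so your proposal supplies the self-contained elementary verification that the paper delegates to Goodearl's book. The reduction is the standard one and every step checks out: from a monomorphism $\varphi\colon eR\to fR$ with $x=\varphi(e)$ you get $eR\is xR$, regularity gives $x=xyx$ and the idempotent $g=xy$ with $gR=xR$ and $g=fg$, the identity $fr=ffr$ places $fR$ (hence $g$) inside $RfR$ without needing a unit, and Lemma \ref{lem idempot} gives $a=gae\in gR$, $b=ebg\in Rg$ with $e=ba\in RgR\sbs RfR$, so $ReR\sbs RfR$ because $RfR$ is a two-sided ideal. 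One small point worth being conscious of: in \cite{Good:3} the relation $eR\lesssim fR$ means ``isomorphic to a direct summand of $fR$,'' but either reading (submodule or summand) hands you the monomorphism your argument starts from, and for a cyclic projective over a regular ring the two notions coincide anyway, so there is no gap. Your closing observation that one could instead just invoke \cite[Corollary 2.23]{Good:3} is exactly what the author does.
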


\begin{pro}\label{lem basis}
 With the above notations, by considering $[\ZU]$ equipped with the partial order induced by that of $\mathbf{K}_0(\mathfrak{B}(I))$, the assignment $i\mapsto [\zu_{i}\mathfrak{B}(I)]$ defines an order isomorphism from $I$ to $[\ZU]$.
 \end{pro}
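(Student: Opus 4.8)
The plan is to pass from the order of $\mathbf{K}_0(\mathfrak{B}(I))$ to the algebraic order on finitely generated projectives. Since $\mathfrak{B}(I)$ is unit-regular (Corollary \ref{corlocmatrix}), the monoid of isomorphism classes of finitely generated projective right $\mathfrak{B}(I)$-modules is cancellative and embeds onto the positive cone of $\mathbf{K}_0(\mathfrak{B}(I))$; hence, for $A,B\in\FP_{\mathfrak{B}(I)}$, one has $[A]\le[B]$ in $\mathbf{K}_0(\mathfrak{B}(I))$ if and only if $A$ is isomorphic to a direct summand of $B$, i.e. $A\lesssim B$ (see \cite[Ch 15]{Good:3}). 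So the statement reduces to showing that
\[
\zu_{i}\mathfrak{B}(I)\lesssim\zu_{j}\mathfrak{B}(I)\quad\Longleftrightarrow\quad i\le j\qquad(i,j\in I).
\]
Granting this, the map $i\mapsto[\zu_{i}\mathfrak{B}(I)]$ — already known to be well defined and onto $[\ZU]$ — both preserves and reflects the order, and by antisymmetry of $\le$ it is injective, hence an order isomorphism of $I$ onto $[\ZU]$.

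For the forward implication I would argue as follows. If $i=j$ there is nothing to prove, so assume $i<j$; then $i$ is not a maximal element of $I$, so $\zu_{i}\mathfrak{B}(I)\is\ze_{Y}\mathfrak{B}(I)$ for every $Y\in\CR_{i}$ (as observed just before the present Proposition). By Lemma \ref{lem idempprim Hi}(3) I may choose $Z\in\CR_{j}$ and $Y\in\CR_{i}$ with $Y\sbs Z$, and since $\CR_{j}$ is a partition of $X(I)_{j}$ (Lemma \ref{lem idempprim Hi}(2)) also $Z\sbs X(I)_{j}$. If $j$ is not maximal, then $\ze_{Y},\ze_{Z}\in\mathfrak{B}(I)$ and $\ze_{Y}\ze_{Z}=\ze_{Y}=\ze_{Z}\ze_{Y}$, whence $\ze_{Z}\mathfrak{B}(I)=\ze_{Y}\mathfrak{B}(I)\oplus(\ze_{Z}-\ze_{Y})\mathfrak{B}(I)$ and $\ze_{Y}\mathfrak{B}(I)\lesssim\ze_{Z}\mathfrak{B}(I)\is\zu_{j}\mathfrak{B}(I)$; if $j$ is maximal, then $\zu_{j}=\ze_{X(I)_{j}}\in\mathfrak{B}(I)$ and, since $Y\sbs X(I)_{j}$, the same computation gives $\ze_{Y}\mathfrak{B}(I)\lesssim\ze_{X(I)_{j}}\mathfrak{B}(I)=\zu_{j}\mathfrak{B}(I)$. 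In either case $\zu_{i}\mathfrak{B}(I)\is\ze_{Y}\mathfrak{B}(I)\lesssim\zu_{j}\mathfrak{B}(I)$.

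For the reverse implication I would first record that $\mathfrak{B}(I)\,\zu_{k}\,\mathfrak{B}(I)=H(I,\{\le k\})$ for every $k\in I$: indeed $H(I,\{\le k\})$ is an ideal of $\mathfrak{B}(I)$ (Theorem \ref{theo ideals}) containing $\zu_{k}\in H(I,k)$, so it contains the ideal generated by $\zu_{k}$; conversely, by Theorem \ref{theo ideals} that ideal equals $H(I,L)$ for a unique lower subset $L$ of $I$, and since $\mathbf{0}\ne\zu_{k}$ lies in $H(I,k)\cap H(I,L)$ and the family $\CH=\{H(I,i)\mid i\in I\}$ is independent, necessarily $k\in L$, so $\{\le k\}\sbs L$ and $H(I,\{\le k\})\sbs H(I,L)$. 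Now, assuming $\zu_{i}\mathfrak{B}(I)\lesssim\zu_{j}\mathfrak{B}(I)$, Lemma \ref{lem idempreg} applied to the regular ring $\mathfrak{B}(I)$ and the idempotents $\zu_{i},\zu_{j}$ yields $\mathfrak{B}(I)\zu_{i}\mathfrak{B}(I)\sbs\mathfrak{B}(I)\zu_{j}\mathfrak{B}(I)$, that is $H(I,\{\le i\})\sbs H(I,\{\le j\})$; since $L\mapsto H(I,L)$ is a lattice isomorphism onto $\BL_{2}(\mathfrak{B}(I))$ (Theorem \ref{theo ideals}), this gives $\{\le i\}\sbs\{\le j\}$ and therefore $i\le j$.

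I expect the only genuinely delicate point to be the forward implication, where one must keep track of the two different shapes of the blocks $H(I,i)$ — a copy of $\Soc(S(I,i))$ when $i$ is not maximal, a copy of $\BK$ with the single primitive idempotent $\ze_{X(I)_{i}}$ when $i$ is maximal — and make sure that an inclusion $Y\sbs Z$ of members of the partitions $\CR_{i}$, $\CR_{j}$ really translates into $\ze_{Y}\mathfrak{B}(I)\lesssim\ze_{Z}\mathfrak{B}(I)$. Everything else is a routine combination of the ideal description of $\mathfrak{B}(I)$ (Theorem \ref{theo ideals}), the partition bookkeeping for the $\CR_{i}$ (Lemma \ref{lem idempprim Hi}), Lemma \ref{lem idempreg}, and unit-regularity.
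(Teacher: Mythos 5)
Your proposal is correct and follows essentially the same route as the paper: the forward implication via choosing $Y\in\CR_{i}$, $Z\in\CR_{j}$ with $Y\sbs Z$ (Lemma \ref{lem idempprim Hi}) so that $\zu_{i}\mathfrak{B}(I)\is\ze_{Y}\mathfrak{B}(I)\lesssim\ze_{Z}\mathfrak{B}(I)\is\zu_{j}\mathfrak{B}(I)$, and the reverse implication via Lemma \ref{lem idempreg} together with $\mathfrak{B}(I)\zu_{k}\mathfrak{B}(I)=H(I,\{\le k\})$ and Theorem \ref{theo ideals}. You merely make explicit two points the paper leaves implicit (the reduction of the $\KO$-order to $\lesssim$ via unit-regularity, and the identification of the ideal generated by $\zu_{k}$), both of which you justify correctly.
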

\begin{proof}
Given $i,j\in I$, assume that $i\le j$. If $j$ is not maximal in $I$, we can take any $Z\in \CR_{j}$ and we have that
$\zu_{j}\mathfrak{B}(I) \is \ze_{Z}\mathfrak{B}(I)$.
According to Lemma \ref{lem idempprim Hi}, (3), we can choose an element $Y\in \CR_{i}$ such that $Y\sbs Z$. As a consequence we have that
\[
\zu_{i}\mathfrak{B}(I) \is \ze_{Y}\mathfrak{B}(I)\sbs \ze_{Z}\mathfrak{B}(I) \is \zu_{j}\mathfrak{B}(I)
\]
and so $[\zu_{i}\mathfrak{B}(I)] \le [\zu_{j}\mathfrak{B}(I)]$. If $j$ is maximal in $I$, the above argument still works by taking $Z = X(I)_{j}$ and $Y = Y_{(u,E)}$, where $u\in\al^{(i\le)}$ and $E$ is any maximal chain of $I$ such that $i,j\in E$, since we have again that $Y \sbs Z$. Conversely, assume that this latter condition holds, meaning that $\zu_{i}\mathfrak{B}(I) \lesssim \zu_{j}\mathfrak{B}(I)$. Then it follows from Lemma \ref{lem idempreg} that
\[
H(I,\{\le i\}) = \mathfrak{B}(I)\zu_{i}\mathfrak{B}(I) \sbs \mathfrak{B}(I)\zu_{j}\mathfrak{B}(I) = H(I,\{\le j\}),
\]
therefore $i\le j$.
\end{proof}

For the purposes of our investigation of the group $\mathbf{K}_0(\mathfrak{B}(I))$, it is necessary to recall some basic facts about regular and semiartinian rings; thus, in what follows we let $R$ be such a ring and $\xi+1$ be its Loewy length. For every right $R$-module $M$ there is a smallest ordinal $h(M)\le \xi+1$ such that $M = M\cdot\Soc_{h(M)}(R)$ and $h(M)$ is a successor ordinal in case $M$ is finitely generated. In particular, if $e = e^{2}\in R$ and $\a$ is the unique ordinal such that $e\in\Soc_{\a+1}(R)\setminus\Soc_{\a}(R)$, then $h(eR) = \a+1$. If $A$ is a finitely generated and projective right $R$-module, then $A\cdot\Soc_{\a}(R) = \Soc_{\a}(A)$ for every $\a\le\xi+1$ (see \cite[Proposition 1.5]{Bac:15}), therefore $A/A\cdot \Soc_{h(A)-1}(R)$ is always a semisimple $R/\Soc_{h(A)-1}(R)$-module; we say that $A$ is \emph{eventually simple} in case $A/A\cdot \Soc_{h(A)-1}(R)$ is simple and we consider the class
\[
\ESFP_{R} \defug \{A\in\FP_{R}\mid\text{$A$ is eventually simple}\}.
\]
We say that a subset $\ZA\sbs\ESFP_{R}$ is \emph{complete and irredundant} if for every simple right $R$-module $U$ there is a \underbar{unique} $A\in\ZA$ such that $A/A\cdot \Soc_{h(A)-1}(R)\is U$. It can be shown that $R$ admits a set $E$ of idempotents such that $\{eR\mid e\in E\}$ is a complete and irredundant subset of $\ESFP_{R}$ (see \cite[Page 70]{BacCiamp:14}).

Let us denote by $\Simp_{R}$ a set of representatives of the isomorphism classes of simple right $R$-modules. It is easily seen that the assignment $U\mapsto r_{R}(U)$ defines a bijection from $\Simp_{R}$ to the set $\Prim_{R}$ of right primitive ideals of $R$; consequently we are enabled to consider the partial ordering $\preccurlyeq$ in $\Simp_{R}$ (which we called in \cite{Bac:15} the \emph{natural partial order}) by prescribing that $U\preccurlyeq V$ exactly when $r_{R}(U)\sbs r_{R}(V)$.
Given $U,V\in\Simp_{R}$, we have that $U\preccurlyeq V$ if and only if $h(U)\le h(V)$ and, for every $A\in\FP_{R}$ such that $A/A\cdot\Soc_{h(V)-1}(R)\is V$, it is true that $U\lesssim A/A\cdot\Soc_{h(U)-1}(R)$ (see \cite[Theorem 2.2]{Bac:15}); moreover it follows from \cite[Theorem 2.1]{Bac:16} (\footnote{\,\,\,We take here an opportunity to remark that a mistake has been left to stand in the final and published version of the quoted paper. Precisely, in the quoted theorem and its proof ten instances appear of a factor module of the form $(***+L_{\b+1})/L_{\b}$, which must be read as $(***+L_{\b})/L_{\b}$ instead, as one can easily infer from the context.}) that $U\prec V$ if and only if $h(U)< h(V)$ and, if $A\in\FP_{R}$ is such that $A/A\cdot\Soc_{h(V)-1}(R)\is V$, then $A/A\cdot\Soc_{h(U)-1}(R)$ contains an infinite direct sum of copies of $U$.

Let $A\in\ESFP_{R}$ and let us consider the simple module $V = A/A\Soc_{h(A)-1}(R)$.  We say that $A$ is \emph{order-selective} (see \cite{BacSpin:17}) if, for every $U\in\Simp_{R}$, the property $U\lesssim A/A\Soc_{h(U)-1}(R)$ is equivalent to $U\preccurlyeq V$. By the above $A$ is order-selective if and only if, given any $U\in\Simp_{R}$ such that $h(U)<h(V)$, the condition $U\lesssim A/A\Soc_{h(U)-1}(R)$ implies that $A/A\Soc_{h(U)-1}(R)$ contains an infinite direct sum of copies of $U$. Finally, we say that a complete and irredundant subset $\ZA\sbs\ESFP_{R}$ is \emph{order representative} if all its elements are order-selective; if it is the case, by \cite[(3.1)]{BacSpin:17} the property $A\lesssim B$ is equivalent to $A/A\cdot\Soc_{h(A)-1}(R)\preccurlyeq B/B\cdot\Soc_{h(B)-1}(R)$ for every $A,B\in \ZA$.

\begin{pro}\label{pro basis}
Let $J$ be an artinian and finitely sheltered subset of $I$ and, with the above notations, let us consider the set $\ZU_{J} \defug \{\zu_{j}H(I,J)\mid j\in J\}$. Then:
\begin{enumerate}
  \item $\ZU_{J}$ is an order representative subset of $\ESFP_{H(I,J)}$;
  \item $\KO(H(I,J))$ is free abelian with $[\ZU_{J}]$ as a basis and
\[
\KO(H(I,J))^{+} = M([\ZU_{J}]).
\]
\end{enumerate}
 \end{pro}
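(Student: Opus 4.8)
The plan is to verify that $\ZU_{J}$ is an order representative subset of $\ESFP_{R}$, where $R\defug H(I,J)$, and then to obtain (2) by quoting the computation of the Grothendieck group of a semiartinian unit-regular ring from \cite{BacSpin:17}. By Corollary \ref{corlocmatrix} $R$ is unit-regular, and by Theorem \ref{artin} it is semiartinian with $\Soc_{\a}(R) = H(I,J_{\a})$ for every ordinal $\a$, where $(J_{\a})_{\a}$ is the dual classical Krull filtration of the artinian poset $J$. For $j\in J$ let $\b(j)$ be the unique ordinal with $j\in J_{\b(j)+1}\setminus J_{\b(j)}$; since $\zu_{j}\in H(I,j)$, $j\notin J_{\b(j)}$ and the family $(H(I,l))_{l\in I}$ is independent (Theorem \ref{lemposetring}), we get $\zu_{j}\in\Soc_{\b(j)+1}(R)\setminus\Soc_{\b(j)}(R)$, hence $h(\zu_{j}R) = \b(j)+1$.

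First I would show that $A_{j}\defug\zu_{j}R$ is eventually simple. By Remark \ref{remark split} the projection $\f_{J_{\b(j)},J}$ induces a $\BK$-algebra isomorphism $R/\Soc_{\b(j)}(R)\is H(I,J\setminus J_{\b(j)})$ fixing $\zu_{j}$, whence $A_{j}/A_{j}\Soc_{\b(j)}(R)\is\zu_{j}H(I,J\setminus J_{\b(j)})$. As $j$ is minimal in $J\setminus J_{\b(j)}$, $H(I,j)$ is a minimal ideal of $H(I,J\setminus J_{\b(j)})$ (Theorem \ref{theo ideals}), so $\zu_{j}H(I,J\setminus J_{\b(j)}) = \zu_{j}H(I,j)$, a minimal right ideal of the simple ring $H(I,j)$ which, being a submodule of an ideal, is simple also over $H(I,J\setminus J_{\b(j)})$; call it $U_{j}$. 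Using Theorem \ref{lemposetring}, (2) (and that $l<j$ forces $l\in J_{\b(j)}$, so $H(I,l)\sbs\Soc_{\b(j)}(R)$ acts trivially on $U_{j}$) one sees $U_{j}H(I,l) = \{\mathbf 0\}$ whenever $j\nle l$, while $U_{j}H(I,j) = U_{j}\ne\{\mathbf 0\}$; since the ideals of $R$ are the $H(I,L)$ for $L$ a lower subset of $J$ (Theorem \ref{theo ideals}) and any one properly containing $J\setminus\{k\in J\mid j\le k\}$ must contain $j$, this forces $r_{R}(U_{j}) = H(I,J\setminus\{k\in J\mid j\le k\})$. Hence $U_{j}\preccurlyeq U_{k}\iff j\le k$; in particular the $U_{j}$ are pairwise non-isomorphic, and since every simple right $R$-module $U$, at its layer $R/\Soc_{\a}(R)\is H(I,J\setminus J_{\a})$ with $h(U)=\a+1$, is isomorphic to a minimal right ideal lying in a homogeneous component $H(I,l)$ of $\Soc(H(I,J\setminus J_{\a}))$, i.e.\ to some $U_{l}$, the set $\ZU_{J}$ is a complete and irredundant subset of $\ESFP_{R}$ and $j\mapsto[A_{j}]$ is an order isomorphism $(J,\le)\is([\ZU_{J}],\preccurlyeq)$.

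The crux is order-selectivity of $A_{j}$. Take $U_{k}$ with $h(U_{k}) = \g+1 < \b(j)+1 = h(A_{j})$, so $\g = \b(k)$, and note that $U_{k}\preccurlyeq U_{j}\iff k<j$ by the annihilator computation. By Remark \ref{remark split}, $A_{j}/A_{j}\Soc_{\g}(R)\is\zu_{j}H(I,J\setminus J_{\g})$; using $A\cdot\Soc_{\a}(R) = \Soc_{\a}(A)$ for finitely generated projective $A$ and Theorem \ref{artin}, its socle is $\zu_{j}\Soc(H(I,J\setminus J_{\g})) = \bigoplus\{\zu_{j}H(I,l)\mid l\ \text{minimal in}\ J\setminus J_{\g}\}$, the summand $\zu_{j}H(I,l)$ being semisimple and $U_{l}$-homogeneous; by Theorem \ref{lemposetring}, (2) it is nonzero exactly when $l$ is comparable with $j$, which for an $l$ of level $\g<\b(j)$ means $l<j$. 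So $U_{k}\lesssim A_{j}/A_{j}\Soc_{\g}(R)\iff k<j\iff U_{k}\preccurlyeq U_{j}$, and it remains to see that when $k<j$ the multiplicity is infinite. We may take $\zu_{j}$ to be a diagonal idempotent $\ze_{W}$ with $W\in\CR_{j}$ (when $j$ is maximal, $\zu_{j} = \ze_{X(I)_{j}}$ is forced); by Lemma \ref{lem idempprim Hi}, (3) the set $W$ meets $\al$ members $(Y_{\l})_{\l\in\L}$ of $\CR_{k}$, and since every $\za\in H(I,k)$ has only finitely many nonzero $\CR_{k}$-rows (Lemma \ref{HsbsFr}, (3)) one gets $\ze_{W}\za = \sum_{\l\in\L}\ze_{Y_{\l}}\za$, so $\zu_{j}H(I,k) = \bigoplus_{\l\in\L}\ze_{Y_{\l}}H(I,k)$ is a direct sum of $\al$ minimal right ideals of $H(I,k)$, each isomorphic to $U_{k}$. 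Therefore every $A_{j}$ is order-selective, $\ZU_{J}$ is order representative, and (1) holds.

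Finally, (2) follows by applying to $R$, equipped with the order representative set $\ZU_{J}$, the Grothendieck-group computation of \cite{BacSpin:17} (see \cite[(3.1)]{BacSpin:17} and the surrounding results): for a semiartinian unit-regular ring possessing an order representative subset $\ZA\sbs\ESFP_{R}$, $\KO(R)$ is free abelian with basis $[\ZA]$ and positive cone $M([\ZA])$, the Hahn monoid being formed over the poset $([\ZA],\preccurlyeq)$. Combined with the order isomorphism $(J,\le)\is([\ZU_{J}],\preccurlyeq)$, this gives $\KO(H(I,J))^{+} = M([\ZU_{J}])$. The main obstacle is the order-selectivity step, specifically the infinite-multiplicity claim: one must show that a single primitive idempotent of $H(I,j)$ splits inside $H(I,k)$, for $k<j$, into $\al$ primitive idempotents — which is precisely the combinatorial content of Lemma \ref{lem idempprim Hi}, (3), that one block of $\CR_{j}$ absorbs $\al$ blocks of $\CR_{k}$.
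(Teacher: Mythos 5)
Your proof is correct and follows essentially the same route as the paper's: eventual simplicity via the layer isomorphism $R/\Soc_{\b}(R)\is H(I,J\setminus J_{\b})$, completeness and irredundancy via the homogeneous components of the socle of each layer, order-selectivity via the fact that one member of $\CR_{j}$ contains $\al$ members of $\CR_{k}$ for $k<j$ (Lemma \ref{lem idempprim Hi}, (3)), and then (2) quoted from the cited results of \cite{BacSpin:17}; your annihilator computation of $r_{R}(U_{j})$ replaces the paper's use of Lemma \ref{lem idempreg} but establishes the same comparability. The one small imprecision is the maximal case, where $\zu_{j}=\ze_{X(I)_{j}}$ and $X(I)_{j}\notin\CR_{j}$, so Lemma \ref{lem idempprim Hi}, (3) does not apply verbatim; since $X(I)_{j}=\bigcup\CR_{j}$ it still contains $\al$ members of $\CR_{k}$, which is how the paper resolves this case by an explicit choice of a maximal chain through $k$ and $j$.
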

\begin{proof}
(1) We first observe that if $j\in J$ and $\a$ is the unique ordinal such that $j\in J_{\a+1}\setminus J_{\a}$, then $h(\zu_{j}H(I,J)) = \a+1$ and
\[
(\zu_{j}+H(I,J_{\a}))[H(I,J)/H(I,J_{\a})] \is \zu_{j}H(I,J)/\zu_{j}H(I,J_{\a})] \is \zu_{j}H(I,j)
\]
is a simple right $H(I,J)/H(I,J_{\a})$-module, showing that $\zu_{j}H(I,J)$ is eventually simple.

Next, let $V$ be any simple right $H(I,J)$-module and set $\b+1 = h(V)$. Then \[
V = V\cdot H(I,J_{\b+1}) \quad\text{ and }\quad V\cdot H(I,J_{\b}) = 0,
\]
so that $V$ is a simple right module over $H(I,J\setminus J_{\b}) \is H(I,J)/H(I,J_{\b})$ and, as such, we have
\begin{equation}\label{eq complirred}
 V = V\cdot[H(I,J_{\b+1}\setminus J_{\b})] = V\cdot \Soc(H(I,J\setminus J_{\b})).
\end{equation}
 Now the homogeneous components of $\Soc(H(I,J\setminus J_{\b}))$ are precisely the $H(I,k)$ for $k\in J_{\b+1}\setminus J_{\b}$, thus it follows from \eqref{eq complirred} that $V$ is isomorphic to $\zu_{k}H(I,k) = \zu_{k}H(I,J_{\b+1}\setminus J_{\b})$ for a unique $k\in J_{\b+1}\setminus J_{\b}$. Finally, by using Theorem \ref{artin}, Remark \ref{remark split} and Theorem \ref{lemposetring}, (3), we get
 \begin{gather*}
 V \is \zu_{k}H(I,k) = \zu_{k}H(I,J\setminus J_{\b}) \is \zu_{k}H(I,J)/\zu_{k}H(I,J_{\b}) \\
  = \zu_{k}H(I,J)/\zu_{k}\Soc_{\b}(H(I,J)).
 \end{gather*}
We conclude that $\ZU_{J}$ is a complete and irredundant subset of $\ESFP_{H(I,J)}$.

In order to complete the proof of (1), it remains to show that every member of $\ZU_{J}$ is order selective. Thus, given $k\in J$, let $U$ be any simple right $H(I,J)$-module such that $h(U) < h(\zu_{k}H(I,J))$, set $\a+1 = h(U)$, assume that $U \lesssim \zu_{k}H(I,J)/\zu_{k}\Soc_{\a}(H(I,J))$ and let us show that $\zu_{k}H(I,J)/\zu_{k}\Soc_{\a}(H(I,J))$ contains an infinite direct sum of copies of $U$. As we proved above, there is a unique $j\in J$ such that $U\is \zu_{j}H(I,J)/\zu_{j}\Soc_{\a}(H(I,J))$, thus
\[
\zu_{j}H(I,J)/\zu_{j}\Soc_{\a}(H(I,J)) \lesssim \zu_{k}H(I,J)/\zu_{k}\Soc_{\a}(H(I,J)),
\]
that is
\begin{equation}\label{eq complirredd}
\zu_{j}H(I,J)/\zu_{j}H(I,J_{\a}) \lesssim \zu_{k}H(I,J)/\zu_{k}H(I,J_{\a}).
\end{equation}
It follows that $\zu_{j}H(I,J\setminus J_{\a}) \lesssim \zu_{k}H(I,J\setminus J_{\a})$ and, by using Lemma \ref{lem idempreg}, we infer that
\begin{gather*}
H(I,j) = H(I,J\setminus J_{\a})\zu_{j}H(I,J\setminus J_{\a}) \\
 \sbs H(I,J\setminus J_{\a})\zu_{k}H(I,J\setminus J_{\a}) \sbs H(I,\{\le k\}\cap J)
 \end{gather*}
and therefore $j< k$. Suppose that $k$ is not maximal and choose any $Z\in \CR_{k}$; then we have that $\zu_{k}H(I,k)\is \ze_{Z}H(I,k)$ and hence $\zu_{k}H(I,J)\is \ze_{Z}H(I,J)$ by Lemma \ref{lem idempot}. According to Lemma \ref{lem idempprim Hi}, (3), there is a subset $\CS$ of $\CR_{j}$ such that $|\CS| = \al$ and $Y\sbs Z$ for every $Y\in\CS$. Consequently
\begin{equation}\label{eq para}
\bigoplus_{Y\in\CS}\ze_{Y}H(I,J) \sbs \ze_{Z}H(I,J) \is \zu_{k}H(I,J),
\end{equation}
If $k$ is maximal in $J$, we choose a maximal chain $E$ of $I$ such that $j,k\in E$ and take $\CS = \{Y_{(u,E)}\mid u\in\al^{(j\le)}\}\sbs \CR_{j}$. Then again $|\CS| = \al$ and \eqref{eq para} still holds with $\ze_{X(I)_{j}}$ in place of $\ze_{Z}$. As a result we have in both cases that
\begin{gather*}
\bigoplus_{Y\in\CS}(\ze_{Y}H(I,J)/\ze_{Y}H(I,J_{\a})) \lesssim \zu_{k}H(I,J)/\zu_{k}H(I,J_{\a})\\
 \is \zu_{k}H(I,J)/\zu_{k}\Soc_{\a}H(I,J)
\end{gather*}
and we are done, because $U\is \ze_{Y}H(I,J)/\ze_{Y}H(I,J_{\a})$ for every $Y\in\CS$.

(2) is now a consequence of \cite[Theorems 3.6 and 4.3]{BacSpin:17}.
\end{proof}

We are now in a position to state and prove the main theorem of this section.

\begin{theo}\label{theo K0}
Let $I$ be an $\al$-poset having a finite cofinal subset. With the above notations, the group $\mathbf{K}_0(\mathfrak{B}(I))$ is free abelian with $[\ZU]$ as a basis and
\begin{equation}\label{eq poscone}
\KO(\mathfrak{B}(I))^{+} = M([\ZU]).
\end{equation}
Consequently the assignment $i\mapsto [\zu_{i}\mathfrak{B}(I)]$ induces an isomorphism
\[
\lmap{\rho(I)}{G(I)}{\KO(\mathfrak{B}(I))}
\]
of partially ordered abelian groups.
\end{theo}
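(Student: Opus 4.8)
The plan is to realize $\mathfrak{B}(I)$ as a filtered colimit of \emph{unital} subalgebras to which Proposition \ref{pro basis} applies, compute $\mathbf{K}_0$ on each piece, and pass to the limit. Since $I$ has a finite cofinal subset, the set $\ZM(I)$ of maximal elements of $I$ is finite and $I=\{\le\ZM(I)\}$ (this is the content of $I$ being finitely sheltered in $I$). Let $\CF$ be the directed family of all finite subsets $F\sbs I$ with $\ZM(I)\sbs F$. For $F\in\CF$ we have $\maximal{F}=\ZM(I)\sbs F$ and $F\sbs\{\le\ZM(I)\}=\{\le\maximal{F}\}$, so $F$ is finitely sheltered in $I$, and it is artinian because it is finite; by \eqref{multuniteq} (Proposition \ref{multunit}) the ring $H(I,F)$ has identity $\sum_{m\in\ZM(I)}\ze_{X(I)_{m}}$, which is also the identity of $\mathfrak{B}(I)$, so by Corollary \ref{cor fincof} $H(I,F)$ is a unital subalgebra of $\mathfrak{B}(I)$, and it is unit-regular by Corollary \ref{corlocmatrix}. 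As every element of $\mathfrak{B}(I)=H(I,I)$ has finite support, $\mathfrak{B}(I)=\bigcup_{F\in\CF}H(I,F)$; hence $\mathfrak{B}(I)$ is the filtered colimit, in unital rings, of $\bigl(H(I,F)\bigr)_{F\in\CF}$ along the unital inclusions $\iota_{F,F'}\colon H(I,F)\hookrightarrow H(I,F')$ (for $F\sbs F'$).

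Next I would treat one level. Fix $F\in\CF$. Proposition \ref{pro basis} gives that $\mathbf{K}_0(H(I,F))$ is free abelian with basis $[\ZU_{F}]=\{[\zu_{i}H(I,F)]\mid i\in F\}$ and positive cone $M([\ZU_{F}])$. Running the argument of Proposition \ref{lem basis} with $H(I,F)$ in place of $\mathfrak{B}(I)$ — it invokes only Lemma \ref{lem idempprim Hi}, Lemma \ref{lem idempreg}, Theorem \ref{theo ideals}, the identity $H(I,F)\,\zu_{i}\,H(I,F)=H(I,\{\le i\}\cap F)$, and unit-regularity of $H(I,F)$, all of which are available — shows that $i\mapsto[\zu_{i}H(I,F)]$ is an order isomorphism of the poset $F$ onto $[\ZU_{F}]$. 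Extending $\BZ$-linearly we obtain an isomorphism of partially ordered abelian groups $\theta_{F}\colon G(F)\to\mathbf{K}_0(H(I,F))$, $i\mapsto[\zu_{i}H(I,F)]$, where $G(F)$ carries its positive cone $M(F)$ as in \eqref{eq M(I)}.

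Then I would check compatibility and pass to the colimit. For $F\sbs F'$ in $\CF$, since $eR\otimes_{R}S\cong eS$ for an idempotent $e$ of a unital subring $R$ of a ring $S$, the induced map $\mathbf{K}_0(\iota_{F,F'})$ sends $[\zu_{i}H(I,F)]$ to $[\zu_{i}H(I,F')]$ (cf.\ Lemma \ref{lem idempot}); hence $\theta_{F'}^{-1}\circ\mathbf{K}_0(\iota_{F,F'})\circ\theta_{F}$ is the inclusion $G(F)\hookrightarrow G(F')$, $i\mapsto i$, which is an order embedding because the maximal elements of the (finite) support of an element of $G(F)$ are the same whether computed in $F$, in $F'$, or in $I$, so that $M(F)=M(I)\cap G(F)=M(F')\cap G(F)$. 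Thus $(\theta_{F})_{F\in\CF}$ is an isomorphism of direct systems of partially ordered abelian groups. Since $\mathbf{K}_0$ commutes with filtered colimits of unital rings, $\mathbf{K}_0(\mathfrak{B}(I))=\varinjlim_{F\in\CF}\mathbf{K}_0(H(I,F))\cong\varinjlim_{F\in\CF}G(F)$ as partially ordered abelian groups; and since every element of $G(I)$ has finite support, $\bigcup_{F\in\CF}G(F)=G(I)$ and $\bigcup_{F\in\CF}M(F)=M(I)$, so that last colimit is $G(I)$ with its cone $M(I)$.

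Putting the pieces together, the $\theta_{F}$ glue to an isomorphism $\rho(I)\colon G(I)\to\mathbf{K}_0(\mathfrak{B}(I))$ of partially ordered abelian groups which, by construction, sends each $i\in I$ to $[\zu_{i}\mathfrak{B}(I)]$; this is exactly the map of the statement. In particular $[\ZU]=\rho(I)(I)$ is a $\BZ$-basis of $\mathbf{K}_0(\mathfrak{B}(I))$ and $\mathbf{K}_0(\mathfrak{B}(I))^{+}=\rho(I)(M(I))=M([\ZU])$, which is \eqref{eq poscone}. I expect the only genuinely delicate points to be the two $\mathbf{K}$-theoretic inputs — the continuity of $\mathbf{K}_0$ on the filtered system $\bigl(H(I,F)\bigr)_{F\in\CF}$ and the identity $\mathbf{K}_0(R\hookrightarrow S)[eR]=[eS]$ for an idempotent $e$ of a unital subring $R$ of $S$ — both standard for unital rings, which is why it pays to arrange the approximating subalgebras $H(I,F)$ to be unital; the order-theoretic bookkeeping is routine once Propositions \ref{pro basis} and \ref{lem basis} are in hand.
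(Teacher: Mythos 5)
Your proof is correct, and it is essentially the alternative route that the paper explicitly flags and then declines: in its own proof the author writes that ``one might conclude the proof by using the fact that the functor $\mathbf{K}_0(-)$ preserves direct limits\ldots together with Proposition \ref{pro basis}'', before opting instead for ``a direct and relatively fast argument''. You take the colimit route in full: you verify that the finite subsets $F\supseteq\ZM(I)$ are artinian and finitely sheltered, that the inclusions $H(I,F)\hookrightarrow\mathfrak{B}(I)$ are unital with a common identity, that Proposition \ref{lem basis} transfers to each level via $H(I,F)\zu_iH(I,F)=H(I,\{\le i\}\cap F)$, and that the transition maps correspond to the order embeddings $G(F)\hookrightarrow G(F')$ because $M(F)=M(I)\cap G(F)$; continuity of $\KO$ then delivers basis, positive cone, and the isomorphism $\rho(I)$ in one stroke. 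The paper instead proves the three ingredients by hand: $\BZ$-independence of $[\ZU]$ by a separate induction that tensors down to the semisimple top $H(I,L')$ of the sublattice generated by the supports, generation via the union $\KO(\mathfrak{B}(I))=\bigcup_{J}\KO(\mu_J)(H(I,J))$, and the cone equality \eqref{eq poscone} by chasing idempotents through Proposition \ref{pro basis}. What your version buys is brevity and a cleaner conceptual packaging (everything is an isomorphism of directed systems of partially ordered abelian groups); what the paper's version buys is independence from \cite[Proposition 15.11]{Good:3} and from the unstated (though easy, as you correctly check) extension of Proposition \ref{lem basis} to the finite levels. Both are sound; yours is a legitimate, self-contained alternative provided you keep the level-$F$ verifications you included, since Proposition \ref{lem basis} as literally stated in the paper concerns only $\mathfrak{B}(I)$ itself.
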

\begin{proof}
We first prove that $[\ZU]$ is a $\BZ$-independent subset of $\KO(\mathfrak{B}(I))$.
Let $J$ and $K$ be two disjoint and finite subsets of $I$, assume that $(a_{j})_{j\in J}$ and $(b_{k})_{k\in K}$ are families of non negative integers such that
\begin{equation}\label{eq lindep}
\sum_{j\in J}a_{j}[\zu_{j}\mathfrak{B}(I)] =
\sum_{k\in K}b_{k}[\zu_{k}\mathfrak{B}(I)],
\end{equation}
and let us prove that, consequently, $a_{j} = 0 = b_{k}$ for all $j\in J$ and $k\in K$. The statement is trivial if $J\cup K = \vu$ (it cannot be false...). By proceeding inductively on $n =|J\cup K|$, let $n> 0$ and assume that the statement is true whenever $|J\cup K|<n$. Inasmuch as $\mathfrak{B}(I)$ is unit-regular, \eqref{eq lindep} is equivalent to the existence of an isomorphism (\footnote{If $\mathfrak{I}$ is a right ideal of some ring $R$ and $n$ is a natural number, we use here the notation $\mathfrak{I}^{n}$ to mean the direct sum of $n$ copies of $\mathfrak{I}$, not their product!})
\begin{equation}\label{eq lindep1}
\bigoplus_{j\in J}(\zu_{j}\mathfrak{B}(I))^{a_{j}} \is
\bigoplus_{k\in K}(\zu_{k}\mathfrak{B}(I))^{b_{k}}.
\end{equation}
Consequently, if $J = \vu$ (resp. $K = \vu$), then $b_{k} = 0$ for all $k\in K$  (resp. $a_{j} = 0$ for all $j\in J$) and induction works. Thus, assume that $J \ne\vu\ne K$.
If we set $L = J\cup K$, it follows from (2) and (3) of Theorem \ref{lemposetring} that
\[
\zu_{i}\mathfrak{B}(I) = \zu_{i}H(I,\{\le L\})
\]
for every $i\in L$, therefore \eqref{eq lindep1} can be rewritten as
\begin{equation}\label{eq lindep2}
\bigoplus_{j\in J}(\zu_{i}H(I,\{\le L\}))^{a_{j}} \is
\bigoplus_{k\in K}(\zu_{k}H(I,\{\le L\}))^{b_{k}}.
\end{equation}
Let $L'$ be the set of all maximal elements of $L$ and note that $\{<L'\}$ is a lower subset of $\{\le L\}$, therefore $H(I,\{<L'\})$ is an ideal of the ring $H(I,\{\le L\})$ and we have a $\BK$-algebra isomorphism
\[
H(I,\{\le L\})/H(I,\{<L'\})\is H(I,L')
\]
(see Remark \ref{remark split}). After tensoring (over $H(I,\{\le L\})$) by
\[
H(I,\{\le L\})/H(I,\{<L'\}),
\]
the isomorphism \eqref{eq lindep2} gives rise to an isomorphism
\begin{equation}\label{eq lindep3}
\bigoplus_{j\in J\cap L'}(\zu_{j}H(I,L'))^{a_{j}} \is
\bigoplus_{k\in K\cap L'}(\zu_{k}H(I,L'))^{b_{k}}.
\end{equation}
Since $H(I,L') = \bigoplus_{i\in L'}H(I,i)$ is a (not necessarily artinian) semisimple ring ha\-ving each $H(I,i)$ as a homogeneous component, then the $\zu_{i}H(I,i)$ are pairwise non-isomorphic minimal right ideals of $H(I,L')$ for $i$ ranging in $L'$, thus \eqref{eq lindep3} entails that $a_{j} = 0 = b_{k}$ for all $j\in J\cap L'$ and $k\in K\cap L'$. As a result we have in $\mathbf{K}_0(\mathfrak{B}(I))$ the equality
\begin{equation}\label{eq lindep4}
\sum_{j\in J\setminus L'}a_{j}[\zu_{j}\mathfrak{B}(I)] =
\sum_{k\in K\setminus L'}b_{k}[\zu_{j}\mathfrak{B}(I)]
\end{equation}
and induction applies, because $|(J\setminus L')\cup(K\setminus L')|<n$.

We have so far shown that $[\ZU]$ is a $\BZ$-linearly independent subset of $\mathbf{K}_0(\mathfrak{B}(I))$. Let $\CF(I)$ be the set of all finite subsets of $I$ which contain all maximal elements of $I$ and note that $H(I,J)$ is a unital subring of $\mathfrak{B}(I)$ for every $J\in \CF(I)$. Inasmuch as $\CF(I)$ is directed under the ordering by inclusion and $\mathfrak{B}(I))$ is the union of its unital subrings $H(I,J)$, for $J\in\CF(I)$, one might conclude the proof by using the fact that the functor $\mathbf{K}_0(-)$ preserves direct limits (see \cite[Proposition 15.11]{Good:3}), together with Proposition \ref{pro basis}. Nonetheless, we wish to display here a direct and relatively fast argument. Thus, let us denote by $\map{\mu_{J}}{H(I,J)}{\mathfrak{B}(I)}$ the inclusion map for every $J\in \CF(I)$ and let us first show that
\begin{equation}\label{eq union}
  \mathbf{K}_0(\mathfrak{B}(I)) = \bigcup_{J\in \CF(I)}\mathbf{K}_0(\mu_{J})(H(I,J)).
\end{equation}
Remember that the group $\mathbf{K}_0(\mathfrak{B}(I))$ is generated by the elements of the form $[\ze\mathfrak{B}(I)]$, where $\ze$ is an idempotent of $\mathfrak{B}(I)$, because $\mathfrak{B}(I)$ is a regular ring. Now, if $\ze$ is any idempotent of $\mathfrak{B}(I)$, since
\[
\mathfrak{B}(I) = \bigcup_{J\in \CF(I)}H(I,J)
\]
we can choose some $J\in \CF(I)$ such that $\ze\in H(I,J)$. We have
\[
[\ze\mathfrak{B}(I)] = [\ze H(I,J)\otimes_{H(I,J)}\mathfrak{B}(I)] = \mathbf{K}_0(\mu_{J})([\ze H(I,J)]),
\]
consequently \eqref{eq union} immediately follows.

Given $J\in \CF(I)$, by Proposition \ref{pro basis} the group $\mathbf{K}_0(H(I,J))$ is free abelian with the set $\ZU_{J} = \{[\zu_{j}H(I,J)]\mid j\in J\}$ as a basis. Thus \eqref{eq union} enables us to conclude that
\[
  [\ZU] = \bigcup_{J\in\CF(I)}\{[\zu_{i}\mathfrak{B}(I)]\mid i\in J\} = \bigcup_{J\in\CF(I)}
\{\mathbf{K}_0(\mu_{J})([\zu_{i}H(I,J)])\mid i\in J\}
\]
generates $\mathbf{K}_0(\mathfrak{B}(I))$ and hence is a basis.

Next, let $\mathbf{0} \ne \ze = \ze^{2}\in\mathfrak{B}(I)$ and choose some $J\in\CF(I)$ such that $\ze\in H(I,J)$. By Proposition \ref{pro basis} there is a family $(a_{j})_{j\in J}$ of integers such that
\[
[\ze H(I,J)] = \sum_{j\in J}a_{j}[\zu_{j} H(I,J)]
\]
in $\mathbf{K}_0(H(I,J))$ and $a_{j}>0$ whenever $j$ is a maximal element of $J$. Then in $\mathbf{K}_0(\mathfrak{B}(I))$ we have
\[
[\ze\mathfrak{B}(I)] = \mu_{J}([\ze H(I,J)])
= \sum_{j\in J}a_{j}\mu_{J}([\zu_{j} H(I,J)])
= \sum_{j\in J}a_{j}[\zu_{j}\mathfrak{B}(I)].
\]
This is enough to conclude that the first member of \eqref{eq poscone} is contained in the second. Conversely, let $(a_{i})_{i\in I}$ be any family of integers having finite support $F$, such that $a_{i}>0$ whenever $i$ is a maximal element of $F$, and choose any $J\in\CF(I)$ such that $F\sbs J$. By Proposition \ref{pro basis} there is some $B\in\FP_{H(I,J)}$ such that
\[
[B] = \sum_{j\in J}a_{j}[\zu_{j} H(I,J)].
\]
Consequently we have
\begin{gather*}
\sum_{i\in I}a_{i}([\zu_{j}\mathfrak{B}(I)])
= \sum_{j\in J}a_{j}\mathbf{K}_0(\mu_{J})([\zu_{j} H(I,J)])
= \mathbf{K}_0(\mu_{J})([B]) \\
= [B\otimes_{H(I,J)}\mathfrak{B}(I)].
\end{gather*}
This shows the reverse inclusion in \eqref{eq poscone} and hence the equality. The last statement follows from Proposition \ref{lem basis}.
\end{proof}

\section{Primeness versus primitivity. The primitive spectrum.}

In this section we investigate which properties of the $\al$-poset $I$ are related with the primeness and the primitivity of the algebra $\mathfrak{B}(I)$. As we shall see, while if $|I|=\al_{0}$ primeness of $\mathfrak{B}(I)$ always implies primitivity, for every cardinal $\al>\al_{0}$ there exists some $\al$-poset $I$ such that $\mathfrak{B}(I)$ is prime but not primitive. As we know (see Remark \ref{remark split}), every factor ring of $\mathfrak{B}(I)$ is isomorphic to $H(I,J)$ for a unique upper subset $J$ of $I$; the main result of this section is that the ring $H(I,J)$ is prime if and only if $J$ is downward directed, while the existence of a coinitial chain in $J$ is necessary and sufficient for $H(I,J)$ to be primitive.

At any rate, based on the results so far obtained, we can exhibit easily here a first special set of primitive factor rings of $\mathfrak{B}(I)$, namely those of the form $H(I,i\le)$. In fact, given $i\in I$, we have from Theorem \ref{artin} that
\[
\Soc(H(I,i\le)) = H(I,i) \ne \mathbf{0}
\]
is homogeneous. If $\mathfrak{I}$ is any nonzero ideal of $H(I,i\le)$ and $L$ is the unique lower subset of $\{i\le\}$ such that $\mathfrak{I} = H(I,L)$, it follows from Theorem \ref{lemposetring}, (3) that $H(I,i)H(I,l) \ne \mathbf{0} \ne H(I,l)H(I,i)$ for every $l\in L$, meaning that $H(I,i)\mathfrak{I} \ne \mathbf{0} \ne \mathfrak{I}H(I,i)$. This shows that $H(I,i\le)$ is a primitive ring and so $H(I,i\nle)$ is a primitive ideal of $\mathfrak{B}(I)$. As we shall see, those of the form $H(I,i\nle)$, for $i\in I$, are the only primitive ideals of $\mathfrak{B}(I)$ exactly when $I$ is artinian.

In order to reach our goal we rely on the following three results, the first of which is a characterization of right primitive rings which was already used in \cite{AbramsBellRangaswamy:011} and seems to have been observed for the first time by Formanek in \cite{Formanek:1} (see also \cite[Lemma 11.28]{Lam:1}).

\begin{pro}\label{pro primitive1}
      A ring $R$ is right primitive if and only if there is a proper right ideal $\mathfrak{I}$ such that $\mathfrak{I}+\mathfrak{A} = R$ for every ideal $\mathfrak{A}\ne 0$.
    \end{pro}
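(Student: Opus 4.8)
The statement to prove is Proposition \ref{pro primitive1}, the characterization of right primitive rings: $R$ is right primitive if and only if there is a proper right ideal $\mathfrak{I}$ with $\mathfrak{I}+\mathfrak{A}=R$ for every nonzero ideal $\mathfrak{A}$. The plan is to prove both implications by unwinding the definition of primitivity in terms of a faithful simple right module, and translating between such a module and a maximal right ideal.

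For the forward direction, assume $R$ is right primitive, so there is a faithful simple right $R$-module $M$. Pick any nonzero $x\in M$; then $xR=M$ by simplicity, so $\mathfrak{I}\defug r_R(x)=\{a\in R\mid xa=0\}$ is a maximal right ideal (it is the kernel of the surjection $R\to M$, $a\mapsto xa$), hence proper. I claim $\mathfrak{I}+\mathfrak{A}=R$ for every ideal $\mathfrak{A}\ne 0$. Indeed, $x\mathfrak{A}$ is a submodule of $M$: it is nonzero because $M$ is faithful (if $x\mathfrak{A}=0$ then, since $M=xR$ and $\mathfrak{A}$ is a two-sided ideal, $M\mathfrak{A}=xR\mathfrak{A}\subseteq x\mathfrak{A}=0$, contradicting faithfulness). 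By simplicity $x\mathfrak{A}=M$, so there is $a\in\mathfrak{A}$ with $xa=x$, whence $1-a\in r_R(x)=\mathfrak{I}$ and $1=(1-a)+a\in\mathfrak{I}+\mathfrak{A}$. Thus $\mathfrak{I}+\mathfrak{A}=R$.

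For the converse, suppose $\mathfrak{I}$ is a proper right ideal with $\mathfrak{I}+\mathfrak{A}=R$ for every nonzero ideal $\mathfrak{A}$. By Zorn's Lemma enlarge $\mathfrak{I}$ to a maximal right ideal $\mathfrak{m}$; note $\mathfrak{m}$ still satisfies $\mathfrak{m}+\mathfrak{A}=R$ for every nonzero ideal $\mathfrak{A}$, since it contains $\mathfrak{I}$. Set $M\defug R/\mathfrak{m}$, a simple right $R$-module; I must check $M$ is faithful. Its annihilator $\mathfrak{A}\defug \mathrm{ann}_R(M)$ is the largest two-sided ideal contained in $\mathfrak{m}$ (the core of $\mathfrak{m}$). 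If $\mathfrak{A}\ne 0$ then by hypothesis $\mathfrak{m}+\mathfrak{A}=R$; but $\mathfrak{A}\subseteq\mathfrak{m}$ forces $\mathfrak{m}=R$, contradicting that $\mathfrak{m}$ is proper. Hence $\mathfrak{A}=0$, $M$ is faithful and simple, and $R$ is right primitive.

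The argument is essentially standard module theory and presents no real obstacle; the only point deserving care is the faithfulness check in each direction, where one must use that the relevant ideal $\mathfrak{A}$ is two-sided (so that $M\mathfrak{A}\subseteq x\mathfrak{A}$ in the forward direction, and so that the core of $\mathfrak{m}$ is an ideal in the reverse direction). One should also record that the right ideal $\mathfrak{I}$ can be taken maximal without loss of generality, which is what makes the reverse direction clean.
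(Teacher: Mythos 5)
Your proof is correct. The paper itself gives no proof of this proposition — it is quoted as a known result of Formanek, with a pointer to \cite[Lemma 11.28]{Lam:1} — and your argument (passing from a faithful simple module to the annihilator of a cyclic generator in one direction, and from a maximal right ideal containing $\mathfrak{I}$ to the core/annihilator of $R/\mathfrak{m}$ in the other) is exactly the standard proof found there; both faithfulness checks are handled correctly.
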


The second is a generalization of \cite[Proposition 3.4]{AbramsBellRangaswamy:011}; this latter was used by Abrams, Bell and Rangaswamy in order to establish sufficient conditions on a graph $E$ for its Leavitt algebra $L(E)$ (over some field) to be primitive. In a similar manner, our generalization will enable us to state that the existence of a coinitial chain in $J$ is a sufficient condition for $H(I,J)$ to be primitive.

\begin{pro}\label{pro primitive2}
      Let $R$ be a unital ring, let $A$ be any totally ordered set and assume that there is a family $(e_{i})_{i\in A}$ of nonzero idempotents of $R$ such that, given $i,j\in A$ with $i\le j$, then $e_{i}e_{j} = e_{i}$ (resp. $e_{j}e_{i} = e_{i}$). If every nonzero ideal of $R$ contains some $e_{i}$, then $R$ is right (resp. left) primitive.
    \end{pro}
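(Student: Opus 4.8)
The plan is to apply Proposition \ref{pro primitive1} (Formanek's characterization) by exhibiting, for the ring $R$ in the hypothesis, a proper right ideal $\mathfrak{I}$ that meets every nonzero ideal ``maximally'', i.e. $\mathfrak{I}+\mathfrak{A}=R$ for every $\mathfrak{A}\ne 0$. Suppose first we are in the case $e_ie_j=e_i$ whenever $i\le j$ (the ``right'' case); the family $(e_i)_{i\in A}$ then behaves like a decreasing chain of idempotents, and $(1-e_i)R$ is an \emph{increasing} chain of right ideals. The natural candidate is
\[
\mathfrak{I} \defug \bigcup_{i\in A}(1-e_i)R,
\]
which is a right ideal because $A$ is totally ordered (so the union is directed). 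First I would check that $\mathfrak{I}$ is proper: if $1\in\mathfrak{I}$ then $1=(1-e_i)r$ for some $i$, whence $e_i=e_i(1-e_i)r=0$, contradicting $e_i\ne 0$.

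Next, given a nonzero ideal $\mathfrak{A}$, by hypothesis there is some $i\in A$ with $e_i\in\mathfrak{A}$. Then $1=(1-e_i)+e_i\in\mathfrak{I}+\mathfrak{A}$, so $\mathfrak{I}+\mathfrak{A}=R$. By Proposition \ref{pro primitive1}, $R$ is right primitive. For the parenthetical case $e_je_i=e_i$ whenever $i\le j$, the same argument works on the left: one sets $\mathfrak{I}'\defug\bigcup_{i\in A}R(1-e_i)$, checks it is a proper \emph{left} ideal (if $1=r(1-e_i)$ then $e_i=r(1-e_i)e_i=0$), observes that for any nonzero ideal $\mathfrak{A}$ containing some $e_i$ we get $1=(1-e_i)+e_i\in\mathfrak{I}'+\mathfrak{A}$, and invokes the left-handed version of Formanek's criterion to conclude that $R$ is left primitive.

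The only genuinely substantive point is verifying that $\mathfrak{I}$ (resp. $\mathfrak{I}'$) is closed under addition; this is exactly where total ordering of $A$ is used. If $x\in(1-e_i)R$ and $y\in(1-e_j)R$ with, say, $i\le j$, then from $e_ie_j=e_i$ we get $(1-e_j)(1-e_i)=1-e_i-e_j+e_ie_j=1-e_j$, hence $(1-e_j)R\supseteq(1-e_i)R$ is false in general, but rather $(1-e_i)R\sbs(1-e_j)R$: indeed $(1-e_i)=(1-e_i)(1-e_j)$? Let me instead note $(1-e_i)(1-e_j)=1-e_j-e_i+e_ie_j=1-e_j-e_i+e_i=1-e_j$, so $1-e_j\in(1-e_i)R$, giving $(1-e_j)R\sbs(1-e_i)R$; thus the larger index gives the \emph{smaller} right ideal, and the union over a chain is nested, hence directed, hence a right ideal. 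So $x+y\in(1-e_i)R\sbs\mathfrak{I}$. I expect this bookkeeping about which inclusion goes which way to be the main (mild) obstacle; everything else is a direct application of Proposition \ref{pro primitive1}.
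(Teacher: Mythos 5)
Your proposal is correct and follows essentially the same route as the paper: the same candidate $\mathfrak{I}=\bigcup_{i\in A}(1-e_{i})R$, the same verification that the family $\{(1-e_{i})R\}$ is a nested chain of proper right ideals (with the inclusion $(1-e_{j})R\sbs(1-e_{i})R$ for $i\le j$ going the way you eventually settle on), and the same appeal to Proposition \ref{pro primitive1}, with the specular argument for the left case.
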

    \begin{proof}
The proof is totally the same as that of \cite[Proposition 3.4]{AbramsBellRangaswamy:011}, however we repeat it for completeness. Concerning right primitivity, it follows from the assumption that if $i,j\in A$ and $i\le j$, then $Re_{i}\sbs Re_{j}$ and so $(1-e_{i})R\sps(1-e_{j})R$, therefore
\[
    \{(1-e_{i})R\mid i\in A\}
    \]
is a chain of proper right ideals of $R$ and so $\mathfrak{I} = \bigcup\{(1-e_{i})R\mid i\in A\}$ is a proper right ideal of $R$. Let $\mathfrak{A}$ be any nonzero ideal of $R$ and let $i\in A$ be such that $e_{i}\in \mathfrak{A}$. Then
\[
1 = e_{i} + (1-e_{i}) \in \mathfrak{A}+\mathfrak{I}
\]
and therefore $\mathfrak{A}+\mathfrak{I} = R$. We conclude from Proposition \ref{pro primitive1} that $R$ is right primitive. The proof relative to left primitivity is just specular.
\end{proof}

Finally, the following lemma is the key to establish that the existence of a coinitial chain in $J$ is a necessary condition for $H(I,J)$ to be primitive.

\begin{lem}\label{lem coinchain}
      Given a partially ordered set $I$, the following conditions are equivalent:
     \begin{enumerate}
       \item $I$ has a coinitial chain;
       \item There is a family $(F_{i})_{i\in I}$ of finite subsets of $I$ such that $F_{i}\sbs\{\le i\}$ for every $i\in I$ and, given $i,j\in I$, at least one element of $F_{i}$ is comparable with some element of $F_{j}$.
     \end{enumerate}
    \end{lem}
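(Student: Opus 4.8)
\textbf{Proof plan for Lemma \ref{lem coinchain}.}

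The plan is to prove both implications directly. For (1)$\Rightarrow$(2), suppose $C$ is a coinitial chain of $I$. For each $i\in I$, coinitiality gives some $c\in C$ with $c\le i$; fix such a $c_i$ and set $F_i = \{c_i\}$. This is a finite (indeed singleton) subset of $\{\le i\}$. Given $i,j\in I$, the elements $c_i,c_j$ both lie in the chain $C$, hence are comparable, which is exactly the required property. So this direction is essentially immediate.

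The substantive direction is (2)$\Rightarrow$(1). Assume a family $(F_i)_{i\in I}$ as in (2) is given. The first step is to extract from the union $\bigcup_{i\in I}F_i$ a coinitial \emph{subset} $S$ of $I$: for every $i\in I$, the set $F_i\sbs\{\le i\}$ is nonempty (if some $F_i$ were empty, taking $j=i$ in the hypothesis would fail), so picking any element of $F_i$ shows that $\bigcup_i F_i$ is coinitial in $I$; thus $S\defug\bigcup_i F_i$ is a coinitial subset. The key structural observation to establish next is that \emph{any two elements of $S$ are comparable} — i.e.\ $S$ is itself a chain. Indeed, take $s,s'\in S$, say $s\in F_i$ and $s'\in F_j$. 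The hypothesis supplies $a\in F_i$ and $b\in F_j$ with $a,b$ comparable; but that alone does not immediately force $s,s'$ comparable. So the real content is to bootstrap this: first note that if $a\in F_i$ and $a'\in F_i$ lie in the same $F_i$, applying the hypothesis with the pair $(i,i)$ only yields \emph{some} comparable pair within $F_i$, not that all of $F_i$ is a chain. The clean way around this is to argue that one may \emph{replace} each $F_i$ by a single well-chosen element: since $F_i$ is finite, one wants to pick $f_i\in F_i$ so that the resulting family $(f_i)_{i\in I}$ is pairwise comparable. This is a compactness/choice argument — finitely many candidates at each index, with a binary "comparability" constraint between indices — so I expect to invoke a combinatorial selection principle (a consistency argument via König's lemma / compactness of $\prod_i F_i$ with the product topology, or Zorn's lemma applied to partial selections), and this is where the main difficulty lies.

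Concretely, I would argue as follows. Consider the set $P = \prod_{i\in I}F_i$ of all choice functions, with the product topology (each $F_i$ finite discrete, so $P$ is compact Hausdorff). For each pair $i,j$ let $Z_{ij}=\{f\in P\mid f(i)\text{ and }f(j)\text{ are comparable}\}$, a clopen subset of $P$. The hypothesis says that for every \emph{finite} $F\sbs I$ there is a choice function making all pairs from $F$ comparable — this requires a small induction using property (2) to handle finitely many indices at once — hence the family $\{Z_{ij}\}$ has the finite intersection property, and by compactness $\bigcap_{i,j}Z_{ij}\ne\vu$. Any $f$ in this intersection gives elements $f(i)\in F_i\sbs\{\le i\}$ that are pairwise comparable; then $C\defug\{f(i)\mid i\in I\}$ is a chain, and it is coinitial because $f(i)\le i$ for every $i$. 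The one gap to fill carefully is the finite case: given $i_1,\dots,i_n$, produce $f$ with $f(i_k), f(i_\ell)$ comparable for all $k,\ell$. Here I would argue by induction on $n$, at each stage using property (2) to choose, among the comparable representatives already fixed, one compatible with the new index; the finiteness of each $F_i$ and the transitivity of the order on any chain make this go through. The main obstacle, then, is precisely organizing this finite-to-infinite passage cleanly — everything else is bookkeeping.
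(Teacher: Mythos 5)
Your direction (1)$\Rightarrow$(2) is exactly the paper's argument and is fine. The problem is in (2)$\Rightarrow$(1): the ``finite case'' that you defer to the end --- given $i_1,\dots,i_n$, find $f(i_k)\in F_{i_k}$ with all the $f(i_k)$ pairwise comparable --- is not bookkeeping; it is false. Hypothesis (2) only guarantees, for each \emph{pair} of indices, the existence of \emph{some} comparable pair of representatives, and these pairwise witnesses need not cohere. Concretely, let $I$ consist of a minimum $m$, three disjoint two-element chains $s_1<s_2$, $t_2<t_3$, $s_3<t_1$ above $m$, and three further elements $p$ (above $s_1,t_1$), $q$ (above $s_2,t_2$), $r$ (above $s_3,t_3$). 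Put $F_p=\{s_1,t_1\}$, $F_q=\{s_2,t_2\}$, $F_r=\{s_3,t_3\}$ and $F_i=\{m\}$ for every other $i$. Condition (2) holds: pairs involving an $F_i=\{m\}$ are witnessed by $m$ being comparable to everything, and the pairs $(p,q)$, $(q,r)$, $(p,r)$ are witnessed by $s_1\le s_2$, $t_2\le t_3$, $s_3\le t_1$ respectively. Yet no choice of one element from each of $F_p,F_q,F_r$ is pairwise comparable, since $s_1$ is comparable only to $s_2$ among $F_q\cup F_r$ and $t_1$ only to $s_3$. So your sets $Z_{ij}$ fail the finite intersection property already for three indices, and the compactness argument never gets started. (The poset does have a coinitial chain, namely $\{m\}$ --- but it is not of the form $\{f(i)\mid i\in I\}$ for any selection $f$, which shows the whole selection-based architecture, not just its proof, is the wrong target.)

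The paper proves (2)$\Rightarrow$(1) by an entirely different, transfinite argument: assuming no chain of $I$ is coinitial (and noting (2) forces $I$ to be downward directed with no minimal element), it constructs by recursion, for every ordinal $\lambda$, a strictly decreasing family $(i_\alpha)_{\alpha<\lambda}$ in which each $i_\alpha$ is a lower bound of $F_{i_\beta}$ for all $\beta<\alpha$; taking $|\lambda|>|I|$ gives a contradiction. Hypothesis (2) is used only at limit stages: the chain $C$ built so far is not coinitial, so some $h$ lies above no member of $C$, and then any lower bound $k$ of the finite set $F_h$ satisfies $k<i_\beta$ for all $\beta$, because the comparable pair $u\in F_{i_\beta}$, $v\in F_h$ supplied by (2) cannot satisfy $u\le v$ (that would give $i_{\beta+1}\le u\le v\le h$, putting $h$ above $C$), whence $k\le v<u\le i_\beta$. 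That descending-chain idea is what is missing from your plan; the finite-to-infinite compactness route cannot be repaired.
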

    \begin{proof}
(1)$\Rightarrow$(2) If $A$ is a coinitial chain of $I$, for every $i\in I$ choose $k_{i}\in A$ such that $k_{i}\le i$ and define $F_{i} = \{k_{i}\}$. Clearly the family $(F_{i})_{i\in I}$ meets the requirements stated in (2).

(2)$\Rightarrow$(1) Assume (2) and observe that, consequently, $I$ is downward directed. Suppose that no chain of $I$ is coinitial; of course, we may assume that $I$ is not empty, otherwise there is nothing to prove. As a first consequence $I$ must be infinite and has no minimal element, otherwise $I$ would admit a smallest element $m$ and $\{m\}$ would be a coinitial chain. We claim that, for every ordinal $\l$, there exists a family $(i_{\a})_{\a<\l}$ of elements of $I$ such that if $\b<\a<\l$, then  $i_{\b}>i_{\a}$ and $i_{\a}$ is a lower bound for $F_{i_{\b}}$. Thus $\{i_{\a}\mid\a<\l\}$ would be a subset of $I$ having the same cardinality of $\l$ and this leads to a contradiction as soon as the cardinality of $\l$ exceeds that of $I$. By proceeding recursively on $\a<\l$, let us choose any $i_{0}\in I$. Suppose that $0<\a<\l$ and assume that we have defined $i_{\b}$ for every $\b<\a$, in such a way that if $\g<\b<\a$, then $i_{\b}<i_{\g}$ and $i_{\b}$ is a lower bound for $F_{i_{\g}}$. If $\a = \b+1$ for some $\b$, we choose a lower bound $j$ for $F_{i_{\b}}\cup\{i_{\b}\}$ and, since $j$ cannot be a minimal element, we choose $i_{\a}<j$, so that $i_{\b}>i_{\a}$ and $i_{\a}$ is a lower bound for $F_{i_{\b}}$. As a consequence, if $\g<\b$ then $i_{\a}$ is a lower bound for $F_{i_{\g}}$ too. Suppose that $\a$ is a limit ordinal and let us consider the chain $C = \{i_{\b}\mid \b<\a\}$. Since $C$ is not coinitial, there is some $h\in I$ such that $h\nin\{C\le\}$. Let $k$ be a lower bound for $F_{h}$ and let us prove that $k<i_{\b}$ whenever $\b<\a$. Given $\b<\a$, by the assumption (2) there are two elements $u\in F_{i_{\b}}$ and $v\in F_{h}$ which are comparable. It is not the case that $u\le v$ otherwise, by the inductive assumption, $i_{\b+1}\le u\le v\le h$ and hence $h\in\{C\le\}$, because $i_{\b+1}\in C$. Thus $v<u$ and so $k\le v<u\le i_{\b}$, as wanted. Finally, let us define define $i_{\a}\defug k$; by the above and the inductive assumption, $i_{\a}$ is a lower bound for $F_{i_{\d}}$ for all $\d<\a$. Thus our claim is true and the proof is complete.
\end{proof}

\begin{theo}\label{theo gprimprimitive}
Let $I$ be a nonempty $\al$-poset having a finite cofinal subset and let $J$ be an upper subset of $I$. Then we have:
     \begin{enumerate}
       \item $H(I,J)$ is prime if and only if $J$ is downward directed;
       \item $H(I,J)$ is right primitive if and only if $J$ has a coinitial chain, if and only if $H(I,J)$ is left primitive.
     \end{enumerate}
\end{theo}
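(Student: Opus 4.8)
The plan is to treat the two assertions separately, in both cases exploiting the description of the ideals of $H(I,J)$ given by Theorem~\ref{theo ideals}: every ideal is $H(I,L)$ for a unique lower subset $L$ of $J$, and it is nonzero exactly when $L\ne\emptyset$. We may assume $J\ne\emptyset$, since otherwise $H(I,J)=\{\mathbf{0}\}$. Also, $J$ being an upper subset of $I$ and $I$ having a finite cofinal subset, $J$ is finitely sheltered, so $H(I,J)$ is a unital ring by Proposition~\ref{multunit}; this lets us invoke Propositions~\ref{pro primitive1} and \ref{pro primitive2}.

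For part (1): if $J$ is downward directed, take two nonzero ideals $H(I,L)$, $H(I,M)$ with $L,M$ nonempty lower subsets of $J$, pick $i\in L$, $j\in M$ and a common lower bound $k\in J$ of $i,j$; then $k\in L\cap M$, so $H(I,k)\subseteq H(I,L)\cap H(I,M)$, and since $H(I,k)$ is a simple ring, $H(I,k)=H(I,k)^{2}\subseteq H(I,L)H(I,M)$, so the product is nonzero and $H(I,J)$ is prime. Conversely, if $J$ is not downward directed, choose $i,j\in J$ with no common lower bound in $J$ and set $L=\{\le i\}\cap J$, $M=\{\le j\}\cap J$; these are nonempty lower subsets of $J$, and any $l\in L$, $m\in M$ are incomparable (a comparability $l\le m$ or $m\le l$ would produce a common lower bound of $i,j$ lying in $J$), hence $H(I,L)H(I,M)=\sum_{l\in L,\,m\in M}H(I,l)H(I,m)=\{\mathbf{0}\}$ by Theorem~\ref{lemposetring}(2), so $H(I,J)$ is not prime.

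For part (2), the ``coinitial chain $\Rightarrow$ primitive'' direction uses Proposition~\ref{pro primitive2}. Given a coinitial chain $A$ of $J$, extend $A$ to a maximal chain $B$ of $I$, fix the map $p_{0}\in\al^{(I)}$ that is $\emptyset$ everywhere, and for each $a\in A$ set $u_{a}=p_{0}|_{\{a\le\}}$, $E_{a}=B\cap\{a\le\}\in\CM(a\le)$, and
\[
e_{a}=\begin{cases}\ze_{Y_{(u_{a},E_{a})}}, & \text{if }a\notin\ZM(I),\\ \ze_{X(I)_{a}}, & \text{if }a\in\ZM(I).\end{cases}
\]
Each $e_{a}$ is a nonzero idempotent of $H(I,a)\subseteq H(I,J)$, and the point $(p_{0},B)$ lies in $Y_{(u_{a},E_{a})}$ for every $a\in A$; hence, for $a<a'$ in $A$, the members $Y_{(u_{a},E_{a})}\in\CR_{a}$ and $Y_{(u_{a'},E_{a'})}\in\CR_{a'}$ meet, so $Y_{(u_{a},E_{a})}\subseteq Y_{(u_{a'},E_{a'})}$ by Lemma~\ref{lem idempprim Hi}(1) (and when $a'$ is maximal, $a'\in E_{a}$ forces $Y_{(u_{a},E_{a})}\subseteq X(I)_{a'}$); since these are diagonal idempotents, $e_{a}e_{a'}=e_{a'}e_{a}=e_{a}$. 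Finally, every nonzero ideal $H(I,L)$ contains some $e_{a}$: pick $m\in L$ and, by coinitiality of $A$, an $a\in A$ with $a\le m$, so $a\in L$. Proposition~\ref{pro primitive2} then gives that $H(I,J)$ is both right and left primitive.

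For the converse (which I expect to be the main obstacle, mostly in the bookkeeping), suppose $H(I,J)$ is right primitive and let $\mathfrak{I}$ be a proper right ideal with $\mathfrak{I}+\mathfrak{A}=H(I,J)$ for every nonzero ideal $\mathfrak{A}$ (Proposition~\ref{pro primitive1}). For each $j\in J$, the ideal generated by $H(I,j)$ is the nonzero ideal $H(I,\{\le j\}\cap J)$, so $\mathbf{1}=x_{j}+y_{j}$ with $x_{j}\in\mathfrak{I}$ and $\mathbf{0}\ne y_{j}\in H(I,\{\le j\}\cap J)$; let $F_{j}\subseteq\{\le j\}\cap J$ be the (finite, nonempty, since $\mathbf{1}\notin\mathfrak{I}$) support of $y_{j}$, so $F_{j}\subseteq\{k\in J\mid k\le j\}$. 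If for some $i,j\in J$ every element of $F_{i}$ were incomparable with every element of $F_{j}$, then $y_{i}y_{j}=\mathbf{0}$ by Theorem~\ref{lemposetring}(2), i.e.\ $(\mathbf{1}-x_{i})(\mathbf{1}-x_{j})=\mathbf{0}$, which rearranges to $\mathbf{1}=x_{i}y_{j}+x_{j}\in\mathfrak{I}$ (as $\mathfrak{I}$ is a right ideal), a contradiction. Hence $(F_{j})_{j\in J}$ satisfies condition (2) of Lemma~\ref{lem coinchain}, so $J$ has a coinitial chain. The left-primitive case is the mirror argument (a proper left ideal $\mathfrak{I}$, and $(\mathbf{1}-x_{j})(\mathbf{1}-x_{i})=\mathbf{0}$ yielding $\mathbf{1}=y_{j}x_{i}+x_{j}\in\mathfrak{I}$), so all three conditions in (2) are equivalent. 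The only genuinely delicate points are verifying membership $Y_{(u_{a},E_{a})}\in\CR_{a}$ together with the nesting $Y_{(u_{a},E_{a})}\subseteq Y_{(u_{a'},E_{a'})}$ in the construction above, and noting that the $F_{j}$ are nonempty.
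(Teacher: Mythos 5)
Your proposal is correct and follows essentially the same route as the paper: part (1) via the ideal description of Theorem \ref{theo ideals} together with Theorem \ref{lemposetring}(2)--(3), and part (2) via Proposition \ref{pro primitive2} applied to a nested family of diagonal idempotents indexed by the coinitial chain, with the converse obtained from Proposition \ref{pro primitive1} and Lemma \ref{lem coinchain} exactly as in the paper. The only (welcome) refinement is your explicit case split $e_{a}=\ze_{X(I)_{a}}$ when $a\in\ZM(I)$, since $H(I,a)$ then consists only of scalar multiples of $\ze_{X(I)_{a}}$ and a proper $\ze_{Y}$ with $Y\in\CR_{a}$ would not lie in it --- a point the paper's own proof passes over.
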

\begin{proof}
First note that $H(I,J)$ has a multiplicative identity $\zu$, by the assumption and Proposition \ref{multunit}.

(1) According to Theorem \ref{theo ideals} the ring $H(I,J)$ is prime if and only if\linebreak $H(I,\{\le i\})H(I,\{\le j\}) \ne 0$ for all $i,j\in I$. In view of Theorem \ref{lemposetring} the latter condition is equivalent to requiring that, given $i,j\in I$, there are two comparable elements $h,k\in I$ such that $h\le i$ and $k\le j$; this exactly means that $\{i,j\}$ has a lower bound.

(2) Let $C$ be a coinitial chain of $J$; by the Hausdorff Maximal Principle we may assume that $C$ is a maximal chain of $J$. For every $i\in C$ let us denote by $\vu_{i}$ the constant map $k\mapsto\vu$ from $\{i\le\}$ to $\al$ and let us consider the element
\[
Y_{(\vu_{i},C\cap\{i\le\})} = \left\{(p,B)\in X(I)\left|\,\, p|_{\{i\le\}}\right. =\vu_{i} \text{ and }B\cap\{i\le\} = C\cap\{i\le\}\right\} \in \CR_{i}
\]
and set $\ze_{i} = \ze_{Y_{(\vu_{i},C\cap\{i\le\})}}\in H(I,i)\sbs H(I,J)$. If $i,j\in C$ and $i<j$, then $Y_{(\vu_{i},C\cap\{i\le\})}\sbs Y_{(\vu_{j},C\cap\{j\le\})}$ and hence
\begin{equation}\label{eq idemprim}
\ze_{i}\ze_{j} = \ze_{i} = \ze_{j}\ze_{i};
\end{equation}
 moreover $\ze_{i} \ne \mathbf{0}$ for every $i\in C$, because $\vu\nin\CR_{i}$. We have now the family $(\ze_{i})_{i\in C}$ of idempotents of the unital ring $H(I,J)$, therefore right primitivity of $H(I,J)$ will follow from Proposition \ref{pro primitive2} once we prove that every nonzero ideal of $H(I,J)$ contains $\ze_{i}$ for some $i\in C$. Given a nonzero ideal $\mathfrak{A}$ of $H(I,J)$, it follows from Theorem \ref{theo ideals} that $\mathfrak{A} = H(I,L)$ for some nonempty lower subset $L$ of $J$. Given any $l\in L$, by the assumption there is some $i\in C$ such that $i\le l$. Then necessarily $i\in L$, therefore $\ze_{i}\in H(I,L)\sbs\mathfrak{A}$ and we are done. Clearly, since \eqref{eq idemprim} holds whenever $i,j\in C$ and $i<j$, then $H(I,J)$ is left primitive as well.

Conversely, assume that $H(I,J)$ is right primitive. By Proposition \ref{pro primitive1} there is a proper right ideal $\mathfrak{I}$ of $H(I,J)$ such that $\mathfrak{I}+\mathfrak{A} = H(I,J)$ for every nonzero ideal $\mathfrak{A}$ of $H(I,J)$. In particular, in view of Theorem \ref{theo ideals} we have that $\mathfrak{I}+H(I,\{\le i\}) = H(I,J)$ for every $i\in J$. As a result, for every $i\in J$ we can choose a nonzero element $\za_{i}\in H(I,\{\le i\})$ such that $\zu-\za_{i}\in \mathfrak{I}$ and there is a subset $F_{i} = \{h_{i1},\ldots,h_{in_{i}}\}$ of $\{\le i\}$ and nonzero elements $\zb_{i1}\in H(I,h_{i1}),\ldots,\zb_{in_{i}}\in H(I,h_{in_{i}})$ such that
\[
\za_{i} = \zb_{i1}+\cdots+\zb_{in_{i}}.
\]
The family $(F_{i})_{i\in J}$ satisfies the condition stated in (2) of Lemma \ref{lem coinchain}. To see this, observe that if $i,j\in J$, then necessarily $\za_{i}\za_{j}\ne \mathbf{0}$, otherwise we would get
\[
\za_{j} = (\zu-\za_{i})\za_{j} \in \mathfrak{I}
\]
and hence $\zu\in \mathfrak{I}$. Consequently $\zb_{ih}\zb_{jk}\ne \mathbf{0}$ for some $h\in F_{i}$ and $k\in F_{j}$ and so $H(I,h)H(I,k)\ne 0$; by Theorem \ref{lemposetring} this means that $h$ and $k$ are comparable. Now it follows from Lemma \ref{lem coinchain} that $J$ admits a coinitial chain.

Finally, it is clear that a completely specular argument applies by assuming that $H(I,J)$ is left primitive.
\end{proof}

As a particular case we get the following

\begin{cor}\label{theo primprimitive}
Let $I$ be a nonempty $\al$-poset having a finite cofinal subset. Then we have:
     \begin{enumerate}
       \item $\mathfrak{B}(I)$ is prime if and only if $I$ is downward directed;
       \item $\mathfrak{B}(I)$ is right primitive if and only if $I$ has a coinitial chain, if and only if $\mathfrak{B}(I)$ is left primitive.
     \end{enumerate}
\end{cor}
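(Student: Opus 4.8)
The plan is to deduce Corollary \ref{theo primprimitive} immediately from Theorem \ref{theo gprimprimitive} by specializing $J=I$. Indeed $I$ is (trivially) an upper subset of itself and, by hypothesis, has a finite cofinal subset, so $H(I,I)=\mathfrak{B}(I)$ is a unital $\BK$-algebra by Proposition \ref{multunit}; hence both assertions of Theorem \ref{theo gprimprimitive} apply verbatim with $J$ replaced by $I$, yielding (1) and (2) at once. So the proof is essentially this one-line reduction.

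For completeness I would briefly recall why each half of Theorem \ref{theo gprimprimitive} holds, since the corollary is where the reader most wants the statement. For (1): by Theorem \ref{theo ideals} the ideals of $\mathfrak{B}(I)$ are precisely the $H(I,L)$ with $L$ a lower subset of $I$, so $\mathfrak{B}(I)$ fails to be prime exactly when $H(I,\{\le i\})\,H(I,\{\le j\})=\{\mathbf{0}\}$ for some $i,j\in I$; by the multiplication rules of Theorem \ref{lemposetring} this product is nonzero precisely when some $h\le i$ is comparable with some $k\le j$, i.e.\ when $\{i,j\}$ has a lower bound, which is the downward-directedness condition.

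For (2): if $I$ has a coinitial chain $C$ one builds, for each $i\in C$, a nonzero idempotent $\ze_i=\ze_{Y_{(\vu_i,C\cap\{i\le\})}}\in H(I,i)\sbs\mathfrak{B}(I)$, with $\ze_i\ze_j=\ze_i=\ze_j\ze_i$ whenever $i<j$ in $C$; since every nonzero ideal $H(I,L)$ of $\mathfrak{B}(I)$ contains some $H(I,\{\le l\})$ and hence some $\ze_i$ with $i\in C$, Proposition \ref{pro primitive2} gives that $\mathfrak{B}(I)$ is both right and left primitive. Conversely, right primitivity together with Proposition \ref{pro primitive1} produces, for each $i\in I$, a nonzero $\za_i\in H(I,\{\le i\})$ with $\zu-\za_i$ in a fixed proper right ideal; writing $\za_i$ as a sum of nonzero components $\zb_{ih}\in H(I,h)$ over a finite set $F_i\sbs\{\le i\}$, the forced relation $\za_i\za_j\ne\mathbf{0}$ and Theorem \ref{lemposetring} show that $(F_i)_{i\in I}$ satisfies condition (2) of Lemma \ref{lem coinchain}, whence $I$ has a coinitial chain.

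I do not anticipate any real obstacle in the corollary itself: it is a pure specialization of the preceding theorem. The genuine work sits inside Theorem \ref{theo gprimprimitive} and, above all, in the set-theoretic recursion of Lemma \ref{lem coinchain}, where one manufactures, in a poset with no coinitial chain, a strictly descending transfinite sequence of length exceeding $|I|$ — but that has already been carried out.
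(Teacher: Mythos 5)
Your proposal is correct and matches the paper exactly: the paper derives this corollary as an immediate specialization of Theorem \ref{theo gprimprimitive} to the upper subset $J=I$ (which is finitely sheltered since $I$ has a finite cofinal subset), with no further argument given. Your supplementary recollection of the two halves of that theorem's proof is also faithful to the paper's reasoning.
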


If $|I| = \al_{0}$ and $I$ is downward directed, then it is easy to see that $I$ admits a coinitial chain. It seems worth to emphasize that the condition $|I| = \al_{0}$ does not make $I$ an $\al_{0}$-poset, because $I$ might have uncountably many maximal chains. As an example, let $I = A\cup B$, where $A$ and $B$ are disjoint countable sets, together with two bijections $h\mapsto a_{h}$ and
$h\mapsto b_{h}$ from $\BZ$ to $A$ and $B$ respectively. Then the binary relation
\[
\{(a_{h},a_{k})\mid h\le k\} \cup\{(b_{h},b_{k})\mid h\le k\}\cup
\{(a_{h},b_{k})\mid h< k\} \cup\{(b_{h},a_{k})\mid h<k\}
 \]
 is a partial order in $I$ with the following Hasse diagram:
 \begin{equation*}
\begin{split}
\begin{picture}(100,110)(0,0)
\putl{25}{80}{0}{-1}{22}
\putl{25}{50}{0}{-1}{22}
\putl{60}{80}{0}{-1}{22}
\putl{60}{50}{0}{-1}{22}
\putbb{25}{95}{$\vdots$}  \putbb{60}{95}{$\vdots$}
\putbb{25}{80}{$a_1$} \putbb{60}{80}{$b_1$}
\putbb{25}{50}{$a_0$} \putbb{60}{50}{$b_0$}
\putbb{25}{20}{$a_{-1}$} \putbb{60}{20}{$b_{-1}$}
\putbb{25}{5}{$\vdots$}  \putbb{60}{5}{$\vdots$}
\putl{30}{50}{1}{-1}{22} \putl{52}{50}{-1}{-1}{22}
\putl{30}{80}{1}{-1}{22} \putl{52}{80}{-1}{-1}{22}
\end{picture}
\end{split}
\end{equation*}
If $X$ is any subset of $\BZ$, then
\[
\{a_{h}\mid h\in X\}\cup \{b_{h}\mid h\in \BZ\setminus X\}
\]
is a maximal chain of $I$ and every maximal chain of $I$ arises in this way. Thus $I$ is not an $\al_{0}$-poset, but it is a $2^{\al_{0}}$-poset.

In any case, if $|I| = \al_{0}$, then $I$ is an $\al$-poset for every cardinal $\al\ge 2^{\al_{0}}$ and $\mathfrak{B}(I)$ is prime if and only if it is primitive.

Now, let $X$ be a set with $\al = |X| >\al_{0}$ and let $I$ be the lattice of all cofinite subsets of $X$ partially ordered by inclusion. Then $I$ is downward directed and admits $X$ as the greatest element. Since every chain in $I$ is at most countable, the set of all maximal chains of $I$ has cardinality $\al$ and so $I$ is an $\al$-poset. It is not difficult to see that no chain of $I$ is coinitial, therefore $\mathfrak{B}(I)$ is prime but it is not primitive.

We know from Theorems \ref{theo ideals} that each ideal of $\mathfrak{B}(I)$ is of the form $H(I,L)$ for a unique lower subset $L$ of $I$ and $\mathfrak{B}(I)/H(I,L)$ is in a natural way isomorphic to $H(I,I\setminus L)$ (see Remark \ref{remark split}). As a consequence of Theorem \ref{theo gprimprimitive} we have the following immediate description of the primitive spectrum $\Prim_{\mathfrak{B}(I)}$ of $\mathfrak{B}(I)$, which is at the same time right and left.

\begin{pro}\label{pro primitid}
The primitive ideals of $\mathfrak{B}(I)$ are those of the form $H(I,\{A\nleqslant\})$ for some chain $A$ of $I$.
\end{pro}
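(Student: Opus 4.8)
The plan is to reduce the statement to Theorem \ref{theo gprimprimitive} via the ideal description already at our disposal. By Theorem \ref{theo ideals} every ideal of $\mathfrak{B}(I) = H(I,I)$ is of the form $H(I,L)$ for a \emph{unique} lower subset $L\sbs I$, and by Remark \ref{remark split} there is a $\BK$-algebra isomorphism $\mathfrak{B}(I)/H(I,L) \is H(I, I\setminus L)$, where $I\setminus L$ is an upper subset of $I$. Hence $H(I,L)$ is a (right, equivalently left) primitive ideal of $\mathfrak{B}(I)$ exactly when $H(I, I\setminus L)$ is a primitive ring, which by Theorem \ref{theo gprimprimitive}, (2) happens exactly when the upper subset $I\setminus L$ admits a coinitial chain. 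Note that a primitive ideal is proper, so $L\ne I$ and $I\setminus L\ne\vu$, which in particular forces $I\ne\vu$, so that Theorem \ref{theo gprimprimitive} is applicable. Thus the proposition will follow once I show that a lower subset $L$ of $I$ has the form $\{A\nleqslant\}$ for some nonempty chain $A$ of $I$ if and only if $I\setminus L$ has a coinitial chain.

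This last equivalence is an elementary consequence of the definitions, and is the only genuine computation involved. First, for the direction that $L=\{A\nleqslant\}$ implies $I\setminus L$ has a coinitial chain: here $I\setminus L = \{A\le\}$, the smallest upper subset of $I$ containing $A$; by definition every $k\in\{A\le\}$ lies above some element of $A$, so $A$ is coinitial in $\{A\le\}$, and $A$ is a chain, whence $I\setminus L$ has a coinitial chain. Conversely, suppose $J = I\setminus L$ is a nonempty upper subset possessing a coinitial chain $C$; I would show $\{C\le\} = J$. Indeed, $C\sbs J$ together with $J$ being an upper subset gives $\{C\le\}\sbs J$, while for any $k\in J$ coinitiality of $C$ in $J$ yields $c\in C$ with $c\le k$, so $k\in\{C\le\}$. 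Consequently $\{C\nleqslant\} = I\setminus J = L$; since the order on $J$ is induced from that of $I$, $C$ is a chain of $I$ (nonempty, as $J\ne\vu$), and $L=\{C\nleqslant\}$ is of the required form.

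I do not expect a real obstacle: the substantive content has been absorbed into Theorem \ref{theo gprimprimitive}, and what remains is the bookkeeping observation that a coinitial chain of the upper subset $I\setminus L$ generates $I\setminus L$, hence is recovered, after passing to complements, as the chain $A$ of the statement, while conversely the chain generating any $\{A\le\}$ is coinitial in it. It is only worth recording that, by Theorem \ref{theo gprimprimitive}, (2), right and left primitivity coincide for all these quotients, so that the notion of primitive ideal is here unambiguous and the description furnished is simultaneously the right and the left primitive spectrum of $\mathfrak{B}(I)$.
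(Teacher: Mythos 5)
Your proposal is correct and follows exactly the route the paper intends: the paper offers no written proof, presenting the proposition as an immediate consequence of Theorem \ref{theo gprimprimitive} combined with Theorem \ref{theo ideals} and Remark \ref{remark split}, and your argument supplies precisely the missing bookkeeping (including the elementary equivalence between $L=\{A\nleqslant\}$ for a nonempty chain $A$ and the existence of a coinitial chain in the upper set $I\setminus L$). The only detail worth noting is your correct observation that $A$ must be nonempty, since the empty chain would yield the improper ideal $\mathfrak{B}(I)$ itself.
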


In other words the assignment $A\mapsto H(I,\{A\nleqslant\})$ defines a surjective map from the set of all chains of $I$ to $\Prim_{\mathfrak{B}(I)}$; clearly, it is also injective if and only if $I$ is an antichain. Among the chains in $I$ we have the singletons and we have seen directly, at the beginning of the present section, that each $i\in I$ gives rise to the primitive ideal $H(I,i\nle)$. Thus we may consider the map
\[
\lmap{\Psi}{I}{\Prim_{\mathfrak{B}(I)}}
\]
defined by
\[
\Psi(i) = H(I,\{i\nle\})
\]
for all $i\in I$. Of course  $\Psi(i)\sbs\Psi(j)$ if and only if $i\le j$, so that $\Psi$ is an order imbedding. With the next result we describe the remarkable instance in which $\Psi$ is an order isomorphism.

\begin{pro}\label{pro artinian}
With the above notations, the following conditions are equivalent:
\begin{enumerate}
  \item $\Psi$ is an order isomorphism;
  \item $I$ is artinian;
  \item $\mathfrak{B}(I)$ is semiartinian.
\end{enumerate}
\end{pro}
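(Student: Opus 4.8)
The plan is to prove the chain of equivalences (1)$\Leftrightarrow$(2)$\Leftrightarrow$(3) by exploiting what we already know. The equivalence (2)$\Leftrightarrow$(3) is precisely the last assertion of Theorem \ref{artin}, so only the link with condition (1) requires work. I would organize matters as follows. First, recall that by Proposition \ref{pro primitid} the primitive ideals of $\mathfrak{B}(I)$ are exactly the $H(I,\{A\nleqslant\})$ for $A$ a chain of $I$, and that the order embedding $\Psi$ sends $i$ to $H(I,\{i\nle\})$. Since inclusion of ideals of the form $H(I,L)$ corresponds (Theorem \ref{theo ideals}) to inclusion of lower subsets, $\Psi$ is an order isomorphism precisely when every primitive ideal of $\mathfrak{B}(I)$ is of the singleton form $H(I,\{i\nle\})$; equivalently, for every chain $A$ of $I$ the lower set $\{\le A\}$ coincides with $\{\le i\}$ for some $i\in I$. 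So I would reduce (1) to the purely order-theoretic statement: \emph{for every chain $A$ of $I$, the upper set $\{A\le\}$ has a smallest element}, i.e. $A$ has an upper bound in $I$ lying in $\{A\le\}$ — more precisely $\{\le A\}$ is principal.

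Next I would prove (2)$\Rightarrow$(1). Assume $I$ is artinian and let $A$ be any chain of $I$. An artinian chain is well-ordered by the reverse order — wait, artinian means DCC, so every nonempty subset has a minimal element, hence $A$, being a chain, has a least element $a_{0} = \min(A)$. Then $\{\le A\} = \{\le a_{0}\}$ is principal (since every element of $A$ is $\ge a_{0}$, and $a_{0}\in A$), so $H(I,\{A\nleqslant\}) = H(I,\{a_{0}\nle\}) = \Psi(a_{0})$. Thus every primitive ideal is in the image of $\Psi$, and since $\Psi$ is already an order embedding with $\Psi(i)\sbs\Psi(j)\Leftrightarrow i\le j$, it is an order isomorphism. (One should double-check that distinct chains with the same least element give the same primitive ideal, which is immediate from $\{\le A\}=\{\le\min A\}$.)

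For the converse (1)$\Rightarrow$(2), I would argue contrapositively: if $I$ is not artinian, exhibit a primitive ideal not of the form $\Psi(i)$. Since $I$ fails DCC, there is a strictly descending sequence $i_{0}>i_{1}>i_{2}>\cdots$ in $I$; let $A$ be a maximal chain of $I$ containing $\{i_{n}\mid n\in\BN\}$ (Hausdorff). I claim $\{\le A\}$ is not principal: if it were, $\{\le A\} = \{\le m\}$ with $m\in\{\le A\}$, so $m\le i_{n}$ for all $n$ and $m = \min(A)$, but then $m < i_{n}$ for all $n$ forces $m\notin A$ (as $A\supseteq\{i_n\}$ has no least element among the $i_n$, and if $m\in A$ then since $A$ is a chain $m$ is comparable to each $i_n$, giving $m<i_n$, fine) — here the real point is that $\{\le A\}$ principal would force $A$ to have a least element, contradicting the strictly descending subsequence; more carefully, if $\min(\le A)=m$ exists and $m\in A$ then $m=\min A\le i_{n+1}<i_n$, contradiction only if $m\notin A$, but then $H(I,\{A\nle\}) = H(I,\{m\nle\})=\Psi(m)$ would still hold — so principality of $\{\le A\}$ alone suffices to put the ideal in the image of $\Psi$. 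Hence I must instead show directly that $\{\le A\}$ is \emph{not} principal: suppose $\{\le A\}=\{\le m\}$. Then $m\in\{\le A\}$ means $m\le a$ for some $a\in A$; and every $i_n\le m$. So $m$ is a lower bound of $\{i_n\}$ inside $A$'s down-set, but then $i_{n}\le m$ for all $n$ together with $m\le a\in A$ and comparability in the chain $A$ forces $m$ comparable to each $i_n$; since $i_n\le m$, we get $m\ge i_n$ for all $n$. If also $m\le i_N$ for some $N$ then $m=i_N$ is the least of the $i_n$, contradiction; so $m\ge i_n$ strictly is impossible to reconcile with $m\le a$ unless $a\ge m\ge i_n$ — consistent. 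The clean way: $\{\le A\}$ principal with generator $m$ forces $m\in\{\le A\}\sbs I$ and $i_n\le m$ for all $n$; by maximality one can choose $A$ so that $m\notin A$ is excluded — I would instead pick $A$ a maximal chain \emph{with no least element}, which exists since the descending sequence has none and maximal chains only add elements comparable to all $i_n$. Then $\{\le A\}$ cannot be principal, for a generator would be a least element of $\{\le A\}$ lying below all of $A$, and pulling it into $A$ (possible since it is comparable to every element of $A$, being $\le$ each $i_n\in A$ hence comparable to the cofinal-downward part, and below $A$ generally) contradicts maximality-without-least-element. Thus $H(I,\{A\nleqslant\})$ is a primitive ideal not equal to any $\Psi(i)$ (as those correspond to principal down-sets), so $\Psi$ is not surjective. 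This proves $\neg$(2)$\Rightarrow\neg$(1), completing the cycle. The main obstacle is exactly this last implication: one must carefully select the chain $A$ so that its down-set is provably non-principal, and verify that non-principal down-sets of chain-type are genuinely outside the image of $\Psi$ — which follows because $\Psi(i)$ corresponds to the principal lower set $\{\le i\}$ and the correspondence $L\mapsto H(I,L)$ is an order isomorphism onto the ideal lattice (Theorem \ref{theo ideals}), so two ideals coincide iff the lower sets do.
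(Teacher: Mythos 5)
Your reduction of (1) to an order-theoretic condition is sound in spirit, but you repeatedly write the lower set $\{\le A\}$ where the relevant object is the upper set $\{A\le\}$: since $\Psi(i)=H(I,\{i\nle\})$ and $\{A\nle\}=I\setminus\{A\le\}$, the primitive ideal $H(I,\{A\nle\})$ lies in the image of $\Psi$ exactly when $\{A\le\}=\{i\le\}$ for some $i$, which happens exactly when $A$ has a \emph{least} element (and then $i=\min A\in A$). With that correction your (2)$\Rightarrow$(1) is correct, and (2)$\Leftrightarrow$(3) is indeed the last assertion of Theorem \ref{artin}. Note that your architecture $(1)\Leftrightarrow(2)\Leftrightarrow(3)$ differs from the paper's cycle $(1)\Rightarrow(2)\Rightarrow(3)\Rightarrow(1)$: the paper gets $(3)\Rightarrow(1)$ by observing that the socle of the semiartinian primitive factor $H(I,I\setminus L)$ is a homogeneous minimal ideal, forcing $(I\setminus L)_1=\{m\}$ and $I\setminus L=\{m\le\}$; your route replaces this with the purely combinatorial $(2)\Rightarrow(1)$, which is a legitimate and arguably more elementary alternative.

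The genuine gap is in your $(1)\Rightarrow(2)$. Your witness chain does not work: if $A$ is a \emph{maximal} chain containing the strictly descending sequence $(i_n)$, then $\{A\le\}$ may perfectly well be principal. Take $I=\{b\}\cup\{i_n\mid n\in\BN\}$ with $b<i_n$ for all $n$ (a non-artinian $\al$-poset with finite cofinal subset $\{i_0\}$): the unique maximal chain is all of $I$, it has least element $b$, and $\{A\le\}=\{b\le\}=I$. Your fallback --- ``pick $A$ a maximal chain with no least element, which exists\dots'' --- fails in the same example: no such maximal chain exists there, yet $I$ is not artinian. The repair is simple, because Proposition \ref{pro primitid} applies to \emph{arbitrary} chains: take $A=\{i_n\mid n\in\BN\}$ itself. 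If $\{A\le\}=\{m\le\}$ for some $m$, then $m\in\{A\le\}$ gives $i_N\le m$ for some $N$, while $i_{N+1}\in\{m\le\}$ gives $m\le i_{N+1}<i_N\le m$, a contradiction; hence $H(I,\{A\nle\})$ is a primitive ideal outside the image of $\Psi$ (two ideals $H(I,L)$ coincide only if the lower sets do, by Theorem \ref{theo ideals}). Alternatively, argue directly as the paper does: if $\Psi$ is surjective, then for every nonempty chain $C$ one gets $\{C\le\}=\{i\le\}$ for some $i$, whence $i=\min C$; so every chain of $I$ has a least element and $I$ is artinian.
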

\begin{proof}
(1)$\Rightarrow$(2) If $\Psi$ is bijective and $C$ is a non-empty chain of $I$, then the ideal $H(I,C\nle)$ is primitive and so there is some $i\in I$ such that $H(I,C\nle) = \Psi(i) = H(I,i\nle)$. Consequently $\{C\nle\} = \{i\nle\}$ and therefore $i$ is the smallest element of $C$. This shows that $I$ is artinian.

(2)$\Rightarrow$(3): see Theorem \ref{artin}.

(3)$\Rightarrow$(1) Assume that $\mathfrak{B}(I)$ is semiartinian, let $\mathfrak{P}$ be a primitive ideal of $\mathfrak{B}(I)$ and let $L$ be the unique lower subset of $I$ such that $\mathfrak{P} = H(I,L)$. We have that $H(I,I\setminus L) \is \mathfrak{B}(I)/\mathfrak{P}$ is primitive and semiartinian, therefore
\[
\Soc(H(I,I\setminus L)) = H(I,(I\setminus L)_{1})
\]
(see Theorem \ref{artin}) is homogeneous and is a minimal ideal of $H(I,I\setminus L)$. As a consequence $(I\setminus L)_{1} = \{m\}$ for some $m\in I\setminus L$ and $m$ must be the smallest element of $I\setminus L$. We conclude that $I\setminus L = \{m\le\}$ and so $L = \{m\nle\}$, proving that $\mathfrak{P} = \Psi(m)$.
\end{proof}

\section{Making $G(-)$ and $\mathfrak{B}(-)$ functors: the category $\Pos_{\al}$. }\label{sect functalgebraB}

In this last section we show that each of the assignments $I\mapsto G(I)$ and ${I\mapsto \mathfrak{B}(I)}$ admits a covariant and a contravariant functorial extension to appropriate categories of partially ordered sets. Precisely, if we consider the category $\Pos$  whose objects are all posets, while the morphisms are all maps which are order imbeddings (\footnote{\,\,\,We recall that if $I$, $J$ are partially ordered sets, a map $\map{f}IJ$ is an \emph{(order) imbedding} if, for every $i,i'\in I$, the properties $i\le i'$ and $f(i)\le f(i')$ are equivalent; if it is the case, $f$ is obviously injective.}) of the domain as an upper subset of the codomain, then the assignment $I\mapsto G(I)$ extends to a pair of functors $G(-)$, $G^{*}(-)$, the first covariant and the second contravariant, from $\Pos$ to the category $\Poab$ whose objects are all partially ordered abelian groups and morphisms are all order preserving group homomorphisms.
Next we show that, given an infinite cardinal $\al$ and a field $\BK$, the assignment $I\mapsto \mathfrak{B}(I)$ extends to a pair of functors $\mathfrak{B}(-)$, $\mathfrak{B}^{*}(-)$, the first covariant and the second contravariant, from the subcategory $\Pos_{\al}$ of $\Pos$ of all $\al$-posets \emph{having a finite cofinal subset} to the category $\Alg_{\BK}$ of \emph{unital} $\BK$-algebras and \emph{not necessarily unital} $\BK$-algebra homomorphisms. Finally, we will see that these four functors match with the functor $\KO(-)$, in the sense that there are natural equivalences $\rho(-)\colon G(-)\approx \KO(-)\circ \mathfrak{B}(-)$ and $\rho^{*}(-)\colon G^{*}(-)\approx \KO(-)\circ \mathfrak{B}^{*}(-)$.

Let $I$ and $J$ be partially ordered sets and let $\map{f}IJ$ be an isotone map. As $G(I)$ and $G(J)$ are the free abelian groups generated by $I$ and $J$ respectively, the most ``natural'' group homomorphism $G(f):G(I)\to G(J)$ to be assigned to $f$ is clearly the unique homomorphism which extends $f$ to $G(I)$, that is
\[
G(f)\left(\sum_{i\in I}x_{i}i\right) = \sum_{i\in I}x_{i}f(i).
\]
Of course we have that $G(1_{I}) = 1_{G(I)}$ and, if $g$ is a second isotone map and $fg$ is defined, then $G(fg) = G(f)G(g)$.
We would like $G(f)$ to be isotone, that is
\begin{equation}\label{eq isotone}
    G(f)(M(I))\sbs M(J).
\end{equation}

However this may fail if $f$ is not injective. For example, suppose that $I = \{h,k\}$, where $h<k$, and $J=\{j\}$; then the unique map $f:I\to J$ is obviously isotone. If we consider the element $x = -2h+k\in M(I)$, we have that $G(f)(x) = -j\nin M(J)$. Fortunately \eqref{eq isotone} holds if $f$ is injective. In fact, assume that $x\in M(I)$ and let $m$ be a maximal element of $\Supp(G(f)(x)) = f(\Supp(x))$. Then the unique element ${l\in M(I)}$ such that $m = f(l)$ is maximal in $\Supp(x)$. In fact, if $i\in\Supp(x)$ and ${l\le i}$, then $m =f(l)\le f(i)$ and $f(i)\in\Supp(G(f)(x))$, therefore $m = f(i)$ and hence $l = i$. Consequently $G(f)(x)_{m} = x_{l}>0$ and this proves that $G(f)(x)\in M(J)$.

We are now allowed to speak about the covariant functor $G(-)$ from the category of all partially ordered sets and \emph{injective isotone maps} to the category $\Poab$.

Passing to consider the contravariant side, let again $I$ and $J$ be partially ordered sets and $\map fIJ$ an isotone map. Again, in order to assign to $f$ an order preserving group homomorphism from $G(J)$ to $G(I)$ we have at our disposal a ``natural'' path to follow. In fact, by identifying $G(J)$ as the subgroup $\BZ^{(J)}$ of $\BZ^{J}$ of all maps from $J$ to $\BZ$ having finite support, we see that $f$ induces the homomorphism of groups
\[
 \lmap{f^{*}}{\BZ^{(J)}}{\BZ^{I}}
 \]
defined by $f^{*}(x) = x\circ f$ for all $x\in G(J)$, that is $(f^{*}(x))_{i} = x_{f(i)}$ for all $i\in I$. Given $x\in G(J)$, we have that
\[
\Supp(f^{*}(x)) = f^{-1}(\Supp(x)),
\]
therefore $f^{*}(x)\in G(I)$ for all $x\in G(J)$ if and only if $f^{-1}(j)$ is finite for each $j\in J$. But, even if this latter condition holds, if $f$ is not injective the homomorphism $f^{*}$ may fail to be an isotone map, as the following example shows.

\begin{ex}\label{ex contraiso}
{\rm Let $I = \{i,j,k\}$ and $J = \{u,v\}$ with $i<j, i<k, j\nleqslant k, k\nleqslant j$ and $u<v$; next define $\map fIJ$ with $f(i) = f(j) = u$ and $f(k) = v$. If we consider $x = v-u\in M(J)$, then we see that $f^{*}(x) = -i-j+k\nin M(I)$, because $j$ is maximal in $I$, therefore $f^{*}$ is not isotone.}
\end{ex}

Thus, also for the contravariant case we must restrict our attention to the injective isotone maps and, as we are going to see with the next result, only part of them are allowed. We observe that if $\map fIJ$ is injective, then $f^{*}$ is an epimorphism, as it is the projection of $\BZ^{(J)}$ onto $\BZ^{(f(I))}\is \BZ^{(I)}$ with kernel $\BZ^{(J\setminus f(I))}$. Moreover, for every $i\in I$ and $j\in J$ we have that
\[
(f^{*}(j))_{i} = \begin{cases} 1, \text{ if $j = f(i)$;} \\
                                0, \text{ otherwise.}
                                \end{cases}
\]
Consequently:
\begin{equation}\label{eq imbedding}
\text{for every $i\in I$ and $j\in J$,}\qquad
f^{*}(j) = \begin{cases} i, \text{ if $j = f(i)$;} \\
                                0, \text{ otherwise.}
                                \end{cases}
\end{equation}

\begin{pro}\label{pro homohahn}
Let $I,J$ be partially ordered sets and let $\map fIJ$ be an injective and isotone map. Then the homomorphism $f^{*}:G(J)\to G(I)$ is isotone, namely
\begin{equation}\label{eq homohahn}
f^{*}(M(J))\sbs M(I),
\end{equation}
if and only if $f$ is an order imbedding and $f(I)$ is an upper subset of $J$.
\end{pro}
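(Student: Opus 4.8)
The plan is to prove the two implications separately, extracting from the map $f^{*}$ the order-theoretic information encoded in \eqref{eq imbedding}. Recall that $f$ is already assumed injective and isotone, so what must be characterized is exactly when $f^{*}$ carries positive elements to positive elements.

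First I would prove the ``only if'' direction by contraposition, splitting into the two failure cases. Suppose $f(I)$ is not an upper subset of $J$: then there are $i\in I$ and $j\in J$ with $f(i)<j$ and $j\notin f(I)$. Consider $x = j - f(i)\in G(J)$; since $j$ is the unique maximal element of $\Supp(x)$ and $x_{j}=1>0$, we have $x\in M(J)$. By \eqref{eq imbedding}, $f^{*}(x) = f^{*}(j) - f^{*}(f(i)) = \mathbf{0} - i = -i\notin M(I)$, so \eqref{eq homohahn} fails. Next suppose $f(I)$ is an upper subset of $J$ but $f$ is not an order imbedding: since $f$ is isotone and injective, the only way this can happen is that there exist $i,i'\in I$ with $i\nleqslant i'$ but $f(i)\leqslant f(i')$; in fact, because $f$ is injective we have $f(i)<f(i')$. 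Now take $x = f(i') - f(i)\in G(J)$. As $f(i')$ is the maximal element of the (two-element) support of $x$ with positive coefficient, $x\in M(J)$; but $f^{*}(x) = i' - i$, and since $i\nleqslant i'$, the element $i$ is maximal in $\Supp(i'-i)=\{i,i'\}$ while its coefficient is $-1<0$, whence $f^{*}(x)\notin M(I)$. Thus in both cases \eqref{eq homohahn} fails, proving the contrapositive.

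For the ``if'' direction, assume $f$ is an order imbedding and $f(I)\in\Uparrow\!\!J$, and let $x\in M(J)$; I must show $f^{*}(x)\in M(I)$. By \eqref{eq imbedding}, $\Supp(f^{*}(x)) = f^{-1}(\Supp(x))$ and $(f^{*}(x))_{i} = x_{f(i)}$ for every $i\in I$. Let $i$ be a maximal element of $\Supp(f^{*}(x))$; I claim $f(i)$ is a maximal element of $\Supp(x)$. Indeed, $f(i)\in\Supp(x)$; suppose $f(i)\leqslant j$ for some $j\in\Supp(x)$. Since $f(I)$ is an upper subset of $J$ and $f(i)\in f(I)$, we get $j\in f(I)$, say $j = f(i')$ for a (unique) $i'\in I$. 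Then $f(i)\leqslant f(i')$, and because $f$ is an order imbedding this forces $i\leqslant i'$; moreover $i'\in f^{-1}(\Supp(x)) = \Supp(f^{*}(x))$, so maximality of $i$ in $\Supp(f^{*}(x))$ gives $i = i'$, hence $j = f(i)$. This proves $f(i)$ is maximal in $\Supp(x)$, so $x_{f(i)}>0$ since $x\in M(J)$, and therefore $(f^{*}(x))_{i} = x_{f(i)}>0$. As $i$ was an arbitrary maximal element of $\Supp(f^{*}(x))$, we conclude $f^{*}(x)\in M(I)$, establishing \eqref{eq homohahn}.

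The main obstacle, modest as it is, lies in the ``if'' direction: one must verify that maximality of $i$ in the \emph{pulled-back} support $f^{-1}(\Supp(x))$ transfers to maximality of $f(i)$ in $\Supp(x)$, and this is precisely where \emph{both} hypotheses are used — the upper-subset condition to guarantee that any $j\in\Supp(x)$ dominating $f(i)$ actually lies in the image (so it has a preimage to compare against $i$), and the order-imbedding condition to pull the inequality $f(i)\leqslant j=f(i')$ back to $i\leqslant i'$. Without the upper-subset condition the dominating element $j$ might simply not be in $f(I)$, so there would be nothing in $\Supp(f^{*}(x))$ to contradict the maximality of $i$; the first counterexample above is exactly this phenomenon.
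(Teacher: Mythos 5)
Your proof is correct and follows essentially the same route as the paper's: the same test elements $j-f(i)$ and $f(i')-f(i)$ for the forward direction (the paper argues directly from \eqref{eq homohahn} rather than by contraposition), and the same key observation for the converse, namely that a maximal element of $\Supp(f^{*}(x))=f^{-1}(\Supp(x))$ must map to a maximal element of $\Supp(x)$, using the upper-subset hypothesis to find a preimage and the imbedding hypothesis to pull the inequality back. The differences are purely in logical packaging (contrapositive versus direct, direct claim versus contradiction), not in substance.
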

\begin{proof}
Suppose that \eqref{eq homohahn} holds and let $i,j\in I$ be such that $f(i)\le f(j)$. Then $f(j)-f(i)\in M(J)$ and consequently, by using \eqref{eq imbedding}, we see that $j-i = f^{*}(f(j)- f(i))\in M(I)$. As a result $i\le j$ and this proves that $f$ is an imbedding. Next, let $i\in I$, $j\in J$ and assume that $f(i)\le j$ but $j\nin f(I)$. Then $-i = f^{*}(j-f(i))\in M(I)$ again by \eqref{eq imbedding}, which is impossible. Thus necessarily $j\in f(I)$ and so $f(I)$ is an upper subset of $J$.

Conversely, assume that  $f$ is an imbedding and $f(I)$ is an upper subset of $J$. Given $x\in M(J)$, $x\ne 0$, and a maximal element $m$ of $\Supp(f^{*}(x))$, we claim that $(f^{*}(x))_{m}>0$. Suppose, on the contrary, that $(f^{*}(x))_{m}\le 0$. Then $x_{f(m)} = (f^{*}(x))_{m}\le 0$ and so $f(m)$ is a non-maximal element of $\Supp(x)$. Let $j\in \Supp(x)$ be such that $f(m)<j$; then $j\in f(I)$ by the assumption. If we let $i\in I$ be such that $f(i) = j$, again from the assumption we have that $m<i$. But $(f^{*}(x))_{i} = x_{f(i)} = x_{j}\ne 0$, hence $i\in\Supp(f^{*}(x))$ and this contradicts the maximality of $m$ in $\Supp(f^{*}(x))$. Thus $(f^{*}(x))_{m}>0$ and therefore $f^{*}(x)\in M(I)$.
\end{proof}

We are led to consider the category whose objects are all partially ordered sets and morphisms are those maps which are order imbeddings of the domain as an upper subset of the codomain, which we denote by $\Pos$. It is obvious that $1_{I}^{*} = 1_{G(I)}$ for every object $I$ of $\Pos$, while if $\map fIJ$ $\map gJK$ are morphisms in $\Pos$, then $(gf)^{*} = f^{*}g^{*}$. Thus we can legitimately define the contravariant functor $G^{*}(-)$ from the category $\Pos$ to the category $\Poab$.

The condition of being an order imbedding of the domain as an upper subset of the codomain, for isotone maps between partially ordered sets, is very close to a known condition on morphisms between directed graphs; we feel it is worth to enlighten that connection.

 We recall that a \emph{(directed) graph} is an ordered 4-tuple
\[
E = (E^{0},E^{1},s_{E},r_{E}),
\]
 where $E^{0},E^{1}$ are two disjoint sets and
\[
\map{s_{E}}{E^{1}}{E^{0}}, \qquad \map{r_{E}}{E^{1}}{E^{0}}
\]
are maps. The elements of $E^{0}$ are called the \emph{vertices} and those of $E^{1}$ the \emph{arrows} (or \emph{edges}) of $E$. A \emph{graph morphism} from a graph $E$ to a graph $F$ is an ordered pair $f = (f^{0}, f^{1})$ consisting of two maps $f^{0}: E^{0}\to F^{0}$ and $f^{1}: E^{1}\to F^{1}$ such that
$s_Ff^{1} = f^{0}s_E$ and $r_Ff^{1} = f^{0}r_E$. We denote by $\ZD\zi\ZG\zr$ the category of all directed graphs and arbitrary graph morphisms.

Every partially ordered set $I$ gives rise to its \emph{associated graph} $(I^{0},I^{1},s_{I},r_{I})$, where $I^{0} = I$, $I^{1} = \{(i,j)\in I\times I\mid i<j\}$ and $s_{I}((i,j)) = i$, $r_{I}((i,j)) = j$ for all $(i,j)\in I^{1}$. If $I,J$ are partially ordered sets and $f:I\to J$ is an isotone map, then we may consider the two maps $f^{0}:I^{0}\to J^{0}$, $f^{1}:I^{1}\to J^{1}$, where $f^{0}(i) = f(i)$ for all $i\in I$ and, if $i<j$ in $I$, then $f^{1}((i,j)) = (f^{0}(i),f^{0}(j))$; we can check immediately that the pair $(f^{0}, f^{1})$ is a graph morphism from $(I^{0},I^{1},s_{I},r_{I})$ to $(J^{0},J^{1},s_{J},r_{J})$. Conversely, if $(f^{0}, f^{1})\colon (I^{0},I^{1})\to (J^{0},J^{1})$ is any graph morphism, then the map $f\defug f^{0}\colon I\to J$ is isotone. We say that $(f^{0}, f^{1})$ is the \emph{graph morphism associated} to $f$.

In seeking an appropriate subcategory of $\ZD\zi\ZG\zr$ to which one can extend, as functors, the maps which assign to every directed graph $E$ its path $\BK$-algebra $\BK E$ and its Leavitt path algebra $L_{\BK}(E)$, in \cite{Good:5} Goodearl defines a graph morphism $f = (f^{0}, f^{1})\colon E\to F$ to be a $CK$-\emph{morphism} (short for \emph{Cuntz-Krieger morphism}) if it satisfies the following conditions:
\begin{enumerate}
  \item $f^{0}$ and $f^{1}$ are both injective;
  \item For each $v\in E^{0}$ which is neither a sink nor an infinite emitter, $f^{1}$ induces a bijection from $s_{E}^{-1}(v)$ to $s_{F}^{-1}(f^{0}(v))$
\end{enumerate}
(for row-finite graphs, meaning that each vertex emits at most finitely many arrows, $CK$-morphisms are precisely the \emph{complete morphisms} of \cite{AraMorenoPardo:01}, p. 161); then he shows that the assignments $E\mapsto \BK E$ and $E\mapsto L_{\BK}(E)$ admit natural functorial extensions to the subcategory $\ZC\ZK\ZG\zr$ of $\ZD\zi\ZG\zr$ of all graphs together with $CK$-morphisms. Let us say that $f$ is a \emph{strict $CK$-morphism} if it satisfies the above condition (1) together with the following stronger form of (2):
\begin{enumerate}\setcounter{enumi}{2}
  \item For \emph{every} $v\in E^{0}$, $f^{1}$ induces a bijection from $s_{E}^{-1}(v)$ to $s_{F}^{-1}(f^{0}(v))$.
\end{enumerate}

\begin{pro}\label{pro CK}
Let $I,J$ be partially ordered sets and let $\map{f}IJ$ be an isotone map. Then $f$ is an order imbedding and $f(I)$ is an upper subset of $J$ if and only if the associated graph morphism  $(f^{0}, f^{1})\colon (I^{0},I^{1})\to (J^{0},J^{1})$  is a strict $CK$-morphism.
\end{pro}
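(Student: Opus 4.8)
The plan is to treat the statement as a straightforward dictionary between the order structure of a poset and the combinatorics of its associated graph: an arrow out of a vertex $i$ is literally an element strictly above $i$, so the bijection condition in a strict $CK$-morphism is just a reformulation of the two conditions ``$f$ is an order imbedding'' and ``$f(I)$ is an upper subset of $J$''. I would prove the two implications in turn, in each case reducing everything to the definitions recalled in the excerpt.

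For ($\Rightarrow$), assume $f$ is an order imbedding and $f(I)\in\Uparrow\!\! J$. First I would record that $f^{0}=f$ is injective (an order imbedding always is) and that $f^{1}$ is injective, since $(f(i),f(j))=(f(i'),f(j'))$ forces $i=i'$ and $j=j'$ by injectivity of $f$; this is condition (1). For condition (3), fix $v\in I^{0}$; then $s_{I}^{-1}(v)=\{(v,j)\mid v<j\}$ and $s_{J}^{-1}(f(v))=\{(f(v),w)\mid f(v)<w\}$, and $f^{1}$ carries $(v,j)$ to $(f(v),f(j))$, which indeed has source $f(v)$. Injectivity of this restriction is immediate from injectivity of $f$, and for surjectivity I would take $w\in J$ with $f(v)<w$: since $f(I)$ is an upper subset, $w=f(j)$ for some $j\in I$, and then $f(v)<f(j)$ together with the imbedding property (giving $v\le j$) and injectivity of $f$ (giving $v\ne j$) yields $v<j$, so $(v,j)$ is a preimage of $(f(v),w)$. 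Hence $(f^{0},f^{1})$ is a strict $CK$-morphism.

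For ($\Leftarrow$), assume $(f^{0},f^{1})$ is a strict $CK$-morphism. Since $f$ is isotone by hypothesis, it only remains to show $f(i)\le f(i')\Rightarrow i\le i'$ and that $f(I)$ is an upper subset of $J$. If $f(i)=f(i')$, injectivity of $f^{0}$ gives $i=i'$. If $f(i)<f(i')$, then $(f(i),f(i'))\in s_{J}^{-1}(f(i))$, so by the bijection in (3) for $v=i$ there is an arrow $(i,j)\in s_{I}^{-1}(i)$ with $f^{1}((i,j))=(f(i),f(i'))$, whence $f(j)=f(i')$, so $j=i'$ and $i<j=i'$; thus $f$ is an imbedding. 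The same lifting argument gives the upper-subset property: if $w\in J$ and $f(i)\le w$ with $w\ne f(i)$, then $(f(i),w)\in s_{J}^{-1}(f(i))$ lifts to some $(i,j)$ with $f(j)=w$, so $w\in f(I)$.

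I do not anticipate a genuine obstacle here. The one place needing a little care is to keep the strict order $<$ (which labels the arrows of the associated graph) separate from $\le$ at every step, and to notice that the surjectivity half of the bijection in condition (3) is exactly the content of ``$f(I)$ is an upper subset'', while its injectivity half is automatic from injectivity of $f$; everything else is a direct unwinding of the definitions stated just before the Proposition.
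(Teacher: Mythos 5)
Your proof is correct and follows essentially the same route as the paper's: both directions are proved by directly unwinding the definitions, with the surjectivity of the induced map $s_{I}^{-1}(v)\to s_{J}^{-1}(f(v))$ carrying the upper-subset condition and its lifting property giving both the imbedding and upper-subset conclusions in the converse. No issues.
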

\begin{proof}
Suppose that $f$ is an order imbedding and $f(I)$ is an upper subset of $J$. Then both $f^{0}$, $f^{1}$ are clearly injective. Given $i\in I$, we have that $f^{1}$ induces an injective map from $s_{I}^{-1}(i)$ to $s_{J}^{-1}(f^{0}(i))$, which is also surjective. In fact, given $(k,l)\in s_{J}^{-1}(f^{0}(i))$, necessarily $k = f^{0}(i)<l$. As $f^{0}(I) = f(I)$ is an upper subset of $J$, there is some $j\in I$ such that $l = f(j)$ and, since $f$ is an order imbedding, we have that $i<j$, thus $(i,j)\in I^{1}$ and hence $(k,l) = f^{1}(i,j)$.

Conversely, suppose that $(f^{0}, f^{1})$ is a strict $CK$-morphism. Then $f = f^{0}$ is injective. Let $i,j\in I$ and assume that $f(i)<f(j)$, that is $(f(i), f(j))\in I^{1}$. As $(f(i), f(j))\in s_{J}^{-1}(f^{0}(i))$, by the assumption there is a unique $(i',j')\in s_{I}^{-1}(i)$ such that $(f(i), f(j)) = f^{1}(i',j')$. Necessarily $i' = i$, $j' = j$, so that $i<j$. Next, let $i\in I$ and $k\in J$ be such that $f(i)<k$. Then $(f(i), k)\in s_{J}^{-1}(f^{0}(i))$ and, again from the assumption, we infer that $(f(i), k) = f^{1}((i,j))$ for a unique $j\in I$. Thus $k = f(j)\in f(I)$ and we are done.
\end{proof}

We now proceed to find the promised two functorial extensions of the assignment $I\mapsto \mathfrak{B}(I)$.

Let $\map fIJ$ be a morphism of $\Pos_{\al}$ and let $i\in I$. Since $f$ restricts to an order isomorphism from $\{i\le\}$ to $\{f(i)\le\}$, the assignment $(u,E)\mapsto (u\cdot f|_{\{i\le\}},f^{-1}(E))$ defines a bijection from $X(f(i)\le)$ to $X(i\le)$ and therefore we get the $\BK$-algebra isomorphism
\[
\lmap{\tilde{f}_{i}}{\CFM_{X(i\le)}(\BK)}{\CFM_{X(f(i)\le)}(\BK)}
\]
defined by
\[
\left(\tilde{f}_{i}(\za)\right)((u,E),(v,F))= \za((u\cdot f|_{\{i\le\}},f^{-1}(E)),(v\cdot f|_{\{i\le\}},f^{-1}(F)))
\]
for all $\za\in\CFM_{X(i\le)}(\BK)$, $u,v\in\al^{(f(i)\le)}$ and $E,F\in\CM(f(i)\le)$. We denote by $\overline{f}_{i}$ the isomorphism from $\FR_{X(i\le)}$ to $\FR_{X(f(i)\le)}$ induced by $\tilde{f}_{i}$ \emph{when $i$ is not maximal}; if $i$ is maximal, we use the same symbol $\overline{f}_{i}$ to denote the obvious isomorphism $\mathbf{1}_{\CFM_{X(i\le)}(\BK)}k \mapsto \mathbf{1}_{\CFM_{X(f(i)\le)}(\BK)}k$. Recall that, by the definition of $H(I,i)$, the assignment $\za\mapsto\psi_{I,i}(\za)$ (see Proposition \ref{subrings} and the beginning of section \ref{sect ringDI}) defines a $\BK$-algebra isomorphism
\[
\lmap{\a_{I,i}}{\FR_{X(i\le)}(\BK)}{H(I,i)}
\]
when $i$ is not maximal, and an isomorphism
\[
\lmap{\a_{I,i}}{\mathbf{1}_{\CFM_{X(i\le)}(\BK)}\BK}{H(I,i)} = \ze_{X(I)_i}\BK
\]
when $i$ is maximal. Thus we obtain the $\BK$-algebra isomorphism
\[
\lmap{\widehat{f}_{i}\defug \a_{J,f(i)}\overline{f}_{i}\a_{I,i}^{-1}}
{H(I,i)}{H(J,f(i))}
\]
for each $i\in I$ and hence the $\BK$-linear isomorphism
\[
\lmap{\widehat{f}\defug\bigoplus_{i\in I}\widehat{f}_{i}}{\bigoplus_{i\in I}H(I,i) = \mathfrak{B}(I)}{\bigoplus_{i\in I}H(J,f(i)) = H(J,f(I))}.
\]

\begin{lem}\label{pro isoalg}
With the above setting and notations, $\widehat{f}$ is a $\BK$-algebra isomorphism.
\end{lem}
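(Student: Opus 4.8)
The plan is to establish that $\widehat{f}$, which by its very construction is a $\BK$-linear bijection $\mathfrak{B}(I)\to H(J,f(I))$, is in fact multiplicative. The bijectivity is clear: $\widehat{f}=\bigoplus_{i\in I}\widehat{f}_i$ is the direct sum of the $\BK$-linear bijections $\widehat{f}_i\colon H(I,i)\to H(J,f(i))$, and, $f$ being injective, the target decomposes as $H(J,f(I))=\bigoplus_{i\in I}H(J,f(i))$ by the independence of the family $\CH$ in $Q(J)$ (Theorem~\ref{lemposetring}, applied to the subset $f(I)$ of $J$). Since the multiplication of $\mathfrak{B}(I)=\bigoplus_{i\in I}H(I,i)$ is $\BK$-bilinear, it suffices to prove $\widehat{f}(\za\zb)=\widehat{f}(\za)\,\widehat{f}(\zb)$ for all $i,j\in I$, $\za\in H(I,i)$ and $\zb\in H(I,j)$.

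I would first dispose of the two trivial cases. If $i,j$ are not comparable, then neither are $f(i),f(j)$, since $f$ is an order imbedding; hence both $\za\zb$ and $\widehat{f}(\za)\,\widehat{f}(\zb)$ vanish by Theorem~\ref{lemposetring},(2). If $i=j$, then the identity holds because $\widehat{f}_i=\a_{J,f(i)}\,\overline{f}_i\,\a_{I,i}^{-1}$ is a composite of $\BK$-algebra isomorphisms, hence a $\BK$-algebra homomorphism $H(I,i)\to H(J,f(i))$.

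The content is concentrated in the case where $i,j$ are comparable and distinct, and by symmetry I may treat a pair $i<j$, noting that then $f(i)<f(j)$. By Theorem~\ref{lemposetring},(3), both $\za\zb$ and $\zb\za$ belong to $H(I,i)$, while $\widehat{f}_i(\za)\,\widehat{f}_j(\zb)$ and $\widehat{f}_j(\zb)\,\widehat{f}_i(\za)$ belong to $H(J,f(i))$; so it is enough to check
\[
\widehat{f}_i(\za\zb)=\widehat{f}_i(\za)\,\widehat{f}_j(\zb)\qquad\text{and}\qquad\widehat{f}_i(\zb\za)=\widehat{f}_j(\zb)\,\widehat{f}_i(\za).
\]
Writing $\za=\psi_{I,i}(\za_0)$ and $\zb=\psi_{I,j}(\zb_0)$ for suitable $\za_0,\zb_0$ in the relevant $\FR$-algebras, and unfolding the definitions of $\widehat{f}_i,\widehat{f}_j$, of $\overline{f}_i,\overline{f}_j$ and $\tilde{f}_i,\tilde{f}_j$, and of the maps $\psi$, the above reduces to an identity between matrix entries over $X(J)$. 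I would prove it by exactly the computation already performed in Proposition~\ref{subrings}: expand $(\za\zb)(x,y)=\sum_{z\in X(I)}\za(x,z)\zb(z,y)$ in $Q(I)$, use conditions $(*_{i}),(*_{j}),(**_{i}),(**_{j})$ to restrict the sum to one $(\CN_{C},\CN_{C})$-block and to rewrite it as a sum over $X(j\le)$, and then match it with the corresponding expansion of $\bigl(\widehat{f}_i(\za)\,\widehat{f}_j(\zb)\bigr)(x',y')$ over $X(J)$. What keeps the two sides aligned is the following structural fact, valid since $i<j$: $\{j\le\}\sbs\{i\le\}$, the map $f|_{\{j\le\}}$ is the restriction of $f|_{\{i\le\}}$, and $f$ carries $\{j\le\}$ bijectively onto $\{f(j)\le\}\sbs\{f(i)\le\}$; consequently the restriction bijections $X(i\le)\to X(j\le)$, $(u,E)\mapsto(u|_{\{j\le\}},E\cap\{j\le\})$, are intertwined by $\tilde{f}_i$ and $\tilde{f}_j$ with the analogous ones over $J$, and the identifications of the lower parts $A\cap\{\le i\}$, $A\cap\{\le j\}$ of maximal chains of $I$ are likewise respected.

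The only real obstacle is this final matrix-entry verification: it is routine but notationally heavy, because it requires tracking simultaneously the decompositions $p=r\cup u$ of elements of $\al^{(I)}$ over $\{i\nle\}\sqcup\{i\le\}$ and over $\{j\nle\}\sqcup\{j\le\}$, the corresponding decompositions of maximal chains, and the $f$-images of all these data. Once the translation dictionary between the two sides is in place, the verification is entirely parallel to --- and no harder than --- the proof that $\psi_{I,i}$ is a ring homomorphism, so I would present it in compact form rather than writing out every entry.
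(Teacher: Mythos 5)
Your structural reductions match the paper's proof exactly: reduce by bilinearity to $\za\in H(I,i)$, $\zb\in H(I,j)$; kill the incomparable case via Theorem~\ref{lemposetring},(2); observe that $i=j$ is automatic because $\widehat{f}_i$ is a composite of algebra isomorphisms; and for $i<j$ verify the two products entrywise inside $H(J,f(i))$. The intertwining fact you isolate (that $f$ restricts to a bijection $\{j\le\}\to\{f(j)\le\}$ compatible with the restriction maps $X(i\le)\to X(j\le)$) is indeed what makes the computation close.

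The gap is that the central verification is asserted rather than carried out, and your claim that it ``reduces to exactly the computation already performed in Proposition~\ref{subrings}'' is not accurate. That proposition proves multiplicativity of $\psi_{I,i}$ within a single algebra; here one must first derive an explicit entry formula for $\widehat{f}_i(\za)$ in terms of $\za$ and the pulled-back indices (the paper's \eqref{eq eta}, which requires unwinding $\a_{I,i}^{-1}$ and $\overline{f}_i$ and using that $\za\in Q(I,i)$), and then match a convolution over $X(I)$ against a convolution over $X(J)$. The nontrivial step is identifying, on each side, exactly which indices contribute: on the $X(I)$ side these are governed by a set $\CG$ of maximal chains of $\{i\le\}$ through $j$ with prescribed trace on $[f(i),f(j)]$, on the $X(J)$ side by an analogous set $\CH$, and one must check $\CG=\{f^{-1}(H)\mid H\in\CH\}$ together with the parametrization of the first components by $w\in\al^{(f(j)\le)}$ via the bijection $f|_{\{j\le\}}$. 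None of this is in Proposition~\ref{subrings}; it is new (if routine) work. Separately, your uniform phrasing ``$\zb=\psi_{I,j}(\zb_0)$ with $\zb_0$ in the relevant $\FR$-algebra'' silently excludes the case where $j$ is maximal in $I$: there $H(I,j)=\ze_{X(I)_j}\BK$ is not a socle and $\zb_0$ is a scalar multiple of $\mathbf{1}_{\CFM_{X(j\le)}(\BK)}\notin\FR_{X(j\le)}(\BK)$, so this case needs its own (easier) check, which the paper gives and you omit.
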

\begin{proof}
Given $i\in I$, let us start by computing explicitly $\widehat{f}_{i}(\za)$ for each $\za\in H(I,i)$. First note that if $i$ is \emph{is maximal} in $I$, then $f(i)$ is maximal in $J$ and, since $H(I,i)=\ze_{X(I)_{i}}\BK$ and $H(J,f(i))=\ze_{X(J)_{f(i)}}\BK$, we have that
\begin{equation}\label{eq blabbb}
\widehat{{f}_{i}}(\ze_{X(I)_{i}}k) = \ze_{X(J)_{f(i)}}k \quad \text{for all $k\in\BK$}.
\end{equation}
Assume that $i$ \emph{is not maximal}. We observe that, inasmuch as $H(I,i)\sbs S(I,i)$, if $\za\in H(I,i)$ then for every $s,t\in\al^{(\nle i)}$, $u,v\in\al^{(i\le)}$, $C,D\in\CM(\le i)$ and $E,F\in\CM(i\le)$ we have that
\[
    \za((s\cup u,C\cup E),(s\cup v,C\cup F))
    = \za((t\cup u,D\cup E),(t\cup v,D\cup F)).
\]
Thus we can define the map
\[
\lmap{\b}{H(I,i)}{\FR_{X(i\le)}(\BK)}
\]
as follows: given $\za\in H(I,i)$, for every $u,v\in\al^{(i\le)}$ and $E,F\in\CM(i\le)$
\[
\b(\za)((u,E),(v,F)) \defug
\za((t\cup u,C\cup E),(t\cup v,C\cup F)),
\]
where $t\in\al^{(\nle i)}$ and $C\in\CM(\le i)$ are chosen arbitrarily. We claim that
\[\a_{I,i}^{-1} = \b.
 \]
Indeed, given $\zb\in\FR_{X(i\le)}(\BK)$, let $u,v\in\al^{(i\le)}$ and $E,F\in\CM(i\le)$. By taking arbitrarily $t\in\al^{(\nle i)}$ and $C\in\CM(\le i)$ we have that
\begin{align*}
\left[\b\left(\a_{I,i}(\zb)\right)\right]((u,E),(v,F))
&= \a_{I,i}(\zb)((t\cup u,C\cup E),(t\cup v,C\cup F)) \\
&= \zb((u,E),(v,F)).
\end{align*}
Since $\a_{I,i}$ is an isomorphism, this proves our claim.

Now we observe that, given $r,s\in\al^{(f(i)\nle)}$, $u,v\in\al^{(f(i)\le)}$ and $A,B\in\CM(J)$, we have
\[
\widehat{f}_{i}(\za)((r\cup u,A), (s\cup v,B)) \ne 0
\]
only if
\[
f(i)\in A\cap B,\quad A\cap\{\le f(i)\} = B\cap\{\le f(i)\}\quad \text{and} \quad r=s.
\]
Assume that this is the case, set
\[
E = f^{-1}(A)\cap\{i\le\}, \qquad
F =  f^{-1}(B)\cap\{i\le\}
\]
and choose arbitrarily $t\in\al^{(i\nle)}$ and $C\in\CM(\le i)$. Since $f$ is a morphism of $\Pos_{\al}$, then $f(\{i\le\}) = \{f(i)\le\}$ and $f(\{\nle i\})\sbs\{\nle f(i)\}$, thus we can compute as follows:
\begin{equation}\label{eq eta}
\begin{split}
\widehat{f}_{i}(\za)&((r\cup u,A), (r\cup v,B))=\left[\overline{f_{i}}\left(
\a_{I,i}^{-1}(\za)\right)\right]((u,A\cap\{f(i)\le\}),(v, B\cap\{f(i)\le\}))\\
&=\left[\a_{I,i}^{-1}(\za)\right]((u\cdot f|_{\{i\le\}},E),(v\cdot f|_{\{i\le\}}, F))\\
&=\za((t\cup (u\cdot f|_{\{i\le\}}),C\cup E),(t\cup (v\cdot f|_{\{i\le\}}), C\cup F)) \\
&=\za(((r\cdot f|_{\{i\nle\}})\cup (u\cdot f|_{\{i\le\}}), C\cup E),((r\cdot f|_{\{i\nle\}})\cup (v\cdot f|_{\{i\le\}}), C\cup F)) \\
&=\za(((r\cup u)f, C\cup E),((r\cup v)f, C\cup F)),
\end{split}
\end{equation}
where the last equality comes from the fact that $\za\in Q_{i}$.

Now we can prove that
\begin{equation}\label{eq homoalg}
    \widehat{f}(\za\zb) = \widehat{f}(\za)\cdot\widehat{f}(\zb)
\end{equation}
for every $\za,\zb\in\mathfrak{B}(I)$. Thanks to the linearity of $\widehat{f}$ it is sufficient to assume that $\za\in H(I,i)$ and $\zb\in H(I,j)$ for some $i,j\in I$ with $i\ne j$. Inasmuch as $\widehat{f}(H(I,i))\sbs H(J,f(i))$ for all $i\in I$, if $i$ and $j$ are not comparable, then both members of \eqref{eq homoalg} are zero. Assume that $i<j$. Then $\za\zb\in H(I,i)$ and both members of \eqref{eq homoalg} are in $H(J,f(i))$; consequently, in order to check that \eqref{eq homoalg} holds it is sufficient to prove that if $r\in\al^{(f(i)\nle)}$, $u,v\in\al^{(f(i)\le)}$ and $A,B\in\CM(J)$ are such that $f(i)\in A\cap B$ and $A\cap\{\le f(i)\} = B\cap\{\le f(i)\}$, the equality
\begin{equation}\label{eq homoalgg}
    \left[\widehat{f}(\za\zb)\right]((r\cup u,A), (r\cup v,B))
    = \left[\widehat{f}(\za)\cdot\widehat{f}(\zb)\right]((r\cup u,A), (r\cup v,B))
\end{equation}
holds. Assume that $j$ \emph{is not maximal} in $I$. By setting
\begin{gather*}
C = A\cap\{\le f(i)\} = B\cap\{\le f(i)\}, \quad D = f^{-1}(C)\in\CM(\le i),\\
E = f^{-1}(A)\cap\{i\le\}, \quad   \text{and}\quad F = f^{-1}(B)\cap\{i\le\}
\end{gather*}

we obtain from \eqref{eq eta}:
\[
  \begin{aligned}
    &\left[\widehat{f}(\za\zb)\right]((r\cup u,A), (r\cup v,B)) \\
    &\phantom{XXXXXX} = \left[\widehat{f}_{i}(\za\zb)\right]((r\cup u,A), (r\cup v,B)) \\
    &\phantom{XXXXXX} =(\za\zb)(((r\cup u)f, D\cup E),((r\cup v)f, D\cup F))\\
    &\phantom{XXXXXX}= \sum_{x\in X(I)}\za(((r\cup u)f, D\cup E),x)\cdot
\zb(x,((r\cup v)f, D\cup F))).
\end{aligned}
\]

If we write
\[
v' = v|_{\{f(i)\le\}\setminus\{f(j)\le\}},
\]
in order that a single summand of the last sum be nonzero it is necessary that
\[
x = ((r\cup v'\cup w)f, D\cup G),
\]
where $w\in\al^{(f(j)\le)}$ and $G$ is a maximal chain of $\{i\le\}$ which belongs to the set
\[
\CG\ = \bigl\{\phantom{\bigotimes}\!\!\! \!\!\!\!\!\!G\in\CM(i\le)\mid j\in G,\, f(G)\cap[f(i),f(j)] = B\cap[f(i),f(j)] \bigr\}.
\]
Thus we may write
\begin{equation}\label{eq homoalggg}
  \begin{aligned}
    &\left[\widehat{f}(\za\zb)\right]((r\cup u,A), (r\cup v,B)) \\
&\phantom{xxxxx}=\sum_{\substack{w\in\al^{(f(j)\le)}\\ G\in\CG}}
\left\{\phantom{\bigotimes}\!\!\!\!\!\!\!\za(((r\cup u)f,D\cup E), ((r\cup v'\cup w)f, D\cup G))\right. \\
&\phantom{xxxxxxxxxxxxxxx}\cdot
\zb(((r\cup v'\cup w)f, D\cup G),((r\cup v)f,D\cup F))\Big{\}}.
\end{aligned}
\end{equation}
On the other hand we have
\[
\begin{aligned}
    &\left[\widehat{f}(\za)\cdot\widehat{f}(\zb)\right]
    ((r\cup u,A), (r\cup v,B))\\
&\phantom{xxxxxxx}= \sum_{y\in X(J)}\left\{\phantom{\bigotimes}\!\!\!\!\!\!\!
[\widehat{f}(\za)]((r\cup u,A), y)
\cdot[\widehat{f}(\zb)](y, (r\cup v,B))\right\}.
\end{aligned}
\]
Here again, a single summand is not zero only if
\[
y = (r\cup v'\cup w, C\cup H),
\]
where $w\in\al^{(f(j)\le)}$ and $H$ is a maximal chain of $\{f(i)\le\}$ which belongs to the set
\[
\CH\ = \bigl\{\phantom{\bigotimes}\!\!\!\!\!\!\!\!\!H\in\CM(\{f(i)\le\})\mid f(j)\in H,\, H\cap[f(i),f(j)] = B\cap[f(i),f(j)] \bigr\}.
\]
Thus, by using \eqref{eq eta} we obtain:
\[
  \begin{split}
    &\left[\widehat{f}(\za)\cdot\widehat{f}(\zb)\right]
    ((r\cup u,A), (r\cup v,B))\\
&\phantom{xxxxxxxx}=\sum_{\substack{w\in\al^{(f(j)\le)}\\ H\in\CH}}
\left\{\phantom{\bigotimes}\!\!\!\!\!\!\!\!\!
[\widehat{f}(\za)]((r\cup u,A), (r\cup v'\cup w,C\cup H))\right. \\
&\left.\phantom{xxxxxxxxxxxx}
\cdot[\widehat{f}(\zb)] ((r\cup v'\cup w,C\cup H), (r\cup v,B))\right\}\\
&\phantom{xxxxxxxx}=\sum_{\substack{w\in\al^{(f(j)\le)}\\ H\in\CH}}
\left\{\phantom{\bigotimes}\!\!\!\!\!\!\!\za(((r\cup u)f,D\cup E),((r\cup v'\cup w)f, D\cup f^{-1}(H))\right. \\
&\phantom{xxxxxxxxxxxx}
\cdot \zb(((r\cup v'\cup w)f, D\cup f^{-1}(H)),((r\cup v)f, D\cup F))\Big{\}}.
\end{split}
\]
Finally, since $\CG = \{f^{-1}(H)\mid H\in\CH\}$, the latter sum coincides with the sum at the second member of \eqref{eq homoalggg}, therefore the equality \eqref{eq homoalg} holds.

Finally, assume that $j$ \emph{is maximal} in $I$, so that $\zb = \ze_{X(I)_{j}}\cdot k$ for some $k\in\BK$ and \eqref{eq homoalg} becomes
\begin{equation}\label{eq abcd}
\widehat{f}(\za\ze_{X(I)_{j}})\cdot k = (\widehat{f}(\za)\cdot\widehat{f}(\ze_{X(I)_{j}})) \cdot k,
\end{equation}
by taking \eqref{eq blabbb} into account; of course we may assume that $k = 1$. With the same meaning of $D,E,F$ as above, by using \eqref{eq eta} we have
\[
  \begin{aligned}
    &\left[\widehat{f}(\za\ze_{X(I)_{j}})\right]((r\cup u,A), (r\cup v,B)) \\
    &\phantom{XXXXXX} = \left[\widehat{f}_{i}(\za\ze_{X(I)_{j}})\right]((r\cup u,A), (r\cup v,B)) \\
    &\phantom{XXXXXX} =(\za\ze_{X(I)_{j}})(((r\cup u)f, D\cup E),((r\cup v)f, D\cup F))\\
    &\phantom{XXXXXX}= \begin{cases} \za(((r\cup u)f, C\cup E),((r\cup v)f, C\cup F)), \text{ if $f(j)\in B$;} \\
    $0$, \text{ otherwise.}
    \end{cases}
\end{aligned}
\]
Similarly we have
\[
  \begin{aligned}
    &\left[\widehat{f}(\za)\cdot\widehat{f}(\ze_{X(I)_{j}})\right]((r\cup u,A), (r\cup v,B)) \\
    &\phantom{XXXXXX} = \left[\widehat{f_{i}}(\za)\cdot\widehat{f_{i}}(\ze_{X(I)_{j}})\right]((r\cup u,A), (r\cup v,B)) \\
    &\phantom{XXXXXX} = \left[\widehat{f_{i}}(\za)\cdot\ze_{X(J)_{f(j)}}\right]((r\cup u,A), (r\cup v,B)) \\
    &\phantom{XXXXXX}= \begin{cases} \widehat{f_{i}}(\za)((r\cup u,A), (r\cup v,B)), \text{ if $f(j)\in B$;} \\
    $0$, \text{ otherwise.}
    \end{cases} \\
    &\phantom{XXXXXX}= \begin{cases} \za(((r\cup u)f, C\cup E),((r\cup v)f, C\cup F)), \text{ if $f(j)\in B$;} \\
    $0$, \text{ otherwise}
    \end{cases}
\end{aligned}
\]
and the equality \eqref{eq abcd} is proved.

The case in which $i>j$ is handled in the same manner and we conclude that $\widehat{f}$ is a $\BK$-algebra homomorphism.
\end{proof}

We are now in a position to define the $\BK$-algebra homomorphisms
\[
\lmap{\mathfrak{B}(f)}{\mathfrak{B}(I)}
{\mathfrak{B}(J)}\quad \text{and}\quad
\lmap{\mathfrak{B}^{*}(f)}{\mathfrak{B}^{*}(J)}
{\mathfrak{B}^{*}(I)}:
\]
we take the first as the composition of $\widehat{f}$ followed by the inclusion into $\mathfrak{B}(J)$ and the second as the composition of the projection of $\mathfrak{B}(J)$ onto $H(J,f(I))$, with kernel the ideal $H(J,J\setminus f(I))$ (see \ref{remark split} and note that $J\setminus f(I)$ is a lower subset of $J$), followed by $\widehat{f}^{-1}$.

\begin{pro}\label{pro covfunct}
With the above settings, $\mathfrak{B}(-)$ and $\mathfrak{B}^{*}(-)$ are functors from $\Pos_{\al}$ to the category $\Alg_{\BK}$, the first covariant and the second contravariant.
\end{pro}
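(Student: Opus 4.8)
The plan is to check the two functor axioms for $\mathfrak{B}(-)$ and $\mathfrak{B}^{*}(-)$ directly from the construction of $\widehat{f}$. First I would record that $\mathfrak{B}(f)$ and $\mathfrak{B}^{*}(f)$ really are $\BK$-algebra homomorphisms: by Lemma \ref{pro isoalg} the map $\widehat{f}$ is a $\BK$-algebra isomorphism of $\mathfrak{B}(I)$ onto the subalgebra $H(J,f(I))$ of $\mathfrak{B}(J)$, so $\mathfrak{B}(f)$ — that isomorphism followed by the inclusion $H(J,f(I))\hookrightarrow\mathfrak{B}(J)$ — is a homomorphism; and since $f$ is a morphism of $\Pos_{\al}$ the set $f(I)$ is an upper subset of $J$, hence $J\setminus f(I)$ is a lower subset of $J$ and, by Remark \ref{remark split}, the projection $\mathfrak{B}(J)\to H(J,f(I))$ with kernel $H(J,J\setminus f(I))$ is a $\BK$-algebra homomorphism, whence so is $\mathfrak{B}^{*}(f) = \widehat{f}^{-1}\circ(\text{this projection})$.

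The structural fact I would isolate and use everywhere is that $\widehat{f}$ is ``graded'' over $I$: it is the direct sum $\bigoplus_{i\in I}\widehat{f}_{i}$ with $\widehat{f}_{i}(H(I,i)) = H(J,f(i))$, so $\widehat{f}$ restricts to an isomorphism $H(I,S)\to H(J,f(S))$ for every $S\sbs I$, and $\widehat{f}^{-1}$ carries each $H(J,f(S))$ back onto $H(I,S)$. With this in hand the identity axiom is immediate: for $f = 1_{I}$ the bijection $(u,E)\mapsto(u\cdot f|_{\{i\le\}},f^{-1}(E))$ of index sets is the identity of $X(i\le)$, so each $\tilde{f}_{i}$, each $\overline{f}_{i}$, and hence each $\widehat{f}_{i} = \a_{I,i}\,\overline{f}_{i}\,\a_{I,i}^{-1}$ is the identity; since $1_{I}(I) = I$, the inclusion $H(I,I)\hookrightarrow\mathfrak{B}(I)$ and the projection with kernel $H(I,\vu) = \{\mathbf{0}\}$ are both the identity of $\mathfrak{B}(I)$, so $\mathfrak{B}(1_{I}) = 1_{\mathfrak{B}(I)} = \mathfrak{B}^{*}(1_{I})$.

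For covariance, given morphisms $\map fIJ$ and $\map gJK$ of $\Pos_{\al}$, I would prove the single identity $\widehat{gf} = \widehat{g}\circ\widehat{f}$ as isomorphisms onto $H(K,(gf)(I))$. Both sides split as direct sums over $i\in I$, the $i$-th summand of $\widehat{g}\circ\widehat{f}$ being $\widehat{g}_{f(i)}\circ\widehat{f}_{i}$ by the gradedness of $\widehat{g}$; cancelling $\a_{J,f(i)}^{-1}\a_{J,f(i)}$ reduces the claim to $\overline{(gf)}_{i} = \overline{g}_{f(i)}\circ\overline{f}_{i}$, and, through the defining matrix-algebra isomorphisms $\tilde{f}_{i}$, $\tilde{g}_{f(i)}$ (the maximal case being the trivial identification of copies of $\BK$), to the assertion that the underlying bijections of index sets compose correctly. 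That last point rests on $(gf)|_{\{i\le\}} = g|_{\{f(i)\le\}}\circ f|_{\{i\le\}}$ and $(gf)^{-1}(E) = f^{-1}(g^{-1}(E))$, which hold because $f$ and $g$, being morphisms of $\Pos_{\al}$, satisfy $f(\{i\le\}) = \{f(i)\le\}$ and the analogous equality for $g$. Once $\widehat{gf} = \widehat{g}\circ\widehat{f}$ is established, $\mathfrak{B}(g)\circ\mathfrak{B}(f)$ is $\widehat{f}$ followed by $\widehat{g}|_{H(J,f(I))}$ followed by the inclusion into $\mathfrak{B}(K)$, which equals $\widehat{gf}$ followed by that inclusion, i.e. $\mathfrak{B}(gf)$.

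For contravariance I would use the gradedness of $\widehat{g}$ over $J$ once more. Writing, for an upper subset $S$ of $K$, $\pi_{S}\colon\mathfrak{B}(K)\to H(K,S)$ for the projection with kernel $H(K,K\setminus S)$, I note that $g(J)$ is an upper subset of $K$ and that $g(f(I))$, being an upper subset of the upper subset $g(J)$, is again an upper subset of $K$ with $g(f(I))\sbs g(J)$, so $\pi_{g(f(I))}\circ\pi_{g(J)} = \pi_{g(f(I))}$ and all these projections are $\BK$-algebra maps. From the gradedness of $\widehat{g}$ and the injectivity of $g$ one checks that $\widehat{g}^{-1}$ intertwines the projection of $\mathfrak{B}(J)$ onto $H(J,f(I))$ with the projection of $H(K,g(J))$ onto $H(K,g(f(I)))$; chaining all of this shows that $\mathfrak{B}^{*}(f)\circ\mathfrak{B}^{*}(g)$ collapses to $(\widehat{g}\circ\widehat{f})^{-1}\circ\pi_{g(f(I))}$, and this is exactly $\widehat{gf}^{-1}\circ\pi_{(gf)(I)} = \mathfrak{B}^{*}(gf)$, by the covariance step. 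Since the genuine analytic content — that $\widehat{f}$ is an algebra isomorphism — is already Lemma \ref{pro isoalg}, there is no real obstacle here; the only delicate point, which I would set up as an explicit auxiliary lemma before assembling the argument, is the commutation of the projections with $\widehat{g}^{-1}$, where it is easy to confuse the index sets $I$, $f(I)$, $g(f(I))$ and $g(J)$.
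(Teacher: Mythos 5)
Your proposal is correct and follows essentially the same route as the paper: both arguments reduce everything to the identity $(\overline{gf})_{i} = \overline{g}_{f(i)}\circ\overline{f}_{i}$ coming from the composition of the index-set bijections, and to the fact that for $f=1_{I}$ those bijections are identities. The only difference is that you spell out the bookkeeping with the projections in the contravariant case (the intertwining of $\widehat{g}^{-1}$ with $\pi_{f(I)}$ and $\pi_{g(f(I))}$), which the paper dismisses with ``we infer easily''; this is a welcome but not substantively different elaboration.
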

\begin{proof}
If $i\in I$ is not maximal, since $\overline{\left(1_{I}\right)}_{i} = 1_{\FR_{X(i\le)}(\BK)}$, then $\widehat{\left(1_{I}\right)}_{i} = 1_{H(I,i)}$; if $i$ is maximal, then $\overline{\left(1_{I}\right)}_{i}$ is the identity map on $\mathbf{1}_{\CFM_{X(i\le)}(\BK)}\BK$ and so, again, $\widehat{\left(1_{I}\right)}_{i} = 1_{H(I,i)}$. As a consequence $\mathfrak{B}(1_{I}) = 1_{\mathfrak{B}(I)} = \mathfrak{B}^{*}(1_{I})$. Let $\map fIJ$ and $\map gJK$ be morphisms in $\Pos_{\al}$. A straightforward computation shows that
\[
(\overline{gf})_{i} = \overline{g}_{f(i)}\cdot\overline{f_{i}},
\]
from which we infer easily that
\[
(\widehat{gf})_{i} = \widehat{g}_{f(i)}\widehat{f}_{i} \quad \text{and}
\quad(\widehat{gf})^{-1}_{i} = \widehat{f}^{-1}_{i}\widehat{g}\,^{-1}_{f(i)}.
\]
Consequently
\[
\mathfrak{B}(gf) =  \mathfrak{B}(g)\mathfrak{B}(f) \quad \text{and}\quad
\mathfrak{B}^{*}(gf) =  \mathfrak{B}^{*}(f)\mathfrak{B}^{*}(g).
\]
\end{proof}

\begin{re}\label{remark nonunit}
If $I\in\Pos_{\al}$, we know that $\mathfrak{B}(I)$ is a unital $\BK$-algebra, specifically
\[
1_{\mathfrak{B}(I)} = \sum\{\ze_{X(I)_{m}}| m\in\ZM(I)\}
\]
(see Proposition \ref{multunit}). If $f\colon I\to J$ is a morphism of $\Pos_{\al}$, while the $\BK$-algebra homomorphism $\mathfrak{B}^{*}(f)$ is always unital, $\mathfrak{B}_{\al,K}(f)$ is unital if and only if $f(\ZM(I)) = \ZM(J)$.
\end{re}

Given a \emph{unital} ring $R$, it is standard to define $\KO(R)$ as the Grothendieck group of the commutative monoid of isomorphism classes of all finitely generated projective right $R$-modules; this makes $\KO(-)$ a covariant functor from the category of all unital rings and all unital ring homomorphisms, to the category of partially ordered abelian groups. Fortunately a natural extension of $\KO(-)$ to the category $\ZR\zn\zg$ (this is a notation which seems due to Faith) of not necessarily unital rings and not necessarily unital ring homomorphisms is available (see Ch. 5, \S 7 in \cite{Rosen:1}). Specifically, given $R\in\ZR\zn\zg$, one considers first the ``unitization'' ring $R_{+}$ of $R$, defined as the abelian group $R\times\BZ$ together with the multiplication $(x,m)(y,n)\defug (xy+my+nx,mn)$, so that the assignment $x\mapsto (x,0)$ identifies $R$ as an ideal of $R_{+}$; thus, by considering the canonical epimorphism $\pi_{R}\colon R_{+}\to R_{+}/R \is \BZ$, one defines $\KO(R)$ as the kernel of $\KO(\pi_{R})$. If $\a\colon R\to S$ is a morphism in $\ZR\zn\zg$ and $\a_{+}\colon R_{+}\to S_{+}$ is the unital homomorphism defined by $(x,m)\mapsto (\a(x),m)$, then $\KO(\a)$ is taken as the unique homomorphism which makes commutative the diagram
\[
\begin{CD}
\KO(R) @>\sbs>> \KO(R_{+})\\
@V{\KO(\a)}VV @VV{\KO(\a_{+})}V\\
\KO(S) @>\sbs>> \KO(S_{+}).
\end{CD}
\]
If $R$ has a multiplicative identity $e$, then $(e,0)$ is a central idempotent of $R_{+}$ and so $R_{+}\is R\times\BZ$; consequently the above definition of $\KO(R)$ matches with the standard one.

Suppose that $R$, $S$ are unital regular rings and $\a\colon R\to S$ is a morphism in $\ZR\zn\zg$. By the above, for every idempotent $e\in R$ we have:
\begin{gather*}
\KO(\a)([eR]) = \KO(\a_{+})([(e,0)R_{+}]) = [(e,0)R_{+}\otimes_{R_{+}}S_{+}]
= [(\a(e),0)S_{+}] \\ = [\a(e)S\times\{0\}] = [\a(e)S].
\end{gather*}
Since $\KO(R)$ is generated by the set $\{[eR]\mid e = e^{2}\in R\}$, we see that $\KO(\a)$ is the unique homomorphism such that $\KO(\a)([eR]) = [\a(e)S]$ for all idempotents $e\in R$.

An alternative definition of $\KO(R)$ uses the equivalence classes of idempotents in the ring $\ZM_{\infty}(R)$ of all $\al_{0}\times\al_{0}$ matrices having only finitely many nonzero entries (see for example Section 5 of \cite{Blackadar:001}); in this way an unified treatment of the unital and nonunital cases is obtained.

Let us come back to the functors $\mathfrak{B}(-)$ and $\mathfrak{B}^{*}(-)$. In view of all above, given a morphism $f\colon I\to J$ of $\Pos_{\al}$, it makes sense to speak about the homomorphism
$\map{\KO(\mathfrak{B}(f))}{\KO(\mathfrak{B}(I))} {\KO(\mathfrak{B}(J))}$ and hence about the composite functor $\KO(\mathfrak{B}(-))$. If $I\in\Pos_{\al}$, by Theorem \ref{theo K0} there is a unique isomorphism $\map{\rho(I)}{G(I)}{\KO(\mathfrak{B}(I))}$ which sends each $i\in I$ to $[\zu\mathfrak{B}(I)]$, where $\zu$ is an arbitrary primitive idempotent of $H(I,i)$. We have then two diagrams
\[
\begin{CD}
G(I) @>\rho(I)>> \KO(\mathfrak{B}(I))\\
@V{G(f)}VV @VV{\KO(\mathfrak{B}(f))}V\\
G(J) @>\rho(J)>> \KO(\mathfrak{B}(J)),
\end{CD}
\qquad\qquad\qquad
\begin{CD}
G^{*}(I) @>\rho(I)>> \KO(\mathfrak{B}^{*}(I))\\
@A{G^{*}(f)}AA @AA{\KO(\mathfrak{B}^{*}(f))}A\\
G^{*}(J) @>\rho(J)>> \KO(\mathfrak{B}^{*}(J)).
\end{CD}
\]
Let $i\in I$ and let $\zu$ be any primitive idempotent of $H(I,i)$. Then $(\mathfrak{B}(f))(\zu) = \widehat{f}_{i}(\zu)$ is a primitive idempotent of $H(J,f(i))$ and so we obtain:
\begin{gather*}
\KO(\mathfrak{B}(f))(\rho(I)(i))
= \KO(\mathfrak{B}(f))([\zu\mathfrak{B}(I)])\\
= [(\mathfrak{B}(f)(\zu))\mathfrak{B}(J)]
= [(\widehat{f}_{i}(\zu))\mathfrak{B}(J)]
= \rho(J)(f(i))
= \rho(J)(G(f)(i)).
\end{gather*}
Since $G(I)$ is free with $I$ as a basis, this shows that the first diagram commutes.

Next, let $j\in J$ and let $\zv$ be a primitive idempotent of $H(J,j)$. If $j\nin f(I)$, then $j\in\Ker(G^{*}(f))$ (see \eqref{eq imbedding}) and $H(J,j)\sbs\Ker(\mathfrak{B}^{*}(f))$, hence both $\rho(I)(G^{*}(f)(j))$ and $\KO(\mathfrak{B}^{*}(f))(\rho(J)(j))$ are zero. If $j\in J$ and $i$ is the unique element of $I$ such that $j = f(i)$, then:
\begin{gather*}
\KO(\mathfrak{B}^{*}(f))(\rho(J)(j))
= \KO(\mathfrak{B}^{*}(f))([\zv\mathfrak{B}^{*}(J)])\\
= [(\mathfrak{B}^{*}(f)(\zv))\mathfrak{B}^{*}(I)]
= [(\widehat{f}^{-1}(\zv))\mathfrak{B}^{*}(I)]
= [(\widehat{f}^{-1}_{i}(\zv))\mathfrak{B}^{*}(I)]\\
= \rho(I)(i)
= \rho(I)(G^{*}(f)(j)).
\end{gather*}
Here again, since $G(J)$ is free with $J$ as a basis, this shows that the second diagram commutes.
Finally we may conclude as follows:

\begin{pro}\label{pro natequiv}
With the above notations and settings, the assignment $I\mapsto\rho(I)$ defines two natural equivalences
\[
\rho(-)\colon G(-)\approx \KO(-)\circ\mathfrak{B}(-)\quad\text{and} \quad \rho^{*}(-)\colon G^{*}(-)\approx  \KO(-)\circ \mathfrak{B}^{*}(-).
\]
\end{pro}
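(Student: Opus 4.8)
The plan is to verify that the natural-equivalence statement follows immediately from the two commuting diagrams established just before the proposition, so the proof itself is essentially a bookkeeping assembly rather than a new computation. First I would recall that, by Theorem \ref{theo K0}, for every object $I$ of $\Pos_{\al}$ the map $\rho(I)\colon G(I)\to\KO(\mathfrak{B}(I))$ is a \emph{group isomorphism} (indeed an isomorphism of partially ordered abelian groups), not merely a homomorphism; this is the content of the last assertion of that theorem. Since an isomorphism is in particular an invertible morphism in the target category, to conclude that $\rho(-)$ is a natural equivalence it suffices to check that the family $(\rho(I))_{I\in\Pos_{\al}}$ is a natural transformation from $G(-)$ to $\KO(-)\circ\mathfrak{B}(-)$, and likewise that $(\rho(I))_{I}$, now read with the contravariant variance, is a natural transformation from $G^{*}(-)$ to $\KO(-)\circ\mathfrak{B}^{*}(-)$.

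Next I would simply point to the two square diagrams displayed immediately above the statement of Proposition \ref{pro natequiv}, whose commutativity was verified in the preceding paragraphs by evaluating both composites on a primitive idempotent $\zu$ of $H(I,i)$ (respectively $\zv$ of $H(J,j)$) and using the identities $\KO(\mathfrak{B}(f))([\zu\mathfrak{B}(I)])=[(\widehat{f}_{i}(\zu))\mathfrak{B}(J)]=\rho(J)(f(i))$ and, in the contravariant case, $\KO(\mathfrak{B}^{*}(f))([\zv\mathfrak{B}^{*}(J)])=[(\widehat{f}^{-1}_{i}(\zv))\mathfrak{B}^{*}(I)]=\rho(I)(i)$ when $j=f(i)$, together with the vanishing of both sides when $j\notin f(I)$. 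The only observation still needed is that these commutativities, which were checked on the free generators $i\in I$ (resp. $j\in J$) of $G(I)$ (resp. $G(J)$), extend to all of $G(I)$ (resp. $G(J)$) by $\BZ$-linearity, since all four maps $G(f)$, $G^{*}(f)$, $\KO(\mathfrak{B}(f))$, $\KO(\mathfrak{B}^{*}(f))$ and $\rho(I),\rho(J)$ are group homomorphisms; this is exactly what was recorded in the phrases ``since $G(I)$ is free with $I$ as a basis'' and ``since $G(J)$ is free with $J$ as a basis''. Hence both diagrams commute as diagrams of group homomorphisms, which is precisely the naturality condition.

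Finally I would assemble the conclusion: $\rho(-)$ is a natural transformation $G(-)\Rightarrow\KO(-)\circ\mathfrak{B}(-)$ each of whose components is an isomorphism in $\Poab$, hence a natural equivalence; symmetrically $\rho^{*}(-)$ (the same family of maps, regarded with respect to the contravariant functors $G^{*}(-)$ and $\KO(-)\circ\mathfrak{B}^{*}(-)$) is a natural equivalence $G^{*}(-)\approx\KO(-)\circ\mathfrak{B}^{*}(-)$. I do not expect any genuine obstacle here: the substantive work — that $\rho(I)$ is an order isomorphism (Theorem \ref{theo K0}), that $\mathfrak{B}(-)$ and $\mathfrak{B}^{*}(-)$ are functors (Proposition \ref{pro covfunct}), and the explicit description of $\KO$ on morphisms in $\ZR\zn\zg$ — has all been done, and the commutativity of the two squares was already carried out in the text preceding the statement. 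The mild point to be careful about is purely notational: keeping straight that the \emph{same} underlying isomorphisms $\rho(I)$ serve simultaneously for the covariant and the contravariant naturality squares, with only the direction of the vertical arrows reversed, and that in the contravariant square the relevant identity uses $\widehat{f}^{-1}$ on the image summand $H(J,f(I))$ after projecting away the kernel ideal $H(J,J\setminus f(I))$. Thus the proof is a short paragraph citing the displayed diagrams and invoking that isomorphisms are the invertible morphisms.

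\begin{proof}
By Theorem \ref{theo K0}, for every object $I$ of $\Pos_{\al}$ the map $\rho(I)\colon G(I)\to\KO(\mathfrak{B}(I))$ is an isomorphism of partially ordered abelian groups; in particular it is an invertible morphism in $\Poab$. Thus it remains only to check naturality, that is, that the two diagrams displayed just before the statement commute.

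For the covariant case, let $f\colon I\to J$ be a morphism of $\Pos_{\al}$ and let $i\in I$. Choosing any primitive idempotent $\zu$ of $H(I,i)$, we have that $(\mathfrak{B}(f))(\zu)=\widehat{f}_{i}(\zu)$ is a primitive idempotent of $H(J,f(i))$, whence
\[
\KO(\mathfrak{B}(f))(\rho(I)(i)) = \KO(\mathfrak{B}(f))([\zu\mathfrak{B}(I)]) = [(\widehat{f}_{i}(\zu))\mathfrak{B}(J)] = \rho(J)(f(i)) = \rho(J)(G(f)(i)).
\]
Since $G(I)$ is free abelian with $I$ as a basis and all maps involved are group homomorphisms, it follows that $\KO(\mathfrak{B}(f))\circ\rho(I) = \rho(J)\circ G(f)$, so the first diagram commutes.

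For the contravariant case, let again $f\colon I\to J$ be a morphism of $\Pos_{\al}$ and let $j\in J$. If $j\notin f(I)$, then $j\in\Ker(G^{*}(f))$ by \eqref{eq imbedding} and $H(J,j)\sbs\Ker(\mathfrak{B}^{*}(f))$, so both $\rho(I)(G^{*}(f)(j))$ and $\KO(\mathfrak{B}^{*}(f))(\rho(J)(j))$ vanish. If instead $j = f(i)$ for the (unique) $i\in I$, then choosing a primitive idempotent $\zv$ of $H(J,j)$ we obtain
\[
\KO(\mathfrak{B}^{*}(f))(\rho(J)(j)) = [(\widehat{f}^{-1}_{i}(\zv))\mathfrak{B}^{*}(I)] = \rho(I)(i) = \rho(I)(G^{*}(f)(j)).
\]
Since $G(J)$ is free abelian with $J$ as a basis and all maps involved are group homomorphisms, $\KO(\mathfrak{B}^{*}(f))\circ\rho(J) = \rho(I)\circ G^{*}(f)$, so the second diagram commutes.

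Therefore $\rho(-)$ is a natural transformation $G(-)\Rightarrow\KO(-)\circ\mathfrak{B}(-)$ all of whose components are isomorphisms, hence a natural equivalence, and symmetrically $\rho^{*}(-)$ is a natural equivalence $G^{*}(-)\approx\KO(-)\circ\mathfrak{B}^{*}(-)$.
\end{proof}
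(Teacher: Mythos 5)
Your proposal is correct and follows essentially the same route as the paper: the paper carries out exactly these two generator computations (evaluating both composites on a primitive idempotent of $H(I,i)$, resp.\ $H(J,j)$, and handling the case $j\notin f(I)$ separately) in the text immediately preceding the statement, and then concludes by observing that each $\rho(I)$ is an isomorphism by Theorem \ref{theo K0} and that freeness of $G(I)$ and $G(J)$ upgrades the generator identities to commutativity of the squares. No gaps.
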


\noindent\textbf{\emph{Acknowledgement}}. The author wish to express his deep gratitude to the refe\-ree. Her/his extremely accurate work was an invaluable help in preparing the final version of this paper, by fixing a number of issues present in the preliminary manuscript.

\end{document}